\documentclass[11pt,a4paper]{amsart}

\usepackage{amsfonts,amssymb,amsmath,amsthm}
\usepackage{mathtools,esint}
\usepackage{lmodern}
\usepackage[noadjust]{cite}
\usepackage{enumitem}
\usepackage{needspace}
\usepackage{xcolor}
\usepackage[plainpages=false,pdfpagelabels,backref=page]{hyperref}
\hypersetup{
 colorlinks=true,
 linkcolor={cyan!90!black},
 citecolor={magenta},
 urlcolor={green!40!black},
 pdftitle={Boundary Harnack inequalities for the fractional p-Laplacian in Reifenberg flat domains},
 pdfauthor={Kaj Nystr\"om},
pdfsubject={Reifenberg approximation, weighted non-local forms, and ordered boundary comparison}
}

\newtheorem{thm}{Theorem}[section]
\newtheorem{lem}[thm]{Lemma}
\newtheorem{prop}[thm]{Proposition}
\newtheorem{cor}[thm]{Corollary}

\theoremstyle{definition}
\newtheorem{defn}[thm]{Definition}
\newtheorem{rem}[thm]{Remark}
\numberwithin{equation}{section}

\usepackage{tikz}
\usetikzlibrary{arrows.meta,calc,patterns,decorations.pathreplacing}
\definecolor{KPBlue}{RGB}{35,82,111}
\definecolor{KPGreen}{RGB}{45,100,83}
\definecolor{KPGold}{RGB}{163,115,43}
\tikzset{
  kp/past/.style={draw=KPBlue,densely dashed,line width=.8pt,fill=KPBlue!7},
  kp/future/.style={draw=KPGreen,line width=.85pt,fill=KPGreen!6},
  kp/outer/.style={draw=black!55,dash pattern=on 4pt off 2pt,line width=.65pt},
  kp/arrow/.style={-{Stealth[length=2mm]},draw=black!65,line width=.7pt},
  kp/step/.style={-{Stealth[length=2.2mm]},draw=KPBlue,line width=1pt},
  kp/vertex/.style={draw=black!70,rounded corners=1.5pt,fill=white,
    minimum width=2.05cm,minimum height=1.02cm,inner sep=3pt,
    align=center,font=\small},
  kp/terminal/.style={kp/vertex,double,double distance=1.2pt},
  kp/small/.style={font=\footnotesize,align=center},
  kp/heading/.style={font=\small\bfseries,anchor=west},
  kp/leader/.style={draw=black!65,line width=.5pt},
  kp/axis/.style={-{Stealth[length=1.7mm]},line width=.55pt,draw=black!75}
}

\newcommand{\R}{\mathbb R}
\newcommand{\cH}{\mathcal H}

\usepackage{mathrsfs}
\renewcommand{\d}{\,\mathrm{d}}

\newcommand{\esssup}{\mathop{\mathrm{ess\,sup}}}
\newcommand{\essinf}{\mathop{\mathrm{ess\,inf}}}
\newcommand{\supp}{\operatorname{supp}}
\newcommand{\avgint}{\fint}
\newcommand{\Om}{\Omega}
\newcommand{\dist}{\operatorname{dist}}
\newcommand{\osc}{\operatorname{osc}}
\newcommand{\Tail}{\operatorname{Tail}}
\newcommand{\capsp}{\operatorname{cap}_{s,p}}
\newcommand{\Jp}{J_p}
\newcommand{\Fsp}{\mathcal F_{s,p}}
\newcommand{\Bu}{\mathcal B_u}
\newcommand{\HH}{\mathbb H}
\newcommand{\Ws}{W^{s,p}}
\newcommand{\Wzero}{W^{s,p}_0}
\newcommand{\Vs}{V^{s,p}}
\newcommand{\trange}{2\le p<2/(1-s)}
\newcommand{\dxdy}{\d x\,\mathrm d y}
\allowdisplaybreaks
\makeatletter
\renewcommand{\l@section}{%
  \@tocline{1}{3pt}{0pt}{}{\normalfont\small}}
\makeatother

\begin{document}

\title[Fractional boundary Harnack inequalities]
{Boundary Harnack inequalities for the fractional $p$-Laplacian
in Reifenberg flat domains}
\author{Kaj Nystr\"om}
\address{Department of Mathematics, Uppsala University, Box 480\\
SE-751 06 Uppsala, Sweden}
\email{kaj.nystrom@math.uu.se}
\subjclass[2020]{35R11, 35B45, 35B65, 31C45}
\renewcommand{\subjclassname}{\textup{2020} Mathematics Subject Classification}
\keywords{Fractional \(p\)-Laplacian, boundary Harnack inequality,
Reifenberg flat domain, non-local Dirichlet problem, linearization}

\begin{abstract}
We prove a scale-invariant boundary Harnack inequality and
H\"older continuity up to the boundary of quotients of positive fractional
$p$-harmonic functions in Reifenberg flat domains with sufficiently
small flatness constant, for $0<s<1$ and $2\le p<2/(1-s)$.
The solutions are globally nonnegative, vanish on a common exterior
boundary patch, and belong to the natural local energy and
$(p-1)$-tail class. Their distant exterior values may be unrelated
and unbounded. After normalization at a corkscrew point, the
comparison and H\"older constants depend only on $n,s,p$.  We first establish local half-space boundary comparison for every
$p\ge2$ and prove relative approximation by the half-space profile
in Reifenberg domains. The compactness argument identifies escaped exterior mass
and excludes its constant source term. Interior $C^{1,\alpha}$ regularity then
yields gradient non-degeneracy and lower increment bounds in a
cone of directions. To treat the degenerate kernels produced by
linearization, we develop estimates for weighted non-local forms
and prove a localized Harnack inequality for bounded quotients,
retaining the full exterior interactions. Ordered variational
deformations and a common fractional $p$-harmonic majorant yield comparison for
solutions with unrelated exterior data. Truncation of distant data and fractional $p$-harmonic replacement
extend the estimates from the variational energy class to the
natural tail class.
\end{abstract}

\maketitle


\begingroup
\hypersetup{linkcolor=black}
\tableofcontents
\endgroup


\section{Introduction and statement of main results}\label{Section_0}

In a series of papers \cite{LN07,LN,LN10}, John Lewis and the author
developed boundary estimates for non-negative solutions to the
$p$-Laplace equation, $1<p<\infty$, including boundary Harnack
inequalities and H\"older continuity up to the boundary of quotients
of positive solutions vanishing on a common boundary portion.
These estimates, and the methods underlying their proofs, have found
applications in the regularity theory of two-phase free boundary
problems \cite{LN10FB,LN12FB}, in inverse free boundary problems below
the continuous threshold \cite{LN08R,LN12R}, and in the study of
$p$-harmonic measure and its Hausdorff dimension \cite{LNP11,LNV13}.

The methods developed in these papers reflect the underlying geometry.
In $C^1$ domains and Reifenberg flat domains with sufficiently small
flatness constant, approximation by hyperplanes and quantitative
non-degeneracy estimates for gradients play a central role
\cite{LN,LLN}.
The analysis in Lipschitz domains and Ahlfors regular NTA domains
also uses estimates for boundary measures and approximation by
suitable Lipschitz subdomains \cite{LN07,LN10,LN08R,LN12R}.
A common ingredient is the passage from the nonlinear equation
to associated linear degenerate elliptic equations, whose analysis
connects the boundary behavior of $p$-harmonic functions with
weighted elliptic theory.

The purpose of this paper is to develop a counterpart of the boundary
comparison theory in \cite{LN,LLN} for the fractional $p$-Laplacian in
Reifenberg flat domains.
This extension presents additional difficulties arising from
non-locality, since boundary estimates must account for interactions
with the exterior of the domain, while linearization produces kernels
whose degeneracy depends on increments of the solutions at pairs
of points.

To formulate the problem, let $n\ge2$, $0<s<1$, and $1<p<\infty$.
We consider the fractional $p$-Laplacian
\begin{equation}\label{eq:operator}
 \Fsp u(x)=\operatorname{PV}\int_{\R^n}
 \frac{|u(x)-u(y)|^{p-2}(u(x)-u(y))}
 {|x-y|^{n+sp}}\,\mathrm d y
\end{equation}
and the associated Dirichlet problem in a domain $\Om\subset\R^n$.
For this non-local operator, Dirichlet data are prescribed on
$\R^n\setminus\Om$.
Moreover, estimates localized to a ball retain contributions from
the values of the solution outside that ball.
These contributions are measured by the tail
\begin{equation}\label{eq:tail}
 \Tail(f,z,r)=
 \left(r^{sp}\int_{\R^n\setminus B_r(z)}
       \frac{|f(y)|^{p-1}}{|y-z|^{n+sp}}
       \,\mathrm d y\right)^{1/(p-1)}.
\end{equation}

For $w\in\partial\Om$ and $r>0$, we write
$D_r(w)=\Om\cap B_r(w)$.
For $x\in\Om$, we set
$d_\Om(x)=\dist(x,\R^n\setminus\Om)$.
The geometric assumptions and the weak formulation of the equation
with zero exterior values are specified in
Section~\ref{sec:dirichlet}.
A corkscrew point $A_r(w)$ is chosen in $D_r(w)$ so that
$d_\Om(A_r(w))\ge r/M_{\mathrm{geom}}$, where $M_{\mathrm{geom}}$ is a fixed geometric constant.
Pointwise values of weak solutions refer to their continuous
representatives in the region where the equation holds.

Our main result is the following theorem.

\begin{thm}\label{thm:main}
Let $0<s<1$ and $\trange$.
There exist $\delta_*>0$, $C\ge1$, and $\alpha\in(0,1)$,
depending only on $n,s,p$, with the following property.
Suppose that $\Om$ is $(\delta,r_0)$-Reifenberg flat,
$\delta\le\delta_*$, $w\in\partial\Om$, and $0<r<r_0/8$.
Let $u,v\in \mathrm L^{p-1}_{\mathrm{loc}}(\R^n)$ be  functions, nonnegative almost everywhere in $\mathbb R^n$,
neither of which vanishes almost everywhere, and assume that
\[
 \Tail(u,w,4r)+\Tail(v,w,4r)<\infty.
\]
Suppose that
\begin{equation}\label{eq:target-problem}
 \Fsp u=\Fsp v=0\quad\text{in }D_{4r}(w),\qquad
 u=v=0\quad\text{on }B_{4r}(w)\setminus\Om
\end{equation}
in the local weak sense of Definition~\ref{def:local-zero}.
Then, with $A=A_r(w)$,
\begin{equation}\label{eq:target-bhi}
 C^{-1}\frac{u(A)}{v(A)}
 \le \frac{u(x)}{v(x)}
 \le C\frac{u(A)}{v(A)}
 \qquad\text{for }x\in D_r(w),
\end{equation}
and
\begin{equation}\label{eq:target-holder}
 \left|\log\frac{u(x)}{v(x)}-\log\frac{u(y)}{v(y)}\right|
 \le C\left(\frac{|x-y|}{r}\right)^\alpha
 \qquad\text{for }x,y\in D_r(w).
\end{equation}
\end{thm}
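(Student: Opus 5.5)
The plan follows the Lewis--Nyström strategy for the local $p$-Laplacian: reduce the quotient estimate to a linear degenerate non-local problem via a deformation between $u$ and $v$, and use the non-degeneracy of the gradient near flat boundaries to control the linearized kernel. The argument has four stages.

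\textbf{Stage 1: reduction to the energy class with comparable data.} By homogeneity I normalize $u(A)=v(A)=1$. Since $u,v$ only lie in the tail class, I first truncate the data outside $B_{4r}(w)$. Concretely, I take the fractional $p$-harmonic replacements $\tilde u,\tilde v$ in $D_{4r}(w)$ with exterior data $\min(u,M\,\Tail\text{-scale})\chi_{B_{R}}$. Interior and boundary Harnack and Hölder estimates with tail control (Carleson estimate, boundary Hölder decay) show that $u$ and $\tilde u$ are comparable on $D_{2r}(w)$, with errors that vanish as $M,R\to\infty$. The second step is a Carleson estimate $u\le C u(A)$ on $D_{2r}(w)$, whose tail term is absorbed by the normalization at $A$. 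Combining the two, it suffices to prove the theorem for energy-class solutions whose tails at scale $r$ are bounded by $C$. The distant exterior data may still differ, so I also construct a common fractional $p$-harmonic majorant $W\ge \max(u,v)$ vanishing on $B_{4r}\setminus\Om$, and compare both $u$ and $v$ to $W$. For ordered pairs $u\le W$, the comparison then passes through the ordered deformation below.

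\textbf{Stage 2: geometry and non-degeneracy.} First I prove the half-space case: the profile $(x_n)_+^s$ is the unique positive solution up to constants, and a local boundary comparison holds for $p\ge2$. Next, by compactness under $\delta$-flatness, the rescalings of $u$ at $w$ are close to $c\,(x\cdot\nu)_+^s$ in relative norm. Any mass escaping to the exterior in the limit contributes a constant source, and positivity together with the half-space Liouville theorem excludes it. Interior $C^{1,\alpha}$ regularity then upgrades the closeness to gradient non-degeneracy, $|\nabla u|\approx u/d_\Om$ in a Whitney-type region. I also need a lower bound $u(x)-u(y)\gtrsim$ on pairs lying in a cone of directions around $\nu$. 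This is where small $\delta$ is used.

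\textbf{Stage 3: linearization.} For the ordered pair $u\le W$, or $u,v$ after the majorant reduction, I set $u_t=(1-t)u+tW$, or better the variational path of solutions $\Fsp u_t=0$ with interpolated data. The derivative $\partial_t u_t$ then solves a linear non-local equation with kernel $K_t(x,y)=(p-1)|u_t(x)-u_t(y)|^{p-2}|x-y|^{-n-sp}$. The restriction $p<2/(1-s)$ is exactly what makes the weight $|u_t(x)-u_t(y)|^{p-2}\approx d^{(s-1)(p-2)}|x-y|^{p-2}$ near the boundary integrable enough for Caccioppoli and Poincaré inequalities. It also keeps the kernel in a weighted class where the cone lower bounds give coercivity. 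For this weighted form I prove a localized Harnack inequality and boundary Hölder continuity for the bounded quotient $\partial_t u_t/u_t$, keeping the full exterior interactions. Integrating $\partial_t\log u_t$ in $t$ then gives \eqref{eq:target-bhi}. Iterating the boundary oscillation decay of the quotient on the dyadic balls $D_{2^{-k}r}$ gives \eqref{eq:target-holder}.

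\textbf{Main obstacle.} I expect Stage 3 to be the hardest: the Harnack inequality for the degenerate linearized kernel. The degeneracy depends on increments over pairs, and the kernel is large where the increments are large, whereas the non-degeneracy of Stage 2 holds only in cones. My first attempt would be a De Giorgi iteration in a weighted fractional Sobolev space, with the weight controlled from below only on cone pairs. Coercivity would come from a fractional Poincaré inequality that uses only cone-directional pairs. The nonlocal tail terms would be handled by the truncation of Stage 1.
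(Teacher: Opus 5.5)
Your architecture matches the paper's. It has Carleson and positive-tail normalization, half-space classification with escaped exterior mass as a constant source, and gradient and cone-increment bounds from interior $C^{1,\alpha}$ regularity. It also has the ordered variational path, integration of $\partial_t\log u_t$, a common majorant, and truncation with replacement to reach the tail class.

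The gap is in the step you flag as the main obstacle, where your mechanism cannot give the required uniformity. Ordering and homogeneity give only $U_\tau\le U_t\le (t/\tau)U_\tau$, hence $0\le q_t=F_t/U_t\le 1/t$. The bound $q_t\asymp q_t(A)$ must hold with constants independent of this $1/t$, because it is integrated over $t\in(0,1)$. The only normalization known to be integrable is $q_t(A)$, through $\int_0^1 q_t(A)\,dt=\log(U_1(A)/U_0(A))$.

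Truncating the exterior data in Stage 1 does not control the tail of the quotient. On $D^c$ one has $q_t=(W-u)/((1-t)u+tW)$, which equals $1/t$ wherever $u=0<W$. Estimating the positive tail by $\sup q_t$ and the jump moment therefore leaves an additive error of order $1/t$, which is not integrable.

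What is needed, and what the paper proves, is that the positive tail of a globally nonnegative harmonic function $h$ of the transformed form $\mathscr E_U$ on $(S,U\,dx)$ is bounded by its local supremum via the equation itself (test with $(h-2\sup h)\eta^2$). Turning that averaged bound into one valid for every first endpoint requires the kernel-averaging inequality $K_U(x,y)\le C\mu(\mathbb B_\rho(x))^{-1}\int_{\mathbb B_\rho(x)}K_U(\xi,y)\,d\mu(\xi)$ for $|x-y|\ge2\rho$. This is not automatic, because $|U(\xi)-U(y)|^{p-2}$ can vanish for individual $\xi$. The paper obtains it by placing in each ball two subsets of comparable $\mu$-measure with separated $U$-ranges, so that for every $y$ one of them stays away from $U(y)$. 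Cone-pair coercivity supplies none of this; it only gives the Poincar\'e inequality on Whitney balls.

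The boundary part of Stage 3 is also missing its key idea. The quotient has no boundary condition on $\partial\Omega$, so oscillation decay on $D_{2^{-k}r}(w)$ needs the quotient equation on relative balls centered at boundary points. The paper adjoins $\partial\Omega$ with zero $\mu$-measure. It shows $\partial\Omega$ has zero $\mathscr E_U$-capacity, using strip cutoffs whose full energy decays like a positive power of $\rho/r$; this is why the boundary decay exponent $\beta$ must be taken close to $s$. It then proves a Poincar\'e inequality on boundary balls with scale $r^{sp}U(A_r)^{1-p}$, using two value-separated balls near the corkscrew point. Cone pairs only give the interior order $\sigma_*=sp-p+2$. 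Relatedly, the range $p<2/(1-s)$ enters through the $C^{1,\alpha}$ theorem and through $\sigma_*>0$: the linearized kernel must be singular enough to be coercive, not integrable enough.

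Finally, after normalizing $u(A)=v(A)=1$ the majorant must satisfy $W(A)\le K_0$; otherwise the ordered comparisons give constants proportional to $\log(2W(A))$. This does not follow if $W$ is the replacement in the whole patch $D_{4r}(w)$. The data next to $\partial B_{4r}(w)$ are controlled only in tail, and after your truncation the available bound is the truncation level. The paper builds $W$ in $D_{\Lambda r}(w)$ inside the larger harmonicity patch $D_{8\Lambda r}(w)$. It compares $W$ with the supersolution equal to a constant $H_*\simeq\|G\|_{L^\infty(B_{2\Lambda r}(w))}+\operatorname{Tail}(G,w,2\Lambda r)$ on $B_{2\Lambda r}(w)$ and to $G=\max\{u,v\}$ outside. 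The buffer annulus is what lets boundary local boundedness and the positive-tail lemma bound $H_*$ by a constant multiple of $u(A)+v(A)$.
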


\bigskip
\begin{rem} By Theorem \ref{thm:main},  \(q=u/v\) extends to a positive continuous function
on \(\overline\Om\cap B_r(w)\). By \eqref{eq:target-bhi} and
\eqref{eq:target-holder}, this extension satisfies
\[
 |q(x)-q(y)|\le C\frac{u(A)}{v(A)}
 \left(\frac{|x-y|}{r}\right)^\alpha
 \quad\text{for }x,y\in\overline\Om\cap B_r(w),
\]
and the two-sided comparison in \eqref{eq:target-bhi} also holds there.
\end{rem}

\begin{rem} The geometric assumptions in Theorem~\ref{thm:main} include
several familiar classes of domains.
Every bounded $C^1$ domain is $(\delta,r_0)$-Reifenberg flat
for any prescribed $\delta>0$, provided $r_0$ is chosen
sufficiently small depending on $\delta$ and the domain.
Thus the theorem applies to all bounded $C^1$ domains,
including balls, at sufficiently small scales. Lipschitz domains with sufficiently small Lipschitz constants
also satisfy the required flatness condition.
For example, the graph domain
\[
 \Om_\varphi
 =\{(x',x_n)\in\R^{n-1}\times\R:
          x_n>\varphi(x')\}
\]
is Reifenberg flat with flatness constant bounded by
$C(n)\operatorname{Lip}(\varphi)$.
Taking $\varphi(x')=\varepsilon|x'|$, with $\varepsilon>0$
sufficiently small, gives an example whose boundary is not
$C^1$ at the origin. The Reifenberg condition also allows boundaries that are
not locally Lipschitz graphs.
Suitable small-angle variants of the planar Koch snowflake
have arbitrarily small Reifenberg flatness constants at
sufficiently small scales, while their boundaries have
Hausdorff dimension greater than one.
The approximating hyperplane may vary with both the boundary
point and the scale, and the condition does not require
these planes to converge to a tangent plane.
For further examples and geometric discussion, see
\cite{LNV13}.
\end{rem}

\begin{rem}
For $0<s<1$, the restriction $\trange$ arises from our use of
the interior regularity theorem of Giovagnoli, Jesus, and
Silvestre \cite{GJS}, which establishes $C^{1,\beta}$ regularity
of fractional $p$-harmonic functions for some $\beta>0$
in this range.
\end{rem}

\begin{rem}
The numerical factors in the radii in Theorem~\ref{thm:main} may be adjusted, with corresponding
changes in the constants and the admissible range of $r$.
For example, one may impose the equation and the zero exterior
condition in a larger outer ball and obtain the conclusions in a
smaller concentric ball.
\end{rem}
\begin{rem}\label{rem:global-nonnegativity}
The nonnegativity assumption in Theorem~\ref{thm:main}
is imposed on all of $\R^n$. The distant exterior values
need not agree or vanish, but must satisfy the stated
nonnegativity and tail assumptions. Global nonnegativity
eliminates the negative tails in interior Harnack estimates
and allows us to control the positive tail by an interior
value, with constants independent of the exterior data.
If a solution is nonnegative only in its fractional
$p$-harmonicity region, its negative exterior values
contribute to the estimates. For example, the interior
Harnack inequality in \cite[Theorem~1.1]{DKP} takes
the form
\[
 \sup_{B_\rho(a)}u
 \le C\inf_{B_\rho(a)}u
 +C\left(\frac{\rho}{R}\right)^{sp/(p-1)}
       \Tail(u_-,a,R),
 \qquad 0<\rho<R/2,
\]
when $u$ is nonnegative and fractional $p$-harmonic
in $B_R(a)$, under the hypotheses of that theorem.
Here $u_-=\max\{-u,0\}$. Thus local nonnegativity
does not by itself give normalization solely in
terms of a positive interior value.
\end{rem}

\begin{rem}\label{rem:sign-changing-exterior-data}
There is a boundary obstruction to replacing
global nonnegativity by local positivity, already
for $p=2$ and the case of the fractional Laplacian. The construction in
\cite[Theorem~1.14]{DSV} gives a sign-changing function
$u$, vanishing outside a ball $B$, which, after
changing its sign if necessary, is positive and
fractional harmonic in an interior neighborhood
of $\partial B$, but satisfies
\[
 \lim_{B\ni x\to x_0}\frac{u(x)}{d_B(x)^s}=0
 \qquad\text{at a boundary point }x_0,
\]
where $d_B(x)=\dist(x,B^c)$.
The potential in that construction is compactly
supported inside $B$, so the equation is homogeneous
near the boundary. Comparing $u$ with a globally
nonnegative function that is fractional harmonic
in the same boundary patch and comparable to $d_B^s$
there shows that the lower bound in
\eqref{eq:target-bhi} cannot follow from local
positivity alone. A possible extension would impose quantitative
conditions on the negative exterior data, for example
smallness of
\[
 \frac{\Tail(u_-,w,4r)}{u(A)}
 \quad\text{and}\quad
 \frac{\Tail(v_-,w,4r)}{v(A)},
\]
for solutions positive in the comparison patch.
Whether such conditions suffice for a version of
Theorem~\ref{thm:main} requires additional boundary
non-degeneracy and comparison arguments.
No such extension is asserted here.
\end{rem}

\subsection{Relation to the literature}

Boundary Harnack inequalities for non-local equations have an
extensive history in the linear case. For the fractional Laplacian,
Bogdan \cite{Bog97} established boundary comparison in Lipschitz
domains. The theory subsequently developed to include more
general jump operators and weaker geometric assumptions.
In particular, Bogdan, Kumagai, and Kwa\'snicki \cite{BKK15}
proved boundary Harnack inequalities in arbitrary open sets
for a broad class of Markov processes with jumps, under
structural assumptions on the jump kernel and the generator.
Their results illustrate that non-local interactions can permit
boundary comparison without the geometric restrictions familiar
from local equations.

An analytic treatment for non-local elliptic operators in
non-divergence form was developed by Ros-Oton and Serra
\cite{RSBHI}. They consider operators
\[
 Lu(x)=\int_{\R^n}
 \left(\frac{u(x+z)+u(x-z)}{2}-u(x)\right)
 K(x,z)\,\mathrm d z,
\]
where the measurable kernel is even in $z$ and satisfies
\[
 \lambda |z|^{-n-2s}
 \le K(x,z)\le
 \Lambda |z|^{-n-2s}
\]
for fixed constants $0<\lambda\le\Lambda$.
No continuity of the coefficients in $x$ is required.
For globally nonnegative solutions vanishing on a common
exterior portion, they establish boundary comparison in
arbitrary open sets and H\"older continuity of the quotient
up to the boundary in Lipschitz domains.
More recently, Cao and Chen \cite{CC24} obtained a uniform
boundary Harnack principle in arbitrary open subsets of
metric measure spaces under jump measure comparability,
jump tail estimates, and upper bounds on early-exit
probabilities from balls.
Their results include classes of non-local operators in
both divergence and non-divergence form with measurable
coefficients.

The weighted local theory provides another relevant comparison.
Fabes, Kenig, and Serapioni \cite{FKS} establish Harnack and
H\"older estimates for divergence-form equations whose ellipticity
is controlled by an $A_2$ weight.
At a more abstract level, Barlow and Murugan \cite{BM19}
prove a scale-invariant boundary Harnack principle in inner
uniform domains for regular strongly local Dirichlet forms,
assuming the elliptic Harnack inequality and suitable
properties of Green functions.
Their theorem does not require volume doubling of the
reference measure, and their proof uses elliptic estimates
without quantitative heat kernel bounds.
The jump interactions in our linearized forms persist after
the ground-state transformation, so this strongly local
theory does not apply directly.

In the non-local weighted setting, Behn, Diening, Ok, and
Rolfes \cite{BDOR} develop fractional weighted Sobolev spaces
and prove interior Harnack and H\"older estimates for the
corresponding equations with Muckenhoupt weights.
In their quadratic case, the interaction density relative
to Lebesgue measure is comparable to
\[
 J_\omega(x,y)
 =\frac{\omega(x)\omega(y)}
 {|x-y|^{2\eta}\omega(B_{xy})},
 \qquad 0<\eta<1,\qquad \omega\in A_2,
\]
where $B_{xy}$ is the ball with diameter $[x,y]$ and
$\omega(B)=\int_B\omega\,\mathrm d x$.
For $p>2$, the kernels obtained here by linearization can
degenerate in particular directions, even on interior balls
where the natural weight is comparable to a positive constant.
As explained below, this prevents the uniform two-sided
comparison with the displayed weighted kernel.
Moreover, the weighted measure associated with a variable
solution is quantitatively controlled only in its harmonicity
patch, while distant exterior values remain unrestricted
apart from nonnegativity and the tail condition.

A more flexible framework is provided by symmetric non-local
Dirichlet forms on metric measure spaces.
Chen, Kumagai, and Wang \cite{CKW} study elliptic Harnack
inequalities and their implications for H\"older regularity
in this setting, allowing the scale function to depend on
the center.
Our proof follows the quadratic Harnack strategy of
Chen and Yu \cite{CY}.
We give the bounded quadratic argument needed here in
Lemma~\ref{lem:local-form-harnack}, making the solution
class, the treatment of exterior interactions, and the
dependence of the constants explicit. Applying this argument requires structural estimates for
the weighted forms arising from linearization.
A Poincar\'e inequality must control weighted oscillation
by quadratic energy, while cutoff, jump moment, and kernel
averaging estimates must control the contributions introduced
by localization.
Section~\ref{sec:variable} establishes these estimates in the
boundary patch while retaining interactions with all distant
exterior points.
No doubling assumption is imposed on the weighted measure
outside the controlled patch.
Verifying these properties for the degenerate kernels and
proving the resulting local Harnack inequality with constants
independent of the global quotient bound are principal
analytic steps in the paper.

For nonlinear equations, the interior theory is well developed.
Di Castro, Kuusi, and Palatucci \cite{DKP} established Harnack
inequalities for nonlinear non-local equations with measurable
kernels, including the fractional $p$-Laplacian.
Their estimates make explicit the role of the non-local tail.
Boundary regularity in smooth domains was developed by
Iannizzotto, Mosconi, and Squassina \cite{IMS16,IMS20}.
For bounded right-hand sides and homogeneous exterior Dirichlet
data in bounded $C^{1,1}$ domains, these works establish global
H\"older regularity and, in the degenerate range $p\ge2$,
H\"older continuity of the normalized function $u/d_\Om^s$.
The corresponding fine boundary regularity in the singular range
$1<p<2$ was subsequently proved by Iannizzotto and Mosconi
\cite{IM24}.
When combined with a positive lower bound for the normalized
denominator, such estimates yield H\"older regularity of quotients
in the smooth-domain setting.

Boundary comparison for the fractional $p$-Laplacian has also
been studied directly.
Ataei \cite{Ataei} proved a generalized Hopf lemma and a global
boundary Harnack inequality in bounded open sets, within a
framework that allows Wiener regular boundaries.
His comparison theorem treats positive functions satisfying
suitable differential inequalities and assumes that their
exterior data are bounded and mutually comparable.
It gives two-sided bounds for the quotient, with constants
depending on the domain, the exterior-data bounds, and the
normalization.
The local problem considered here allows the two solutions to
have unrelated nonnegative values away from the common zero
patch.
In particular, their distant exterior values need not be
mutually comparable or bounded.

Two further developments are particularly relevant to the
present argument.
Byun, Kim, and Song \cite{BKS} established boundary regularity
for nonlinear non-local equations in sufficiently flat Reifenberg
domains, including boundary H\"older estimates with any exponent
strictly below $s$, provided the flatness constant is chosen
accordingly.
These are estimates for individual solutions.
Giovagnoli, Jesus, and Silvestre \cite{GJS} proved interior
$C^{1,\alpha}$ regularity for fractional $p$-harmonic functions
in the range $2\le p<{2}/{(1-s)}$. This interior differentiability result supplies an essential
input to our analysis of the linearized equations and explains
the parameter range used in Theorem~\ref{thm:main}.

In Theorem~\ref{thm:main} we establish local, scale-invariant
boundary comparison and H\"older continuity up to the
boundary of quotients of positive fractional \(p\)-harmonic
functions in sufficiently flat Reifenberg domains. The equation and the common zero exterior
condition are imposed only in a boundary patch, while the
distant exterior values may be unrelated and unbounded,
subject to global nonnegativity and the natural
\((p-1)\)-tail condition.
After normalization at a corkscrew point, the comparison
and H\"older constants depend only on \(n,s,p\).
The result holds in the natural local energy and tail
class, without a finite global \(p\)-energy assumption.

The principal analytic difficulty is the degeneracy of
the linearized kernels, which depends on increments of
the background solution at pairs of points.
For \(p>2\), even the half-space kernels lack
the uniform pointwise lower bounds available in the
uniformly elliptic linear theory.
The weighted quadratic-form estimates developed below
address this degeneracy while retaining the full
exterior interactions.

\subsection{Convention on constants} For nonnegative quantities \(X\) and \(Y\), we throughout the paper write
\(X\asymp Y\) if \[
C^{-1}Y\le X\le CY\quad \mbox{for some \(C\ge1\)},
\]
with the dependence of \(C\) specified in the statement. Unless otherwise stated, the constant $C$ will depend at most on
$n,s,p$.

\subsection{Outline of the proof of Theorem~\ref{thm:main}}

The central strategy behind the proof of Theorem~\ref{thm:main} is to obtain nonlinear boundary
comparison by integrating estimates for linearized
equations along families of solutions. Given a positive
fractional \(p\)-harmonic solution \(U\), we first prove
boundary estimates for \(F/U\), where \(F\) solves
the equation obtained by linearizing \(\Fsp\) at \(U\)
and satisfies \(0\le F\le M_q U\) globally for some
finite \(M_q\). The estimates are independent of \(M_q\).  We then apply
these estimates to parameter derivatives of ordered variational
families. Integration gives comparison of the endpoint solutions,
and a common fractional \(p\)-harmonic majorant removes the ordering requirement.
The half-space approximation and the weighted energy estimates
provide the analytic ingredients for this argument.

Two regularity inputs play different roles. The capacitary
estimates of Kim, Lee, and Weidner \cite{KLW} provide boundary
continuity and quantitative boundary decay under an exterior
capacity density condition, which follows from the presence of exterior corkscrews.
The qualitative boundary regularity underlying this approach
is characterized by the non-local Wiener criterion in \cite{KLL}.
For interior differentiability, we use the theorem of
Giovagnoli, Jesus, and Silvestre \cite{GJS}, which gives
$C^{1,\alpha_{\mathrm{int}}}$ regularity for some
$\alpha_{\mathrm{int}}>0$ in the range
\begin{equation}\label{eq:range}
 0<s<1,\qquad 2\le p<{2}/{(1-s)}.
\end{equation}
We impose \eqref{eq:range} whenever this differentiability
result is used. The Dirichlet theory and several of the
basic energy and boundary estimates are stated for
$1<p<\infty$.

A normalization used throughout the proof is the control
of both the local supremum and the positive tail by a
single interior reference value.
Proposition~\ref{prop:boundary-normalization} provides
these bounds by combining the positive-tail estimate
of Lemma~\ref{lem:positive-tail} with boundary local
boundedness. The supremum bound is the Carleson estimate.
The tail bound controls the influence of distant exterior
data without requiring a pointwise bound on those data.
This common normalization enters the half-space comparison,
the localization arguments, the weighted kernel estimates,
and the construction of the common majorant.

We first establish boundary comparison in the upper half-space
\[
 \HH=\{x=(x',x_n)\in\R^n:x_n>0\}.
\]
The basic model is
\[
 h(x)=(x_n)_+^s,
 \qquad (x_n)_+=\max\{x_n,0\}.
\]
This function is fractional $p$-harmonic in $\HH$ and
vanishes on $\HH^c$. It provides the profile against which
we measure the boundary behavior of positive solutions.
Theorem~\ref{thm:flat} and Proposition~\ref{prop:flat-bhi}
establish the half-space comparison for every $p\ge2$.
Using the positive-tail normalization,
the flat boundary upper estimate in \cite[Lemma~3.1]{BKS}
and a tangent-ball comparison give two-sided bounds for
$u/x_n^s$ in terms of the same interior value. A cutoff
then permits the local use of the weighted boundary
regularity estimates of \cite{IMS20}.
Lemma~\ref{lem:localized-oscillation} records the resulting
oscillation argument, including both quotient tails.
Under \eqref{eq:range},
Corollary~\ref{cor:flat-gradient-unconditional} also gives
the boundary asymptotics of the gradient.

The purpose of the half-space analysis is to obtain
quantitative information about positive solutions in
sufficiently flat Reifenberg domains.
Theorem~\ref{thm:approximation} shows that, on an interior
ball whose radius is comparable to its distance from the
boundary (a Whitney ball), a solution normalized at the center is close
to a rotated half-space profile. The approximation is proved by contradiction, using
compactness and classification of global solutions
in the half-space. After rescaling and normalization,
the domains approach a half-space and a subsequence
of the solutions converges locally to a nontrivial
limit \(V\).

A difficulty specific to the non-local equation is that
local convergence does not detect exterior mass escaping
to infinity. After a rigid change of coordinates, the limiting
domain is $\HH$. Lemma~\ref{lem:halfspace-compactness} shows that the
limit \(V\) satisfies
\[
 \Fsp V=\lambda\quad\text{weakly in }\HH,
 \qquad
 V=0\quad\text{almost everywhere on }\HH^c,
\]
where \(\lambda\ge0\) is a constant representing the
possible contribution of exterior mass escaping
to infinity.
Comparison with torsion functions in expanding balls
forces \(\lambda=0\). The half-space boundary comparison,
applied on arbitrarily large scales, then shows that
\(V\) is a multiple of \((x_n)_+^s\).
These conclusions are established in
Lemma~\ref{lem:halfspace-classification}.
The interior normalization fixes the multiple, and
local uniform convergence to this profile contradicts
the assumed failure of approximation.

Assume \eqref{eq:range}, and let \(u\) satisfy the
solution hypotheses of Theorem~\ref{thm:main}.
Thus \(u\) is globally nonnegative and nontrivial,
has finite tail, and satisfies
\[
 \Fsp u=0\quad\text{in }D_{4r}(w),
 \qquad
 u=0\quad\text{on }B_{4r}(w)\setminus\Om
\]
in the local weak sense of
Definition~\ref{def:local-zero} below.
For sufficiently small Reifenberg flatness, combining
the profile approximation discussed above with interior differentiability
gives, by Corollary~\ref{cor:reifenberg-gradient},
\begin{equation}\label{eq:funs}
 C^{-1}\frac{u(x)}{d_\Om(x)}
 \le |\nabla u(x)|
 \le C\frac{u(x)}{d_\Om(x)}
 \qquad\text{for }x\in D_{r/c_1}(w).
\end{equation}
Here \(C,c_1\ge1\) depend only on \(n,s,p\). The same corollary gives a quantitative lower bound
for increments in a cone of directions. More precisely,
there exist \(\tau\in(0,1)\) and \(C\ge1\), depending
only on \(n,s,p\), such that for every
\(x\in D_{r/c_1}(w)\) there is a unit vector \(\nu_x\)
for which
\begin{equation}\label{eq:funs+}
 |u(y)-u(z)|
 \ge C^{-1}\frac{u(x)}{d_\Om(x)}\,|y-z|
\end{equation}
whenever
\begin{equation}\label{eq:funs++}
 y,z\in B_{\tau d_\Om(x)}(x),
 \qquad
 |(y-z)\cdot\nu_x|\ge\frac12|y-z|.
\end{equation}
The direction \(\nu_x\) may depend on \(x\) and \(u\),
whereas the constants are uniform. This estimate
controls the increments appearing in the linearized
kernel and provides the directional lower bounds
used in the subsequent energy comparisons.

Estimate \eqref{eq:funs} is the fractional counterpart
of the fundamental inequality in the local boundary
Harnack theory for the $p$-Laplacian developed in
\cite{LN,LLN}. In that theory, its lower bound excludes
critical points in the comparison region, while the
two-sided estimate controls the ellipticity weight
of the linearized operator through
\[
 |\nabla u|^{p-2}
 \asymp
 \bigl({u}/{d_\Om}\bigr)^{p-2}.
\]
This control is a key ingredient in the construction
of an $A_2$ extension of the weight and the application
of weighted linear boundary Harnack estimates.
Those linear estimates can then be integrated along
deformations of solutions to obtain nonlinear boundary
comparison.

In the present non-local setting,
\eqref{eq:funs} and the accompanying cone increment
estimates \eqref{eq:funs+}-\eqref{eq:funs++} provide the corresponding starting point
for the analysis of the linearized kernels and their
quadratic forms. These increment estimates are needed because the coefficients
of the linearized equation depend on pairs of points.
Indeed, for $p>2$, linearization of $\Fsp$ about a positive
solution $U$ produces the non-local equation with kernel
\begin{equation}\label{eq:intro-kernel}
 K_U(x,y)=(p-1)
 \frac{|U(x)-U(y)|^{p-2}}{|x-y|^{n+sp}}.
\end{equation}
The directional degeneracy is already present for the
half-space profile $h(x)=(x_n)_+^s$. Fix an interior ball
$B_\rho(a)$ with $2\rho<a_n$. The heights of points in this
ball are comparable to $a_n$, and the mean value theorem gives
\[
 |h(x)-h(y)|\asymp a_n^{s-1}|x_n-y_n|
 \qquad(x,y\in B_\rho(a)).
\]
Consequently, after separating the radial and height factors,
the linearized kernel contains the angular factor $({|x_n-y_n|}/{|x-y|})^{p-2}$. For $p>2$, this factor becomes arbitrarily small as the
direction $x-y$ approaches a direction parallel to
$\partial\HH$. Such small values occur on sets of pairs
of positive measure, even though $h$ is comparable to the
positive constant $a_n^s$ throughout the ball.
Thus the natural weight does not account for the directional
degeneracy, and the two-sided kernel comparison required
in \cite{BDOR} is unavailable even for this basic model.
In particular, gradient non-degeneracy does not supply a
uniform pointwise lower bound for the linearized kernel.

Instead, as we show below, the cone increment estimates
\eqref{eq:funs+}-\eqref{eq:funs++} yield lower bounds
for the associated quadratic forms on Whitney balls.
These bounds allow the quadratic energy to control
the relevant fractional Sobolev seminorms, despite
the possible vanishing of the kernel on individual pairs.
The weighted local theory in \cite{LN,LLN} provides the model
for our strategy, while the passage from directional increment
bounds to coercivity of the non-local quadratic forms requires
a separate argument.

The next step is to study the quotient \(F/U\) on
\(S=\{U>0\}\), where \(F\) solves the linearized equation
\(\mathcal L_UF=0\) and satisfies \(0\le F\le M_q U\)
globally for some finite \(M_q\).
Here the kernel of \(\mathcal L_U\) is \(K_U\) from
\eqref{eq:intro-kernel}. Since \(U\) solves
\(\Fsp U=0\), homogeneity gives
\begin{equation}\label{eq:funs++1}
 \mathcal L_UU=(p-1)\Fsp U=0
\end{equation}
in the weak sense in the same harmonicity region.
Moreover, the domination condition forces $F=0$ on the
zero set of $U$. Set $q=F/U$ on $S$. For an admissible
test function $\varphi$ compactly supported in this region,
we test the equation for $F$ with $U\varphi$, test the
equation for $U$ with $F\varphi$, and subtract. This gives
\begin{equation}\label{eq:funs++2}
 \frac12\iint_{S\times S}
 (q(x)-q(y))(\varphi(x)-\varphi(y))
 \widetilde K_U(x,y)\,\mathrm d x\,\mathrm d y=0,
 \qquad S=\{U>0\},
\end{equation}
where
\begin{equation}\label{eq:funs++3}
 \widetilde K_U(x,y)=U(x)U(y)K_U(x,y).
\end{equation}
This change of unknown and weak formulation is the
ground-state transformation based on $U$. It converts
comparison of $F$ with $U$ into regularity of their quotient,
with kernel $\widetilde K_U$ relative to Lebesgue measure.

Section~\ref{sec:variable} develops boundary estimates
for the transformed equation \eqref{eq:funs++2}-\eqref{eq:funs++3}. We express its quadratic
form using the weighted measure
\(\mathrm d\mu_U=U\,\mathrm d x\) on \(S\).
Since
\begin{equation}\label{eq:funs++4}
 \widetilde K_U(x,y)\,\mathrm d x\,\mathrm d y
 =
 K_U(x,y)\,\mathrm d\mu_U(x)\,\mathrm d\mu_U(y),
\end{equation}
the kernel relative to \(\mu_U\) is \(K_U\).
For a measurable function \(f:S\to\R\), the associated
quadratic energy is
\begin{equation}\label{eq:funs++5}
 \mathscr E_U(f)
 =
 \frac12\iint_{S\times S}
 |f(x)-f(y)|^2K_U(x,y)
 \,\mathrm d\mu_U(x)\,\mathrm d\mu_U(y).
\end{equation}
We write $\mathscr E_{U,G}(f)$ when both integration
variables are restricted to $S\cap G$.

For a variable positive solution $U$, there is also a
localization issue. The measure $U\,\mathrm d x$ is
quantitatively controlled only in the fractional $p$-harmonicity patch,
while the distant exterior values of $U$ may be arbitrary
nonnegative data with finite tail. The energy
$\mathscr E_U$ nevertheless includes interactions with
all of $S$. We must therefore establish the required
weighted estimates locally while retaining the full
exterior interactions.

To obtain boundary estimates for the quotient \(q\),
we develop local regularity estimates for weak solutions
associated with the quadratic form \(\mathscr E_U\).
A basic ingredient is a weighted Poincar\'e inequality,
which relates variation on a boundary ball to quadratic
energy on a fixed enlargement of that ball. For \(z\in\partial\Om\), write
\(\mathbb B_r(z)=\overline\Om\cap B_r(z)\).
With respect to the measure
\(\mathrm d\mu_U=U\,\mathrm d x\), the weighted average
of \(f\) on this ball is
\[
 f_{\mathbb B_r(z),\mu_U}
 =
 {\biggl (\displaystyle\int_{\Om\cap B_r(z)}
             f(x)U(x)\,\mathrm d x\biggr )}\bigl /
      {\biggl (\displaystyle\int_{\Om\cap B_r(z)}
             U(x)\,\mathrm d x\biggr )}.
\]
We prove the weighted Poincar\'e inequality
\begin{equation}\label{eq:funs++6}
 \int_{\Om\cap B_r(z)}
 |f(x)-f_{\mathbb B_r(z),\mu_U}|^2
 U(x)\,\mathrm d x
 \le
 C\Psi_U(z,r)
 \mathscr E_{U,\mathbb B_{\kappa r}(z)}(f),
\end{equation}
where \(\kappa\ge2\) is fixed and
\[
 \Psi_U(z,r)
 =
 \frac{r^{sp}}{U(A_r(z))^{p-1}}.
\]
The left-hand side measures the deviation of \(f\)
from its weighted average in \(\mathrm L^2(\mu_U)\).
The right-hand side is the quadratic energy of \(f\)
on the enlarged ball, multiplied by the scale factor
\(\Psi_U(z,r)\).
Thus small energy, with this normalization, forces
\(f\) to be close to a constant in the weighted
\(\mathrm L^2\) sense. This supplies quantitative
coercivity even though the kernel may vanish on
individual pairs of points.

The normalization is consistent with multiplication of $U$
by $\lambda>0$ as the energy is multiplied by $\lambda^p$
and $\Psi_U$ by $\lambda^{1-p}$. Both sides of
\eqref{eq:funs++6} are therefore multiplied by $\lambda$,
while the weighted average is unchanged.

The scale \(\Psi_U\) also controls the interactions
described by the transformed kernel. For
\(z\in\partial\Om\), \(x\in D_r(z)\), and radii within
the comparison patch, we prove the jump moment estimate
\begin{equation}\label{eq:funs++7}
 \frac1{r^2}\int_{S\cap B_r(x)}
 |x-y|^2K_U(x,y)\,\mathrm d\mu_U(y)
 +
 \int_{S\setminus B_r(x)}
 K_U(x,y)\,\mathrm d\mu_U(y)
 \le \frac{C}{\Psi_U(z,r)}.
\end{equation}
The first term is the normalized second moment of
interactions at distances smaller than \(r\).
The second is the total kernel mass of interactions
at distances at least \(r\). Both integrals use
the weight \(\mathrm d\mu_U(y)=U(y)\,\mathrm d y\). This estimate also controls the energy introduced
by localization. Let \(0\le\eta\le1\) be a cutoff
with Lipschitz constant at most \(C/r\).
For \(|x-y|<r\), its squared difference is bounded
by \(C|x-y|^2/r^2\), while for \(|x-y|\ge r\)
it is bounded by \(1\). Consequently,
\begin{equation}\label{eq:funs++10}
 \Gamma_U(\eta)(x)
 :=
 \int_S|\eta(x)-\eta(y)|^2
 K_U(x,y)\,\mathrm d\mu_U(y)
 \le \frac{C}{\Psi_U(z,r)}.
\end{equation}
Thus the jump moment estimate in \eqref{eq:funs++7}
bounds both distant kernel interactions and the energy
cost of cutoff functions. Together with the volume
estimates and the Poincar\'e inequality, it supplies
the controls needed in the weighted Harnack and
oscillation arguments.

To apply these arguments at boundary points, we also
prove that the boundary within the comparison patch
has zero capacity with respect to the weighted
quadratic form \(\mathscr E_U\). We adjoin the boundary to the weighted space and
assign it zero \(\mu_U\)-measure. The zero-capacity
argument shows that the weak equation for bounded
quotients remains valid for admissible test functions
whose supports meet the boundary. The weighted Harnack
and oscillation estimates can therefore be applied
on relative balls centered at boundary points.
This allows regularity estimates for the transformed
equation to yield regularity of \(q=F/U\) up to
the boundary.

The resulting statement,
Theorem~\ref{thm:variable-linear-bhi}, applies when
\(F\) solves \(\mathcal L_UF=0\) and satisfies
\(0\le F\le M_q U\) globally for some finite \(M_q\).
It gives
\begin{equation}\label{eq:funs++11uu}
 C^{-1}\frac{F(A)}{U(A)}
 \le \frac{F(x)}{U(x)}
 \le C\frac{F(A)}{U(A)}
\end{equation}
throughout the comparison region, where \(A\) is
the fixed interior reference point, together with
H\"older estimates for \(F/U\) with the same
normalization. The displayed inequality is the
boundary comparison estimate for \(F\) and \(U\).
The comparison and H\"older constants are independent
of \(M_q\). This uniformity is essential for the
subsequent deformation argument, where the available
domination bound depends on the parameter and may
become arbitrarily large.

We next explain how to recover nonlinear comparison
from these linear estimates. To do this we use a deformation of solutions
analogous to that in \cite{LN07} for the local $p$-Laplace
equation. Let $U_0\le U_1$ be globally ordered variational
solutions in a common comparison domain $D$, satisfying the
required positivity and boundary hypotheses. For $0\le t\le1$,
let $U_t$ be the variational solution of $\Fsp U_t=0$ in $D$
with exterior data
\begin{equation}\label{eq:funs++11}
 g_t=(1-t)U_0+tU_1
 \quad\text{on }D^c.
\end{equation}
Comparison and homogeneity give
\begin{equation}\label{eq:funs++12}
 U_\tau\le U_t\le\frac{t}{\tau}U_\tau,
 \qquad 0<\tau\le t\le1.
\end{equation}
Proposition~\ref{prop:ordered-derivative} constructs
weak parameter derivatives \(F_t=\partial_tU_t\)
satisfying
\begin{equation}\label{eq:funs++13}
 \mathcal L_{U_t}F_t=0
 \quad\text{in }D,
 \qquad
 0\le F_t\le\frac{U_t}{t}
 \quad\text{on }\R^n
\end{equation}
for almost every \(t\in(0,1)\).
The bounded-quotient theorem therefore applies to
\(F_t/U_t\) with \(M_q=1/t\), and its constants remain
uniform as \(t\downarrow0\).

Theorem~\ref{ass:paths} justifies pointwise integration
in the comparison patch and gives
\begin{equation}\label{eq:funs++14}
 \log\frac{U_1(x)}{U_0(x)}
 =
 \int_0^1\frac{F_t(x)}{U_t(x)}\,\mathrm d t.
\end{equation}
At the interior reference point \(A\), this identity
shows that the normalizing values \(F_t(A)/U_t(A)\)
are integrable in \(t\). We can consequently integrate
the linear comparison \eqref{eq:funs++11uu} and H\"older estimates  over the
whole parameter interval. This proves the ordered
boundary comparison and logarithmic H\"older estimate
in Proposition~\ref{prop:ordered-completion}.
The argument requires no \(C^1\) dependence in the
variational energy space.

To compare solutions with unrelated exterior values, we first
work in the finite-energy class. For two such solutions $u,v$
satisfying the remaining hypotheses of Theorem~\ref{thm:main},
we construct a common fractional $p$-harmonic majorant $W$.
It is the variational solution in a smaller boundary patch
with exterior data $\max\{u,v\}$. Comparison gives $W\ge u,v$.
The essential additional step is to control the
size of \(W\) at the reference point. After normalizing
\(u(A)=v(A)=1\), Proposition~\ref{prop:max-lift} gives
\begin{equation}\label{eq:funs++14a}
 1\le W(A)\le K_0
\end{equation}
with a uniform constant \(K_0\).
Its proof uses a supersolution that is constant on
a surrounding ball and agrees with the prescribed
data outside that ball. The required height of the
constant is controlled by the local supremum and
positive-tail estimates for the original solutions. We then apply ordered comparison to the pairs
\((u,2W)\) and \((v,2W)\) and use the identity
\begin{equation}\label{eq:funs++15}
 \log\frac{u}{v}
 =
 \log\frac{2W}{v}-\log\frac{2W}{u}.
\end{equation}
The bound for \(W(A)\) controls the reference values
in both ordered comparisons. We thereby obtain boundary comparison and H\"older
estimates for two finite-energy solutions without
requiring comparability of their distant exterior
values. These are the estimates of the main theorem,
proved at this stage under an additional energy
assumption. A final approximation argument then
removes this assumption.

To formulate the finite-energy version precisely,
let \(D\subset\R^n\) be a bounded open set.
We denote by \(V^{s,p}(D)\) the space of measurable
functions \(f:\R^n\to\R\) such that
\(f|_D\in\mathrm L^p(D)\) and
\[
 \iint_{D\times\R^n}
 \frac{|f(x)-f(y)|^p}{|x-y|^{n+sp}}
 \,\mathrm d y\,\mathrm d x<\infty.
\]
This condition requires finite \(p\)-energy for
interactions within \(D\) and between \(D\) and
its complement, without prescribing the exterior
values. The following theorem states the result
under this additional assumption.

\begin{thm}\label{thm:intro-finite}
Let \(u,v,\Om,w,r\) satisfy the hypotheses of
Theorem~\ref{thm:main}, and assume in addition that
\[
 u,v\in V^{s,p}(D_{2r}(w)).
\]
For sufficiently small \(\delta\), depending only on
\(n,s,p\), estimates \eqref{eq:target-bhi} and
\eqref{eq:target-holder} hold with constants depending
only on \(n,s,p\).
In particular, these constants are independent of
the energy norms of \(u\) and \(v\).
\end{thm}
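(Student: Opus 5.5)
The plan follows the outline: reduce to the ordered case through a common majorant, then integrate the linear estimates along the deformation. Normalize $u(A)=v(A)=1$; both conclusions are invariant under separate scaling of $u$ and $v$, and $u(A),v(A)>0$ by interior Harnack, since the functions are globally nonnegative and nontrivial. Fix a smaller patch $D'=D_{\rho r}(w)$ with $1<\rho<2$. Let $W$ be the variational solution of $\Fsp W=0$ in $D'$ with exterior data $\max\{u,v\}$ on $\R^n\setminus D'$. The data lie in $V^{s,p}$ because $u,v\in V^{s,p}(D_{2r}(w))$, and $\max$ does not increase increments. The data also vanish on $B_{4r}(w)\setminus\Om$, so $W$ satisfies the hypotheses of Theorem~\ref{thm:main} in the smaller patch. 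The comparison principle for variational solutions, with $u$ and $v$ as solutions in $D'$ having smaller exterior data, gives $W\ge u$ and $W\ge v$. Proposition~\ref{prop:max-lift} gives $1\le W(A)\le K_0$.

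Next I would apply the ordered result, Proposition~\ref{prop:ordered-completion}, to the pairs $(U_0,U_1)=(u,2W)$ and $(v,2W)$. The factor $2$ makes the ordering $U_0\le U_1$ hold with room to spare at $A$, if that is needed for strict positivity of the deformation. That proposition rests on three inputs. Proposition~\ref{prop:ordered-derivative} builds the family $U_t$ with derivatives $F_t$ satisfying \eqref{eq:funs++13}. Theorem~\ref{thm:variable-linear-bhi} applies with $M_q=1/t$, with constants uniform in $t$. Theorem~\ref{ass:paths} supplies the identity \eqref{eq:funs++14}. The output is
\[
 \Bigl|\log\frac{2W(x)}{u(x)}-\log\frac{2W(A)}{u(A)}\Bigr|\le C,\qquad
 \Bigl|\log\frac{2W(x)}{u(x)}-\log\frac{2W(y)}{u(y)}\Bigr|\le C\Bigl(\frac{|x-y|}{r}\Bigr)^{\alpha}
\]
on $D_r(w)$, possibly after shrinking radii via the remark on adjusting them. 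The analogous estimates hold for $v$. To get these, I integrate \eqref{eq:funs++11uu} in $t$. The normalizing integral $\int_0^1F_t(A)/U_t(A)\,\mathrm dt=\log(2W(A)/u(A))$ lies in $[\log 2,\log(2K_0)]$, so both the comparison bound and the H\"older seminorm are controlled by $C\log(2K_0)$.

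Finally I use \eqref{eq:funs++15}. The difference of the two log-quotients gives $\log(u/v)$, and the triangle inequality yields \eqref{eq:target-holder} together with
\[
 \Bigl|\log\frac{u(x)}{v(x)}-\log\frac{u(A)}{v(A)}\Bigr|\le 2C,
\]
which is \eqref{eq:target-bhi}. None of the constants involves the energy norms of $u$ and $v$. The energy assumption enters only qualitatively: it makes $W$ and the family $U_t$ well defined as variational solutions, and it lets the derivative construction work in the energy space.

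The main obstacle I expect is checking that the ordered proposition's hypotheses hold for the pair $(u,2W)$ on the correct patch. $W$ is harmonic only in $D'$, so the deformation must be run in a patch $D\subset D'$ where both $u$ and $W$ are solutions. Both endpoint data must lie in $V^{s,p}(D)$, and both must vanish on the common exterior zero set. I would take $D=D'$ and verify that $\max\{u,v\}\in V^{s,p}(D')$ using the elementary inequality $|\max\{a,b\}-\max\{c,d\}|\le\max\{|a-c|,|b-d|\}$. The bound on $W(A)$ from Proposition~\ref{prop:max-lift} is taken as given.
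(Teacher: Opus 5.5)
Your reduction (normalize, lift by $W$, run the ordered comparison on $(u,2W)$ and $(v,2W)$, subtract logarithms) is the argument of Theorem~\ref{thm:completion}. It does not prove the statement at the given radii, and the step you defer to ``shrinking radii via the remark'' is the actual content of the paper's proof.

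\textbf{The gap: the ordered machinery needs a large buffer.} Theorem~\ref{thm:variable-linear-bhi}, and hence Theorem~\ref{ass:paths} and Proposition~\ref{prop:ordered-completion}, require the backgrounds to solve the equation in $D_{\Lambda r}(w)$. They give estimates only on $D_r(w)$, normalized at $A_r(w)$. Here $\Lambda$ is the large constant fixed in Subsection~\ref{subprelim}: it exceeds a large multiple of $L\ge32$ to make room for all structural enlargements. Likewise, Proposition~\ref{prop:max-lift} with lift domain $D_{\Lambda r}(w)$ needs the equation in $D_{8\Lambda r}(w)$.

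With your lift domain $D'=D_{\rho r}(w)$, $\rho<2$, this causes two failures:
\begin{itemize}
\item Running the deformation in $D'$ yields comparison only on $D_{\rho r/\Lambda}(w)$, normalized at $A_{\rho r/\Lambda}(w)$, not on $D_r(w)$.
\item The bound $W(A_r(w))\le K_0$ does not follow from Proposition~\ref{prop:max-lift} as stated. With lift radius $\rho r$ and reference point $A_r(w)$, it needs the equation in $D_{8\rho r}(w)$, which is larger than $D_{4r}(w)$.
\end{itemize}
The remark on adjusting radii cannot close this. It is a consequence of the theorem you are proving. It also goes the wrong way: it trades a larger hypothesis ball for a smaller conclusion ball. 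Here the ratio $4$ between the hypothesis and conclusion balls is fixed, while the machinery needs a ratio of order $8\Lambda$.

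\textbf{What is missing: the localization argument.}
\begin{itemize}
\item Take $\rho=r/(16\Lambda)$. For each $z\in\partial\Om$ within $2\rho$ of $D_r(w)$, apply Proposition~\ref{prop:max-lift} and Theorem~\ref{thm:completion} on $D_\rho(z)$. This is legitimate because $D_{8\Lambda\rho}(z)=D_{r/2}(z)\subset D_{2r}(w)$, where both the equation and the energy hypothesis hold. It gives two-sided and log-H\"older bounds on $D_\rho(z)$, normalized at $A_\rho(z)$.
\item Decrease the flatness threshold so that $\delta\le\theta/16$, with $\theta=\rho/(8M_{\mathrm{geom}}r)$. Then Remark~\ref{rem:localized-chains} joins $A_\rho(z)$ to $A_r(w)$ by chains of uniformly bounded length inside $D_{2r}(w)$. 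The same chains reach points of $D_r(w)$ with $d_\Om\ge\rho/8$. Interior Harnack along these chains transfers the normalization to $u(A)/v(A)$.
\item For the H\"older estimate, split by distance. Pairs at distance a small multiple of $\rho$ either lie in one boundary comparison ball, or are joined by a segment in an interior ball of radius comparable to $\rho$. On such a ball, Proposition~\ref{prop:C1} with tail normalization gives $|\nabla\log u|+|\nabla\log v|\le C/\rho$. More distant pairs are controlled by the two-sided bound.
\end{itemize}
Since $\Lambda$ depends only on $n,s,p$, all constants, including the final flatness threshold, remain structural.
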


Theorem~\ref{thm:intro-finite} is proved in
Section~\ref{sec:completion}.
The independence of its constants from the energy
norms is essential for the passage to
Theorem~\ref{thm:main}.
Lemma~\ref{lem:natural-tail-approximation} constructs
finite-energy approximants by truncating the distant
exterior data and taking fractional $p$-harmonic replacements.
These approximants preserve the weak exterior zero
condition and converge both locally uniformly and
in the natural tail norm. The comparison estimates
therefore pass to the limit without requiring a
uniform bound on the \(p\)-energies of the approximants.

\subsection{Organization of the paper}

Section~\ref{sec:dirichlet} introduces the geometric assumptions
and develops the Dirichlet theory.
Section~\ref{sec:basic} establishes the non-local counterparts
of the basic estimates in Section~2.1 of \cite{LN}.
Section~\ref{sec:flat} proves boundary comparison in a half-space
and derives gradient estimates from weighted boundary regularity.
Section~\ref{sec:transfer} establishes relative approximation
by half-space profiles in Reifenberg domains and the resulting
gradient bounds.
Section~\ref{sec:linearization} develops the linearized equations
and analyzes the exact half-space model.
Section~\ref{sec:variable} establishes structural estimates
for variable background solutions and proves comparison
and H\"older estimates for bounded quotients.
Section~\ref{sec:completion} combines the ordered deformation
argument with the maximum-lift estimate to prove finite-energy
comparison, then uses approximation to pass to the natural
tail class.
Finally, Section~\ref{sec:concluding} discusses weaker geometric
assumptions and extensions to more general non-local operators
of $p$-Laplace type.

\section{Geometry and the exterior Dirichlet problem}
\label{sec:dirichlet}

This section establishes the geometric and functional framework
for the boundary comparison arguments.
We first recall Reifenberg flatness and the geometric consequences
used below, including corkscrew points and exterior capacity density.
We then introduce the energy and tail spaces and specify the weak
formulation of the fractional $p$-Laplace equation.
Since the operator is non-local, Dirichlet data are prescribed
on the complement of the domain, and the formulation must retain
interactions with the entire exterior.

We distinguish the variational Dirichlet problem, formulated
in a finite-energy class, from the local weak formulation
with zero exterior values on a boundary patch used in
Theorem~\ref{thm:main}.
We establish existence, uniqueness, and comparison for the
variational problem and discuss continuity up to the boundary
under exterior capacity density.
Finally, we construct finite-energy approximations of solutions
in the natural tail class.
These approximations preserve the local exterior zero condition
and provide the convergence needed to pass the boundary comparison
estimates to the class of solutions in the main theorem.
\subsection{Domains and capacity}

\begin{defn}\label{def:reifenberg}
Let \(\delta\in(0,1/8)\) and \(r_0>0\).
An open set \(\Om\subsetneq\R^n\) is
\((\delta,r_0)\)-Reifenberg flat if, for every \(w\in\partial\Om\)
and \(0<r\le r_0\), there is an orthonormal coordinate system centered
at \(w\) such that
\begin{equation}\label{eq:reifenberg}
 B_r\cap\{x_n>\delta r\}
 \subset \Om\cap B_r
 \subset B_r\cap\{x_n>-\delta r\}.
\end{equation}
\end{defn}

\begin{rem}
This definition includes the separation condition. The coordinate
system may vary with \(w\) and \(r\). No surface measure assumption
is imposed.
\end{rem}

\begin{defn}\label{def:nta}
Let \(M_{\mathrm{geom}}\ge2\) and \(r_0>0\).
A domain \(\Om\) is NTA up to scale \(r_0\) if both \(\Om\) and
\(\R^n\setminus\overline\Om\) have corkscrew balls of radius \(r/M_{\mathrm{geom}}\)
inside \(B_r(w)\), for \(w\in\partial\Om\) and \(0<r<r_0\),
and \(\Om\) has the following local uniformity property.
For every \(w\in\partial\Om\), \(0<r<r_0\), and
\(x,y\in D_r(w)\), there is a rectifiable curve
\(\gamma\subset\Om\) joining \(x\) to \(y\), of length at most
\(M_{\mathrm{geom}}|x-y|\), such that
\[
 \min\{\operatorname{length}(\gamma_{x,z}),
        \operatorname{length}(\gamma_{z,y})\}
 \le M_{\mathrm{geom}}d_\Om(z)\qquad(z\in\gamma).
\]
Here \(\gamma_{x,z}\) and \(\gamma_{z,y}\) are the two subarcs.
This is the equivalent local formulation of the Harnack chain
condition used in \cite[Definition~1.3]{LN}.
\end{defn}

\begin{rem}\label{rem:localized-chains}
For sufficiently small \(\delta\), Reifenberg flat domains are NTA
with controlled constants, after a fixed reduction of the scale.
We use this standard geometric fact in the form recorded in \cite{LN}.
For the fixed changes of radii at the end of the proof, we use the
following explicit chain containment. Let \(w\in\partial\Om\), \(0<\theta<1/8\),
\(3r<r_0\), and \(\delta\le\theta/16\). If
\(x,y\in D_{3r/2}(w)\) and
\(d_\Om(x),d_\Om(y)\ge\theta r\), they can be joined by at most
\(C/\theta\) overlapping balls of radius \(\theta r/32\),
whose doubles lie in \(D_{2r}(w)\).
Indeed, in the coordinates of \eqref{eq:reifenberg} at scale
\(3r\), both points have height at least \(\theta r-3\delta r\).
Their joining segment lies in \(B_{3r/2}(w)\) and has distance
at least \(\theta r-6\delta r\ge\theta r/2\) from \(\Om^c\).
Covering that segment gives the stated chain. Thus all interior
Harnack comparisons along this chain remain inside the indicated
harmonicity patch.
\end{rem}

For a compact set \(E\subset B_{2r}(w)\), define
\begin{equation}\label{eq:capacity}
 \capsp(E,B_{2r}(w))
 =\inf\left\{
 \iint_{\R^{2n}}\frac{|\varphi(x)-\varphi(y)|^p}{|x-y|^{n+sp}}
 \dxdy\,\middle|\,
 \varphi\in C_0^\infty(B_{2r}(w)),\ \varphi\ge1\text{ on }E
 \right\}.
\end{equation}
We put
\begin{equation}\label{eq:theta}
 \Theta_w(r)=
 \left(\frac{\capsp(\overline B_r(w)\setminus\Om,B_{2r}(w))}
 {r^{n-sp}}\right)^{1/(p-1)}.
\end{equation}

\begin{lem}\label{lem:capacity}
Suppose that \(\Om\) has exterior corkscrew balls with constant \(M_{\mathrm{geom}}\).
Then there exists \(\theta_0=\theta_0(n,s,p,M_{\mathrm{geom}})>0\) such that
\(\Theta_w(r)\ge\theta_0\) at every allowed point and scale.
In particular, this holds uniformly for sufficiently flat
Reifenberg domains.
\end{lem}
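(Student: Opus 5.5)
The plan is to reduce to the capacity of a single ball and then use monotonicity and scaling of \(\capsp\). Fix an allowed \(w\in\partial\Om\) and \(r\). The exterior corkscrew condition of Definition~\ref{def:nta}, applied at scale \(r\), gives a ball \(B_\rho(y)\subset(\R^n\setminus\overline\Om)\cap B_r(w)\) with \(\rho=r/M_{\mathrm{geom}}\). Shrinking by a factor \(1/2\), the closed ball \(\overline B_{\rho/2}(y)\) is a compact subset of \(\overline B_r(w)\setminus\Om\). Capacity is monotone in the set, since any admissible \(\varphi\) for the larger set is admissible for the smaller. Hence
\[
 \capsp(\overline B_r(w)\setminus\Om,B_{2r}(w))\ge \capsp(\overline B_{\rho/2}(y),B_{2r}(w)).
\]

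Next I would remove the dependence on the position of \(y\). We have \(B_{2r}(w)\subset B_{3r}(y)\), and enlarging the reference set only enlarges the admissible class \(C_0^\infty\), which can only lower the infimum. Therefore
\[
 \capsp(\overline B_{\rho/2}(y),B_{2r}(w))\ge\capsp(\overline B_{\rho/2}(y),B_{3r}(y)).
\]
Translating to the origin and rescaling \(x\mapsto x/r\) multiplies the Gagliardo energy by \(r^{n-sp}\), because the double integral scales as \(r^{2n}\cdot r^{-(n+sp)}\). This gives
\[
 \capsp(\overline B_{\rho/2}(y),B_{3r}(y))=r^{n-sp}\,c_*,\qquad c_*:=\capsp\bigl(\overline B_{1/(2M_{\mathrm{geom}})}(0),B_3(0)\bigr),
\]
so \(\Theta_w(r)\ge c_*^{1/(p-1)}=:\theta_0\), which depends only on \(n,s,p,M_{\mathrm{geom}}\).

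The remaining point, and the only nontrivial one, is \(c_*>0\). Let \(\varphi\in C_0^\infty(B_3)\) with \(\varphi\ge1\) on \(\overline B_a\), where \(a=1/(2M_{\mathrm{geom}})\). Since \(\varphi=0\) outside \(B_3\), restricting the energy to pairs \(x\in B_a\), \(y\in B_4\setminus B_3\) gives \(|\varphi(x)-\varphi(y)|\ge1\) and \(|x-y|\le 7\). The energy is therefore at least \(|B_a|\,|B_4\setminus B_3|\,7^{-n-sp}>0\), uniformly over admissible \(\varphi\). (Alternatively one could use the fractional Poincaré inequality on \(B_3\), but this direct bound suffices.)

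Finally, for sufficiently flat Reifenberg domains, the inclusion \eqref{eq:reifenberg} places the ball \(B_{r/4}\bigl(-\tfrac r2 e_n\bigr)\) inside \(B_r\cap\{x_n<-\delta r\}\subset\R^n\setminus\Om\) when \(\delta<1/8\). This gives exterior corkscrews with \(M_{\mathrm{geom}}=8\), say, and hence a uniform \(\theta_0(n,s,p)\).
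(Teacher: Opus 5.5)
Your argument is correct, but it takes a different route from the paper.

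\textbf{The paper's route.} The paper does not reduce to a model ball. It uses the exterior corkscrew only to obtain the measure bound $|B_r(w)\setminus\Om|\ge c\,r^n$. It then applies the fractional Poincar\'e inequality for functions supported in $B_{2r}(w)$ to an arbitrary admissible $\varphi$. This bounds the energy below by $c\,r^{-sp}\int_{B_{2r}(w)}|\varphi|^p\,\mathrm d x\ge c\,r^{-sp}|B_r(w)\setminus\Om|\ge c\,r^{n-sp}$.

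\textbf{Your route.} You use monotonicity of $\capsp$ in the set and in the reference ball, together with translation and scaling, to reduce to the fixed number $c_*=\capsp\bigl(\overline B_{1/(2M_{\mathrm{geom}})}(0),B_3(0)\bigr)$. You then show $c_*>0$ by hand, using pairs between $B_a$ and the annulus $B_4\setminus B_3$, where $\varphi$ vanishes.

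\textbf{Comparison.} Your version is self-contained and gives an explicit constant without quoting the Poincar\'e inequality. The paper's version is shorter given that inequality, and it needs only exterior measure density rather than an exterior ball. Your pair-counting bound also works without the monotonicity and scaling reduction. Taking $x\in\overline B_r(w)\setminus\Om$ and $y\in B_{4r}(w)\setminus B_{3r}(w)$, so that $|\varphi(x)-\varphi(y)|\ge1$ and $|x-y|\le5r$, gives energy at least $c\,|B_r(w)\setminus\Om|\,r^{-sp}\ge c\,r^{n-sp}$. This is an elementary substitute for the Poincar\'e step under the same measure-density hypothesis the paper uses.
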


\begin{proof}
An exterior corkscrew ball gives
\(|B_r(w)\setminus\Om|\ge c(n,M_{\mathrm{geom}})r^n\).
For every admissible $\varphi$ in \eqref{eq:capacity},
the fractional Poincar\'e inequality for functions supported
in a ball, see \cite[Lemma~2.4]{BLP14}, gives
\[
 \iint_{\R^{2n}}\frac{|\varphi(x)-\varphi(y)|^p}{|x-y|^{n+sp}}
 \dxdy
 \ge c r^{-sp}\int_{B_{2r}(w)}|\varphi|^p\d x
 \ge c(n,s,p,M_{\mathrm{geom}})r^{n-sp}.
\]
Taking the infimum proves the claim. The measure lower bound
also follows directly from \eqref{eq:reifenberg}.
\end{proof}

\subsection{Energy spaces and weak conditions}

Throughout this subsection, let $0<s<1$ and $1<p<\infty$.
For an open set $D\subset\R^n$ and a measurable function
$u:D\to\R$, we define the Gagliardo seminorm
\[
 [u]_{W^{s,p}(D)}
 =
 \left(
 \iint_{D\times D}
 \frac{|u(x)-u(y)|^p}{|x-y|^{n+sp}}
 \,\mathrm d y\,\mathrm d x
 \right)^{1/p}.
\]
The fractional Sobolev space $W^{s,p}(D)$ is
\[
 \Ws(D)
 =
 \left\{
 u\in\mathrm L^p(D):
 [u]_{W^{s,p}(D)}<\infty
 \right\},
\]
equipped with the norm
\[
 \|u\|_{W^{s,p}(D)}
 =
 \left(
 \|u\|_{\mathrm L^p(D)}^p
 +
 [u]_{W^{s,p}(D)}^p
 \right)^{1/p}.
\]
We write $u\in\Ws_{\mathrm{loc}}(D)$ if
$u|_{D'}\in\Ws(D')$ for every open set $D'\Subset D$.
For functions defined on all of $\R^n$, we abbreviate
\[
 [u]_{s,p}=[u]_{W^{s,p}(\R^n)}.
\]
The space encoding zero exterior Dirichlet values is
\[
 \Wzero(D)
 =
 \overline{C_0^\infty(D)}^{\,W^{s,p}(\R^n)},
\]
where functions in $C_0^\infty(D)$ are extended by zero
outside $D$.
In particular, every element of $\Wzero(D)$ vanishes
almost everywhere on $D^c=\R^n\setminus D$.
For arbitrary open sets, we retain this closure definition
as part of the weak boundary condition. For a bounded open set $D$, we put
\[
 Q_D=\R^{2n}\setminus(D^c\times D^c).
\]
Thus $Q_D$ consists of pairs for which at least one point
belongs to $D$.
We also recall the variational energy space
\begin{equation}\label{eq:V-space}
 V^{s,p}(D)
 =
 \left\{
 \begin{array}{l|l}
 u:\R^n\to\R \text{ measurable}
 &
 \displaystyle
 \int_D |u(x)|^p\,\mathrm d x
 +
 \int_D\int_{\R^n}
 \frac{|u(x)-u(y)|^p}{|x-y|^{n+sp}}
 \,\mathrm d y\,\mathrm d x<\infty
 \end{array}
 \right\}.
\end{equation}
By symmetry, finiteness of the interaction integral in
\eqref{eq:V-space} is equivalent to finiteness of the
corresponding integral over $Q_D$.
Thus $\Vs(D)$ includes interactions between $D$ and its
complement, whereas the seminorm defining $\Ws(D)$
involves only pairs in $D\times D$.
Membership in $\Vs(D)$ does not prescribe the exterior values.
We equip this space with the norm
\[
 \|u\|_{\Vs(D)}
 =\left(\int_D|u|^p\d x+
 \iint_{Q_D}\frac{|u(x)-u(y)|^p}{|x-y|^{n+sp}}\dxdy
 \right)^{1/p}.
\]
This is equivalent to the norm obtained from the integral over
$D\times\R^n$ in \eqref{eq:V-space}.
The natural tail space is
\[
 \mathrm L^{p-1}_{sp}(\R^n)
 =
 \left\{
 u\in\mathrm L^{p-1}_{\mathrm{loc}}(\R^n):
 \int_{\R^n}
 \frac{|u(y)|^{p-1}}{(1+|y|)^{n+sp}}
 \,\mathrm d y<\infty
 \right\}.
\]
Functions in this space have finite tails
$\Tail(u,z,r)$ for every $z\in\R^n$ and $r>0$. Finally, we write
\[
 \Jp(a)=|a|^{p-2}a\quad\text{for }a\ne0,
 \qquad
 \Jp(0)=0.
\]

\begin{defn}\label{def:weak}
Let \(u\in\Ws_{\mathrm{loc}}(D)\cap \mathrm L^{p-1}_{sp}(\R^n)\).
The function \(u\) is a weak solution of \(\Fsp u=0\) in \(D\) if
\begin{equation}\label{eq:weak}
 \frac12\iint_{\R^{2n}}
 \frac{\Jp(u(x)-u(y))(\varphi(x)-\varphi(y))}
 {|x-y|^{n+sp}}\dxdy=0
 \qquad\text{for }\varphi\in C_0^\infty(D).
\end{equation}
It is a weak supersolution if the left side is nonnegative for
nonnegative \(\varphi\), and a weak subsolution if it is nonpositive.
\end{defn}

\begin{rem}The factor \(1/2\) is consistent with \eqref{eq:operator}.
For \(u\in\Vs(D)\), the weak formulation extends by density to
\(\varphi\in\Wzero(D)\).
\end{rem}

\begin{defn}\label{def:local-zero}
Let $w\in\partial\Om$, $R>0$, and $D_R(w)=\Om\cap B_R(w)$. A function \(u\in\Ws(B_R(w))\cap \mathrm L^{p-1}_{sp}(\R^n)\)
satisfies the local homogeneous exterior condition if
\(u=0\) almost everywhere on \(B_R(w)\setminus\Om\) and
\[
 \eta u\in\Wzero(D_R(w))
 \qquad\text{for every }\eta\in C_0^\infty(B_R(w)).
\]
A local weak solution with zero exterior values is such a function
that satisfies \eqref{eq:weak} in \(D_R(w)\).
\end{defn}

The approximation requirement in Definition~\ref{def:local-zero}
retains information on sets of positive capacity and zero Lebesgue
measure. It must not be replaced on arbitrary domains by an almost
everywhere condition alone. For nonzero localized data we use the
weak exterior condition of \cite[Section~3]{KLW}.

\subsection{Variational solutions}

\begin{thm}\label{thm:dirichlet}
Let \(D\subset\R^n\) be bounded and open, \(1<p<\infty\),
and \(g\in\Vs(D)\).
Then there is a unique \(u\in g+\Wzero(D)\) such that
\(\Fsp u=0\) weakly in \(D\).
It is the unique minimizer of
\begin{equation}\label{eq:energy}
 \mathcal E_D(v)=\frac1{2p}
 \iint_{Q_D}\frac{|v(x)-v(y)|^p}{|x-y|^{n+sp}}\dxdy
 \quad\text{over }g+\Wzero(D).
\end{equation}
\end{thm}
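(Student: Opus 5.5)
We prove Theorem~\ref{thm:dirichlet} by the direct method of the calculus of variations applied to \(\mathcal E_D\) on the affine class \(g+\Wzero(D)\), then identify minimizers with weak solutions and obtain uniqueness from strict monotonicity of \(\Jp\).

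\emph{Existence.} Since \(g\in\Vs(D)\), we have \(\mathcal E_D(g)<\infty\), so the infimum \(m\) is finite. Write \(v=g+\varphi\) with \(\varphi\in\Wzero(D)\). The inequality \(|a+b|^p\ge 2^{1-p}|a|^p-|b|^p\) gives
\[
 [\varphi]_{s,p}^p\le C\bigl(\mathcal E_D(v)+\|g\|_{\Vs(D)}^p\bigr),
\]
because \(\varphi=0\) on \(D^c\), so \([\varphi]_{s,p}^p\) equals the integral of \(|\varphi(x)-\varphi(y)|^p|x-y|^{-n-sp}\) over \(Q_D\). The fractional Poincar\'e inequality for functions vanishing outside the bounded set \(D\) (as in \cite[Lemma~2.4]{BLP14}) then bounds \(\|\varphi\|_{\mathrm L^p}\). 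Hence a minimizing sequence \(\varphi_k\) is bounded in the reflexive space \(\Wzero(D)\).

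Pass to a subsequence converging weakly in \(\Wzero(D)\) and almost everywhere. The weak limit stays in \(\Wzero(D)\), which is a closed convex subspace. Fatou's lemma applied to the integrand on \(Q_D\) gives lower semicontinuity, so the limit \(u=g+\varphi\) is a minimizer. This step is the main technical point, but it is mild: one needs a.e.\ convergence, which comes from compact embedding on bounded sets (local \(\mathrm L^p\) compactness), together with the fact that functions vanish off \(D\).

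\emph{Minimizers are weak solutions.} For \(\psi\in C_0^\infty(D)\), the function \(t\mapsto\mathcal E_D(u+t\psi)\) is differentiable. Differentiation under the integral is justified by the bound \(|a+tb|^{p-1}|b|\le C(|a|^{p-1}|b|+|b|^p)\) and H\"older's inequality on \(Q_D\). Its derivative at \(t=0\) is
\[
 \frac12\iint_{Q_D}\Jp(u(x)-u(y))(\psi(x)-\psi(y))|x-y|^{-n-sp}\dxdy .
\]
Since \(\psi(x)-\psi(y)=0\) on \(D^c\times D^c\), this equals the left-hand side of \eqref{eq:weak}. We also check that \(u\in\Ws_{\mathrm{loc}}(D)\cap\mathrm L^{p-1}_{sp}\). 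Local \(W^{s,p}\) regularity is immediate from \(\Vs(D)\). The tail condition follows from \(\Vs(D)\) as well: fix \(x\) in a set of positive measure in \(D\) and integrate over \(y\), so that \(|u(y)|^p|x-y|^{-n-sp}\) is integrable up to lower-order terms, which yields the weighted \(\mathrm L^{p-1}\) bound.

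\emph{Uniqueness and characterization.} The functional \(\mathcal E_D\) is convex on \(g+\Wzero(D)\). Therefore any weak solution \(u\) in this class is a minimizer: by density the weak equation holds for all test functions in \(\Wzero(D)\) (the preceding remark), and convexity gives \(\mathcal E_D(v)\ge\mathcal E_D(u)+\langle\mathcal E_D'(u),v-u\rangle=\mathcal E_D(u)\).

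Now let \(u_1,u_2\) be two weak solutions in the class. Subtract their equations and test with \(u_1-u_2\in\Wzero(D)\). Strict monotonicity,
\[
 (\Jp(a)-\Jp(b))(a-b)>0\qquad\text{for }a\ne b,
\]
forces \((u_1-u_2)(x)-(u_1-u_2)(y)=0\) almost everywhere on \(Q_D\). Since \(u_1-u_2=0\) on \(D^c\), which has positive measure (\(D\) is bounded), we conclude \(u_1=u_2\). The same argument gives uniqueness of the minimizer, as strict convexity of \(|\cdot|^p\) applies along the segment between two minimizers.
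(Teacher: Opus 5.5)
Your proposal is correct and follows essentially the same route as the paper: coercivity via the power inequality and fractional Poincar\'e, weak compactness and lower semicontinuity, the Euler--Lagrange equation by differentiating along \(u+t\psi\), the supporting-hyperplane inequality for the converse, and strict convexity/monotonicity together with \(|D^c|>0\) for uniqueness. The remaining differences are minor. You get lower semicontinuity from almost-everywhere convergence and Fatou rather than from the bounded increment map, you prove uniqueness by testing the difference of the weak equations, and you verify \(u\in\Ws_{\mathrm{loc}}(D)\cap\mathrm L^{p-1}_{sp}(\R^n)\), which the paper leaves implicit.
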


\begin{proof}
The competitor \(g\) has finite energy. For \(v=g+z\),
\(z\in\Wzero(D)\), the elementary power inequality gives
\[
 [z]_{s,p}^p
 \le C_p\iint_{Q_D}
 \frac{|v(x)-v(y)|^p+|g(x)-g(y)|^p}{|x-y|^{n+sp}}\dxdy.
\]
Here the increments of \(z\) vanish on \(D^c\times D^c\).
The fractional Poincar\'e inequality controls
\(\|z\|_{\mathrm L^p(\R^n)}\) by \([z]_{s,p}\).
For a minimizing sequence \(v_j=g+z_j\), the corrections
\(z_j\) are therefore bounded in \(\Ws(\R^n)\).
After passing to a subsequence, they converge weakly to
\(z\in\Wzero(D)\). The increment map on \(Q_D\) is bounded
from this space into the corresponding weighted \(L^p\) space.
Weak lower semicontinuity of the energy thus shows that
\(u=g+z\) is a minimizer. Differentiation along \(u+t\varphi\), followed by H\"older's inequality,
gives \eqref{eq:weak} for \(\varphi\in\Wzero(D)\).
Conversely, the supporting hyperplane inequality for the convex
function \(|a|^p\) shows that every weak solution in the affine class
minimizes \(\mathcal E_D\). Finally, if there were two minimizers, strict convexity would force their
increments to agree almost everywhere on \(Q_D\).
Their difference vanishes on \(D^c\), so it also vanishes in \(D\).
This proves uniqueness.
\end{proof}

\begin{prop}\label{prop:comparison}
Let \(u,v\in\Vs(D)\) be respectively a weak subsolution and a weak
supersolution in \(D\). If \((u-v)_+\in\Wzero(D)\), then \(u\le v\)
almost everywhere in \(\R^n\).
\end{prop}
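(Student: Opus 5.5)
The plan is the standard test-function argument with the truncation \(\varphi=(u-v)_+\in\Wzero(D)\). Since \(u,v\in\Vs(D)\), the increments of \(u\) and \(v\) lie in the weighted \(L^p\) space on \(Q_D\) with measure \(|x-y|^{-n-sp}\,\mathrm dx\,\mathrm dy\). The sub- and supersolution inequalities then extend by density from nonnegative \(C_0^\infty(D)\) functions to nonnegative elements of \(\Wzero(D)\), by the remark following Definition~\ref{def:weak}. One point needs care here: approximants of a nonnegative \(\varphi\in\Wzero(D)\) must be chosen nonnegative. I would take the approximants \(\varphi_j\in C_0^\infty(D)\), replace them by \((\varphi_j)_+\) mollified at a small scale inside \(D\), and pass to the limit using continuity of the pairing
\[
\varphi\mapsto\iint_{Q_D}\Jp(u(x)-u(y))(\varphi(x)-\varphi(y))|x-y|^{-n-sp}\,\mathrm dx\,\mathrm dy
\]
on \(\Wzero(D)\). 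This continuity follows from H\"older's inequality, since \(\Jp(u(x)-u(y))\) belongs to the dual weighted \(L^{p'}\) space. Pairs in \(D^c\times D^c\) contribute nothing, because \(\varphi\) vanishes there.

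Subtracting the supersolution inequality for \(v\) from the subsolution inequality for \(u\) gives
\[
 \iint_{\R^{2n}}\frac{\bigl(\Jp(u(x)-u(y))-\Jp(v(x)-v(y))\bigr)(\varphi(x)-\varphi(y))}{|x-y|^{n+sp}}\,\mathrm dx\,\mathrm dy\le0 .
\]
Write \(a=u(x)-u(y)\), \(b=v(x)-v(y)\), and \(w=u-v\), so that \(a-b=w(x)-w(y)\) and \(\varphi=w_+\). The key pointwise claim is that
\[
(\Jp(a)-\Jp(b))(w_+(x)-w_+(y))\ge0 ,
\]
with equality only if \(w_+(x)=w_+(y)\). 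I would check it by cases. If \(w(x),w(y)\ge0\), then \(w_+(x)-w_+(y)=a-b\), and monotonicity of \(\Jp\) gives \((\Jp(a)-\Jp(b))(a-b)\ge0\). If only one of \(w(x),w(y)\) is positive, say \(w(x)>0\ge w(y)\), then \(w_+(x)-w_+(y)=w(x)>0\) and \(a-b=w(x)-w(y)>0\), so \(\Jp(a)-\Jp(b)>0\) by strict monotonicity. If both are nonpositive, the factor \(w_+(x)-w_+(y)\) vanishes.

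It follows that the integrand is nonnegative and its integral is at most \(0\), so the integrand vanishes almost everywhere. By the case analysis, \(w_+(x)=w_+(y)\) for almost every pair, so \(w_+\) is almost everywhere equal to a constant. Since \(w_+\in\Wzero(D)\) vanishes almost everywhere on \(D^c\), which has positive measure because \(D\) is bounded, that constant is \(0\). Hence \(u\le v\) almost everywhere in \(\R^n\).

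The main obstacle is the admissibility of \((u-v)_+\) as a test function in both inequalities, that is, the density extension to nonnegative \(\Wzero(D)\) tests together with finiteness of the integrals over \(Q_D\). Once these are in place, the strict monotonicity of \(\Jp\) closes the argument.
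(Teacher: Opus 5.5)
Your proposal is correct and follows essentially the same route as the paper: test the difference of the two weak inequalities with \((u-v)_+\), use that \(\Jp(u(x)-u(y))-\Jp(v(x)-v(y))\) and \((u-v)_+(x)-(u-v)_+(y)\) have the same sign, and conclude that the nonnegative integrand vanishes. The paper finishes by observing that pairs between \(\{(u-v)_+>0\}\) and \(D^c\) would give a strictly positive contribution, which is equivalent to your almost-everywhere-constancy argument; your treatment of nonnegative approximating test functions spells out an admissibility step the paper leaves implicit.
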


\begin{proof}
Subtract the weak inequalities and test with \((u-v)_+\).
For almost every pair \((x,y)\), the differences
\[
 \Jp(u(x)-u(y))-\Jp(v(x)-v(y))
 \quad\text{and}\quad
 (u-v)_+(x)-(u-v)_+(y)
\]
have the same sign whenever the second is nonzero.
Their product is therefore nonnegative. Its integral is
nonpositive, so it vanishes.
If \((u-v)_+\) were positive on a set of positive measure, pairs
between that set and \(D^c\) would give a strictly positive
contribution. Thus \((u-v)_+=0\).
\end{proof}

The usual truncation and order ideal properties of \(\Wzero(D)\)
justify the comparison test for ordered weak exterior values.
These are also the approximation properties used in
\cite[Section~3]{KLW}.

\begin{cor}\label{cor:maximum}
Let \(u_i\) be the solutions associated with bounded data
\(g_i\in\Vs(D)\), \(i=1,2\).
Then
\[
 \|u_1-u_2\|_{\mathrm L^\infty(\R^n)}
 \le\|g_1-g_2\|_{\mathrm L^\infty(\R^n)}.
\]
In particular,
\(\inf_{\R^n}g\le u\le\sup_{\R^n}g\), and nonnegative data give a
nonnegative solution.
\end{cor}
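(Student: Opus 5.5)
The plan is to reduce Corollary~\ref{cor:maximum} to Proposition~\ref{prop:comparison}, using the invariance of the fractional $p$-Laplacian under addition of constants. Put $m=\|g_1-g_2\|_{\mathrm L^\infty(\R^n)}$, which is finite because both data are bounded. Since $\Jp(u(x)-u(y))$ depends only on increments, the function $u_2+m$ is again a weak solution of $\Fsp(u_2+m)=0$ in $D$. It lies in $\Vs(D)$ because $D$ is bounded, so constants belong to $\mathrm L^p(D)$ and the interaction integral does not change.

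To apply Proposition~\ref{prop:comparison} to the pair $u_1$ and $u_2+m$, I need $(u_1-u_2-m)_+\in\Wzero(D)$. Write
\[
 u_1-u_2-m=(g_1-g_2-m)+z,\qquad z=(u_1-g_1)-(u_2-g_2)\in\Wzero(D).
\]
The first term is $\le0$ almost everywhere, so $0\le(u_1-u_2-m)_+\le z_+$. Here I use the order ideal property of $\Wzero(D)$ recalled after Proposition~\ref{prop:comparison}. Take $0\le f\le z_+$ with $z\in\Wzero(D)$, where $f$ has finite $V^{s,p}$-type energy; this holds for $f=(u_1-u_2-m)_+$ since $u_i,g_i\in\Vs(D)$. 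Then $f\in\Wzero(D)$.

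Some care is needed because $u_1-u_2-m$ need not lie globally in $\Ws(\R^n)$. Off $D$ it equals $g_1-g_2-m\le0$, so its positive part vanishes on $D^c$. Its increments on $Q_D$ are controlled by those of $u_1$ and $u_2$, which are finite since $u_1,u_2\in\Vs(D)$. Hence $(u_1-u_2-m)_+\in\Ws(\R^n)$, and it vanishes outside $D$. For arbitrary bounded open $D$, I would still invoke the order ideal property rather than argue from vanishing almost everywhere alone; as remarked after Definition~\ref{def:local-zero}, this distinction matters. This verification is the only delicate step, and I expect it to be the main obstacle. Proposition~\ref{prop:comparison} then gives $u_1\le u_2+m$ almost everywhere. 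Exchanging the roles of the indices gives the reverse bound, which proves the $\mathrm L^\infty$ estimate.

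For the second statement, note that constants $c$ solve the equation and belong to $\Vs(D)$, and that the solution with constant data $c$ is $c$ itself, by the uniqueness in Theorem~\ref{thm:dirichlet}. Applying the same one-sided comparison with $g_2\equiv\sup g$, so that $u_2\equiv\sup g$ and $g_1-\sup g\le0$, gives $u\le\sup_{\R^n}g$. Taking $g_2\equiv\inf g$ gives the lower bound. Nonnegative data give $\inf g\ge0$, and hence $u\ge0$.
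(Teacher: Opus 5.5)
Your proposal is correct and is essentially the paper's argument. The paper likewise shifts \(u_2\) by \(\pm\|g_1-g_2\|_{\mathrm L^\infty}\), applies Proposition~\ref{prop:comparison} twice, and compares with constants for the last assertion. Your explicit check that \((u_1-u_2-m)_+\in\Wzero(D)\), via the bound \(0\le(u_1-u_2-m)_+\le z_+\), finite global energy, and the order ideal property, spells out what the paper leaves to the remark preceding the corollary.
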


\begin{proof}
Put \(a=\|g_1-g_2\|_\infty\).
The functions \(u_2+a\) and \(u_2-a\) solve the same homogeneous
equation as \(u_2\). Their weak exterior values bound those of \(u_1\).
Apply Proposition~\ref{prop:comparison} twice.
Comparison with constants proves the last assertion.
\end{proof}

\begin{rem}\label{rem:spaces-and-solutions}
The space \(\Ws(D)\) measures fractional energy within \(D\),
whereas \(\Vs(D)\) also includes interactions with \(D^c\).
For bounded \(D\),
\[
 \Ws(\R^n)\subset\Vs(D),
 \qquad
 u\in\Vs(D)\ \Longrightarrow\ u|_D\in\Ws(D).
\]
Membership in \(\Vs(D)\) neither prescribes exterior values
nor requires energy on \(D^c\times D^c\) or global
\(\mathrm L^p\) integrability. Thus constants belong to
\(\Vs(D)\), although nonzero constants do not belong to
\(\Ws(\R^n)\). The condition \(u-g\in\Wzero(D)\)
specifies the prescribed data through the closure definition
of \(\Wzero(D)\). On arbitrary open sets, it contains more
boundary information than equality almost everywhere on \(D^c\).
\end{rem}

\begin{rem}\label{rem:weak-and-variational-solutions}
By Theorem~\ref{thm:dirichlet}, a variational solution is a
weak solution in the prescribed finite-energy class
\(g+\Wzero(D)\). A local weak solution requires only
\[
 u\in\Ws_{\mathrm{loc}}(D)
       \cap\mathrm L^{p-1}_{sp}(\R^n)
\]
and the equation against compactly supported test functions.
Local fractional energy controls nearby interactions,
while the \((p-1)\)-tail controls distant interactions,
reflecting the growth of \(\Jp\).
No finite \(p\)-energy on \(Q_D\) is required.
For boundary estimates we additionally impose
Definition~\ref{def:local-zero}.
Lemma~\ref{lem:natural-tail-approximation} provides finite-energy
approximations in this local class, allowing comparison
estimates independent of the approximating energies to pass
to the limit.
\end{rem}

\subsection{Continuous exterior data}

\begin{thm}\label{thm:continuous}
Let \(D\) be a bounded NTA domain and let \(g\in C_b(\R^n)\).
Then there is a unique bounded function
\(u\in\Ws_{\mathrm{loc}}(D)\cap C(\R^n)\) such that
\(u=g\) on \(D^c\) and \(\Fsp u=0\) weakly in \(D\).
If, in addition, \(g\in\Vs(D)\), then \(u\) agrees
with the variational solution in \(g+\Wzero(D)\).
\end{thm}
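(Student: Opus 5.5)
We will build $u$ by approximating $g$ with finite-energy data, solving variational problems, and passing to the limit using the $L^\infty$ contraction of Corollary~\ref{cor:maximum} together with boundary continuity under exterior capacity density.

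\emph{Approximation and convergence.} Choose $g_k\in C_c^\infty(\R^n)$, or more precisely $g_k\in C^\infty\cap W^{1,\infty}$, with $g_k\to g$ uniformly on $\R^n$. Uniform approximation of a merely bounded continuous function on all of $\R^n$ need not be possible by compactly supported functions. We therefore mollify $g$ only on a large ball $B_R\supset D$ and keep $g_k$ equal to a Lipschitz regularization elsewhere. Since $D$ is bounded and $g_k$ is bounded and Lipschitz, the interaction integral over $D\times\R^n$ is finite, so $g_k\in\Vs(D)$. Let $u_k$ be the variational solution of Theorem~\ref{thm:dirichlet}. Corollary~\ref{cor:maximum} gives
\[
\|u_k-u_j\|_{\mathrm L^\infty(\R^n)}\le\|g_k-g_j\|_{\mathrm L^\infty(\R^n)},
\]
so $u_k$ converges uniformly on $\R^n$ to a bounded $u$ with $u=g$ on $D^c$.

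\emph{Continuity.} Interior H\"older estimates \cite{DKP} give continuity of each $u_k$ in $D$, with tails controlled by $\sup|g|$. The main step, and the main obstacle, is continuity up to $\partial D$. An NTA domain has exterior corkscrews, so Lemma~\ref{lem:capacity} yields the capacity density condition. The boundary estimates of \cite{KLW} then give a modulus of continuity at each boundary point. For $u_k$ this modulus depends only on the modulus of continuity of $g_k$ near that point, on $\sup|g_k|$, and on the geometric constants. Taking $g_k$ with a modulus of continuity uniform in $k$ near $\partial D$ (for instance, mollification of $g$ on a neighbourhood of $\overline D$), the $u_k$ are equicontinuous on compact sets and continuous on $\R^n$. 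Uniform convergence then transfers continuity to $u$, so $u\in C(\R^n)$.

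\emph{Local energy, the equation, and uniqueness.} Caccioppoli estimates on balls $B\Subset D$ bound $[u_k]_{W^{s,p}(B)}$ uniformly, using boundedness of $u_k$ and the tail. Hence $u\in\Ws_{\mathrm{loc}}(D)$, and we can pass to the limit in \eqref{eq:weak}. Near the diagonal, weak convergence of the increments together with a.e. convergence handles the nonlinearity $\Jp$, since the local energies are uniformly bounded. Far from the diagonal, dominated convergence applies.

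For uniqueness, take two bounded continuous solutions with the same exterior data $g$. We would compare each with $u_k$ via a comparison principle for continuous solutions that agree on $D^c$. Proposition~\ref{prop:comparison} requires $\Vs(D)$-membership, which is not available a priori. We would instead apply comparison on subdomains $D_\varepsilon=\{u-u_k>\varepsilon+\|g-g_k\|_\infty\}\Subset D$. These sets are compactly contained in $D$ by continuity up to the boundary. On each of them both functions are locally energy-finite, so the truncated difference is an admissible test function in $\Wzero(D_\varepsilon)$.

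Finally, if $g\in\Vs(D)$, the same comparison of $u$ with the variational solution, or the direct choice $g_k\to g$ in $\Vs(D)$ as well as uniformly, identifies the two.
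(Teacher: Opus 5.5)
Your proposal is correct and follows the paper's proof. The common steps are smooth bounded approximants $g_k\in\Vs(D)$, variational solutions made uniformly Cauchy by Corollary~\ref{cor:maximum}, continuity up to $\partial D$ from the capacitary estimates of \cite{KLW}, Caccioppoli bounds to pass to the limit in the weak form, and uniqueness by testing with the positive part of a difference that has compact support in $D$. Your comparison of each solution with $u_k$, instead of comparing two solutions directly, is equivalent.

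Two small points. First, asking for $g_k\in W^{1,\infty}$, or for a Lipschitz regularization far away, is impossible when $g$ is not uniformly continuous. It is also unnecessary: smooth bounded approximants built with a partition of unity, or taking $g_k=g$ away from a neighbourhood of $\overline D$, already lie in $\Vs(D)$. Second, no equicontinuity is needed, because each $u_k$ is continuous on $\R^n$ and uniform limits of continuous functions are continuous.
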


\begin{proof}
Choose bounded smooth functions \(g_j\) with
\(\|g_j-g\|_\infty\to0\). Smooth approximation with a partition of
unity gives such a sequence even when \(g\) is not uniformly
continuous on \(\R^n\).
Each \(g_j\) belongs to \(\Vs(D)\).
Let \(u_j\) be the variational solution from
Theorem~\ref{thm:dirichlet}.

By Lemma~\ref{lem:capacity}, \(D\) satisfies an exterior capacity
density condition. The boundary estimates in \cite{KLW}, together with
interior continuity, give \(u_j\in C(\overline D)\) with boundary
values \(g_j\). Set \(u_j=g_j\) on \(D^c\).
Corollary~\ref{cor:maximum} makes \(u_j\) uniformly Cauchy on
\(\R^n\), with a uniform bound. Its limit \(u\) is continuous and
takes the prescribed exterior values.

The local Caccioppoli estimate, proved in
Lemma~\ref{lem:caccioppoli} below, gives uniform energy bounds
for \(u_j\) on compact subsets of \(D\). On a fixed interior ball, the increments of \(u_j\) converge almost
everywhere and their \(p\)-energies are bounded.
Consequently their nonlinear fluxes converge weakly in the
corresponding \(\mathrm L^{p/(p-1)}\) space. The interactions outside a
larger interior ball pass to the limit by the uniform bound and
the integrable tail of the kernel.
This proves \eqref{eq:weak} for \(u\).

For uniqueness, let \(u,v\) be two bounded continuous solutions.
For every \(\varepsilon>0\), the function
\((u-v-\varepsilon)_+\) has compact support in \(D\), because
\(u=v\) on \(D^c\) and both are continuous.
Local energy regularity makes it an admissible test in the
difference of their equations. The proof of
Proposition~\ref{prop:comparison} gives \(u-v\le\varepsilon\).
Letting \(\varepsilon\downarrow0\) and exchanging \(u,v\) proves
uniqueness.

If \(g\in\Vs(D)\), the variational solution attains its continuous
data by the capacitary boundary estimate and is bounded by
Corollary~\ref{cor:maximum}. The uniqueness just proved identifies
the two solutions.
\end{proof}

\begin{rem}
To identify the continuous and variational solutions for
\(g\in C_b(\R^n)\cap\Vs(D)\), we use the boundary
continuity estimate in \cite[Theorem~1.4]{KLW}, which
accommodates a general modulus of continuity for \(g\).
No H\"older continuity of \(g\) is required.
Theorem~\ref{thm:continuous} concerns bounded continuous
exterior data and does not cover unbounded data in the
tail space. The next subsection instead constructs
finite-energy approximations of a given local weak
solution in the natural tail class, as needed for
the boundary comparison argument.
\end{rem}

\subsection{Approximation in the natural tail class}

\begin{lem}\label{lem:natural-tail-approximation}
Let \(p\ge2\), \(0<s<1\), \(R>0\), and let \(D\) be a nonempty
open subset of \(B_R(z)\). Suppose that
\[
 u\in\Ws(B_{4R/3}(z))\cap \mathrm L^{p-1}_{sp}(\R^n),\qquad
 u\ge0\quad\hbox{on }\R^n,\qquad
 \esssup_{B_R(z)}u<\infty,
\]
and that \(\Fsp u=0\) weakly in \(D\).
Then there are \(g_m\in\Vs(D)\) and variational solutions
\(u_m\in g_m+\Wzero(D)\) such that
\begin{equation}\label{eq:tail-approximation-order}
 0\le u_m\le u_{m+1}\le u\quad\hbox{on }\R^n,
 \qquad g_m=u\quad\hbox{near }\overline D.
\end{equation}
Moreover,
\begin{align}
 (u-u_m)\mathbf1_D&\longrightarrow0\qquad
       \hbox{in }\Ws(\R^n),\label{eq:tail-approximation-energy}\\
 u_m&\longrightarrow u\qquad\hbox{locally uniformly in }D,\label{eq:tail-approximation-local}\\
 \int_{\R^n}\frac{|u-u_m|^{p-1}}{(1+|y|)^{n+sp}}\d y
       &\longrightarrow0.\label{eq:tail-approximation-tail}
\end{align}
If \(D=D_R(z)\) and \(u\) has the local homogeneous exterior
condition on \(B_R(z)\), then every \(u_m\) has that condition.
\end{lem}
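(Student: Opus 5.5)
The plan is to truncate the distant exterior data and take fractional $p$-harmonic replacements. Fix a cutoff $\chi\in C_0^\infty(B_{4R/3}(z))$ with $0\le\chi\le1$ and $\chi=1$ on a neighborhood of $\overline B_R(z)\supset\overline D$. For $m\ge1$ define
\[
 g_m=\chi u+(1-\chi)\min\{u,m\}\,\mathbf 1_{B_{mR}(z)} .
\]
Then $g_m=u$ near $\overline D$, $0\le g_m\le g_{m+1}\le u$, and $g_m\uparrow u$ pointwise. To check that $g_m\in\Vs(D)$, split the interaction integral over $D\times\R^n$ into two parts. Pairs with $y\in B_{4R/3}(z)$ are controlled by $u\in\Ws(B_{4R/3}(z))$ and by the Lipschitz bound on $\chi$ together with $\esssup_{B_R}u<\infty$. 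For pairs with $y$ away from $\overline D$, use that $|x-y|$ is bounded below and $g_m$ is bounded, so the integral is finite. Let $u_m$ be the variational solution from Theorem~\ref{thm:dirichlet}.

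Ordering comes next. By Corollary~\ref{cor:maximum}, $u_m\ge0$. Since $g_m\le g_{m+1}$, Proposition~\ref{prop:comparison} gives $u_m\le u_{m+1}$. The bound $u_m\le u$ is less direct, because $u$ is only a local weak solution and may not lie in $\Vs(D)$. Here $u-g_m$ vanishes near $\overline D$, and $u$ satisfies the equation tested against functions in $\Wzero(D)$ with support in $D$. I would test the difference of the equations with $(u_m-u)_+$. This function lies in $\Wzero(D)$ because $u_m-g_m\in\Wzero(D)$ and $u=g_m$ near $\overline D$, so $(u_m-u)_+\le(u_m-g_m)_+$ on a neighborhood of $\overline D$ and vanishes off $D$. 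Density of $C_0^\infty(D)$ in this class, together with the tail integrability of $\Jp(u(x)-u(y))$, justifies the test. The sign argument of Proposition~\ref{prop:comparison} then yields $u_m\le u$.

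For convergence, write $e_m=(u-u_m)\mathbf 1_D$. Since $u-u_m=u-g_m$ vanishes on $D^c$ near $\overline D$, we have $u-u_m=e_m+(u-g_m)$ with the second term supported away from $\overline D$. Testing the difference of the equations for $u$ and $u_m$ with $e_m\in\Wzero(D)$, and using the strong monotonicity of $\Jp$ for $p\ge2$, $(\Jp(a)-\Jp(b))(a-b)\ge c|a-b|^p$, gives
\[
 [e_m]_{s,p}^p\le C\int_D|e_m(x)|\int_{\R^n\setminus B_{R'}}\frac{|\Jp(\cdot)-\Jp(\cdot)|\text{ of far pairs}}{|x-y|^{n+sp}}\,\mathrm dy\,\mathrm dx .
\]
The far contributions involve only $(u-g_m)(y)$, which is supported where $|y-z|\ge R'$. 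Using $|\Jp(a)-\Jp(b)|\le C(|a|+|b|)^{p-2}|a-b|$ and the local boundedness of $u$, the far part is bounded by $C\,\|e_m\|_{L^1}$ times $\int|u-g_m|^{p-1}(1+|y|)^{-n-sp}\,\mathrm dy$ plus lower-order terms. This integral tends to $0$ by monotone convergence, since $u\in\mathrm L^{p-1}_{sp}$. Poincaré on $D$ closes the estimate and proves \eqref{eq:tail-approximation-energy}.

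Statement \eqref{eq:tail-approximation-tail} follows by dominated convergence: off $D$, $u-u_m=u-g_m\to0$ with dominant $u$, and on $D$ the convergence comes from the $L^p$ convergence of $e_m$. For \eqref{eq:tail-approximation-local}, the $u_m$ are monotone, uniformly bounded on $D$ by $\esssup u$, and have uniformly bounded tails. Interior Hölder estimates of \cite{DKP} give equicontinuity, so the $L^p$ convergence upgrades to local uniform convergence. Finally, if $D=D_R(z)$ and $u$ has the exterior condition, then $u_m=0$ a.e. on $B_R\setminus\Om$ because $0\le u_m\le u$. For $\eta\in C_0^\infty(B_R)$, write $\eta u_m=\eta u-\eta(u-u_m)$. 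The first term lies in $\Wzero(D_R)$ by hypothesis. In the second, $\eta(u-u_m)=\eta e_m$ on $B_R$, and $e_m\in\Wzero(D)$ is stable under multiplication by smooth $\eta$.

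The main obstacle is the comparison $u_m\le u$ and the energy estimate, since $u$ itself is not in the variational class. Justifying the test functions requires care with the tail integrability of the flux of $u$ across distant pairs.
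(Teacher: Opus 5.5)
Your proposal is correct and follows the paper's construction. The shared steps are: truncation of the far data through $\chi u+(1-\chi)\min\{u,m\}$ (your extra factor $\mathbf 1_{B_{mR}(z)}$ is harmless but unnecessary), variational replacement, strong monotonicity of $\Jp$ plus Poincar\'e for the energy convergence, equicontinuity from interior H\"older estimates, dominated convergence for the tail, and the splitting $\eta u_m=\eta u-\eta(u-u_m)$ for the exterior condition.

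The only difference is how the far pairs are handled. The paper records the discrepancy as a source, $\Fsp g_m=e_m$ with $0\le e_m\le C\epsilon_m$ in $D$. Then $u_m\le g_m\le u$ and $c_p[z_m]_{s,p}^p\le\int_D e_mz_m\,\mathrm dx$ follow directly inside $\Vs(D)$. You instead compare with $u$ itself and absorb the discrepancy into the monotonicity estimate. There its size is governed by $\int_{\{u>g_m\}\setminus B_{R'}(z)}(M^{p-1}+u^{p-1})|y-z|^{-n-sp}\,\mathrm dy$ with $M=\esssup_{B_R(z)}u$, not by $\int|u-g_m|^{p-1}$ alone; this still tends to zero by dominated convergence.
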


\begin{proof}
Choose \(0\le\chi\le1\), with
\(\chi\in C_0^\infty(B_{5R/4}(z))\) and
\(\chi=1\) on \(B_{7R/6}(z)\), and define
\[
 g_m=\chi u+(1-\chi)\min\{u,m\}.
\]
In the error estimate below take \(m\ge\esssup_{B_R(z)}u\),
which discards at most finitely many terms.
The truncation and smooth multiplication properties of \(\Ws\)
give \(g_m\in\Ws(B_{4R/3}(z))\).
Outside that ball \(0\le g_m\le m\), and its distance from
\(D\) is at least \(R/3\). These facts prove
\(g_m\in\Vs(D)\). Theorem~\ref{thm:dirichlet} supplies \(u_m\).

The functions \(g_m,u\) agree on \(B_{7R/6}(z)\).
Subtracting their weak forms shows that
\(\Fsp g_m=e_m\) in \(D\), where
\[
 e_m(x)=\int_{\R^n\setminus B_{7R/6}(z)}
 \frac{J_p(u(x)-g_m(y))-J_p(u(x)-u(y))}
 {|x-y|^{n+sp}}\d y.
\]
This subtraction is legitimate in the natural tail class because
the modified pairs are separated from \(D\).
The integrand is nonnegative and vanishes when \(u(y)\le m\).
Otherwise \(u(x)\le m\le g_m(y)\le u(y)\), and its numerator
is at most \(u(y)^{p-1}\). Since
\(|x-y|\ge |y-z|/7\) on the displayed region,
\begin{equation}\label{eq:tail-approximation-error}
 0\le e_m(x)\le C\epsilon_m,\qquad
 \epsilon_m=
 \int_{\{u>m\}\setminus B_{7R/6}(z)}
       \frac{u(y)^{p-1}}{|y-z|^{n+sp}}\d y
 \longrightarrow0.
\end{equation}
The convergence follows by dominated convergence.
As \(g_m\in\Vs(D)\) and \(e_m\in \mathrm L^\infty(D)\), the weak
equation extends by density to every test in \(\Wzero(D)\).

Comparison gives \(0\le u_m\le g_m\le u\).
Monotonicity of \(g_m\) and comparison also give
\(u_m\le u_{m+1}\). Put
\[
 z_m=g_m-u_m=(u-u_m)\mathbf1_D\in\Wzero(D).
\]
Subtract the equations and test with \(z_m\).
The inequality
\((J_p(a)-J_p(b))(a-b)\ge c_p|a-b|^p\) yields
\begin{equation}\label{eq:tail-approximation-stability}
 c_p[z_m]_{s,p}^p
 \le\int_D e_m z_m\d x
 \le C\epsilon_m\|z_m\|_{\mathrm L^1(D)}
 \le C_R\epsilon_m[z_m]_{s,p}.
\end{equation}
The last step uses H\"older and fractional Poincar\'e.
This proves \eqref{eq:tail-approximation-energy}, including
convergence in \(\mathrm L^p(\R^n)\).

The monotone limit equals \(u\) almost everywhere in \(D\)
by that convergence, and outside \(D\) because \(g_m\uparrow u\).
The bound \(0\le u_m\le u\) and dominated convergence prove
\eqref{eq:tail-approximation-tail}.
On every compact subset of \(D\), local boundedness and the
tails of \(u_m\) are uniformly controlled by those of \(u\).
Proposition~\ref{prop:interior} therefore gives equicontinuity.
The almost everywhere limit is the continuous representative of
\(u\), which proves \eqref{eq:tail-approximation-local} for
the full sequence.

Finally, suppose that \(D=D_R(z)\). For
\(\eta\in C_0^\infty(B_R(z))\),
\[
 \eta u_m=\eta u+\eta(u_m-g_m)\in\Wzero(D_R(z)).
\]
The first term belongs to this space by the assumed weak zero
condition, and the second by smooth multiplication in \(\Wzero(D)\).
Also \(u_m=0\) almost everywhere on \(B_R(z)\setminus\Om\),
and \(u_m\in\Ws(B_R(z))\). This verifies
Definition~\ref{def:local-zero}.
\end{proof}

\begin{rem} Note that in Lemma \ref{lem:natural-tail-approximation} no uniform bound on the full \(p\)-energy of the data \(g_m\)
is asserted or needed. The stability estimate uses only the vanishing
\((p-1)\)-tail in \eqref{eq:tail-approximation-error}.
\end{rem}

\section{Basic estimates and control of localization}\label{sec:basic}

This section collects the energy, interior, and boundary estimates
used in the subsequent comparison arguments. We begin with a
Caccioppoli inequality and keep explicit track of the non-local
tails that arise when restricting the equation to a smaller region.
Such localization requires control of both the local energy and
the interactions with the exterior, since distant values continue
to influence the equation.

The estimates provide the quantitative bounds needed for the
half-space analysis and the approximation arguments in Reifenberg
flat domains. Unless otherwise stated, they hold for
\(0<s<1\) and \(1<p<\infty\), without the additional restriction
required by the interior \(C^{1,\alpha}\) regularity theorem.

\subsection{Energy and interior estimates}

\begin{lem}\label{lem:caccioppoli}
Let \(1<p<\infty\), \(B_{2r}=B_{2r}(z)\), and
\(u\in\Ws(B_{2r})\cap \mathrm L^{p-1}_{sp}(\R^n)\).
Suppose either that \(u\) is a weak solution in \(B_{2r}\), or that it is a local weak solution in \(\Om\cap B_{2r}\)
with zero exterior values on \(B_{2r}\setminus\Om\), in the
sense of Definition~\ref{def:local-zero}.
Let \(0\le\eta\le1\), \(\eta\in C_0^\infty(B_{3r/2})\),
\(\eta=1\) on \(B_r\), and \(|\nabla\eta|\le C/r\).
Then
\begin{equation}\label{eq:caccioppoli}
 [\eta u]_{s,p}^p
 \le \frac C{r^{sp}}\int_{B_{2r}}|u|^p\d x
 +\frac C{r^{sp}}(\Tail(u,z,2r))^{p-1}
             \int_{B_{2r}}|u|\d x.
\end{equation}
The constant depends only on \(n,s,p\) and the cutoff bound.
\end{lem}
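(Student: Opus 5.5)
We test the weak equation with \(\varphi=\eta^p u\), as in the proof of the Caccioppoli inequality in \cite{DKP}, but without subtracting a level, since we want the full \(u\) on the right. In the interior case \(\eta^p u\in\Wzero(B_{2r})\) because \(u\in\Ws(B_{2r})\) and \(\eta\) is smooth with compact support in \(B_{3r/2}\). By density, \(\eta^p u\) is then an admissible test function.

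In the boundary case we use Definition~\ref{def:local-zero}. For \(\eta\in C_0^\infty(B_{2r})\) we have \(\eta u\in\Wzero(\Om\cap B_{2r})\). Multiplying further by the smooth function \(\eta^{p-1}\), or writing \(\eta^p u=\eta^{p-1}(\eta u)\) and using smooth multiplication in \(\Wzero\), gives \(\eta^pu\in\Wzero(D_{2r})\). The weak formulation extends to this test function by density. Indeed, the interactions with \(\R^n\setminus B_{2r}\) are controlled by the tail, and the local interactions by \(u\in\Ws(B_{2r})\). Since \(u=0\) on \(B_{2r}\setminus\Om\), there is no boundary contribution. Otherwise the argument is the same in both cases.

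We split the resulting identity into the pairs in \(B_{2r}\times B_{2r}\) and the cross pairs with one point outside \(B_{2r}\) (counted twice). For the local part we use the standard algebraic inequality
\[
 \Jp(a-b)(a\eta_1^p-b\eta_2^p)\ge c|a\eta_1-b\eta_2|^p-C\max\{|a|,|b|\}^p|\eta_1-\eta_2|^p,
\]
which follows by the usual case analysis on \(\eta_1\ge\eta_2\) with Young's inequality (cf.\ \cite[Lemma~3.1]{DKP}). This yields
\[
 c\iint_{B_{2r}^2}\frac{|\eta u(x)-\eta u(y)|^p}{|x-y|^{n+sp}}
 \le C\iint_{B_{2r}^2}\frac{(|u(x)|^p+|u(y)|^p)|\eta(x)-\eta(y)|^p}{|x-y|^{n+sp}}+|\text{cross terms}|.
\]
The first term on the right is bounded by \(Cr^{-sp}\int_{B_{2r}}|u|^p\). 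To see this, use \(|\eta(x)-\eta(y)|\le C\min\{1,|x-y|/r\}\) and integrate in \(y\). For the cross terms, \(x\in\supp\eta\subset B_{3r/2}\) and \(y\notin B_{2r}\) give \(|x-y|\ge|y-z|/4\). Hence
\[
 |\Jp(u(x)-u(y))|\eta^p(x)|u(x)|\le C(|u(x)|^{p}+|u(y)|^{p-1}|u(x)|)\eta^p(x),
\]
and integrating yields \(Cr^{-sp}\int_{B_{2r}}|u|^p+Cr^{-sp}\Tail(u,z,2r)^{p-1}\int_{B_{2r}}|u|\).

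It remains to pass from the energy on \(B_{2r}^2\) to the full seminorm \([\eta u]_{s,p}^p\). The missing pairs have \(x\in B_{3r/2}\) and \(y\notin B_{2r}\), and there \(\eta u(y)=0\). Their contribution is therefore \(2\int_{B_{3r/2}}|\eta u|^p\int_{|y-z|\ge2r}|x-y|^{-n-sp}\,\mathrm dy\le Cr^{-sp}\int_{B_{2r}}|u|^p\). The main point requiring care is the admissibility of \(\eta^pu\) in the boundary case, together with the justification of the weak form for this non-smooth test function. Both are handled by the closure definition of \(\Wzero\) and by the finiteness of the tail, which makes the nonlocal integrals absolutely convergent. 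The algebraic inequality is routine.
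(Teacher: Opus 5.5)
Your proof is correct and is essentially the paper's argument: you test with \(\eta^p u\), apply the cutoff inequality on \(B_{2r}\times B_{2r}\), bound the cross pairs using \(|x-y|\ge c|y-z|\) and the tail, and absorb the remaining cross part of \([\eta u]_{s,p}^p\) into the local error. One small repair is needed. For \(1<p<2\) the factor \(\eta^{p-1}\) is not smooth, and not even Lipschitz where \(\eta\) vanishes, so justify \(\eta^p u\in\Wzero(\Om\cap B_{2r})\) instead by writing \(\eta^p u=\eta^p(\chi u)\), with \(\chi\in C_0^\infty(B_{2r})\) equal to one on \(\supp\eta\), and using that \(\eta^p\) is Lipschitz. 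This factorization also keeps the approximating tests supported in \(\supp\eta\), which is what lets the tail-controlled cross terms pass to the limit.
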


\begin{proof}
Use \(\eta^p u\) as a test, with truncation if needed.
In the boundary case this test is admissible by
Definition~\ref{def:local-zero}.
For \(a,b\in\R\) and \(0\le\xi,\zeta\le1\), the elementary
cutoff inequality gives
\[
 \Jp(a-b)(a\xi^p-b\zeta^p)
 \ge c_p|a\xi-b\zeta|^p
       -C_p(|a|^p+|b|^p)|\xi-\zeta|^p.
\]
Integrating on \(B_{2r}\times B_{2r}\) bounds the error by
\(Cr^{-sp}\int_{B_{2r}}|u|^p\d x\).
For \(x\in\supp\eta\) and \(y\notin B_{2r}\),
\(|x-y|\ge c|y-z|\), and
\[
 |\Jp(u(x)-u(y))u(x)|
 \le C_p\bigl(|u(x)|^p+|u(x)|\,|u(y)|^{p-1}\bigr).
\]
The first term is absorbed into the same local error.
The second gives the tail term in \eqref{eq:caccioppoli}.
The remaining part of the energy of \(\eta u\), with one point
outside \(B_{2r}\), is also bounded by the local error.
Approximation and truncation remove the initial test restrictions.
\end{proof}

We use the local boundedness and Harnack estimates in \cite{DKP}
and the H\"older estimate in \cite[Theorem~1.2]{DKP16}.
Although formulated there in global energy spaces, their proofs
use compactly supported cutoff and truncation test functions.
For these test functions, local \(p\)-energy controls nearby interactions,
while the \((p-1)\)-tail controls the separated exterior interactions.
The same estimates therefore hold in the local energy and tail
class used here, with the tail convention of \cite{DKP}.
For \(0<\beta<1\) and \(E\subset\R^n\), we write
\[
 [u]_{C^\beta(E)}
 =
 \sup_{\substack{x,y\in E\\x\ne y}}
 \frac{|u(x)-u(y)|}{|x-y|^\beta}.
\]

\begin{prop}\label{prop:interior}
Let \(1<p<\infty\), and let \(u\) solve \(\Fsp u=0\) in \(B_{2r}(z)\).
Then there are \(C\ge1\) and \(\beta\in(0,1)\), depending only on \(n,s,p\),
such that
\[
 \|u\|_{\mathrm L^\infty(B_r(z))}
 \le C\biggl(
 \biggl(\avgint_{B_{2r}(z)}|u|^p\d x\biggr)^{1/p}
 +\Tail(u,z,2r)\biggr).
\]
If \(u\ge0\) in \(\R^n\), then
\begin{equation}\label{eq:harnack}
 \sup_{B_r(z)}u\le C\inf_{B_r(z)}u.
\end{equation}
Moreover,
\[
 r^\beta[u]_{C^\beta(B_r(z))}
 \le C\bigl(\|u\|_{\mathrm L^\infty(B_{2r}(z))}
                 +\Tail(u,z,2r)\bigr).
\]
\end{prop}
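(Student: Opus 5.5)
The proposition collects three interior estimates: local boundedness, the Harnack inequality, and H\"older continuity. Each follows from the cited results of Di Castro, Kuusi and Palatucci. The only work is to check that their proofs go through in the local energy and tail class $\Ws(B_{2r}(z))\cap\mathrm L^{p-1}_{sp}(\R^n)$ used here, rather than in global energy spaces. So the plan is to rerun their arguments while tracking which interactions are controlled by local energy and which by the tail.

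\textbf{Step 1: normalization and the energy inequality.} By translation and the scaling $u_r(x)=u(z+rx)$ we may take $z=0$ and $r=1$; the tail transforms consistently with this. The basic tool is a Caccioppoli inequality for truncations $(u-k)_\pm$, tested against $\eta^p(u-k)_\pm$ with $\eta\in C_0^\infty(B_\rho)$ and $B_\rho\subset B_2$. This is the truncated analogue of Lemma~\ref{lem:caccioppoli}. The test function is admissible because $u\in\Ws(B_2)$, truncation and multiplication by smooth cutoffs preserve $\Ws$, and the support is compact in $B_2$. The weak equation \eqref{eq:weak} extends from $C_0^\infty$ to such tests by density. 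The terms split by location:
\begin{itemize}
\item Pairs with both points in $B_2$ are handled by the local $p$-energy.
\item Pairs with one point in $\supp\eta$ and the other outside $B_2$ are separated by a fixed distance. They are bounded by $\sup_{x\in\supp\eta}\int_{\R^n\setminus B_\rho}(u(y)-k)_\pm^{p-1}|x-y|^{-n-sp}\d y$, which is finite by the tail assumption.
\end{itemize}
This is exactly the energy estimate of \cite{DKP}.

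\textbf{Step 2: local boundedness.} Run a De Giorgi iteration on shrinking balls $B_{\rho_j}$ with levels $k_j\uparrow k$. Combine the energy estimate with the fractional Sobolev inequality and bound the tail term by $\Tail(u,0,2)$. Choosing $k=C\bigl((\avgint_{B_2}|u|^p)^{1/p}+\Tail(u,0,2)\bigr)$ gives the bound for $u_+$; applying the same argument to $-u$, which is also a solution, gives the bound for $|u|$.

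\textbf{Step 3: the Harnack inequality.} For globally nonnegative $u$ we have $u_-=0$, so the negative tail term in \cite[Theorem~1.1]{DKP} vanishes (cf.\ Remark~\ref{rem:global-nonnegativity}). This is the main point to verify. The proof in \cite{DKP} has three ingredients:
\begin{itemize}
\item a logarithmic estimate, from testing with $\eta^p(u+d)^{1-p}$;
\item a weak Harnack inequality for supersolutions;
\item a tail estimate bounding $\Tail(u_+,0,R)$ by $\sup_{B_R}u$, plus a negative-tail term that vanishes here.
\end{itemize}
The last ingredient lets us absorb the positive tail into the supremum. All test functions in these steps are compactly supported in the ball, so only the local energy and the tail enter. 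This yields \eqref{eq:harnack} with a constant depending only on $n,s,p$.

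\textbf{Step 4: H\"older continuity.} This is the oscillation decay of \cite[Theorem~1.2]{DKP16}, proved by iterating the logarithmic lemma on geometric scales $\rho_j=\sigma^j$. At each step the tail is split into the part over $B_2\setminus B_{\rho_j}$ and the part over $B_2^c$:
\begin{itemize}
\item the inner annuli are controlled by the previous oscillations;
\item the outer part is controlled by $\Tail(u,0,2)$ and $\|u\|_{\mathrm L^\infty(B_2)}$.
\end{itemize}
This gives $\osc_{B_\rho}u\le C\rho^\beta\bigl(\|u\|_{\mathrm L^\infty(B_2)}+\Tail(u,0,2)\bigr)$. A standard covering argument then converts oscillation decay into the seminorm bound on $B_1$. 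Rescaling back produces the factor $r^\beta$.

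The main obstacle I expect is the admissibility and density step. The weak formulation only tests against $C_0^\infty$, while the estimates need truncated and logarithmic tests. Handling these requires approximating in $\Ws(B_2)$ and passing to the limit in the separated tail integrals, using dominated convergence with the $\mathrm L^{p-1}_{sp}$ bound.
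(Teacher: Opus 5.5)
Your proposal is correct and takes essentially the same route as the paper. The paper also gets local boundedness and the Harnack inequality from \cite{DKP}, with the negative tail vanishing by global nonnegativity, and the H\"older bound from \cite[Theorem~1.2]{DKP16}. It justifies the transfer to the local energy and tail class by your observation that those proofs use only compactly supported cutoff and truncation tests, for which local $p$-energy controls nearby pairs and the $(p-1)$-tail controls separated ones; you simply spell out the scaling, density, and tail-splitting details that the paper compresses into one paragraph.
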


The inequalities in Proposition~\ref{prop:interior} are understood first for the continuous
representative supplied by interior regularity.
In particular, a nontrivial globally nonnegative solution is
strictly positive in every connected region where it solves the
equation. If it vanished identically in an interior ball, testing
there would also exclude nonzero nonnegative exterior values.

\begin{prop}\label{prop:C1}
Assume \eqref{eq:range}. Let \(u\) solve \(\Fsp u=0\) in
\(B_{2r}(z)\).
Then there are \(\sigma\in(0,1)\) and \(C\ge1\), depending only on
\(n,s,p\), such that
\begin{equation}\label{eq:C1}
 r\|\nabla u\|_{\mathrm L^\infty(B_r(z))}
 +r^{1+\sigma}[\nabla u]_{C^\sigma(B_r(z))}
 \le C\bigl(\|u\|_{\mathrm L^\infty(B_{2r}(z))}
                   +\Tail(u,z,2r)\bigr).
\end{equation}
\end{prop}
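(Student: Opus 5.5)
This estimate is not new: it is the interior $C^{1,\alpha}$ theorem of Giovagnoli, Jesus, and Silvestre \cite{GJS}, restated in the local energy and tail class and with the normalization $\|u\|_{\mathrm L^\infty}+\Tail$. The plan is to reduce to the hypotheses of \cite{GJS} by scaling, localization, and an approximation argument.

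\emph{Step 1: reduce to the unit scale.} Both sides of \eqref{eq:C1} behave the same way under $x\mapsto z+rx$. Under $u\mapsto\lambda u$ the equation is homogeneous of degree $p-1$, so both sides scale linearly in $\lambda$. It therefore suffices to take $z=0$, $r=1$ and
\[
 \|u\|_{\mathrm L^\infty(B_2)}+\Tail(u,0,2)\le1,
\]
and to prove $\|\nabla u\|_{\mathrm L^\infty(B_1)}+[\nabla u]_{C^\sigma(B_1)}\le C$.

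\emph{Step 2: localize.} The approach is to split off the far field. Fix a cutoff $\chi\in C_0^\infty(B_{7/4})$ with $\chi=1$ on $B_{3/2}$, and set $\bar u=\chi u$. This function is bounded by $1$ and compactly supported.

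On $B_{5/4}$, subtracting the weak forms of $u$ and $\bar u$ shows that $\bar u$ solves $\Fsp\bar u=f$. Here $f$ collects the interactions with $y\notin B_{3/2}$, i.e.
\[
 f(x)=\int_{\R^n\setminus B_{3/2}}
 \frac{\Jp(u(x)-\chi(y)u(y))-\Jp(u(x)-u(y))}{|x-y|^{n+sp}}\,\mathrm d y .
\]
Since $|x-y|\ge c|y|$ for these pairs, $\|f\|_{\mathrm L^\infty(B_{5/4})}\le C\bigl(1+\Tail(u,0,2)^{p-1}\bigr)\le C$. This is the same subtraction as in the proof of Lemma~\ref{lem:natural-tail-approximation}. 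In addition, $f$ is Hölder continuous in $x$: this follows from Proposition~\ref{prop:interior}, because $u$ is $C^\beta$ on $B_{5/4}$, $\Jp$ is locally Lipschitz for $p\ge2$, and the kernel is smooth away from the diagonal.

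\emph{Step 3: apply \cite{GJS}.} In the range \eqref{eq:range}, \cite{GJS} gives $C^{1,\sigma}$ estimates for bounded solutions of $\Fsp\bar u=f$ with bounded right-hand side. Their statement covers a bounded right-hand side, or is stable under one through their perturbative improvement-of-flatness scheme. This bounds $\bar u$, and hence $u$, in $C^{1,\sigma}(B_1)$ by $C(\|\bar u\|_\infty+\|f\|_\infty)\le C$.

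\emph{Main obstacle.} The main difficulty is matching the solution notion. Our $u$ lies only in $\Ws_{\mathrm{loc}}\cap\mathrm L^{p-1}_{sp}$, while \cite{GJS} may use viscosity or finite-energy solutions. I would first regularize: replace $u$ on $B_2$ by the variational solution with mollified data and truncated tail, as in Lemma~\ref{lem:natural-tail-approximation}. I would then apply the estimate uniformly to these approximations, pass to the limit using the locally uniform convergence supplied by Proposition~\ref{prop:interior}, and use Arzelà--Ascoli on the gradients.

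Verifying that \cite{GJS} tolerates a bounded right-hand side with constants depending only on $n,s,p$ is the point I would check first. If it does not, I would instead absorb $f$ by rescaling to small balls: on $B_\rho$ the rescaled right-hand side is $\rho^{sp}f$, which becomes negligible relative to the flatness being improved, so the estimate can still be run at those scales.
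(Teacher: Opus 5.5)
\textbf{Correct scaling, but Steps~2--3 do not work.} Your Step~1 is correct, and it is essentially the paper's entire proof. The cited theorem \cite[Theorem~1.1]{GJS} is itself an estimate for the homogeneous equation, and its right-hand side already contains $\|u\|_{\mathrm L^\infty}$ and a tail term. The paper applies it to $U(\xi)=u(z+r\xi)$, notes that the tail convention there differs from $\Tail(U,0,2)$ by the fixed factor $2^{sp/(p-1)}$, and scales back. No localization is needed, because the far field is already accounted for by the tail.

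\textbf{Where the argument fails.} Cutting off the far field turns a homogeneous problem into $\Fsp\bar u=f$. You then invoke a $C^{1,\sigma}$ bound by $C(\|\bar u\|_\infty+\|f\|_\infty)$, and no such bound holds throughout \eqref{eq:range}.
\begin{itemize}
\item The condition $\|\Fsp u\|_{\mathrm L^\infty}\le1$ is invariant under $u\mapsto\rho^{-sp/(p-1)}u(\rho\,\cdot)$. So bounded data only supports regularity of order $sp/(p-1)$, which is below $1$ whenever $s<(p-1)/p$.
\item This is sharp already for $p=2$, which lies in the range for every $s$. There $(-\Delta)^su=f\in\mathrm L^\infty$ gives only $C^{2s}$, so no $C^1$ bound holds for $s<1/2$.
\item Your fallback fails for the same reason. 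Under the $C^{1,\sigma}$ normalization $u(\rho x)/\rho^{1+\sigma}$, the right-hand side becomes $\rho^{sp-(1+\sigma)(p-1)}f(\rho x)$. This grows rather than vanishes as $\rho\to0$ whenever $sp<(1+\sigma)(p-1)$.
\item The H\"older continuity of $f$ does not rescue the argument. For $p>2$, $f(x)$ depends on $u(x)$ through $\Jp$, so $f$ is no more regular than $u$, i.e.\ $C^\beta$ with the small exponent of Proposition~\ref{prop:interior}. The theorem in \cite{GJS} supplies no $C^{1,\sigma}$ theory for such right-hand sides. Only for $p=2$ is the localized $f$ smooth.
\end{itemize}

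\textbf{Secondary issue.} The regularization you propose through Lemma~\ref{lem:natural-tail-approximation} requires $u\ge0$ on $\R^n$. Proposition~\ref{prop:C1} imposes no sign condition, so that lemma does not apply as stated.

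\textbf{Fix.} Drop Steps~2--3 and apply the homogeneous estimate with its tail directly to the rescaled function.
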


\begin{proof}
Apply \cite[Theorem~1.1]{GJS} to \(U(\xi)=u(z+r\xi)\).
After rescaling to the unit ball, the two tail conventions
at radius \(2\) differ by the fixed factor
\(2^{sp/(p-1)}\), which is absorbed into the constant.
Scaling back gives \eqref{eq:C1}.
\end{proof}

\begin{rem} In contrast to the local estimate in \cite[Lemma~2.5]{LN} valid for the (local) $p$-Laplace equation,
the right side of \eqref{eq:C1} includes an exterior contribution.
Any estimate of \(\nabla u\) purely in terms of \(u(x)/d_\Om(x)\)
must account for that contribution.
\end{rem}

\subsection{Boundary decay}

For \(w\in\partial\Om\), put
\[
 \mathcal A_u(w,R)=R^{-n/p}\|u\|_{\mathrm L^p(D_R(w))}
                         +\Tail(u,w,R).
\]

\begin{prop}\label{prop:boundary-decay}
Let \(1<p<\infty\).
Suppose that \(\Theta_w(t)\ge\theta_0>0\) for \(0<t\le R\).
Let \(u\in\Vs(D_R(w))\) solve \(\Fsp u=0\) in \(D_R(w)\),
with zero exterior values on \(B_R(w)\setminus\Om\) in the
sense of Definition~\ref{def:local-zero}. Then there are \(C\ge1\) and \(\beta>0\), depending only on
\(n,s,p,\theta_0\), such that
\begin{equation}\label{eq:boundary-decay}
 \esssup_{D_\rho(w)}|u|
 \le C\left(\frac{\rho}{R}\right)^\beta
             \mathcal A_u(w,R),
 \qquad 0<\rho<R/4.
\end{equation}
\end{prop}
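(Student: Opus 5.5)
The plan is the standard iteration argument: combine a local boundedness estimate up to the boundary with a one-step oscillation (decay) estimate driven by the capacity density condition $\Theta_w\ge\theta_0$. Everything follows the scheme of \cite{KLW}, with the tail tracked explicitly.

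\emph{Step 1: boundary local boundedness.} Since $u$ vanishes on $B_R(w)\setminus\Om$ in the sense of Definition~\ref{def:local-zero}, the positive part $u_+$, extended by zero, is a weak subsolution in all of $B_R(w)$. The same holds for $u_-$ after replacing $u$ by $-u$. Testing with $\eta^p(u-k)_+$ for $k\ge0$ is admissible by the weak zero condition. The De Giorgi iteration of \cite{DKP} then applies verbatim, with Lemma~\ref{lem:caccioppoli} in its boundary form, and gives
\[
 \esssup_{D_{R/2}(w)}|u|\le C\mathcal A_u(w,R).
\]

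\emph{Step 2: one-step decay.} Normalize $M(t)=\esssup_{D_t(w)}u_+$, and let $T(t)$ denote the tail of $u_+$ outside $B_t(w)$. The key claim is a reduction of the supremum at the next scale,
\[
 M(t/4)\le(1-\gamma)M(t)+C\,T(t),
 \qquad\gamma=\gamma(n,s,p,\theta_0)>0.
\]
To prove it, note that $v=M(t)-u_+$ is a nonnegative weak supersolution in $B_{t}(w)$ up to a tail error. Moreover $v=M(t)$ on $\overline B_{t/2}(w)\setminus\Om$, a set of capacity at least $\theta_0^{p-1}t^{n-sp}$ by Lemma~\ref{lem:capacity}. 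A capacitary weak Harnack (infimum) estimate for supersolutions, as in \cite[Section~3]{KLW}, yields
\[
 \inf_{D_{t/4}(w)}v\ge c\,\Theta_w(t/2)\,M(t)-C\,T(t).
\]
This is the main obstacle. I would obtain it by testing the supersolution inequality with $\varphi^p$ times a logarithmic truncation, where $\varphi$ is a near-optimal capacity potential for $\overline B_{t/2}(w)\setminus\Om$ in $B_{t}(w)$. Equivalently, one can cite the Wiener-type estimate \cite[Lemma~3.x]{KLW}, which the paper uses throughout, and only check that tails enter linearly.

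\emph{Step 3: iteration.} Iterate Step 2 over $t_k=4^{-k}R/2$. Split each tail $T(t_k)$ into the part from the annuli $B_{t_j}\setminus B_{t_{j+1}}$, $j<k$, which is bounded by $\sum_j 4^{-(k-j)sp/(p-1)}M(t_j)$, and the part outside $B_{R/2}$, which is bounded by $4^{-ksp/(p-1)}\Tail(u,w,R)$. Choosing a sufficiently large fixed step size $4^{N}$ (so that $4^{-Nsp/(p-1)}\ll\gamma$) and taking $\beta<\min\{\log_4(1/(1-\gamma)),\,sp/(p-1)\}/N$, an induction gives
\[
 M(t_k)\le C4^{-k\beta}\bigl(M(R/2)+\Tail(u,w,R)\bigr).
\]
Together with Step 1 and the same argument for $u_-$, this proves \eqref{eq:boundary-decay} for $\rho=t_k$. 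Monotonicity in $\rho$ then covers intermediate radii.
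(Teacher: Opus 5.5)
Your route differs from the paper's: you try to re-derive the capacitary decay by local boundedness, a one-step capacitary estimate and an iteration, while the paper quotes the finished estimate. As written, however, your Step~3 does not close. The annulus adjacent to \(B_{t_k}(w)\) contributes
\[
 t_k^{sp}\int_{t_k<|y-w|<t_{k-1}}\frac{M(t_{k-1})^{p-1}}{|y-w|^{n+sp}}\,\mathrm d y\asymp M(t_{k-1})^{p-1}
\]
to \(T(t_k)^{p-1}\), and its coefficient is of order one. The correct weight of the \(j\)-th annulus is \((t_k/t_{j+1})^{sp}\), not \((t_k/t_j)^{sp}\), and this index shift matters precisely for \(j=k-1\). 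Enlarging the step to \(4^N\) does not change this, because the nearest annulus always begins at radius \(t_k\). With Step~2 in the form \(M(t/4)\le(1-\gamma)M(t)+CT(t)\), you only obtain \(M_{k+1}\le(1-\gamma)M_k+C'M_{k-1}+\cdots\) with a structural \(C'\). Such a recursion gives no decay unless \(C'<\gamma\), and you cannot arrange that, since \(\gamma\sim c\theta_0\) may be small.

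The small factor has to come from a scale gap in Step~2, not from the annuli. Apply the capacitary infimum estimate at scale \(t\) to \(v=M(t')-u_+\) with \(t'=4^Nt\). Since \(v\ge0\) on all of \(B_{t'}(w)\), its negative tail at radius \(t\) is \((t/t')^{sp/(p-1)}\) times its tail at radius \(t'\). This gives
\[
 M(t/4)\le(1-c\theta_0)M(t')+C4^{-Nsp/(p-1)}T(t'),
\]
and now the order-one nearest-annulus term carries a small factor. Alternatively, compare \(u_+\) with target levels \(K\lambda^k\) rather than the actual suprema and let \(\lambda\uparrow1\), as in the proof of Lemma~\ref{lem:localized-oscillation}. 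For \(1<p<2\), you should run the induction directly on the ansatz \(M(t_j)\le K\lambda^j\) instead of splitting the \((p-1)\)-th root of a sum termwise.

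Apart from this, the paper runs no iteration. The zero-data case of \cite[Corollary~1.5]{KLW} already gives
\[
 \esssup_{D_\rho(w)}|u|\le C\mathcal A_u(w,R)\exp\Bigl(-c\int_{4\rho}^R\Theta_w(t)\,\frac{\mathrm d t}{t}\Bigr),
\]
and \(\Theta_w\ge\theta_0\) turns this into a power. The paper's proof consists of the check you omit: that Definition~\ref{def:local-zero} implies the weak exterior condition of \cite[Definition~3.1]{KLW}. That condition is membership in the closure of \(\operatorname{Lip}_c(\R^n\setminus T)\) in \(\Vs(D_{R'}(w))\), where \(T=B_{R'}(w)\setminus\Om\) and \(R'<R\). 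The part \(\eta u\) is handled by Definition~\ref{def:local-zero}. The far part \((1-\eta)u\) is handled through \(\int|u|^p(1+|y|)^{-n-sp}\,\mathrm d y<\infty\), which is where the hypothesis \(u\in\Vs(D_R(w))\) is used, and then \(R'\uparrow R\).

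Your Step~2 rests on the same estimate from \cite{KLW}, so you need this verification too. It is also what gives meaning to \(v=M(t)\) on \(\overline B_{t/2}(w)\setminus\Om\) and to the subsolution property of \(u_+\) across \(B_R(w)\setminus\Om\). As it stands, your argument never uses \(u\in\Vs(D_R(w))\). Finally, the capacity lower bound in Step~2 is the hypothesis \(\Theta_w\ge\theta_0\) itself, not Lemma~\ref{lem:capacity}.
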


\begin{proof}
We first verify the weak exterior condition used in \cite{KLW}.
Fix \(R'<R\), and put \(T=B_{R'}(w)\setminus\Om\).
In \cite[Definition~3.1]{KLW}, zero values on \(T\) mean membership
in the closure of \(\operatorname{Lip}_c(\R^n\setminus T)\)
in \(\Vs(D_{R'}(w))\). Choose
\(\eta\in C_0^\infty(B_R(w))\) equal to one on a neighborhood
of \(\overline B_{R'}(w)\). By Definition~\ref{def:local-zero},
\(\eta u\) is a \(\Ws(\R^n)\)-limit of smooth functions compactly
supported in \(D_R(w)\). These functions avoid \(T\), and their
convergence also holds in \(\Vs(D_{R'}(w))\).
The remainder \((1-\eta)u\) vanishes on a neighborhood of
\(\overline B_{R'}(w)\). Moreover, \(u\in\Vs(D_R(w))\) implies
\[
 \int_{\R^n}\frac{|u(y)|^p}{(1+|y|)^{n+sp}}\,\d y<\infty,
\]
by fixing an interior point at which the defining energy integral
is finite. Truncation and smooth approximation of the remainder
away from \(\overline B_{R'}(w)\) therefore converge in
\(\Vs(D_{R'}(w))\): the separation from this ball bounds the
interaction kernel by a constant times the displayed weight.
Thus \(u\) has zero values on \(T\) in the sense of \cite{KLW}.

Apply the zero-data case of \cite[Corollary~1.5]{KLW} at radius
\(R'\), and let \(R'\uparrow R\). The local norm and tail in
\(\mathcal A_u(w,R')\) converge to those at radius \(R\), giving
\[
 \esssup_{D_\rho(w)}|u|
 \le C\mathcal A_u(w,R)
 \exp\left(-c\int_{4\rho}^R\Theta_w(t)\frac{\d t}{t}\right).
\]
The lower bound for \(\Theta_w\) gives
\((4\rho/R)^{c\theta_0}\). Decreasing the exponent if necessary
proves \eqref{eq:boundary-decay}.
\end{proof}

\begin{rem} Lemma~\ref{lem:capacity} makes the constants in Proposition~\ref{prop:boundary-decay} uniform in bounded
NTA domains and in sufficiently flat Reifenberg domains.
The assumption \(u\in\Vs(D_R(w))\) supplies the finite energy
required by the quoted capacitary estimate and the proof above
verifies compatibility of its weak exterior condition with ours. Lemma~\ref{lem:natural-tail-approximation}
provides the approximation used below to recover boundary comparison
in the natural tail class.
\end{rem}

\subsection{Scaling and exterior contributions}

For \(a,r>0\), \(z\in\R^n\), and
\(U(\xi)=a^{-1}u(z+r\xi)\), changes of variables in the
weak formulation and in \eqref{eq:tail} give
\[
 \Fsp U(\xi)=a^{1-p}r^{sp}\Fsp u(z+r\xi),\qquad
 \Tail(U,\xi,\rho)=a^{-1}\Tail(u,z+r\xi,r\rho).
\]
In particular, normalization by \(a=r^s\) multiplies the
right side of the equation by \(r^s\).

\begin{lem}\label{lem:cutoff}
Let \(p\ge2\), let \(u\in \mathrm L^{p-1}_{sp}(\R^n)\) be bounded in
\(B_R(z)\), and suppose that \(u\) solves the homogeneous
equation in an open subset of \(B_R(z)\).
Take \(0\le\chi\le1\), with \(\chi=1\) on \(B_R(z)\) and
\(\chi=0\) outside \(B_{2R}(z)\), and fix \(0<\theta<1\).
Then \(v=\chi u\) satisfies
\(\Fsp v=f_\chi\) in that subset intersected with \(B_{\theta R}(z)\),
where
\begin{align}\label{eq:cutoff-error}
 f_\chi(x)
 &=\int_{\R^n\setminus B_R(z)}
 \frac{\Jp(u(x)-\chi(y)u(y))-\Jp(u(x)-u(y))}
 {|x-y|^{n+sp}}\d y,\\
 |f_\chi(x)|
 &\le C R^{-sp}
       \bigl(|u(x)|^{p-1}+(\Tail(u,z,R))^{p-1}\bigr).
 \notag
\end{align}
Here \(C\) depends only on \(n,s,p,\theta\).
The assertions are distributional if the individual principal
values have not been justified.
\end{lem}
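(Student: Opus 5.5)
We verify the weak equation for \(v=\chi u\) by splitting the weak form of \(u\) into near and far interactions, and then bound the resulting error pointwise.

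\textbf{Step 1: the equation for \(v\).} Let \(G\) be the open subset of \(B_R(z)\) where \(\Fsp u=0\), and fix \(\varphi\in C_0^\infty(G\cap B_{\theta R}(z))\). Since \(v=u\) on \(B_R(z)\), the contributions of pairs \((x,y)\in B_R\times B_R\) to the weak forms of \(u\) and \(v\) coincide. The support of \(\varphi\) lies in \(B_{\theta R}\). Hence the remaining pairs in the weak form of \(v\) have exactly one point in \(\supp\varphi\) and the other point \(y\notin B_R\). By symmetry they contribute
\[
 \int_{\supp\varphi}\varphi(x)\int_{\R^n\setminus B_R}\frac{\Jp(u(x)-\chi(y)u(y))}{|x-y|^{n+sp}}\d y\d x .
\]
Subtracting the corresponding expression for \(u\), and using that the weak form of \(u\) against \(\varphi\) vanishes, gives
\[
 \tfrac12\iint\frac{\Jp(v(x)-v(y))(\varphi(x)-\varphi(y))}{|x-y|^{n+sp}}\dxdy=\int f_\chi\varphi\d x,
\]
with \(f_\chi\) as in \eqref{eq:cutoff-error}.

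This subtraction is legitimate for three reasons. The kernel is nonsingular there, since \(|x-y|\ge(1-\theta)R\). Moreover, \(|x-y|\ge c_\theta|y-z|\) for \(x\in B_{\theta R}\) and \(y\notin B_R\). Finally, \(u\) is bounded on \(B_R\) and \(|\chi u|\le|u|\) lies in \(\mathrm L^{p-1}_{sp}\). Together these make both far integrals absolutely convergent, so Fubini applies. The local energy class of \(v\) near \(G\) is that of \(u\), because \(v=u\) on \(B_R\). This is the distributional meaning of the statement.

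\textbf{Step 2: the bound on \(f_\chi\).} For \(p\ge2\) we use the elementary estimate
\[
 |\Jp(a)-\Jp(b)|\le C_p\bigl(|a|^{p-2}+|b|^{p-2}\bigr)|a-b|,
\]
applied with \(a=u(x)-\chi(y)u(y)\), \(b=u(x)-u(y)\) and \(|a-b|\le|u(y)|\). Since \(|a|,|b|\le|u(x)|+|u(y)|\), this bounds the integrand's numerator by
\[
 C\bigl(|u(x)|^{p-2}|u(y)|+|u(y)|^{p-1}\bigr)\le C\bigl(|u(x)|^{p-1}+|u(y)|^{p-1}\bigr),
\]
where the last step is Young's inequality. (A cruder route is also enough: bound each \(\Jp\) term separately by \(C(|u(x)|^{p-1}+|u(y)|^{p-1})\).)

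Now integrate over \(y\notin B_R(z)\) against \(|x-y|^{-n-sp}\le C_\theta|y-z|^{-n-sp}\). The \(|u(x)|^{p-1}\) term gives \(C|u(x)|^{p-1}\int_{|y-z|\ge R}|y-z|^{-n-sp}\d y=C R^{-sp}|u(x)|^{p-1}\). The \(|u(y)|^{p-1}\) term gives \(C R^{-sp}\Tail(u,z,R)^{p-1}\) directly from \eqref{eq:tail}. This proves the bound in \eqref{eq:cutoff-error}, with \(C\) depending on \(n,s,p,\theta\).

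\textbf{Main obstacle.} The only delicate point is Step~1: the rearrangement of the weak form with separated pairs. It requires absolute integrability of each far integral, which is guaranteed by boundedness of \(u\) on \(B_R\), the tail condition, and the separation \(|x-y|\ge c_\theta|y-z|\). The pointwise bound in Step~2 is then routine.
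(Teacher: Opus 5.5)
Your proof is correct and follows the paper's argument. The paper also cancels the \(B_R\times B_R\) interactions because \(v=u\) there, uses the separation \(|x-y|\ge(1-\theta)|y-z|\), and integrates a pointwise bound on the numerator against the kernel, with absolute convergence justifying the subtraction in the weak formulation. For the numerator it uses exactly the cruder route you mention in parentheses, \(|\Jp(a-b)|+|\Jp(a-c)|\le C_p(|a|^{p-1}+|b|^{p-1}+|c|^{p-1})\) together with \(|\chi u|\le|u|\), so your mean-value estimate is valid but not needed.
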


\begin{proof}
The functions \(u,v\) agree in \(B_R(z)\), so their local
contributions cancel upon subtraction.
For \(x\in B_{\theta R}(z)\) and \(y\notin B_R(z)\),
\(|x-y|\ge(1-\theta)|y-z|\).
The inequality
\[
 |\Jp(a-b)|+|\Jp(a-c)|
 \le C_p\bigl(|a|^{p-1}+|b|^{p-1}+|c|^{p-1}\bigr)
\]
and \(|\chi u|\le|u|\) give the stated bound.
Absolute convergence of the exterior integral also justifies
the subtraction in the weak formulation.
\end{proof}

\begin{rem} By Lemma \ref{lem:cutoff}, a cutoff therefore produces a right side even when the original
equation is homogeneous. In particular, boundary results for a
global zero-exterior problem must be localized with this error
accounted for.
\end{rem}

\subsection{The Carleson estimate and control of the tail}

For the local \(p\)-Laplacian in NTA domains, nonnegative
solutions vanishing on a boundary portion satisfy the Carleson
estimate \cite[Lemma~2.4]{LN}, which in our notation takes
the form
\begin{equation}\label{eq:carleson-target}
 \sup_{D_{r/c}(w)}u\le C u(A_{r/c}(w)).
\end{equation}
This estimate follows from boundary H\"older continuity
and the interior Harnack inequality through a Harnack
chain argument.

Proposition~\ref{prop:boundary-normalization} below establishes
the corresponding estimate for the fractional \(p\)-Laplacian
when \(p\ge2\) in sufficiently flat Reifenberg domains.
It also controls the positive tails at interior scales
by the same reference value. The proof combines an interior
positive-tail estimate with boundary local boundedness
and boundary H\"older estimates.

We first establish the tail estimate in a form that requires
no assumptions on the boundary geometry. It applies in any
interior ball and will subsequently be used at corkscrew
points.

\begin{lem}
\label{lem:positive-tail}
Let \(0<s<1\), \(p\ge2\), \(a\in\R^n\), and \(\rho>0\).
Suppose that \(u\ge0\) almost everywhere in \(\R^n\), $u\in\Ws(B_{2\rho}(a))
       \cap\mathrm L^{p-1}_{sp}(\R^n)$, and that \(u\) is weakly fractional \(p\)-harmonic in
\(B_{2\rho}(a)\). Then
\[
 \Tail(u,a,\rho)\le C u(a),
\]
where \(u(a)\) is evaluated using the continuous interior
representative and \(C=C(n,s,p)\).
Moreover,
\[
 \rho^{sp}\int_{\R^n}
 \frac{u(y)^{p-1}}{(\rho+|y-a|)^{n+sp}}
 \,\mathrm d y
 \le C u(a)^{p-1}.
\]
\end{lem}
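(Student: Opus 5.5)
The plan is to test the weak equation with a nonnegative cutoff supported near $a$, so that the positive exterior values appear with a definite sign and are bounded by local quantities controlled through Harnack. By scaling, $U(\xi)=u(a+\rho\xi)/u(a)$, it suffices to treat $a=0$, $\rho=1$, $u(0)=1$. We may assume $u(0)>0$: otherwise, by the strict positivity remark after Proposition~\ref{prop:interior}, $u\equiv0$ and there is nothing to prove. The Harnack inequality \eqref{eq:harnack} applied in $B_{2}$ gives $C^{-1}\le u\le C$ on $B_1$, and the H\"older estimate of Proposition~\ref{prop:interior} gives local regularity there.

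Fix $\varphi\in C_0^\infty(B_{1/2})$ with $0\le\varphi\le1$, $\varphi=1$ on $B_{1/4}$, and insert it in \eqref{eq:weak}. Split $\R^{2n}$ into $B_1\times B_1$ and the two symmetric pieces $B_{1/2}\times B_1^c$ (where $\varphi(y)=0$). This gives
\[
 \int_{B_{1/2}}\varphi(x)\int_{B_1^c}\frac{\Jp(u(x)-u(y))}{|x-y|^{n+sp}}\d y\d x
 =-\frac12\iint_{B_1\times B_1}\frac{\Jp(u(x)-u(y))(\varphi(x)-\varphi(y))}{|x-y|^{n+sp}}\dxdy .
\]
For the right side, H\"older's inequality bounds it by $C[u]_{W^{s,p}(B_1)}^{p-1}[\varphi]_{W^{s,p}(B_1)}$. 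The local energy is controlled by Lemma~\ref{lem:caccioppoli} at a smaller radius, but the resulting bound carries a tail term, which is circular. To avoid this, I would instead use the Caccioppoli inequality with $u$ replaced by $(u-k)$ for a level $k$, or, more simply, shrink to an interior ball where the $C^\beta$ bound holds. The cleanest route is to estimate $|\Jp(u(x)-u(y))|\le C|u(x)-u(y)|^{p-1}$ and control the local energy by $1+\varepsilon T$, where $T=\int_{B_1^c}u^{p-1}|y|^{-n-sp}\d y$. The Caccioppoli bound gives $[u]^p\le C(1+T)$, so the right side is at most $C(1+T)^{(p-1)/p}$.

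For the left side, use $\Jp(u(x)-u(y))\le -c_p\,u(y)^{p-1}+C_p u(x)^{p-1}$, valid since $p\ge2$ and $u\ge0$; this is the elementary inequality $|b-a|^{p-1}\ge 2^{2-p}b^{p-1}-a^{p-1}$. With $u(x)\le C$ on $B_{1/2}$ and $|x-y|\asymp|y|$, the left side is at most $-cT+C$. Combining the two sides yields
\[
 cT\le C+C(1+T)^{(p-1)/p},
\]
and since $(p-1)/p<1$ this forces $T\le C$. This is exactly the claim $\Tail(u,0,1)^{p-1}\le C$, that is $\Tail(u,a,\rho)\le Cu(a)$ after undoing the scaling.

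The second claim follows by splitting $\R^n$ into $B_\rho(a)$, where $u\le Cu(a)$ by Harnack and the weight is at most $\rho^{-n-sp}$, and the complement, where $(\rho+|y-a|)\ge|y-a|$ gives the tail.

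The main obstacle is the circularity between the local energy and the tail. The sublinear power $(p-1)/p$ resolves it, but this requires a careful check that the Caccioppoli constant on $B_1$ with radius $2\rho$ data involves only $\sup_{B_2}u$. That quantity is in turn bounded by Harnack on $B_2$, which must be justified since the equation holds only in $B_{2\rho}$. If Harnack up to $B_2$ is unavailable, I would run the argument at radius $\rho/2$ and relate the tails at comparable radii.
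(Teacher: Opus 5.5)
Your argument works with the fallback radii you mention at the end, and it is a different route from the paper's. As first written it does not close. The equation holds only in $B_2$, so Harnack controls $u$ on $B_1$, whereas Lemma~\ref{lem:caccioppoli} with $r=1$ needs $\int_{B_2}u^p$. The $C^\beta$ alternative would not help either, since $\beta$ need not exceed $s$ and that bound again carries the tail.

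The shrunken version closes cleanly. Take $\varphi\in C_0^\infty(B_{1/4})$ with $\varphi=1$ on $B_{1/8}$, and split the weak form at $B_{1/2}$. Lemma~\ref{lem:caccioppoli} with $r=1/2$, together with $u\le C$ on $B_1$, gives $[u]_{W^{s,p}(B_{1/2})}^p\le C(1+\Tail(u,0,1)^{p-1})$. The test produces the good term $cT'$ with $T'=\int_{\R^n\setminus B_{1/2}}u^{p-1}|y|^{-n-sp}\,\mathrm{d}y\ge\Tail(u,0,1)^{p-1}$. The radii are therefore nested in the right direction, and you get $cT'\le C+C(1+T')^{(p-1)/p}$. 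Finiteness of $T'$, from $u\in\mathrm L^{p-1}_{sp}(\R^n)$, permits the absorption.

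The paper instead follows \cite[Lemma~4.2]{DKP}: with $k=\sup_{B_\rho(a)}u$ it tests with $(u-2k)\eta^p$. Because $u-2k\le -k<0$ on $B_\rho(a)$, the interactions inside $B_\rho(a)$ are bounded below by $-Ck^p\rho^{n-sp}$ using only $0\le u\le k$ there, so no local energy of $u$ is needed. The cross term satisfies $\Jp(b-t)(b-2k)\ge ckt^{p-1}-Ck^p$, which yields $\Tail(u,a,\rho)\le Ck$ linearly, without Caccioppoli and without absorption.

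Your route only puts a smooth test function into the equation itself. The price is importing Lemma~\ref{lem:caccioppoli}, whose own tail term forces the sublinear absorption through $(p-1)/p<1$. It also needs an extra layer of room so that the Caccioppoli data lie inside the Harnack ball. Your aside about a Caccioppoli test at a level $k$ is essentially the paper's idea: at level $2k$ the cross term itself delivers the tail with a favorable sign. Both arguments then use Harnack for $k\le Cu(a)$ and the same splitting for the weighted bound.
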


\begin{proof}
We give the argument of \cite[Lemma~4.2]{DKP} in the local
energy class used here. Write \(B=B_\rho(a)\) and
\(k=\sup_B u\), which is finite by interior local boundedness.
Choose \(\eta\in C_0^\infty(B_{3\rho/4}(a))\) such that
\(0\le\eta\le1\), \(\eta=1\) on \(B_{\rho/2}(a)\), and
\(|\nabla\eta|\le C/\rho\).

The function \((u-2k)\eta^p\) is an admissible test function
by local Sobolev approximation. Splitting the weak equation
into interactions within \(B\) and interactions with \(B^c\)
gives \(I+J=0\), where
\[
 I=\frac12\int_B\int_B
 \frac{\Jp(u(x)-u(y))
 \bigl((u(x)-2k)\eta(x)^p-(u(y)-2k)\eta(y)^p\bigr)}
 {|x-y|^{n+sp}}
 \,\mathrm d y\,\mathrm d x
\]
and
\[
 J=\int_B\int_{B^c}
 \frac{\Jp(u(x)-u(y))(u(x)-2k)\eta(x)^p}
 {|x-y|^{n+sp}}
 \,\mathrm d y\,\mathrm d x.
\]
These integrals are well defined because \(u\) has local
\(p\)-energy and a finite exterior \((p-1)\)-tail. Since \(0\le u\le k\) on \(B\), Young's inequality gives
\[
 I\ge
 -Ck^p\int_B\int_B
 \frac{|\eta(x)-\eta(y)|^p}{|x-y|^{n+sp}}
 \,\mathrm d y\,\mathrm d x
 \ge-Ck^p\rho^{n-sp}.
\]
For \(0\le b\le k\) and \(t\ge0\), we also have
\[
 \Jp(b-t)(b-2k)\ge ck t^{p-1}-Ck^p.
\]
Applying this inequality to \(J\), the separation of
\(\supp\eta\) from \(B^c\) bounds the negative contribution.
For the positive contribution, we use \(\eta=1\) on
\(B_{\rho/2}(a)\) and
\[
 |x-y|\le\frac32|y-a|
 \qquad
 \text{for }x\in B_{\rho/2}(a),\quad y\in B^c.
\]
Consequently,
\[
 J\ge
 ck\rho^{n-sp}(\Tail(u,a,\rho))^{p-1}
 -Ck^p\rho^{n-sp}.
\]
Hence,
\[
0=I+J\geq -Ck^p\rho^{n-sp}+ ck\rho^{n-sp}(\Tail(u,a,\rho))^{p-1}
 -Ck^p\rho^{n-sp},\]
and if \(k>0\), then
\[
 \Tail(u,a,\rho)\le Ck.
\]
The interior Harnack inequality gives \(k\le Cu(a)\). If \(k=0\), then \(u=0\) in \(B\). Testing the equation
with a nonzero nonnegative function supported in \(B\),
and using global nonnegativity, shows that \(u=0\)
almost everywhere in \(\R^n\). The tail estimate therefore
also holds in this case.

Finally, split the weighted integral into \(B\) and \(B^c\).
On \(B\), interior Harnack gives \(u\le Cu(a)\), and hence
\[
 \rho^{sp}\int_B
 \frac{u(y)^{p-1}}{(\rho+|y-a|)^{n+sp}}
 \,\mathrm d y
 \le C u(a)^{p-1}.
\]
On \(B^c\), the same integral is bounded by
\((\Tail(u,a,\rho))^{p-1}\). This proves the second assertion.
\end{proof}

In particular, if \(u\) satisfies the hypotheses of
Lemma~\ref{lem:positive-tail} with \(a=e_n\) and
\(\rho=1/4\), then
\begin{equation}\label{eq:normalized-mass}
 \int_{\R^n}
 \frac{u(y)^{p-1}}{(1+|y|)^{n+sp}}
 \,\mathrm d y
 \le C u(e_n)^{p-1}.
\end{equation}
Indeed,
\(\frac14+|y-e_n|\le\frac54(1+|y|)\), so this follows
from the weighted estimate in the lemma.
No half-space assumption is needed for this conclusion.

We now apply the interior tail estimate at a corkscrew point.

\begin{prop}\label{prop:boundary-normalization}
Let \(0<s<1\), \(p\ge2\), and \(0<\beta<s\).
Then there exists \(\delta_0=\delta_0(n,s,p,\beta)>0\) such
that the following holds. Let \(\Om\) be \((\delta,r_0)\)-Reifenberg flat with
\(0<\delta\le\delta_0\). Suppose that
\(z\in\partial\Om\), \(0<8r<r_0\), and that \(U\) is a
nontrivial weak solution of \(\Fsp U=0\) in \(D_{8r}(z)\), nonnegative
on \(\R^n\), with finite tail and zero exterior values
on \(B_{8r}(z)\setminus\Om\) in the sense of
Definition~\ref{def:local-zero}.
Set \(a_r=U(A_r(z))\). Then
\begin{align}
 r^{-n}\int_{B_{4r}(z)}U^{p-1}\,\mathrm d y
 +r^{sp}\int_{\R^n\setminus B_{4r}(z)}
 \frac{U(y)^{p-1}}{|y-z|^{n+sp}}\,\mathrm d y
 &\le C a_r^{p-1},
 \label{eq:boundary-normalized-mass}\\
 \|U\|_{\mathrm L^\infty(B_{2r}(z))}
 +r^\beta[U]_{C^\beta(B_{2r}(z))}
 &\le C a_r.
 \label{eq:boundary-normalized-holder}
\end{align}
In particular,
\begin{equation}\label{eq:boundary-normalized-decay}
 U(x)\le C a_r\left(\frac{d_\Om(x)}r\right)^\beta
 \quad\text{for }x\in D_r(z),
 \qquad
 \int_{D_r(z)}U\,\mathrm d x\asymp a_r r^n.
\end{equation}
The constants in these estimates depend only on
\(n,s,p,\beta\) and the fixed corkscrew constants.
\end{prop}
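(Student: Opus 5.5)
The plan is to prove the Carleson-type bound \eqref{eq:boundary-normalized-holder} first, derive the mass and tail bound \eqref{eq:boundary-normalized-mass} from it, and obtain \eqref{eq:boundary-normalized-decay} at the end.

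\textbf{Step 1: control at interior scales.} Let \(x\in D_{7r}(z)\) with \(d=d_\Om(x)\). Lemma~\ref{lem:positive-tail} on the ball \(B_{d/2}(x)\), combined with the interior Harnack inequality \eqref{eq:harnack}, gives
\[
\sup_{B_{d/4}(x)}U+\Tail(U,x,d/4)\le C\,U(x).
\]
By Remark~\ref{rem:localized-chains}, Harnack chains inside the patch show that \(U(x)\le C(\theta)a_r\) whenever \(x\in D_{6r}(z)\) and \(d_\Om(x)\ge\theta r\).

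\textbf{Step 2: the Carleson estimate.} This is the main obstacle. The difficulty is to bound \(\sup_{D_{2r}}U\) by \(a_r\) without an a priori bound, since Proposition~\ref{prop:boundary-decay} involves \(\mathcal A_U\), which itself needs control.

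I would use the classical Caffarelli--Fabes--Mortola--Salsa / Lewis--Nyström iteration.
\begin{itemize}
\item Suppose \(U(x_0)=Ma_r\) with \(M\) huge at some \(x_0\in D_{2r}\).
\item Boundary decay at the nearest boundary point \(w_0\), at a scale \(R_0\) comparable to \(M^{-1/\beta'}r\), forces \(\mathcal A_U(w_0,R_0)\gtrsim (R_0/d(x_0))^{-\beta}Ma_r\).
\item By Step 1, the interior part of \(\mathcal A_U\) at the corkscrew level is controlled. So either the local \(L^p\) norm or the tail at scale \(R_0\) is large.
\item Large local \(L^p\) norm yields a new point \(x_1\) with \(U(x_1)\ge 2Ma_r\) close to \(x_0\). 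Large tail is also converted into large local sup via the Step 1 tail bound, because the far part is controlled at scale \(r\) by \(a_r\).
\end{itemize}
Iterating produces a sequence \(x_j\to\partial\Om\) with \(U(x_j)\to\infty\) and \(\sum|x_j-x_{j-1}|\) small, so the sequence stays in \(D_{4r}\). This contradicts boundary continuity of \(U\), which is finite by the boundary decay applied at a fixed coarser scale where \(\mathcal A_U<\infty\).

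A cleaner variant: since \(p\ge2\) and the decay exponent can be taken as any \(\beta<s\) by \cite{BKS} for small \(\delta\), work with \(\mathcal A\) at dyadic scales and take a weighted supremum. Define
\[
N=\sup_{x\in D_{4r}}U(x)\,\bigl(d(x,\partial B_{4r})/r\bigr)^{\gamma},
\]
which is finite by boundary continuity. Estimate \(N\le C a_r+\tfrac12 N\) using the decay and the tail bound of Step 1 applied at the corkscrew of each boundary ball.

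\textbf{Step 3: mass and tail.} With \(\sup_{D_{2r}}U\le Ca_r\), and \(U=0\) on \(B_{8r}\setminus\Om\), the local integral in \eqref{eq:boundary-normalized-mass} follows (after the same argument at scale \(4r\) with enlarged constants). For the tail, apply Lemma~\ref{lem:positive-tail} at the corkscrew \(A_r(z)\) with \(\rho\) comparable to \(r/M_{\mathrm{geom}}\). The weighted integral there dominates \(r^{sp}\int_{B_{4r}^c}U^{p-1}|y-z|^{-n-sp}\,\mathrm dy\), since \(\rho+|y-A|\) is comparable to \(|y-z|\) for \(y\notin B_{4r}\).

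\textbf{Step 4: H\"older bound and consequences.} The \(\beta\)-H\"older bound follows from combining two estimates:
\begin{itemize}
\item the boundary H\"older estimate of \cite{BKS} for flat Reifenberg domains, whose input is exactly the local sup and tail bound just obtained;
\item the interior bound of Proposition~\ref{prop:interior} on Whitney balls.
\end{itemize}
The decay in \eqref{eq:boundary-normalized-decay} follows by comparing with a nearest boundary point where \(U=0\). The lower mass bound \(\int_{D_r}U\gtrsim a_r r^n\) comes from Harnack on the corkscrew ball, and the upper mass bound from the sup bound.
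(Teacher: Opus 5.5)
Your Step~2, which you identify as the crux, is not a proof as written, and the Carleson iteration is not needed.

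\textbf{Where the iteration fails.} When the decay estimate at the small scale \(R_0\) is driven by a large tail, the contribution of \(U\) on the intermediate region \(B_r(w_0)\setminus B_{R_0}(w_0)\) is controlled neither by Step~1 nor by \(a_r\). It is controlled only by the supremum you are trying to bound. The new large value can then sit at distance comparable to \(r\) from \(x_0\), so the displacements need not be summable and the sequence can leave the patch.

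\textbf{Further problems.}
\begin{itemize}
\item The weighted-supremum variant could absorb the intermediate region, but you only assert \(N\le Ca_r+\tfrac12N\). Its finiteness already requires a boundary local boundedness estimate.
\item Proposition~\ref{prop:boundary-decay} assumes \(U\in\Vs(D_R(w))\). The hypotheses here give only \(U\in\Ws(B_{8r}(z))\cap\mathrm L^{p-1}_{sp}(\R^n)\), via Definition~\ref{def:local-zero}.
\item Step~3 needs a supremum bound on all of \(D_{4r}(z)\) to control \(\int_{B_{4r}(z)}U^{p-1}\). A bound on \(D_{2r}(z)\) does not suffice.
\end{itemize}

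\textbf{The missing idea.} Lemma~\ref{lem:positive-tail} at the corkscrew point already gives the local mass, not only the far tail. Take \(A=A_r(z)\) with \(d_\Om(A)\ge\kappa r\), and set \(\rho=\kappa r/4\). The lemma yields
\[
 \int_{\R^n\setminus B_\rho(A)}\frac{U(y)^{p-1}}{|y-A|^{n+sp}}\,\mathrm d y\le Cr^{-sp}a_r^{p-1}.
\]
Since \(|y-A|\le5r\) for \(y\in B_{4r}(z)\), this bounds \(\int_{B_{4r}(z)\setminus B_\rho(A)}U^{p-1}\,\mathrm d y\) by \(Cr^na_r^{p-1}\). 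Interior Harnack handles \(B_\rho(A)\). Off \(B_{4r}(z)\) one has \(|y-A|\asymp|y-z|\), which gives the tail.

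Thus \eqref{eq:boundary-normalized-mass} comes first, with no Carleson estimate. It states exactly that the \(\mathrm L^{p-1}\) average of \(U\) over \(D_{4r}(z)\) plus \(\Tail(U,z,4r)\) is at most \(Ca_r\). That is the normalization used in two estimates of \cite{BKS}:
\begin{itemize}
\item the boundary local boundedness estimate \cite[Lemma~2.6]{BKS};
\item the boundary H\"older estimate \cite[Theorem~1.8]{BKS}, with exponent \(\beta<s\) once \(\delta_0(\beta)\) is small.
\end{itemize}
Both apply in the local energy and tail class, since their localized truncation tests are admissible under Definition~\ref{def:local-zero}. Together they give \eqref{eq:boundary-normalized-holder} directly. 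The Carleson bound is then a by-product of local boundedness, with no iteration. Finally, \eqref{eq:boundary-normalized-decay} follows as in your Step~4.
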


\begin{proof}
Write \(A=A_r(z)\). By the corkscrew condition, $|A-z|<r$, $d_\Om(A)\ge\kappa r$, for a fixed \(\kappa\in(0,1]\).
Taking \(\rho=\kappa r/4\), we have $B_{2\rho}(A)\subset D_{2r}(z)$. Lemma~\ref{lem:positive-tail} therefore gives
\[
 \int_{\R^n\setminus B_\rho(A)}
 \frac{U(y)^{p-1}}{|y-A|^{n+sp}}\,\mathrm d y
 \le C\rho^{-sp}a_r^{p-1}
 \le Cr^{-sp}a_r^{p-1}.
\]
Interior Harnack also gives \(U\le Ca_r\) on \(B_\rho(A)\). For \(y\in B_{4r}(z)\), we have \(|y-A|\le5r\).
Thus the preceding tail estimate controls the integral
of \(U^{p-1}\) over \(B_{4r}(z)\setminus B_\rho(A)\).
The integral over \(B_\rho(A)\) is controlled by Harnack.
For \(y\notin B_{4r}(z)\),
\[
 \frac34|y-z|\le |y-A|\le\frac54|y-z|,
\]
so the same tail estimate controls the exterior integral
centered at \(z\). This proves
\eqref{eq:boundary-normalized-mass}. Reifenberg flatness gives
\(|D_{4r}(z)|\asymp r^n\). Consequently,
\[
 \left(
 \frac{1}{|D_{4r}(z)|}
 \int_{D_{4r}(z)}U^{p-1}\,\mathrm d y
 \right)^{1/(p-1)}
 +\Tail(U,z,4r)
 \le Ca_r.
\]
This is the normalization appearing in
\cite[Lemma~2.6 and Theorem~1.8]{BKS}.
Applying those estimates after translation and rescaling
gives \eqref{eq:boundary-normalized-holder}, provided
\(\delta_0\) is chosen sufficiently small depending on
\(n,s,p,\beta\).
The localized truncation test functions used in these estimates are
admissible by Definition~\ref{def:local-zero}.
Indeed, \(\eta U\in\Wzero(D_{8r}(z))\) for every
\(\eta\in C_0^\infty(B_{8r}(z))\).
Further cutoffs, Lipschitz truncations vanishing at zero,
and Sobolev approximation preserve this test function class.
Local energy and tail integrability justify passage to the limit
in the weak pairing. The exterior values remain in the tail term
throughout.

The continuous representative of \(U\) vanishes on
\(\partial\Om\cap B_{2r}(z)\).
For \(x\in D_r(z)\), choose a nearest boundary point
\(\zeta\in\partial\Om\). Since
\(d_\Om(x)\le|x-z|<r\), we have \(\zeta\in B_{2r}(z)\).
Hence
\[
 U(x)=|U(x)-U(\zeta)|
 \le [U]_{C^\beta(B_{2r}(z))}d_\Om(x)^\beta,
\]
which proves the decay estimate. The upper bound on the integral follows from
\eqref{eq:boundary-normalized-holder}.
For the lower bound, set $E=B_\rho(A)\cap B_r(z)$. Then \(E\subset D_r(z)\) and
\(|E|\ge c\rho^n\ge cr^n\).
Interior Harnack gives \(U\ge ca_r\) on \(B_\rho(A)\), so
\[
 \int_{D_r(z)}U\,\mathrm d x
 \ge\int_E U\,\mathrm d x
 \ge ca_r r^n.
\]
This completes the proof.
\end{proof}

\begin{rem} The boundary measure in \cite[Lemma~2.6]{LN}, $p$-harmonic measure,
has a different non-local counterpart.
If \(u\ge0\) belongs to
\(\mathrm L^{p-1}_{sp}(\R^n)\) and vanishes on an open
exterior set, then at points \(x\) of that set with positive
distance from the essential support of \(u\),
\[
 -\Fsp u(x)=
 \int_{\R^n}
 \frac{u(y)^{p-1}}{|x-y|^{n+sp}}\,\mathrm d y.
\]
Thus the exterior interaction extends into the complement
of \(\Om\). In contrast to \cite{LN}, in this paper no boundary-supported measure identity is used
in the subsequent arguments.
\end{rem}

\section{The half-space and boundary gradient estimates}
\label{sec:flat}

This section develops the flat boundary estimates used later
to approximate solutions in Reifenberg domains. The explicit
profile \((x_n)_+^s\) provides the reference behavior near
the boundary of the half-space. Using the normalization
established in Section~\ref{sec:basic}, we obtain two-sided
bounds and H\"older estimates for \(u/x_n^s\) near the flat
boundary. This is analogous to the corresponding local theory in \cite{LN} where the initial step consists of
obtaining two-sided
bounds and H\"older estimates for \(u/x_n\). The argument combines boundary barriers with
localized weighted estimates, keeping track of the exterior
tails so that distant nonnegative data are controlled by
a single interior reference value.

The half-space boundary comparison holds for every \(p\ge2\).
We then combine the quotient estimates with interior
\(C^{1,\alpha}\) regularity to obtain boundary gradient
asymptotics and quantitative non-degeneracy.
This step uses \cite{GJS} and therefore requires
\(2\le p<2/(1-s)\).
These gradient estimates will subsequently be transferred
to sufficiently flat Reifenberg domains.

\subsection{The half-space profile}

Put \(\HH=\{x\in\R^n\mid x_n>0\}\).

\begin{prop}\label{prop:profile}
Let \(0<s<1\) and \(1<p<\infty\).
The function \(h(x)=(x_n)_+^s\) belongs to
\(\Ws_{\mathrm{loc}}(\R^n)\cap \mathrm L^{p-1}_{sp}(\R^n)\) and satisfies
\(\Fsp h=0\) in \(\HH\). Moreover,
\begin{equation}\label{eq:profile-gradient}
 \nabla h(x)=s x_n^{s-1}e_n,\qquad
 |\nabla h(x)|=s\frac{h(x)}{x_n}.
\end{equation}
\end{prop}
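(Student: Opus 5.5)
The plan is to reduce everything to one dimension and then lift back by integrating out the tangential variables. Formula \eqref{eq:profile-gradient} is immediate: for \(x\in\HH\) the function \(h\) is smooth near \(x\) and \(\nabla h=s x_n^{s-1}e_n\). For the tail class, I will use \(h(y)^{p-1}\le(1+|y|)^{s(p-1)}\). The integrand in the definition of \(\mathrm L^{p-1}_{sp}(\R^n)\) is then bounded by \((1+|y|)^{s(p-1)-n-sp}=(1+|y|)^{-n-s}\), which is integrable. Local integrability of \(h^{p-1}\) is clear.

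For \(h\in\Ws_{\mathrm{loc}}(\R^n)\), Hölder continuity of order \(s\) alone is not enough, so I will compute directly. On a ball \(B_R\), the function \(h(x)-h(y)\) depends only on \((x_n,y_n)\). For fixed \(x_n,y_n\), I integrate the kernel over \(y'\) with \(|x'-y'|\lesssim R\). This gives at most \(C|x_n-y_n|^{-1-sp}\), by the identity
\[
 \int_{\R^{n-1}}\bigl(|z'|^2+t^2\bigr)^{-(n+sp)/2}\d z'=c_{n,s,p}|t|^{-1-sp}.
\]
It then suffices to check that \(g(t)=(t)_+^s\) satisfies
\[
 \iint_{(-R,R)^2}\frac{|g(t)-g(\tau)|^p}{|t-\tau|^{1+sp}}\d t\,\d\tau<\infty .
\]
The integrand is homogeneous of degree \(sp-1-sp=-1\) in \((t,\tau)\in\R^2\). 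Because the exponent \(-1\) is strictly greater than \(-2\), it is integrable near the origin in polar coordinates. Away from the origin and the diagonal it is bounded, and near the diagonal the local smoothness of \(g\) on \(t>0\) (with \(g\equiv0\) on \(t<0\)) gives an integrable singularity of order \(|t-\tau|^{p-1-sp}\), where \(p-1-sp>-1\).

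For the equation, I first prove the pointwise identity \(\Fsp h(x)=0\) for \(x\in\HH\), with the principal value well defined because \(h\) is \(C^{1,1}\) near \(x\). Applying the same \(y'\)-integration to the increments, \(\Fsp h(x)=c_{n,s,p}\Phi(x_n)\), where
\[
 \Phi(t)=\operatorname{PV}\int_\R\frac{\Jp(t^s-g(\tau))}{|t-\tau|^{1+sp}}\d\tau .
\]
Absolute convergence away from the diagonal justifies Fubini, and symmetry of the truncation handles the principal value part. The scaling \(\tau=t\sigma\) gives \(\Phi(t)=t^{s(p-1)-sp}\Phi(1)=t^{-s}\Phi(1)\), so everything reduces to \(\Phi(1)=0\).

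This vanishing is the main obstacle. I would invoke the one-dimensional computation of Iannizzotto, Mosconi and Squassina \cite{IMS16}, which shows that \((t)_+^s\) is \(s\)-fractional \(p\)-harmonic on \((0,\infty)\). If a self-contained argument is preferred, the approach I would try first is the following. Split \(\Phi(1)\) into the contributions from \(\tau<0\), \(0<\tau<1\) and \(\tau>1\). Then apply the inversion \(\tau\mapsto1/\tau\) on \((1,\infty)\), which maps it to \((0,1)\), with the exponents \(s\) and \(1+sp\) chosen so that the Jacobian and kernel factors match. The aim is for the \(\tau>1\) part to cancel the other two. I expect this cancellation to be the genuinely delicate step.

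Finally, I pass from the pointwise identity to \eqref{eq:weak} for \(\varphi\in C_0^\infty(\HH)\). I write the weak form as the limit over \(\varepsilon\to0\) of the integral over \(\{|x-y|>\varepsilon\}\), use antisymmetry to rewrite it as \(\int\varphi(x)\int_{|x-y|>\varepsilon}(\dots)\,\d y\,\d x\), and apply dominated convergence. The domination comes from the uniform \(C^{1,1}\) bounds of \(h\) on \(\supp\varphi\) together with the tail bound above.
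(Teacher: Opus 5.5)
Your reductions are sound and follow the paper's proof: the tail bound by $(1+|y|)^{-n-s}$, the local energy via tangential integration and the degree $-1$ homogeneity of the one-dimensional integrand, the scaling $\Phi(t)=t^{-s}\Phi(1)$, and the passage to the weak form. For the local energy the paper substitutes $\tau=tz$ on the positive side, which is the same computation as your polar coordinates. One precision for $1<p<2$: the principal value and your domination use that the increments of $h$ near $x$ are comparable to $|x_n-y_n|$, not only $C^{1,1}$ bounds. This holds here because $h$ depends only on $x_n$ and $\partial_nh>0$ in $\HH$.

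The gap is at the only step with real content, $\Phi(1)=0$. Quoting \cite{IMS16} is legitimate, but then the proposition is imported rather than proved. Your self-contained route uses exactly the inversion the paper uses, but it stops at the hoped-for cancellation. The missing identity is that, after $\tau=1/\sigma$ on $(1,\infty)$ and combination with $(0,1)$, the positive half-line integrand is an exact derivative:
\[
 \frac{(1-\sigma^s)^{p-1}(1-\sigma^{s-1})}{(1-\sigma)^{1+sp}}
 =\frac1{sp}\,\frac{\mathrm d}{\mathrm d\sigma}
 \left[\frac{(1-\sigma^s)^p}{(1-\sigma)^{sp}}\right].
\]
The bracket equals $1$ at $\sigma=0$ and tends to $0$ as $\sigma\uparrow1$, since it behaves like $s^p(1-\sigma)^{p(1-s)}$. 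So the positive half-line contributes exactly $-1/(sp)$, which cancels $\int_{-\infty}^0(1-\tau)^{-1-sp}\,\mathrm d\tau=1/(sp)$.

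Also, the three-way split cannot be made on untruncated integrals. When $p(1-s)\le1$, the pieces over $(0,1)$ and $(1,\infty)$ diverge separately at $\tau=1$, so you must work with $\{|\tau-1|>\varepsilon\}$. The inversion maps $\{\tau>1+\varepsilon\}$ onto $\{\sigma<1/(1+\varepsilon)\}$ rather than onto $\{\sigma<1-\varepsilon\}$. The leftover strip has width $O(\varepsilon^2)$ and the uncancelled integrand there is $O(\varepsilon^{p-2-sp})$, so it contributes $O(\varepsilon^{p(1-s)})\to0$. With these two points supplied, your argument coincides with the paper's proof.
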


\begin{proof}
Integrating first in the tangential variables reduces the equation,
up to a positive dimensional constant, to the one-dimensional
equation for \(t_+^s\).
By scaling it is enough to evaluate the operator at \(t=1\).
The contribution from the negative half-line is
\[
 \int_{-\infty}^0\frac1{|1-t|^{1+sp}}\d t=\frac1{sp}.
\]
In the integral over \((1,\infty)\), make the change of variables
\(t=1/\tau\). Combining it with the integral over \((0,1)\) gives
\[
 \operatorname{PV}\int_0^\infty
 \frac{|1-t^s|^{p-2}(1-t^s)}{|1-t|^{1+sp}}\d t
 =
 \int_0^1
 \frac{(1-t^s)^{p-1}(1-t^{s-1})}{(1-t)^{1+sp}}\d t.
\]
The difference between the transformed truncations tends to zero.
Indeed, near \(1\) their widths differ by \(O(\varepsilon^2)\),
whereas the integrand before cancellation is
\(O(\varepsilon^{p-2-sp})\).
The resulting error is \(O(\varepsilon^{p(1-s)})\). For
\(\Phi(t)=(1-t^s)^p/(1-t)^{sp}\), direct differentiation gives
\[
 \Phi'(t)=sp
 \frac{(1-t^s)^{p-1}(1-t^{s-1})}{(1-t)^{1+sp}}.
\]
Since \(\Phi(0)=1\) and \(\Phi(1)=0\), the positive half-line
contribution is \(-1/(sp)\).
This proves the pointwise equation. Tangential integration can
equivalently be carried out in the symmetrized weak formulation,
which gives the same conclusion. For completeness, the local one-dimensional energy across \(0\)
is finite because
\[
 \int_0^1 t^{sp}\int_{-1}^0
          (t-\tau)^{-1-sp}\d\tau\,\mathrm d t<\infty.
\]
On the positive side, set \(\tau=tz\), \(0<z<1\).
The resulting integral in \(z\) has integrand
\((1-z^s)^p/(1-z)^{1+sp}\), which is integrable.
Tangential integration proves the local energy assertion in
\(\R^n\).
At infinity, the tail is bounded by a constant times
\(\int_1^\infty\rho^{-s-1}\d\rho\).
Finally, differentiation proves \eqref{eq:profile-gradient}.
\end{proof}

The half-space profile \(h(x)=(x_n)_+^s\) does not belong to
\(\Vs(D)\) for any nonempty bounded open set \(D\).
Indeed, its exterior \(p\)-energy diverges logarithmically on
any fixed cone where \(y_n\asymp|y|\) as \(|y|\to\infty\).
It nevertheless belongs to the local energy and tail class used in
Definition~\ref{def:weak}.
This distinction is another reason to keep the energy formulation
and the more general tail formulation separate.

\subsection{A normalized theorem in a flat boundary patch}

The proof follows the boundary comparison strategy of \cite{LN,LLN},
with \(x_n^s\) replacing \(x_n\). The reflection step in the local
argument has to be replaced. Indeed, if
\(\widetilde h(x)=\operatorname{sgn}(x_n)|x_n|^s\), then
Proposition~\ref{prop:profile} and subtraction of the exterior
integrals give
\[
 \Fsp\widetilde h(x)
 =\int_{\{y_n<0\}}
 \frac{(x_n^s+|y_n|^s)^{p-1}-x_n^{s(p-1)}}
 {|x-y|^{n+sp}}\d y>0,\qquad x\in\HH.
\]
Thus odd reflection does not preserve the homogeneous equation.
We instead use flat barriers and the weighted estimates of
Iannizzotto, Mosconi, and Squassina \cite{IMS20}. Throughout this section, we write \(B_r=B_r(0)\).

\begin{thm}\label{thm:flat}
Let \(0<s<1\) and \(p\ge2\).
Suppose that
\(u\in\Ws(B_4)\cap \mathrm L^{p-1}_{sp}(\R^n)\) is nontrivial,
nonnegative in \(\R^n\), and satisfies
\[
 \Fsp u=0\quad\text{in }\HH\cap B_4,\qquad
 u=0\quad\text{on }B_4\setminus\HH
\]
in the sense of Definition~\ref{def:local-zero}.
Then there exist \(C_F\ge1\) and \(\beta_F\in(0,1)\), depending only
on \(n,s,p\), such that \(q_u=u/x_n^s\) extends continuously to
\(\overline{\HH\cap B_1}\) and
\begin{equation}\label{eq:flat-assumption}
 C_F^{-1}u(e_n)\le q_u\le C_Fu(e_n),\qquad
 [q_u]_{C^{\beta_F}(\overline{\HH\cap B_1})}
 \le C_Fu(e_n).
\end{equation}
Moreover,
\begin{equation}\label{eq:flat-normalization}
 \|u\|_{\mathrm L^\infty(B_2)}+\Tail(u,0,2)\le C_Fu(e_n).
\end{equation}
\end{thm}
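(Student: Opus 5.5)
By homogeneity we normalize \(u(e_n)=1\). The plan has four steps, in this order: normalization \eqref{eq:flat-normalization}, the upper bound for \(q_u\), the lower bound for \(q_u\), and the H\"older estimate.

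\emph{Normalization.} The half-space is \((\delta,r_0)\)-Reifenberg flat for every \(\delta\) and every \(r_0\), and \(e_n\) can serve as a corkscrew point at unit scale. We apply Proposition~\ref{prop:boundary-normalization}, after a fixed rescaling so that the hypotheses in \(B_4\) match its \(D_{8r}\) setting with \(r=1/2\). Combined with interior Harnack chains between \(A_{1/2}(0)\) and \(e_n\), this gives \(\|u\|_{\mathrm L^\infty(B_2)}\le C\) and the positive-tail bound. Lemma~\ref{lem:positive-tail} at \(e_n\) then converts the tail centered at \(e_n\) into one centered at \(0\). This proves \eqref{eq:flat-normalization}, with the full exterior data absorbed into the single value \(u(e_n)\).

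\emph{Upper bound for \(q_u\).} Given the normalization, the flat boundary estimate \cite[Lemma~3.1]{BKS} gives \(u\le Cx_n^s\) in \(\HH\cap B_{3/2}\). The dependence there is on the local supremum and the tail, both now \(\le C\). If one prefers a self-contained route, compare \(u\) in \(\HH\cap B_2\) with \(C h\) plus a localized supersolution correcting for exterior mass in \(B_2^c\). The correction is controlled by Lemma~\ref{lem:cutoff} together with a torsion-type barrier \(\sim x_n^s\), applied through Proposition~\ref{prop:comparison} on the truncated problem.

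\emph{Lower bound for \(q_u\).} We use a tangent-ball comparison. For \(x\in\HH\cap B_1\), take the ball \(B=B_{1/4}(x'+\tfrac14 e_n)\), which is tangent to \(\partial\HH\) at \(x'\). Harnack chains give \(u\ge c\) on a compact sub-ball of \(\HH\cap B_2\) at height \(\approx1/4\). Let \(\phi\) be the variational solution in the annular domain \(B\setminus\overline B_{1/16}(x'+\tfrac14e_n)\), with data \(c\) on the inner ball and \(0\) outside \(B\). By global nonnegativity of \(u\), the exterior data of \(u\) dominate those of \(\phi\), so comparison gives \(u\ge\phi\). One then needs the Hopf-type lower bound \(\phi\ge c\,\dist(\cdot,B^c)^s\), which for \(p\ge2\) is available from \cite{IMS20}, or from an explicit subsolution built from \((1-|y|^2)_+^s\). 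Evaluated along the normal segment, this yields \(u(x)\ge cx_n^s\).

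\emph{H\"older estimate for \(q_u\).} We localize with a cutoff \(\chi\) equal to \(1\) on \(B_{3/2}\). By Lemma~\ref{lem:cutoff}, \(v=\chi u\) solves \(\Fsp v=f\) in \(\HH\cap B_{1}\) with \(|f|\le C\), by \eqref{eq:flat-normalization}. Since \(v\) vanishes on \(\R^n\setminus(\HH\cap B_2)\), the fine boundary regularity of \cite{IMS20} for bounded right-hand sides gives \(v/x_n^s\in C^{\beta_F}\). The domain must be adjusted to a smooth \(C^{1,1}\) set containing \(\HH\cap B_{3/2}\) whose boundary agrees with \(\{x_n=0\}\) near \(B_1\), and the estimates are used in their local form.

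\textbf{Main obstacle.} The quoted estimates of \cite{IMS20} are global and controlled by \(\|v\|_\infty\) and \(\|f\|_\infty\). Localizing them in the patch requires care because the equation is known only near \(B_1\). I would handle this as in Lemma~\ref{lem:localized-oscillation}, running the weighted oscillation iteration for \(v/x_n^s\) on half-balls and tracking the tails of the quotient at every scale. The two-sided bound from the previous steps supplies the scale-invariant starting oscillation, and the upper bound controls the quotient tails.
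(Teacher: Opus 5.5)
Your proposal is correct and follows essentially the paper's route: tail normalization through Lemma~\ref{lem:positive-tail} (your detour through Proposition~\ref{prop:boundary-normalization} is unnecessary, since \cite[Lemma~3.1]{BKS} at radius \(4\) already gives \(u\le Cx_n^s\) on \(\HH\cap B_2\)), a tangent-ball comparison for the lower bound (the paper uses the explicit subsolution \(\varepsilon\tau+a_0\psi\) built from the ball torsion function of \cite[Lemma~2.2]{IMS20}, where you use an annular Dirichlet solution plus a Hopf lemma), and a cutoff followed by the localized oscillation iteration of Lemma~\ref{lem:localized-oscillation}. The one step you leave implicit is the passage from oscillation decay at flat boundary points to the full \(C^{\beta_F}\) seminorm: this requires combining the iteration with interior H\"older estimates on Whitney balls via \(\|u\|_{\mathrm L^\infty(B_{t/2}(x))}+\Tail(u,x,t/2)\le Ct^s\); in addition, the upper bound must hold on all of \(\supp\chi\), not just on \(\HH\cap B_{3/2}\).
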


The proof of Theorem \ref{thm:flat} uses the two-sided profile bounds in
Lemma~\ref{lem:flat-two-sided}, and the localized oscillation
estimate in Lemma~\ref{lem:localized-oscillation}, both stated and proved below.
All comparisons below retain the values on the entire complement
of the comparison domain.

\begin{lem}\label{lem:flat-two-sided}
Under the hypotheses of Theorem~\ref{thm:flat}, we have
\[
 u(x)\le C u(e_n)x_n^s\quad\text{in }\HH\cap B_2,
 \qquad
 u(x)\ge c u(e_n)x_n^s\quad\text{in }\HH\cap B_1,
\]
and \eqref{eq:flat-normalization} holds with constants
depending only on \(n,s,p\).
\end{lem}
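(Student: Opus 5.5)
We prove the normalization \eqref{eq:flat-normalization} first, then the upper bound, then the lower bound; the lower bound is the only step that needs a new construction.

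\textbf{Normalization.} The half-space \(\HH\) is \((\delta,r_0)\)-Reifenberg flat for every \(\delta\) and every \(r_0\), and \(e_n\) is a corkscrew point for the origin at unit scale. We therefore apply Proposition~\ref{prop:boundary-normalization} with \(z=0\) and \(r=1/2\), after a harmless rescaling of radii so that \(D_{8r}(0)\subset\HH\cap B_4\). The Harnack chain of Remark~\ref{rem:localized-chains} compares \(u(A_{1/2}(0))\) with \(u(e_n)\), so \(a_r\asymp u(e_n)\). Estimates \eqref{eq:boundary-normalized-mass}--\eqref{eq:boundary-normalized-holder} then give \(\|u\|_{\mathrm L^\infty(B_2)}\le Cu(e_n)\) and \(\Tail(u,0,2)\le Cu(e_n)\). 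If the radii do not quite fit, we use Lemma~\ref{lem:positive-tail} at \(e_n\) together with \eqref{eq:normalized-mass} and cover \(B_2\) by a fixed number of boundary patches. This proves \eqref{eq:flat-normalization}.

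\textbf{Upper bound.} We normalize \(u(e_n)=1\). The flat boundary upper estimate \cite[Lemma~3.1]{BKS} applies at each \(\zeta\in\partial\HH\cap B_2\) at unit scale, with the quantity \(\|u\|_{\mathrm L^\infty(B_3)}+\Tail(u,\zeta,1)\le C\) already controlled by the previous step. It gives \(u(x)\le Cx_n^s\) near \(\zeta\). Alternatively, we can build a supersolution barrier \(M\,(x_n)_+^s\) modified by a truncation: we compare \(u\) with \(M h\) plus a correction that absorbs the cutoff error of Lemma~\ref{lem:cutoff}, using Proposition~\ref{prop:comparison} in a box, with \(M\) chosen so that the exterior values are dominated.

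\textbf{Lower bound (main step).} Fix \(x\in\HH\cap B_1\) with \(x_n\) small, and consider the tangent ball \(B=B_{1/4}(x'+\tfrac14 e_n)\). The interior Harnack inequality along chains gives \(u\ge c\) on the ball \(B_{1/8}(x'+\tfrac14e_n)\). Let \(\phi\) be the solution of \(\Fsp\phi=0\) in the annular region \(B\setminus\overline{B_{1/8}}\) with exterior data \(\mathbf 1_{B_{1/8}}\) (equivalently, a torsion-type barrier in \(B\)). This function is globally nonnegative with \(\phi\le1\), and Proposition~\ref{prop:comparison} yields \(u\ge c\,\phi\), since \(u\ge0\) outside and \(u\ge c\) on the inner ball. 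The remaining input is the Hopf-type estimate \(\phi(y)\ge c\,d_B(y)^s\) near \(\partial B\). We expect to obtain it from the fine boundary regularity of \cite{IMS20} in the \(C^{1,1}\) domain \(B\), which gives \(\phi/d_B^s\) continuous and positive up to the boundary, or from an explicit subsolution of the form \((1-|y|^2)_+^s\) minus a small correction. At the point \(x\) we have \(d_B(x)=x_n\), and the lower bound follows. This Hopf step, with constants uniform in \(x\), is where the difficulty lies, because \(p\)-dependent explicit barriers are not available for the ball.
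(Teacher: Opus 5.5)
\textbf{Steps 1 and 2 are sound; step 3 has a genuine gap.}

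\emph{Normalization and upper bound.} These steps are correct. The paper does the same thing more economically. It applies \cite[Lemma~3.1]{BKS} once on $B_4$ centred at $0$, bounds both terms on the right by Lemma~\ref{lem:positive-tail}, and reads off \eqref{eq:flat-normalization} from the resulting upper bound. In your unit-scale version, what \cite[Lemma~3.1]{BKS} needs is the $\mathrm L^{p-1}$ average on $B_1(\zeta)$ and $\Tail(u,\zeta,1)$. Both are controlled directly by \eqref{eq:normalized-mass}; your first step bounds $u$ only on $B_2$, not on $B_3$.

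\emph{The gap in the lower bound.} You reduce the lower bound to the Hopf-type estimate $\phi\ge c\,d_B^s$ and then leave it as an expectation. Neither suggested route supplies it as stated.
\begin{itemize}
\item The fine regularity theory of \cite{IMS20} gives H\"older continuity of $\phi/d_B^s$ up to $\partial B$. Continuity alone gives no positive lower bound at the boundary; that positivity is exactly the Hopf property in question. Moreover, the global statements there assume zero exterior data, whereas $\phi$ has nonzero data on the inner ball.
\item The explicit-barrier suggestion points the wrong way. Subtracting a nonnegative correction supported away from the comparison region raises $\Fsp$ there, because $\Jp(A+b)-\Jp(A)>0$ for $b>0$.
\end{itemize}
Two smaller points. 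Uniformity in $x$ is not an issue: the radius is fixed, so all your barriers are translates of one function. Also, the datum $\mathbf 1_{B_{1/8}}$ jumps across the inner boundary of the annulus. For $sp\ge1$ it therefore does not belong to $\Vs$ of the annulus, and Theorem~\ref{thm:dirichlet} does not produce $\phi$.

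\emph{The missing idea.} The paper's argument uses nonlocality in the favourable direction: positive mass placed in the inner ball lowers the operator at points outside it.
\begin{itemize}
\item Let $\tau$ be the torsion function of $B$, so $\Fsp\tau=1$ in $B$ and $\tau=0$ off $B$. By \cite[Lemma~2.2]{IMS20}, $\tau\ge c\,d_B^s$. On its own $\tau$ is a supersolution.
\item Let $E$ be the inner ball on which $u\ge m$. Take a smooth bump $0\le\psi\le1$, supported strictly inside $E$ and equal to one on a smaller concentric ball. Set $w=\varepsilon\tau+a_0\psi$ with $a_0=m/2$.
\item For $x\in B\setminus\overline E$, the scalar inequality $\Jp(A-b)-\Jp(A)\le-2^{2-p}b^{p-1}$ ($b\ge0$) gives
\[
 \Fsp w(x)=\varepsilon^{p-1}+\int_E\frac{\Jp\bigl(\varepsilon(\tau(x)-\tau(y))-a_0\psi(y)\bigr)-\Jp\bigl(\varepsilon(\tau(x)-\tau(y))\bigr)}{|x-y|^{n+sp}}\,\mathrm d y\le\varepsilon^{p-1}-c\,a_0^{p-1}.
\]
\item Choose $\varepsilon$ small, depending only on $n,s,p$. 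Then $w$ is a subsolution in $B\setminus\overline E$, with $w\le u$ on $E$ and $w=0\le u$ off $B$.
\item Comparison gives $u\ge w\ge\varepsilon\tau\ge c\,t^s$ at $x'+te_n$ for $0<t<1/8$. The points with $x_n\ge1/8$ are handled by interior Harnack.
\end{itemize}
This compares $u$ with $w$ directly, so neither the auxiliary function $\phi$ nor a Hopf lemma for it is needed.
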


\begin{proof}
Normalize \(u(e_n)=1\).
The flat estimate \cite[Lemma~3.1]{BKS}, applied with radius \(4\)
and zero right side, gives
\[
 \left\|\frac{u}{x_n^s}\right\|_{\mathrm L^\infty(\HH\cap B_2)}
 \le C\biggl[
 \biggl(\int_{B_4}u^{p-1}\d x\biggr)^{1/(p-1)}
 +\Tail(u,0,4)\biggr].
\]
The distance to \(\partial(\HH\cap B_4)\) equals \(x_n\) on
\(\HH\cap B_2\), so the distance in that lemma is exactly
the one used here. Both terms on the right are bounded by
Lemma~\ref{lem:positive-tail}.
This proves the upper estimate.
The same lemma controls \(\Tail(u,0,2)\), and the upper estimate
controls \(\|u\|_{\mathrm L^\infty(B_2)}\).

We now give a comparison proof for the lower estimate.
Fix \(z=(z',0)\), \(|z'|\le1\), and put
\[
 r_b=1/4,\qquad b=z+r_b e_n,\qquad
 B=B_{r_b}(b),\qquad E=B_{r_b/2}(b).
\]
Interior Harnack chains in \(\HH\cap B_4\) imply
\(u\ge m>0\) on \(E\), where \(m=m(n,s,p)\). Let \(\tau\in\Wzero(B)\) be the torsion function of \(B\), i.e., the unique weak solution of
\begin{align}\label{torsion}
  &\Fsp\tau=1\quad \text{in }B,\qquad
  \tau=0\quad  \text{on }B^c.
\end{align}
The ball torsion
estimate \cite[Lemma~2.2]{IMS20} gives
\[
 cr_b^{s/(p-1)}(\dist(x,B^c))^s
 \le \tau(x)
 \le Cr_b^{s/(p-1)}(\dist(x,B^c))^s
 \qquad(x\in B),
\]
where \(c,C>0\) depend only on \(n,s,p\).
The fixed difference in operator normalization is
absorbed into these constants.
Since the radius of \(B\) is fixed in the present argument,
we obtain
\[
 \|\tau\|_{\mathrm L^\infty(\R^n)}\le C,
 \qquad
 \tau(x)\ge c\,(\dist(x,B^c))^s
 \qquad(x\in B).
\]
Choose \(\psi\in C_0^\infty(B_{3r_b/8}(b))\), with
\(0\le\psi\le1\) and \(\psi=1\) on \(B_{r_b/4}(b)\).
Set \(a_0=m/2\) and \(w=\varepsilon\tau+a_0\psi\).
For \(x\in B\setminus\overline E\), the perturbation is
separated from \(x\), so
\[
 \Fsp w(x)
 =\varepsilon^{p-1}
 +\int_E
 \frac{\Jp(\varepsilon(\tau(x)-\tau(y))-a_0\psi(y))
       -\Jp(\varepsilon(\tau(x)-\tau(y)))}
 {|x-y|^{n+sp}}\d y
\]
in the distributional sense. The scalar inequality
\[
 \Jp(A-b_0)-\Jp(A)\le-2^{2-p}b_0^{p-1},
 \qquad A\in\R,\quad b_0\ge0,
\]
implies
\[
 \Fsp w\le\varepsilon^{p-1}-c a_0^{p-1}
 \quad\text{in }B\setminus\overline E.
\]
Choose \(\varepsilon>0\), depending only on \(n,s,p\), so that
the last expression is nonpositive and
\(\varepsilon\|\tau\|_\infty+a_0\le m\). Hence $w$ is a subsolution in $B\setminus\overline E$, \(w\le u\) on \(E\), and \(w=0\le u\) on \(B^c\).
Weak comparison in \(B\setminus\overline E\) yields \(w\le u\)
everywhere. To check the energy of the test, choose
\(0\le\zeta\le1\) in \(C_0^\infty(B_4)\), equal to \(1\)
near \(\overline B\). Then
\((w-u)_+=(w-\zeta u)_+\) has finite global energy and
vanishes outside \(B\setminus\overline E\). Since this annulus
is Lipschitz, it belongs to its zero-exterior energy space.
The subtraction argument in Proposition~\ref{prop:comparison}
applies directly, with the local interactions controlled by
the \(p\)-energy in \(B_4\) and the remaining interactions
controlled by the \((p-1)\)-tail. Finally, to complete the argument, note that for \(0<t<r_b/2\), the point \(z+te_n\) has distance \(t\)
from \(B^c\). Consequently,
\[
 u(z+te_n)\ge w(z+te_n)\ge\varepsilon\tau(z+te_n)\ge c t^s.
\]
For points of \(\HH\cap B_1\) with \(x_n\ge r_b/2\),
interior Harnack gives the same lower bound.
Restoring \(u(e_n)\) proves the lemma.
\end{proof}


Fix a bounded smooth domain \(G\subset\HH\), depending only on
\(n\), such that \(\HH\cap B_4\subset G\) and its boundary
agrees with \(\partial\HH\) in \(B_4\).
It can be obtained by rounding a larger half-ball away from \(B_4\).
In particular,
\[
 d_G(x)=x_n\quad\text{for }x\in\HH\cap B_2.
\]
The following localized consequence of the weighted estimates
in \cite{IMS20} specifies exactly where the equation is needed.

\begin{lem}\label{lem:localized-oscillation}
Let \(p\ge2\) and let \(v\in\Wzero(G)\) be nonnegative.
Suppose that
\[
 0\le Q:={v}/{d_G^s}\le1\quad\text{in }G,\qquad
 -1\le\Fsp v\le1\quad\text{in }G\cap B_{5/4}
\]
in the weak sense. Then there are \(\alpha\in(0,s/(p-1))\),
\(R_0\in(0,1/8)\), and \(C\ge1\), depending only on \(n,s,p\),
such that
\[
 \mathop{\mathrm{ess\,osc}}_{G\cap B_r(z)}Q\le C r^\alpha
 \quad\text{for }z\in\partial\HH\cap\overline{B_1},
 \quad 0<r\le R_0.
\]
\end{lem}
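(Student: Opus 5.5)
The plan is to reduce the statement to the weighted boundary regularity theory of \cite{IMS20} for $\Fsp v=f$ with bounded $f$ in a smooth bounded domain, after localizing the equation. The hypothesis only controls $\Fsp v$ in $G\cap B_{5/4}$, so the first step is a cutoff. Choose $\chi\in C_0^\infty(B_{9/8})$ with $0\le\chi\le1$ and $\chi=1$ on $B_{17/16}$, and set $\tilde v=\chi v$. Then $\tilde v\in\Wzero(G)$, $\tilde v\ge 0$, and $\tilde v/d_G^s\le1$. By Lemma~\ref{lem:cutoff}, applied with $R=17/16$ and $\theta$ fixed so that $B_{\theta R}\supset B_1$ (or with a slightly smaller ball), $\tilde v$ satisfies $\Fsp\tilde v=f$ in $G\cap B_1$. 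Since $v\le d_G^s$ is bounded on $G$ and $v=0$ off $G$, the error obeys $|f|\le 1+C(1+\Tail(v,0,17/16)^{p-1})\le C$. The tail is bounded because $G$ is a fixed bounded set and $v\le (\operatorname{diam}G)^s$.

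The second step is to extend the equation to all of $G$. Outside $B_1$ the function $\tilde v$ is supported in $B_{9/8}$, and no equation for $v$ is available in $G\cap(B_{9/8}\setminus B_1)$ beyond the one in $B_{5/4}$. So nothing needs to be added: we already know $\Fsp\tilde v$ is bounded in $G\cap B_{17/16\cdot\theta}$. The harder point is that \cite{IMS20} is formulated for equations in the whole domain. I would handle this by working with the localized form of their weighted H\"older estimate, namely their boundary oscillation lemma for $v/d^s$ near a boundary point. That lemma is proved through barriers and a weak Harnack inequality in small half-balls centred at boundary points, and it uses the equation only near the point.

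Alternatively, one can write $\Fsp\tilde v=\tilde f$ in all of $G$, where $\tilde f$ is bounded near $B_1$ and is a signed, bounded-below distribution elsewhere. The oscillation decay at $z\in\partial\HH\cap\overline{B_1}$ and scale $r\le R_0$ involves only the equation in $G\cap B_{2R_0}(z)\subset G\cap B_{5/4}$, together with the global bounds $0\le Q\le1$ and the tail of $\tilde v$ (which is $\le C$). This gives the iteration
\[
\osc_{G\cap B_{r/\kappa}(z)}Q\le(1-c)\osc_{G\cap B_r(z)}Q+Cr^{\alpha_0}.
\]
The error term collects the right side $r^{sp}\|f\|_\infty$, converted into $r^{s}$ after dividing by $r^{s(p-1)}$, together with the nonlocal tail of $Q-\text{const}$, which is controlled by $\sup|Q|\le1$ weighted by $r^{sp}$-decay. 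Iterating yields $\osc\le Cr^\alpha$ for some $\alpha<s/(p-1)$.

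The main obstacle is this last point. One has to check that the IMS20 oscillation lemma genuinely localizes, with the tail of $Q$ entering only through the global bound $Q\le1$, and that the exponent can be taken below $s/(p-1)$. That restriction comes from the scaling of the bounded right side against $d^{s(p-1)}$ in the degenerate case $p\ge2$. I would verify the localization first, keeping every comparison function valued on the whole complement, as in Lemma~\ref{lem:flat-two-sided}.
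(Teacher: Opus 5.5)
Your overall route is the paper's. You localize the weighted boundary estimates of \cite{IMS20} (Propositions~3.7 and~4.5, which use the equation only near $z$) and iterate an oscillation decay for $Q$ at each $z\in\partial\HH\cap\overline{B_1}$. Your preliminary cutoff is unnecessary, since the lemma already supplies the global bound $0\le Q\le1$.

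The gap is in how you close the iteration. Applied at radius $R$ with levels $m\le M$, those two estimates carry nonlocal tails of the \emph{deviations} $(m-Q)_+$ and $(Q-M)_+$ outside $B_R(z)$. These tails have kernel $|y-z|^{-n-s}$ and are multiplied by $R^s$, with an $\mathrm L^{p-1}$ version multiplied by $R^{s/(p-1)}$. These terms are scale invariant. Using only $0\le Q\le1$ gives $R^s\int_{G\setminus B_R(z)}|y-z|^{-n-s}\,\mathrm d y\asymp1$, which is an $O(1)$ error, not $Cr^{\alpha_0}$. The global bound yields decay only for the part of the tail outside a \emph{fixed} ball $B_{R_0}(z)$, where it gives $(r/R_0)^s$. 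It says nothing useful about the annuli between $r$ and $R_0$. So the recursion $\osc_{G\cap B_{r/\kappa}(z)}Q\le(1-c)\osc_{G\cap B_r(z)}Q+Cr^{\alpha_0}$ does not follow from what you wrote, and without it the iteration does not close.

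The missing idea is a nested-interval induction in which the tails are controlled by earlier oscillations. Construct $[m_k,M_k]\subset[0,1]$ of length $w_k=8^{-k\alpha}$ containing the essential range of $Q$ on $D_k=G\cap B_{R_08^{-k}}(z)$. On $D_i\setminus D_{i+1}$ with $i<k$, nesting gives $(m_k-Q)_++(Q-M_k)_+\le w_i-w_k$. Outside $D_0$ the global bound gives $w_0-w_k$. Summing over annuli yields $R_k^{s/\ell}\mathcal E_{k,\ell}\le Cw_kS_\ell(\alpha)^{1/\ell}$ for $\ell\in\{1,p-1\}$, where $S_\ell(\alpha)=\sum_{j\ge1}(8^{j\alpha}-1)^\ell8^{-js}$. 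Once $\alpha$ is small this is a small multiple of $w_k$, so it is absorbed by the gain $(1-\vartheta)w_k$ obtained from adding the two estimates.

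Note also that the bounded right side contributes $R^{s/(p-1)}$, not $R^s$. A right side bounded by $1$ on $B_R$ moves $v$ by about $(R^{sp})^{1/(p-1)}$, and dividing by $R^s$ leaves $R^{s/(p-1)}$. The restriction $\alpha<s/(p-1)$ is exactly what lets this term be absorbed via $R_k^{s/(p-1)}/w_k\le R_0^{s/(p-1)}$ after shrinking $R_0$.
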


\begin{proof}
Fix \(z\) as in the statement.
For \(\ell\in\{1,p-1\}\), use the quotient tails
\[
 \mathcal T_\ell(g,z,R)
 =\left(\int_{G\setminus B_R(z)}
       \frac{|g(y)|^\ell}{|y-z|^{n+s}}\d y\right)^{1/\ell}.
\]
These have kernel exponent \(n+s\), and are different from
\(\Tail\) in \eqref{eq:tail}.
Put \(a=s/(p-1)\), \(R_k=R_0 8^{-k}\), and
\(D_k=G\cap B_{R_k}(z)\).
We construct nested intervals \([m_k,M_k]\subset[0,1]\)
containing the essential range of \(Q\) on \(D_k\), with
\begin{equation}\label{eq:flat-brackets}
 M_k-m_k=w_k,\qquad w_k=8^{-k\alpha}.
\end{equation}
Take \(m_0=0\) and \(M_0=1\). Suppose the construction holds through step \(k\).
Put \(R=R_k/2\), and choose the interior ball
\[
 \widetilde B_k=B_{R_k/8}(z+7R_ke_n/8)
 \subset D_k\setminus B_{3R_k/4}(z).
\]
Apply \cite[Propositions~3.7 and~4.5]{IMS20} to \(v\),
with radius \(R=R_k/2\) and levels \(m_k\) and \(M_k\),
respectively. The required bounds on \(G\cap B_{2R}(z)\)
follow from the inductive brackets. The lower estimate
uses \(\Fsp v\ge-1\), while the upper estimate uses
\(\Fsp v\le1\). Restricting the lower estimate to
\(D_{k+1}\) and absorbing the fixed normalization factors
into the constants, we obtain, for every
\(\varepsilon>0\),
\begin{align*}
 \essinf_{D_{k+1}}(Q-m_k)
 &\ge \sigma\avgint_{\widetilde B_k}(Q-m_k)\d x
       -\varepsilon\|Q-m_k\|_{\mathrm L^\infty(G\cap B_{R_k/2}(z))}\\
 &\quad-C R_k^s\mathcal T_1((m_k-Q)_+,z,R_k)\\
 &\quad-C_\varepsilon R_k^a
       \bigl(1+m_k+\mathcal T_{p-1}((m_k-Q)_+,z,R_k)\bigr),\\
 \essinf_{D_{k+1}}(M_k-Q)
 &\ge \sigma\avgint_{\widetilde B_k}(M_k-Q)\d x
       -\varepsilon\|M_k-Q\|_{\mathrm L^\infty(G\cap B_{R_k/2}(z))}\\
 &\quad-C R_k^s\mathcal T_1((Q-M_k)_+,z,R_k)\\
 &\quad-C_\varepsilon R_k^a
       \bigl(1+M_k+\mathcal T_{p-1}((Q-M_k)_+,z,R_k)\bigr).
\end{align*}
Here \(\sigma\in(0,1]\) and \(C\) depend only on \(n,s,p\),
and \(C_\varepsilon\) also depends on \(\varepsilon\). The two excess averages add to \(w_k\), each supremum
is at most \(w_k\), and the sum of the left sides is
\(w_k-\mathop{\mathrm{ess\,osc}}_{D_{k+1}}Q\).
Choose \(\varepsilon=\sigma/4\), and use
\(0\le m_k\le M_k\le1\). Adding the estimates gives
\begin{equation}\label{eq:flat-oscillation-step}
 \mathop{\mathrm{ess\,osc}}_{D_{k+1}}Q
 \le(1-\vartheta)w_k+C R_k^a
       +C R_k^s\mathcal E_{k,1}
       +C R_k^a\mathcal E_{k,p-1},
\end{equation}
where \(\vartheta=\sigma/2\) and
\begin{equation}\label{Itails}
 \mathcal E_{k,\ell}
 =\mathcal T_\ell((m_k-Q)_+,z,R_k)
  +\mathcal T_\ell((Q-M_k)_+,z,R_k).
\end{equation}
Only the equation in \(G\cap B_{R_k/2}(z)\) is used.
Taking \(R_0\) below a fixed geometric radius makes every
application legitimate. The factor \(2\) in the normalization
of \cite{IMS20} is absorbed into the constants.

We next estimate the two tails in \eqref{Itails} explicitly.
On \(D_i\setminus D_{i+1}\), \(0\le i<k\), nesting gives
\[
 (m_k-Q)_+ +(Q-M_k)_+\le w_i-w_k.
\]
Indeed, at most one summand is nonzero, and
\(m_k-m_i\le w_i-w_k\) and \(M_i-M_k\le w_i-w_k\).
On \(G\setminus D_0\), the global bound \(0\le Q\le1\)
gives the same estimate with \(w_0-w_k\).
Integrating over these annuli and the outer region yields
\begin{equation}\label{eq:flat-quotient-tails}
 R_k^{s/\ell}\mathcal E_{k,\ell}
 \le C w_k S_\ell(\alpha)^{1/\ell},\qquad
 S_\ell(\alpha)=
 \sum_{j=1}^\infty\frac{(8^{j\alpha}-1)^\ell}{8^{js}}.
\end{equation}
For example, the annulus with index \(i\) contributes at most
\(C R_i^{-s}(w_i-w_k)^\ell\) before taking the
\(\ell\)-th root. After multiplication by \(R_k^s\),
this equals
\(Cw_k^\ell(8^{(k-i)\alpha}-1)^\ell8^{-s(k-i)}\).
The outer region has the same bound with \(i=0\).
For \(k=0\), both tail errors vanish. For \(0<\alpha<s/(2(p-1))\), these series converge, and
dominated convergence gives \(S_\ell(\alpha)\to0\) as
\(\alpha\downarrow0\), for both values of \(\ell\).
Choose \(\alpha\) so small that the terms in
\eqref{eq:flat-quotient-tails}, inserted in
\eqref{eq:flat-oscillation-step}, contribute at most
\(\vartheta w_k/4\), and that
\(8^{-\alpha}\ge1-\vartheta/4\).
Since \(\alpha<a\),
\[
 R_k^a/w_k=R_0^a8^{-k(a-\alpha)}\le R_0^a.
\]
We can therefore decrease \(R_0\), independently of \(k,z,v\),
so that the remaining term contributes at most
\(\vartheta w_k/4\).
It follows that
\[
 \mathop{\mathrm{ess\,osc}}_{D_{k+1}}Q
 \le(1-\vartheta/2)w_k\le w_{k+1}.
\]
An interval of length \(w_{k+1}\) containing this essential
range can be chosen inside \([m_k,M_k]\). This proves the
induction. For \(R_{k+1}<r\le R_k\), we use \(G\cap B_r(z)\subset D_k\)
to obtain
\[
 \mathop{\mathrm{ess\,osc}}_{G\cap B_r(z)}Q
 \le w_k\le(8/R_0)^\alpha r^\alpha.
\]
All constants are uniform for the indicated boundary points.
\end{proof}

\begin{rem}
In Lemma \ref{lem:localized-oscillation} the global quotient bound is imposed on the auxiliary
localized function \(v\). In the application below, the
cutoff used in the definition of $v$ is chosen so that the local profile estimate
controls the quotient on its support and the localized
function vanishes near the artificial boundary of \(G\).
The positive-tail estimate controls the change in the
equation caused by the cutoff. A subsequent normalization
then gives both bounds in the hypothesis.
\end{rem}

\begin{proof}[Proof of Theorem~\ref{thm:flat}]
Normalize \(u(e_n)=1\).
The two-sided bounds and \eqref{eq:flat-normalization} follow
from Lemma~\ref{lem:flat-two-sided}.
Choose \(0\le\chi\le1\), with
\(\chi\in C_0^\infty(B_{7/4})\) and \(\chi=1\) on \(B_{3/2}\),
and put \(v=\chi u\).
Definition~\ref{def:local-zero} implies \(v\in\Wzero(G)\).
Since \(d_G=x_n\) on the support of \(v\), the upper estimate gives
\begin{equation}\label{eq:cutoff-quotient-bound}
 0\le v/d_G^s\le C\quad\text{throughout }G.
\end{equation}
Lemma~\ref{lem:cutoff}, with \(R=3/2\) and \(\theta=5/6\),
together with the local upper bound and
\eqref{eq:normalized-mass}, gives
\begin{equation}\label{eq:flat-cutoff-rhs}
 \Fsp v=f_\chi\quad\text{in }\HH\cap B_{5/4},
 \qquad \|f_\chi\|_\infty\le C.
\end{equation}
This identity is justified in the weak formulation.
No bound for \(\Fsp v\) in the transition region of \(\chi\)
is required.
Dividing \(v\) by a sufficiently large constant depending only
on \(n,s,p\), Lemma~\ref{lem:localized-oscillation} applies.
As \(v=u\) on \(B_{3/2}\), it yields
\begin{equation}\label{eq:flat-boundary-oscillation}
 \osc_{\HH\cap B_r(z)}q_u\le C r^\alpha,
 \quad z\in\partial\HH\cap\overline{B_1},\quad 0<r\le R_0.
\end{equation}
The quotient is continuous in \(\HH\cap B_{3/2}\) by interior
regularity, so essential oscillations can be replaced by
ordinary oscillations. The estimate gives a unique continuous
boundary value at every point of the flat patch.

We include the passage from \eqref{eq:flat-boundary-oscillation}
to a H\"older estimate for all pairs of points.
Let \(x\in\HH\cap B_1\) and \(t=x_n\).
The upper bound \(u(y)\le C(y_n)_+^s\) in \(B_2\), together
with \eqref{eq:normalized-mass}, implies
\begin{equation}\label{eq:flat-whitney-tail}
 \|u\|_{\mathrm L^\infty(B_{t/2}(x))}
 +\Tail(u,x,t/2)\le C t^s.
\end{equation}
To verify the tail bound, split the integral at \(B_2\).
In \(B_2\setminus B_{t/2}(x)\), use
\((y_n)_+\le t+|y-x|\le3|y-x|\).
The radial integral is bounded by
\(C\int_{t/2}^{3}\rho^{-s-1}\d\rho\le Ct^{-s}\).
Multiplication by \(t^{sp}\) and the \((p-1)\)-th root
give \(Ct^s\).
On \(B_2^c\), the weighted integral is bounded and
\(t^{sp}\le t^{s(p-1)}\). The interior H\"older estimate therefore gives an
\(\eta\in(0,1)\), depending only on \(n,s,p\), such that
\[
 [u]_{C^\eta(B_{t/8}(x))}\le C t^{s-\eta},
 \qquad
 [q_u]_{C^\eta(B_{t/8}(x))}\le C t^{-\eta}.
\]
For \(x,y\in\HH\cap B_1\), put
\(\delta=|x-y|\) and \(t=\min\{x_n,y_n\}\), and choose the
name \(x\) so that \(x_n=t\).
If \(\delta\ge t/8\) and \(\delta\) is sufficiently small,
both points lie in a boundary ball of radius \(C\delta\).
Equation~\eqref{eq:flat-boundary-oscillation} bounds their
quotient difference by \(C\delta^\alpha\).
If \(\delta<t/8\) and \(t<R_0/4\), the interior estimate and
the boundary oscillation estimate together give
\[
 |q_u(x)-q_u(y)|
 \le C\min\{t^\alpha,(\delta/t)^\eta\}
 \le C\delta^{\alpha\eta/(\alpha+\eta)}.
\]
If \(t\ge R_0/4\), the interior estimate is uniform.
Pairs at a fixed positive separation are controlled by the
uniform bound for \(q_u\).
Thus the asserted estimate holds with $\beta_F={\alpha\eta}/{(\alpha+\eta)}$. It extends to the closure by continuity.
The positive lower bound persists at the flat boundary.
Restoring the factor \(u(e_n)\) completes the proof.
\end{proof}

\begin{prop}\label{prop:flat-bhi}
Let \(u,v\) satisfy the hypotheses of Theorem~\ref{thm:flat}.
Then
\[
 C_F^{-2}\frac{u(e_n)}{v(e_n)}
 \le\frac{u(x)}{v(x)}
 \le C_F^2\frac{u(e_n)}{v(e_n)}
 \qquad\text{for }x\in\HH\cap B_1,
\]
and
\[
 \left|\log\frac{u(x)}{v(x)}-\log\frac{u(y)}{v(y)}\right|
 \le 2C_F^2|x-y|^{\beta_F}
 \qquad\text{for }x,y\in\HH\cap B_1.
\]
\end{prop}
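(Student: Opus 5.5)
Both estimates follow directly from Theorem~\ref{thm:flat}, applied to \(u\) and to \(v\) separately. For \(x\in\HH\cap B_1\) we write
\[
 \frac{u(x)}{v(x)}=\frac{q_u(x)}{q_v(x)},\qquad q_u=u/x_n^s,\quad q_v=v/x_n^s.
\]
The factor \(x_n^s\) cancels, since both functions are positive in \(\HH\cap B_1\).

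For the two-sided bound, \eqref{eq:flat-assumption} gives \(C_F^{-1}u(e_n)\le q_u\le C_Fu(e_n)\) and \(C_F^{-1}v(e_n)\le q_v\le C_Fv(e_n)\) on \(\overline{\HH\cap B_1}\). Dividing the upper bound for \(q_u\) by the lower bound for \(q_v\), and the lower bound for \(q_u\) by the upper bound for \(q_v\), gives the stated comparison with constant \(C_F^2\).

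For the logarithmic H\"older estimate, we split
\[
 \log\frac{u(x)}{v(x)}-\log\frac{u(y)}{v(y)}
 =\bigl(\log q_u(x)-\log q_u(y)\bigr)-\bigl(\log q_v(x)-\log q_v(y)\bigr).
\]
For positive numbers \(a,b\ge m>0\), the mean value theorem gives \(|\log a-\log b|\le |a-b|/m\). We apply this with \(m=C_F^{-1}u(e_n)\), using the H\"older seminorm bound from \eqref{eq:flat-assumption}:
\[
 |\log q_u(x)-\log q_u(y)|\le \frac{C_Fu(e_n)|x-y|^{\beta_F}}{C_F^{-1}u(e_n)}=C_F^2|x-y|^{\beta_F}.
\]
The same bound holds for \(v\). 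Adding the two terms gives the constant \(2C_F^2\).

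There is no genuine obstacle. The only points to check are that the normalizing values \(u(e_n)\) and \(v(e_n)\) are positive, and that \(q_u,q_v\) are bounded below on \(\HH\cap B_1\). Positivity of \(u(e_n)\) follows from nontriviality and the strict positivity remark after Proposition~\ref{prop:interior}. The lower bound on \(q_u,q_v\) is exactly the lower estimate in \eqref{eq:flat-assumption}, so the logarithms are well defined and the mean value bound is legitimate.
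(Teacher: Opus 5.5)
Your proposal is correct and follows the paper's proof. The paper likewise writes $u/v=q_u/q_v$ and reads the two-sided bound off \eqref{eq:flat-assumption}. It then applies $|\log a-\log b|\le |a-b|/\min\{a,b\}$ to $q_u$ and $q_v$ separately, using the H\"older seminorm bound together with the lower quotient bound, and adds the two estimates to obtain the constant $2C_F^2$.
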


\begin{proof}
Since \(u/v=q_u/q_v\), the first conclusion follows directly from
the bounds in \eqref{eq:flat-assumption}.
For positive numbers \(a,b\),
\(|\log a-\log b|\le|a-b|/\min\{a,b\}\).
Apply this inequality to \(q_u\) and \(q_v\), and add.
\end{proof}

\subsection{Interpolation and gradient approximation}

\begin{lem}\label{lem:interpolation}
Let \(0<\sigma<1\), \(M_{\mathrm{reg}}\ge1\), and \(0<\varepsilon\le1\).
Suppose that \(f\in C^{1,\sigma}(B_2)\),
\(\|f\|_{\mathrm L^\infty(B_2)}\le\varepsilon\), and
\([\nabla f]_{C^\sigma(B_2)}\le M_{\mathrm{reg}}\).
Then
\begin{equation}\label{eq:interpolation}
 \|\nabla f\|_{\mathrm L^\infty(B_1)}
 \le C_\sigma
 \left(M_{\mathrm{reg}}^{1/(1+\sigma)}\varepsilon^{\sigma/(1+\sigma)}
       +\varepsilon\right).
\end{equation}
\end{lem}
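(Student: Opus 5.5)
The plan is a Taylor-type interpolation at an optimally chosen scale. Fix \(x\in B_1\) and a unit vector \(e\). Set \(g=\nabla f(x)\cdot e\) and pick a step \(h\in(0,1]\) to be chosen below. The segment from \(x\) to \(x+he\) lies in \(B_2\), so the fundamental theorem of calculus gives
\[
 f(x+he)-f(x)=\int_0^h\nabla f(x+te)\cdot e\,\mathrm d t
 =hg+\int_0^h\bigl(\nabla f(x+te)-\nabla f(x)\bigr)\cdot e\,\mathrm d t .
\]
The H\"older bound on \(\nabla f\) controls the last integral by \(M_{\mathrm{reg}}\int_0^ht^\sigma\,\mathrm d t\le M_{\mathrm{reg}}h^{1+\sigma}\). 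The sup bound controls the left side by \(2\varepsilon\). Dividing by \(h\), we obtain
\[
 |g|\le \frac{2\varepsilon}{h}+M_{\mathrm{reg}}h^{\sigma}\qquad(0<h\le1).
\]

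Next we optimize in \(h\). The two terms balance at \(h_*=(\varepsilon/M_{\mathrm{reg}})^{1/(1+\sigma)}\). Since \(\varepsilon\le1\le M_{\mathrm{reg}}\), we have \(h_*\le1\), so this choice is admissible. It gives
\[
 |g|\le 3M_{\mathrm{reg}}^{1/(1+\sigma)}\varepsilon^{\sigma/(1+\sigma)}.
\]
Taking the supremum over unit vectors \(e\) and over \(x\in B_1\) yields \eqref{eq:interpolation} with \(C_\sigma=3\). The additive \(+\varepsilon\) term is not needed under the stated normalizations.

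If one wished to drop the constraint \(\varepsilon\le M_{\mathrm{reg}}\), the case \(h_*>1\) is handled by taking \(h=1\). Then \(|g|\le2\varepsilon+M_{\mathrm{reg}}\le 3\varepsilon\), which is where the \(+\varepsilon\) term in \eqref{eq:interpolation} comes from.

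There is no real obstacle. The only points to check are that the step \(h_*\) keeps \(x+he\) inside \(B_2\), which holds because \(|x|<1\) and \(h\le1\), and that pointwise values are meaningful, which they are since \(f\in C^{1,\sigma}\).
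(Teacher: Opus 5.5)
Your proof is correct and is essentially the paper's argument: the fundamental theorem of calculus along a segment gives \(|\nabla f(x)\cdot e|\le 2\varepsilon/t+M_{\mathrm{reg}}t^\sigma\), and \(t\) is then chosen comparable to \((\varepsilon/M_{\mathrm{reg}})^{1/(1+\sigma)}\). Your observation that \(\varepsilon\le1\le M_{\mathrm{reg}}\) already makes this choice admissible, so that the \(+\varepsilon\) term is superfluous, is accurate; the paper keeps that term only because it takes \(t\) to be a small multiple of \(\min\{1,(\varepsilon/M_{\mathrm{reg}})^{1/(1+\sigma)}\}\).
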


\begin{proof}
For \(x\in B_1\), a unit vector \(e\), and \(0<t<1/2\),
the fundamental theorem of calculus gives
\[
 |\nabla f(x)\cdot e|
 \le\frac{|f(x+te)-f(x)|}{t}
       +\frac1t\int_0^t
           |\nabla f(x+\tau e)-\nabla f(x)|\d\tau
 \le\frac{2\varepsilon}{t}+M_{\mathrm{reg}}t^\sigma.
\]
Choose \(t\) to be a fixed small multiple of
\(\min\{1,(\varepsilon/M_{\mathrm{reg}})^{1/(1+\sigma)}\}\).
Taking the supremum in \(e\) proves \eqref{eq:interpolation}.
\end{proof}

Together with Proposition~\ref{prop:C1},
Lemma~\ref{lem:interpolation} converts uniform approximation by
a uniformly \(C^{1,\sigma}\) family into gradient approximation
on smaller balls, provided the local solution bounds and tails
are uniform.

The next result gives a precise gradient consequence of a
weighted boundary estimate. Its hypotheses are explicit, so no
differentiation of a merely H\"older quotient is used.

\begin{prop}\label{prop:flat-gradient}
Assume \eqref{eq:range}.
Suppose that \(u\ge0\) in \(\R^n\), that
\(\Fsp u=0\) in \(\HH\cap B_2\), and that
\(u=0\) on \(B_2\setminus\HH\).
Let
\[
 \|u\|_{\mathrm L^\infty(B_2)}+\Tail(u,0,2)\le M_{\mathrm{reg}},\qquad
 q=\frac{u}{x_n^s}\in C^\beta(\overline{\HH\cap B_1}),
 \qquad m\le q,\quad \|q\|_{C^\beta}\le M_{\mathrm{reg}},
\]
where \(m>0\), \(M_{\mathrm{reg}}\ge1\), and \(0<\beta<1\).
Then there are \(t_0>0\), \(C\ge1\), and \(\gamma>0\), depending
only on \(n,s,p,m,M_{\mathrm{reg}},\beta\), such that, for
\(|z'|<1/4\) and \(0<t<t_0\),
\begin{equation}\label{eq:gradient-asymptotic}
 \left|t^{1-s}\nabla u(z',t)-s q(z',0)e_n\right|
 \le Ct^\gamma.
\end{equation}
Here one may take \(\gamma=\beta\sigma/(1+\sigma)\), where
\(\sigma\) is as in Proposition~\ref{prop:C1}.
After decreasing \(t_0\),
\[
 C^{-1}\frac{u(z',t)}{t}
 \le|\nabla u(z',t)|
 \le C\frac{u(z',t)}{t},
 \qquad
 \partial_n u(z',t)\ge C^{-1}\frac{u(z',t)}{t}.
\]
\end{prop}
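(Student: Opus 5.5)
The plan is to compare \(u\) at the point \(x=(z',t)\) with the rescaled profile on a Whitney ball, and then convert uniform closeness into gradient closeness with Lemma~\ref{lem:interpolation}.

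Fix \(|z'|<1/4\) and small \(t\), put \(x=(z',t)\) and \(z=(z',0)\), and define
\[
 U_t(\xi)=t^{-s}u(x+t\xi/2).
\]
Then \(U_t\) is fractional \(p\)-harmonic in \(B_2\), since \(B_t(x)\subset\HH\cap B_2\) for \(t\) small. The comparison function is
\[
 H(\xi)=q(z)\,t^{-s}\bigl(t+t\xi_n/2\bigr)_+^s=q(z)(1+\xi_n/2)_+^s,
\]
which is the rescaled profile \(q(z)h\) evaluated at \(x+t\xi/2\). It is fractional \(p\)-harmonic in \(B_2\) by Proposition~\ref{prop:profile} and scaling, and it is smooth there, with derivatives bounded by \(C(M_{\mathrm{reg}})\).

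The first step is uniform closeness on \(B_2\). For \(y=x+t\xi/2\) with \(|\xi|<2\) we have \(y_n\asymp t\) and \(|y-z|\le 2t\). Since \(u=q\,y_n^s\), the H\"older bound on \(q\) gives
\[
 |U_t(\xi)-H(\xi)|=t^{-s}y_n^s|q(y)-q(z)|\le CM_{\mathrm{reg}}t^\beta=:\varepsilon.
\]

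The second step is a uniform \(C^{1,\sigma}\) bound for \(U_t\). Proposition~\ref{prop:C1}, applied to \(U_t\) on \(B_2\) after a harmless rescaling of radii, requires \(\|U_t\|_{\mathrm L^\infty(B_4)}+\Tail(U_t,0,4)\le C\). For the local part, \(u\le M_{\mathrm{reg}}y_n^s\) near \(x\). For the tail, I will repeat the computation behind \eqref{eq:flat-whitney-tail}. Inside \(B_2\), the bound \(u\le M_{\mathrm{reg}}(y_n)_+^s\) together with \((y_n)_+\le 3|y-x|\) gives a radial integral of order \(t^{-s}\). Outside \(B_2\), the hypothesis \(\Tail(u,0,2)\le M_{\mathrm{reg}}\) controls the weighted integral, and \(t^{sp}\le t^{s(p-1)}\). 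I expect this tail bound, with constants depending only on \(M_{\mathrm{reg}}\), to be the main obstacle. The upper bound on \(q\) holds only on \(\overline{\HH\cap B_1}\), whereas the argument needs \(u\lesssim y_n^s\) on all of \(B_2\). If that is not available directly, I will split the integral at \(B_1\) instead and use \(\Tail(u,0,2)\) together with \(\|u\|_{\mathrm L^\infty(B_2)}\) on \(B_2\setminus B_1\), where \(|y-x|\ge 1/2\). That region then contributes at most \(Ct^{sp}\le Ct^{s(p-1)}\). So \(U_t\) and \(H\) both have \([\nabla\cdot]_{C^\sigma(B_2)}\le C\), and hence so does \(f=U_t-H\).

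The third step is the conclusion. Lemma~\ref{lem:interpolation}, applied to \(f\), gives
\[
 |\nabla f(0)|\le C\varepsilon^{\sigma/(1+\sigma)}=Ct^{\gamma},\qquad \gamma=\beta\sigma/(1+\sigma).
\]
Now \(\nabla U_t(0)=\tfrac12 t^{1-s}\nabla u(x)\) and \(\nabla H(0)=\tfrac12 s\,q(z)e_n\), so this is exactly \eqref{eq:gradient-asymptotic}. Choose \(t_0\) so that \(Ct_0^\gamma\le sm/2\). Then \(\partial_nu(x)\ge \tfrac{s}{2}q(z)t^{s-1}\) and \(|\nabla u(x)|\le Ct^{s-1}\). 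Finally, \(u(x)/t=q(x)t^{s-1}\) with \(m\le q\le M_{\mathrm{reg}}\), and \(|q(x)-q(z)|\le M_{\mathrm{reg}}t^\beta\). Combining these gives the two-sided gradient bounds and the lower bound for \(\partial_nu\).
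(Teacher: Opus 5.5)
Your proof is correct. Its architecture matches the paper's: Whitney-scale rescaling, uniform closeness to the rescaled profile with error \(Ct^\beta\) from the H\"older bound on \(q\), a uniform \(C^{1,\sigma}\) bound from Proposition~\ref{prop:C1}, and then Lemma~\ref{lem:interpolation}.

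The real difference is how you normalize the rescaled function before applying Proposition~\ref{prop:C1}. You compute the Whitney-scale tail directly from the hypotheses. On \(B_1\) you use \(u\le M_{\mathrm{reg}}(y_n)_+^s\) and \((y_n)_+\le 3|y-x|\), which gives \(Ct^{s(p-1)}\) after multiplying by \(t^{sp}\). On the rest, \(\|u\|_{\mathrm L^\infty(B_2)}\) and \(\Tail(u,0,2)\) give \(Ct^{sp}\le Ct^{s(p-1)}\). Splitting at \(B_1\) rather than \(B_2\) is exactly the right fix, since the upper bound on \(q\) is only available there.

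The paper instead normalizes by \(q(z)t^s\) and observes that \(U_t(e_n)\le M_{\mathrm{reg}}/m\). It then obtains the local bound and the tail on \(B_{1/4}(e_n)\) from interior Harnack and Lemma~\ref{lem:positive-tail}. The trade-off is as follows. Your route is elementary and does not use global nonnegativity or the equation in the tail step. The paper's route needs only one interior value plus global nonnegativity, so it never uses the hypothesis \(\|u\|_{\mathrm L^\infty(B_2)}+\Tail(u,0,2)\le M_{\mathrm{reg}}\). That matches how the paper controls tails elsewhere.

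One piece of radius bookkeeping is wrong as written.

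\emph{The profile is not uniformly smooth on \(B_2\).} Since \(\nabla H(\xi)=\tfrac s2 q(z)(1+\xi_n/2)^{s-1}e_n\), the gradient blows up as \(\xi\to-2e_n\). Correspondingly, \(y_n\) ranges over \((0,2t)\), so \(y_n\asymp t\) fails on \(B_2\). Only the upper bound \(y_n<2t\) enters your closeness estimate, so that step is fine.

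\emph{The \(C^{1,\sigma}\) bound holds only on a smaller ball.} \(U_t\) is harmonic only in \(B_2\), so Proposition~\ref{prop:C1} cannot give a \(C^{1,\sigma}\) bound on \(B_2\), and the \(B_4\) quantities are not the relevant ones. Apply it on a smaller ball such as \(B_{1/2}\). There it is controlled by \(\|U_t\|_{\mathrm L^\infty(B_1)}+\Tail(U_t,0,1)=t^{-s}\bigl(\|u\|_{\mathrm L^\infty(B_{t/2}(x))}+\Tail(u,x,t/2)\bigr)\), which is exactly what your computation bounds. \(H\) is uniformly smooth on that ball. Then use the rescaled Lemma~\ref{lem:interpolation} there. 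Since you only need \(\nabla f(0)\), nothing else changes.
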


\begin{proof}
Fix \(z=(z',0)\), put \(a=q(z)\), and define
\[
 U_t(\xi)=\frac{u(z+t\xi)}{a t^s}.
\]
On \(B_{1/2}(e_n)\),
\[
 |U_t(\xi)-(\xi_n)^s|
 =(\xi_n)^s\left|\frac{q(z+t\xi)-q(z)}{a}\right|
 \le C(m,M_{\mathrm{reg}})t^\beta.
\]
For sufficiently small \(t\), the function \(U_t\) is globally
nonnegative and fractional \(p\)-harmonic in \(B_{1/2}(e_n)\).
Since
\[
 U_t(e_n)=\frac{q(z+te_n)}a\le\frac{M_{\mathrm{reg}}}{m},
\]
interior Harnack and Lemma~\ref{lem:positive-tail}, with radius
\(1/4\), give
\[
 \|U_t\|_{\mathrm L^\infty(B_{1/4}(e_n))}
 +\Tail(U_t,e_n,1/4)\le C(m,M_{\mathrm{reg}},n,s,p).
\]
Proposition~\ref{prop:C1} therefore gives a uniform
\(C^{1,\sigma}\) bound on \(B_{1/8}(e_n)\).
Apply the scaled version of Lemma~\ref{lem:interpolation} to
\(U_t-(\xi_n)^s\) on this ball. At \(\xi=e_n\) it gives
\[
 |\nabla U_t(e_n)-se_n|\le Ct^{\beta\sigma/(1+\sigma)}.
\]
Since \(\nabla U_t(e_n)=a^{-1}t^{1-s}\nabla u(z',t)\),
this proves \eqref{eq:gradient-asymptotic}.
The bounds \(m\le a,q\le M_{\mathrm{reg}}\) yield the final conclusions.
\end{proof}

For \(x_n>0\), set $q_u(x)={u(x)}/{x_n^s}$. We use the same notation for its H\"older continuous
extension to the flat boundary, supplied by the preceding
boundary estimates. In particular,
\[
 q_u(z',0)=\lim_{t\downarrow0}\frac{u(z',t)}{t^s}.
\]

\begin{cor}\label{cor:flat-gradient-unconditional}
Assume \eqref{eq:range}, and let \(u\) satisfy the hypotheses
of Theorem~\ref{thm:flat}.
Then there exist \(t_0\in(0,1/4)\), \(\gamma>0\), and \(C\ge1\),
depending only on \(n,s,p\), such that
\[
 \left|t^{1-s}\nabla u(z',t)
       -s q_u(z',0)e_n\right|
 \le C u(e_n)t^\gamma
 \quad\text{for }|z'|<1/4,\quad 0<t<t_0.
\]
In the same region,
\[
 C^{-1}\frac{u(z',t)}{t}
 \le|\nabla u(z',t)|
 \le C\frac{u(z',t)}{t},\qquad
 \partial_n u(z',t)\ge C^{-1}\frac{u(z',t)}{t}.
\]
\end{cor}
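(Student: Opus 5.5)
The plan is to combine Theorem~\ref{thm:flat} with Proposition~\ref{prop:flat-gradient}, after normalizing. By homogeneity of \(\Fsp\) and Lemma~\ref{lem:flat-two-sided}, it suffices to treat \(u(e_n)=1\); the general case follows by applying the result to \(u/u(e_n)\), which multiplies \(\nabla u\) and \(q_u\) by the same factor. This normalization is also what produces the factor \(u(e_n)\) in the asymptotic estimate.

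With \(u(e_n)=1\), we check the hypotheses of Proposition~\ref{prop:flat-gradient} one at a time.
\begin{enumerate}
\item Restricting from \(B_4\) to \(B_2\), \(u\) is nonnegative, fractional \(p\)-harmonic in \(\HH\cap B_2\), and vanishes on \(B_2\setminus\HH\).
\item By \eqref{eq:flat-normalization}, \(\|u\|_{\mathrm L^\infty(B_2)}+\Tail(u,0,2)\le C_F\).
\item By \eqref{eq:flat-assumption}, \(q_u\) extends continuously to \(\overline{\HH\cap B_1}\) and satisfies \(q_u\ge C_F^{-1}\), \(q_u\le C_F\), and \([q_u]_{C^{\beta_F}}\le C_F\). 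Hence \(\|q_u\|_{C^{\beta_F}}\le 2C_F\).
\end{enumerate}
We therefore take \(m=C_F^{-1}\), \(M_{\mathrm{reg}}=2C_F\) and \(\beta=\beta_F\). All three depend only on \(n,s,p\), so Proposition~\ref{prop:flat-gradient} gives \(t_0,C,\gamma\) depending only on \(n,s,p\).

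If needed, we decrease \(t_0\) below \(1/4\); this keeps every point \((z',t)\) with \(|z'|<1/4\) inside \(\HH\cap B_1\), where the quotient bounds hold. The three conclusions then follow directly: the asymptotic identity \eqref{eq:gradient-asymptotic}, the two-sided bound \(|\nabla u|\asymp u/t\), and the lower bound \(\partial_n u\ge C^{-1}u/t\). The point \(e_n\) used for the normalization is fixed, so no constant depends on \(u\).

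The main obstacle is ensuring that the constants produced by Proposition~\ref{prop:flat-gradient} depend only on \(n,s,p\), without reference to \(u\). This reduces to confirming that \(m\), \(M_{\mathrm{reg}}\) and \(\beta\) are universal. That is guaranteed by Theorem~\ref{thm:flat}, and in particular by the tail control \eqref{eq:flat-normalization}, which allows arbitrary nonnegative exterior data.
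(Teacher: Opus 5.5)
Your proposal is correct and follows the paper's own proof: normalize to \(u(e_n)=1\), use the quotient bounds and \eqref{eq:flat-normalization} from Theorem~\ref{thm:flat} to verify the hypotheses of Proposition~\ref{prop:flat-gradient} with \(m=C_F^{-1}\), \(M_{\mathrm{reg}}=2C_F\), \(\beta=\beta_F\), and then restore the factor \(u(e_n)\) by homogeneity. The resulting exponent is \(\gamma=\beta_F\sigma/(1+\sigma)\), as the paper notes.
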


\begin{proof}
Apply Theorem~\ref{thm:flat} to \(u/u(e_n)\).
Its quotient bounds and \eqref{eq:flat-normalization} verify
all hypotheses of Proposition~\ref{prop:flat-gradient} with
constants depending only on \(n,s,p\).
One may take
\(\gamma=\beta_F\sigma/(1+\sigma)\), where \(\sigma\) is the
interior gradient exponent in Proposition~\ref{prop:C1}.
\end{proof}

\section{Approximation in Reifenberg domains}\label{sec:transfer}

We here prove that, in sufficiently flat Reifenberg domains,
positive solutions can be approximated by the half-space
profile on balls whose radii are comparable to the distance
of their centers from the boundary. The approximation is
measured relative to the value of the solution at the center
of the ball. The proof uses a compactness argument that accounts for
exterior mass escaping to infinity along the rescaled
sequence. Although such mass is not detected by local
convergence, it may contribute to the limiting non-local
equation. We show that this contribution is a nonnegative
constant right-hand side. Comparison with torsion functions
in expanding balls forces the constant to vanish, so the
limiting function is fractional \(p\)-harmonic.

Let \(0<s<1\), \(p\ge2\), and let \(\Om\) be a Reifenberg flat
domain. Recall that \(D_R(w)=\Om\cap B_R(w)\) and
\(B_R=B_R(0)\).
Suppose that \(w\in\partial\Om\) and that
\(u\in\Ws_{\mathrm{loc}}(D_{4r}(w))
\cap\mathrm L^{p-1}_{sp}(\R^n)\) is a nontrivial, globally
nonnegative weak solution in \(D_{4r}(w)\).
For \(x\in D_{r/8}(w)\), put \(\rho=d_\Om(x)\).
Then \(0<\rho\le|x-w|<r/8\) and
\(B_\rho(x)\subset D_{r/4}(w)\).
Interior positivity permits the normalization
\[
 U_x(\xi)={(u(x+\rho\xi/4))}/{u(x)}.
\]
The function \(U_x\) is fractional \(p\)-harmonic in \(B_4\)
and satisfies \(U_x(0)=1\).
Interior Harnack on \(B_{\rho/2}(x)\),
Lemma~\ref{lem:positive-tail} with radius \(\rho/4\),
and scaling give
\begin{equation}\label{eq:whitney-normalization}
 \|U_x\|_{\mathrm L^\infty(B_2)}
 +\Tail(U_x,0,2)\le C_{\mathrm W},
 \qquad C_{\mathrm W}=C_{\mathrm W}(n,s,p).
\end{equation}
Here enlarging the tail radius from \(\rho/4\) to \(\rho/2\)
costs only the fixed factor \(2^{sp/(p-1)}\).

The purpose of this section is to prove the following theorem.
\begin{thm}\label{thm:approximation}
Let \(0<s<1\) and \(p\ge2\).
For every \(\varepsilon>0\), there exist
\(\delta_\varepsilon>0\) and \(c_\varepsilon\ge8\), depending
only on \(n,s,p,\varepsilon\), with the following property.
Suppose that \(\Om\) is \((\delta,r_0)\)-Reifenberg flat,
\(\delta\le\delta_\varepsilon\), \(w\in\partial\Om\), and
\(0<r<r_0/8\).
Let \(u\in\Ws(B_{4r}(w))\cap \mathrm L^{p-1}_{sp}(\R^n)\) be
nontrivial and nonnegative in \(\R^n\), and satisfy
\[
 \Fsp u=0\quad\text{in }D_{4r}(w),\qquad
 u=0\quad\text{on }B_{4r}(w)\setminus\Om
\]
in the sense of Definition~\ref{def:local-zero}.
For every \(x\in D_{r/c_\varepsilon}(w)\), put
\(\rho=d_\Om(x)\) and
\[
 U_x(\xi)={(u(x+\rho\xi/4))}/{u(x)}.
\]
Then there is a unit vector \(\nu_x\) such that, with
\(V_x(\xi)=(1+\xi\cdot\nu_x/4)^s\) on \(B_2\),
\begin{equation}\label{eq:whitney-approximation}
 \|U_x-V_x\|_{\mathrm L^\infty(B_2)}\le\varepsilon.
\end{equation}
\end{thm}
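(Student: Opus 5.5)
We argue by contradiction and compactness, following the outline in the introduction. Suppose the statement fails for some \(\varepsilon>0\). Then there are \((\delta_j,r_{0,j})\)-Reifenberg flat domains \(\Om_j\) with \(\delta_j\to0\), points \(w_j\in\partial\Om_j\), radii \(r_j\), solutions \(u_j\), and points \(x_j\in D_{r_j/j}(w_j)\) for which \eqref{eq:whitney-approximation} fails for every unit vector \(\nu\). Put \(\rho_j=d_{\Om_j}(x_j)\), and choose \(\zeta_j\in\partial\Om_j\) with \(|x_j-\zeta_j|=\rho_j\). Rescale by
\[
 \widetilde u_j(\xi)=u_j(\zeta_j+\rho_j\xi)/u_j(x_j),
\]
and rotate coordinates so that the Reifenberg planes at \(\zeta_j\) at scales comparable to \(\rho_j\) are \(\{\xi_n=0\}\). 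After passing to a subsequence we may assume \((x_j-\zeta_j)/\rho_j\to e\), a unit vector. The rescaled domains \(\widetilde\Om_j\) satisfy \eqref{eq:reifenberg} at every scale up to \(R_j=r_j/(8\rho_j)\ge j/8\to\infty\), with flatness \(\delta_j\to0\). Hence they converge locally in Hausdorff distance to a half-space, and this half-space is \(\HH\) after a fixed rotation. Since \(B_1(e)\) misses the complement of the limit, \(e\) is the inner normal \(e_n\).

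Next we obtain uniform bounds. Proposition~\ref{prop:boundary-normalization} and \eqref{eq:boundary-normalized-decay}, applied at \(\zeta_j\) at every scale \(R\le R_j\), together with Harnack chains (Remark~\ref{rem:localized-chains}), give \(\widetilde u_j(A_R)\le C(R)\) and bound \(\widetilde u_j\) in \(L^\infty\) and \(C^\beta\) on \(B_{2R}\), with a boundary decay \(\widetilde u_j\le C(R)d^\beta\). They also bound the tails \(\int\widetilde u_j^{p-1}(1+|y|)^{-n-sp}\,\mathrm dy\le C\), exactly as in \eqref{eq:normalized-mass}. Caccioppoli (Lemma~\ref{lem:caccioppoli}) gives local \(\Ws\) bounds. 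By Arzelà--Ascoli, a subsequence converges locally uniformly on \(\R^n\) to a limit \(V\ge0\), with \(V=0\) on \(\HH^c\) because of the boundary decay and the Hausdorff convergence of the domains. Also \(V(e_n)=1\), and \(V\) lies in the local energy and tail class (by Fatou).

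The main obstacle is passing to the limit in the equation, since the weights \(\widetilde u_j^{p-1}(1+|y|)^{-n-sp}\) may lose mass at infinity. Split the weak form for a test \(\varphi\in C_0^\infty(\HH)\) into interactions with \(|y|<L\) and \(|y|\ge L\). The first part converges by local energy bounds and uniform convergence. For the second, with \(x\in\supp\varphi\) and \(|y|\ge L\), we have \(\Jp(\widetilde u_j(x)-\widetilde u_j(y))=-\widetilde u_j(y)^{p-1}+O(\widetilde u_j(y)^{p-2}+1)\) and \(|x-y|^{-n-sp}=|y|^{-n-sp}(1+O(1/L))\). Take the measures \(\mu_j=\widetilde u_j^{p-1}|y|^{-n-sp}\mathbf 1_{|y|\ge L}\), which have bounded mass, extract a weak-\(*\) limit on the one-point compactification, and let \(L\to\infty\). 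The mass escaping to infinity then contributes a constant \(\lambda\ge0\) independent of \(x\). This gives \(\Fsp V=\lambda\) weakly in \(\HH\), which is Lemma~\ref{lem:halfspace-compactness}.

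To rule out \(\lambda>0\), compare \(V\) from below with the scaled torsion functions \(\lambda^{1/(p-1)}\tau_{B_R(Re_n)}\) of \eqref{torsion}. These lie below \(V\) by comparison, because \(V\ge0\) outside \(B_R\). The torsion satisfies \(\tau\asymp R^{sp/(p-1)}\) at the center, whereas the growth of \(V\) is controlled by its tail bound, \(V(Re_n)\le CR^{s}\), via Lemma~\ref{lem:positive-tail}. Since \(sp/(p-1)>s\), letting \(R\to\infty\) forces \(\lambda=0\). Now apply Theorem~\ref{thm:flat} and Proposition~\ref{prop:flat-bhi} to \(V\) and \(h=(x_n)_+^s\) on balls \(B_R\), after rescaling. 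The oscillation of \(V/h\) on \(B_1\) is then at most \(CR^{-\beta_F}\to0\), so \(V=c\,(x_n)_+^s\), and \(V(e_n)=1\) gives \(c=1\). In the original normalization, \(U_{x_j}(\xi)=\widetilde u_j(e_j+\xi/4)\) with \(e_j\to e_n\). It therefore converges uniformly on \(B_2\) to \((1+\xi_n/4)^s\), which equals \(V_x\) with \(\nu=e_n\) rotated back. This contradicts the choice of \(x_j\).
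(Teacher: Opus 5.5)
Your architecture coincides with the paper's. You blow up at a nearest boundary point, use compactness in which escaping exterior mass produces a constant right-hand side \(\lambda\ge0\) (your one-point-compactification argument repackages Lemma~\ref{lem:halfspace-compactness}), use a torsion comparison to force \(\lambda=0\), and classify the limit by Proposition~\ref{prop:flat-bhi} at large scales.

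The genuine gap is the step ruling out \(\lambda>0\). You claim \(V(Re_n)\le CR^s\) from the tail bound ``via Lemma~\ref{lem:positive-tail}''. That lemma runs the other way: it bounds the tail of a nonnegative fractional \(p\)-harmonic function by its central value. It is also proved for \(\Fsp u=0\), whereas \(V\) solves \(\Fsp V=\lambda\). What the uniform bound \(\int V^{p-1}(1+|y|)^{-n-sp}\,\mathrm dy\le C\) actually yields at \(Re_n\), through local boundedness on \(B_R(Re_n)\), is only \(V(Re_n)\lesssim R^{sp/(p-1)}\). The \(\mathrm L^{p-1}_{sp}\) condition is scale-critical for a constant source, so this is exactly the order \(\lambda^{1/(p-1)}R^{sp/(p-1)}\) of the torsion at the center. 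Comparing at the single point \(Re_n\) therefore gives no contradiction. The growth \(R^s\) is essentially what the classification is meant to prove, and it is not available before \(\lambda=0\) is known.

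The paper avoids this by evaluating the torsion comparison at a fixed point instead of at the center. Take \(E_R=B_R((R+1/2)e_n)\) and use the boundary behaviour \eqref{eq:expanding-torsion}, \(T_R\ge cR^{s/(p-1)}d_{E_R}^s\). Since \(d_{E_R}\ge1/4\) on \(B_{1/4}(e_n)\), this gives \(V\ge c\lambda^{1/(p-1)}R^{s/(p-1)}\) there. Letting \(R\to\infty\) contradicts local boundedness of \(V\), or \(V(e_n)=1\).

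Your center comparison can also be rescued by summation. You have \(V\ge c\lambda^{1/(p-1)}R^{sp/(p-1)}\) on \(B_{R/2}(Re_n)\), so each such ball contributes at least \(c\lambda\) to \(\int V^{p-1}(1+|y|)^{-n-sp}\,\mathrm dy\). The balls with \(R=4^k\) are pairwise disjoint, so this integral would diverge.

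Two smaller omissions:
\begin{enumerate}
\item Before invoking Theorem~\ref{thm:flat} for \(V\), you must verify the weak exterior condition of Definition~\ref{def:local-zero}, not merely \(V=0\) on \(\HH^c\). Translation in the \(e_n\) direction followed by mollification suffices.
\item If you fix the Reifenberg plane at scale \(\rho_j\), you need the tilt of the approximating planes between scales \(1\) and \(R\) to vanish in the limit; it is \(O(\delta_j\log R)\). The paper sidesteps this by choosing the plane at scale \(j\rho_j\) with \(\delta_j\le j^{-2}\).
\end{enumerate}
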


The proof of Theorem~\ref{thm:approximation} is given at the end
of this section. It uses Lemma~\ref{lem:halfspace-compactness}
to identify the contribution of distant values to the limiting
equation and Lemma~\ref{lem:halfspace-classification} to classify
its solutions. Both lemmas are stated and proved below. We first record the resulting gradient estimates.

\begin{cor}\label{cor:reifenberg-gradient}
Assume \eqref{eq:range}. Let \(\Om\) be
\((\delta,r_0)\)-Reifenberg flat, \(w\in\partial\Om\), and
\(0<r<r_0/8\). Let \(u\) satisfy the solution hypotheses
of Theorem~\ref{thm:approximation}.
Then there are \(\delta_1>0\), \(c_1\ge8\), \(C\ge1\), and
\(\tau>0\), depending only on \(n,s,p\), such that, for
\(\delta\le\delta_1\),
\begin{equation}\label{eq:fundamental}
 C^{-1}\frac{u(x)}{d_\Om(x)}
 \le|\nabla u(x)|
 \le C\frac{u(x)}{d_\Om(x)}
 \qquad\text{for }x\in D_{r/c_1}(w).
\end{equation}
For each such \(x\), there is a unit vector \(\nu_x\) such
that
\begin{equation}\label{eq:increment-cone}
 |u(y)-u(z)|\ge
 C^{-1}\frac{u(x)}{d_\Om(x)}|y-z|
\end{equation}
whenever \(y,z\in B_{\tau d_\Om(x)}(x)\) and
\(|(y-z)\cdot\nu_x|\ge|y-z|/2\).
\end{cor}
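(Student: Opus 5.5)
We derive Corollary~\ref{cor:reifenberg-gradient} from Theorem~\ref{thm:approximation} together with the interior $C^{1,\sigma}$ estimate of Proposition~\ref{prop:C1} and the interpolation Lemma~\ref{lem:interpolation}. Fix a small $\varepsilon>0$, to be chosen depending only on $n,s,p$, and set $\delta_1=\delta_\varepsilon$ and $c_1=c_\varepsilon$. For $x\in D_{r/c_1}(w)$ write $\rho=d_\Om(x)$, and let $U_x$, $\nu_x$, $V_x$ be as in Theorem~\ref{thm:approximation}. Then $\|U_x-V_x\|_{\mathrm L^\infty(B_2)}\le\varepsilon$. The model $V_x(\xi)=(1+\xi\cdot\nu_x/4)^s$ is a smooth function on $B_2$, since $1+\xi\cdot\nu_x/4\ge1/2$ there. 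Its gradient at the origin is $\nabla V_x(0)=(s/4)\nu_x$, and on $B_{1/2}$ its $C^{1,1}$ norm is bounded by a constant depending only on $s$. Moreover, for any $\xi,\zeta\in B_{1/2}$ with $|(\xi-\zeta)\cdot\nu_x|\ge|\xi-\zeta|/2$, the mean value theorem gives $|V_x(\xi)-V_x(\zeta)|\ge c_s|\xi-\zeta|$.

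Next we obtain uniform $C^{1,\sigma}$ control of $U_x$. By \eqref{eq:whitney-normalization}, $\|U_x\|_{\mathrm L^\infty(B_2)}+\Tail(U_x,0,2)\le C_{\mathrm W}$, and $U_x$ is fractional $p$-harmonic in $B_4$. Proposition~\ref{prop:C1}, applied on $B_2$ (radius $1$), gives $\|\nabla U_x\|_{\mathrm L^\infty(B_1)}+[\nabla U_x]_{C^\sigma(B_1)}\le C$. Hence $f=U_x-V_x$ satisfies $\|f\|_{\mathrm L^\infty(B_1)}\le\varepsilon$ and $[\nabla f]_{C^\sigma(B_1)}\le M_{\mathrm{reg}}(n,s,p)$. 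A rescaled version of Lemma~\ref{lem:interpolation}, with $B_1$ in place of $B_2$ and $B_{1/2}$ in place of $B_1$, then yields
\[
\|\nabla U_x-\nabla V_x\|_{\mathrm L^\infty(B_{1/2})}\le C\varepsilon^{\sigma/(1+\sigma)}.
\]
We choose $\varepsilon$ so small that this is at most $c_s/8$ and also at most $s/16$.

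It remains to unscale. Since $\nabla U_x(\xi)=(\rho/4)\nabla u(x+\rho\xi/4)/u(x)$, evaluating at $\xi=0$ gives $|\nabla u(x)|\,\rho/(4u(x))\in[s/4-s/16,\,s/4+s/16]$, which is \eqref{eq:fundamental}. For the cone estimate, take $\tau=1/8$. If $y,z\in B_{\tau\rho}(x)$, put $\xi=4(y-x)/\rho$ and $\zeta=4(z-x)/\rho$, so that $\xi,\zeta\in B_{1/2}$ and the cone condition on $y-z$ transfers to $\xi-\zeta$. Integrating $\nabla U_x$ along the segment, which lies in $B_{1/2}$ by convexity, we get
\[
|U_x(\xi)-U_x(\zeta)|\ge|V_x(\xi)-V_x(\zeta)|-(c_s/8)|\xi-\zeta|\ge(c_s/2)|\xi-\zeta|.
\]
Multiplying by $u(x)$ and substituting $|\xi-\zeta|=4|y-z|/\rho$ gives \eqref{eq:increment-cone}.

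The argument has no real obstacle beyond bookkeeping. The main points to check are that the scaled normalization in \eqref{eq:whitney-normalization} matches the radius convention of Proposition~\ref{prop:C1}, which only costs fixed factors, and that $\varepsilon$ is fixed before $\delta_1$ and $c_1$ are chosen, so that all constants depend only on $n,s,p$. The substantive input is Theorem~\ref{thm:approximation}.
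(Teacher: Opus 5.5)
Your proposal is correct and follows the paper's proof essentially step by step: Theorem~\ref{thm:approximation}, uniform $C^{1,\sigma}$ control of $U_x$ from Proposition~\ref{prop:C1} with the Whitney normalization \eqref{eq:whitney-normalization}, Lemma~\ref{lem:interpolation} giving $\|\nabla U_x-\nabla V_x\|_{\mathrm L^\infty(B_{1/2})}\le C\varepsilon^{\sigma/(1+\sigma)}$, and rescaling. The only cosmetic difference is in the cone estimate. You compare increments of $U_x$ directly with the explicit increments of $V_x$ on $B_{1/2}$, using that $\nabla V_x$ is parallel to $\nu_x$, whereas the paper first pins $\nabla U_x$ within $s/16$ of $s\nu_x/4$ on a smaller fixed ball $B_{\tau_0}$ and then integrates along segments.
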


\begin{proof}
Write \(\rho=d_\Om(x)\).
By Theorem~\ref{thm:approximation}, choose
\(\delta\le\delta_\varepsilon\) and
\(x\in D_{r/c_\varepsilon}(w)\) so that
\eqref{eq:whitney-approximation} holds.
Since \(B_{\rho/2}(x)\subset\Om\), \(U_x\) solves the equation
in \(B_2\). Apply Proposition~\ref{prop:C1} on a slightly smaller
ball. Its \(C^{1,\sigma}\) norm there is bounded in terms of
\(n,s,p\) by \eqref{eq:whitney-normalization}.
The functions \(V_x\) have uniformly bounded smooth norms on
\(B_2\), and \(\nabla V_x(0)=s\nu_x/4\).
Lemma~\ref{lem:interpolation}, applied on fixed smaller balls,
therefore gives
\[
 \|\nabla U_x-\nabla V_x\|_{\mathrm L^\infty(B_{1/2})}
 \le C\varepsilon^{\sigma/(1+\sigma)}.
\]
Choose \(\varepsilon\) so small that the right side is less
than \(s/32\), and fix \(\delta_1=\delta_\varepsilon\) and
\(c_1=c_\varepsilon\).
Rescaling proves \eqref{eq:fundamental}. On a sufficiently small fixed ball \(B_{\tau_0}\), the same
interpolation estimate and the smoothness of \(V_x\) give
\[
 |\nabla U_x(\xi)-s\nu_x/4|\le s/16.
\]
Integrating along the segment between two points in that ball
gives the lower increment bound in directions with
\(|(\xi-\zeta)\cdot\nu_x|\ge|\xi-\zeta|/2\).
Rescaling, and setting \(\tau=\tau_0/4\), proves
\eqref{eq:increment-cone}.
\end{proof}

\subsection{Compactness and the contribution from infinity}

\begin{lem}\label{lem:halfspace-compactness}
Let \(0<s<1\), \(p\ge2\), \(R_j\to\infty\),
\(\delta_j\to0\), and \(\eta_j\to0\), with
\(\delta_j,\eta_j\ge0\).
Suppose that \(0\in\partial\Om_j\), that \(\Om_j\) is
\((\delta_j,R_j)\)-Reifenberg flat, and that
\begin{equation}\label{eq:compact-slab}
 B_{R_j}\cap\{x_n>\eta_j\}
 \subset\Om_j\cap B_{R_j}
 \subset B_{R_j}\cap\{x_n>-\eta_j\}.
\end{equation}
Let \(v_j\ge0\) in \(\R^n\) be a local weak solution in
\(\Om_j\cap B_{R_j}\), with zero exterior values on
\(B_{R_j}\setminus\Om_j\) in the sense of
Definition~\ref{def:local-zero}. Suppose
that
\begin{equation}\label{eq:compact-mass}
 \int_{\R^n}\frac{v_j(y)^{p-1}}{(1+|y|)^{n+sp}}\d y\le M_{\mathrm{tail}}.
\end{equation}
Then there are a subsequence, a nonnegative function
\(V\in\Ws_{\mathrm{loc}}(\R^n)\cap \mathrm L^{p-1}_{sp}(\R^n)\),
and \(\lambda\ge0\) such that
\begin{equation}\label{eq:compact-convergence}
 v_j\rightharpoonup V\quad\text{in }\Ws(B_L),\qquad
 v_j\to V\quad\text{in }\mathrm L^p(B_L)
 \quad\text{for every }L<\infty.
\end{equation}
Convergence is also locally uniform in \(\HH\), and
\begin{equation}\label{eq:defect-equation}
 \Fsp V=\lambda\quad\text{weakly in }\HH,\qquad
 V=0\quad\text{almost everywhere in }\HH^c.
\end{equation}
For every \(L>0\) and \(\zeta\in C_0^\infty(B_L)\),
\(\zeta V\in\Wzero(\HH\cap B_L)\).
Along this subsequence,
\begin{equation}\label{eq:defect-mass}
 \lambda=\lim_{R\to\infty}\lim_{j\to\infty}
 \int_{\R^n\setminus B_R}
 \frac{v_j(y)^{p-1}}{|y|^{n+sp}}\d y.
\end{equation}
\end{lem}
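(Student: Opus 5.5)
We follow a standard compactness scheme in four steps: uniform local bounds, extraction of a subsequence, identification of the limit in the near region, and isolation of the far-field interactions as a constant.

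\textbf{Step 1: uniform local bounds.} Fix $L$. For large $j$ we have $R_j>8L$. The mass bound \eqref{eq:compact-mass} bounds $\int_{B_{4L}}v_j^{p-1}$ and $\Tail(v_j,0,4L)$ uniformly. These quantities control the normalization in the boundary local boundedness and H\"older estimates of \cite{BKS}, as used in Proposition~\ref{prop:boundary-normalization}. That proposition applies because $\Om_j$ is uniformly Reifenberg flat. Its hypotheses are satisfied at every boundary point and scale up to $R_j$, since the estimates there are stated in terms of local $\mathrm L^{p-1}$ means and tails. We therefore obtain a uniform bound for $\|v_j\|_{\mathrm L^\infty(B_{2L})}+[v_j]_{C^\beta(B_{2L})}$. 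The Caccioppoli inequality, Lemma~\ref{lem:caccioppoli}, applied in both its interior and boundary forms and combined with a covering, then bounds $[\zeta v_j]_{s,p}$ for each fixed $\zeta\in C_0^\infty(B_L)$.

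\textbf{Step 2: extraction and the exterior condition.} A diagonal argument gives a subsequence with weak convergence in $\Ws(B_L)$ and strong convergence in $\mathrm L^p(B_L)$ for every $L$; by the uniform H\"older bound the convergence is locally uniform on $\R^n$, in particular in $\HH$. Fatou's lemma shows $V\in\mathrm L^{p-1}_{sp}$. Since $v_j=0$ on $B_{R_j}\setminus\Om_j\supset B_{R_j}\cap\{x_n<-\eta_j\}$, we get $V=0$ a.e.\ on $\HH^c$. For the condition $\zeta V\in\Wzero(\HH\cap B_L)$, note that $\zeta v_j\in\Wzero(\Om_j\cap B_L)$ is supported in $\{x_n\ge-\eta_j\}$. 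Translate it by $2\eta_j e_n$, use the slab inclusion \eqref{eq:compact-slab} so that the support lies in $\overline{\HH}$ shifted slightly inward, and pass to the weak limit. Because $\Wzero(\HH\cap B_{L'})$ is weakly closed, the limit lies in it. Alternatively, one can use the uniform boundary decay $v_j\le C(x_n+\eta_j)_+^\beta$ together with Hardy-type cutoff arguments.

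\textbf{Step 3: identification of the equation; this is the main obstacle.} Fix $\varphi\in C_0^\infty(\HH\cap B_L)$. For large $j$ we have $\supp\varphi\subset\Om_j$, and $\operatorname{dist}(\supp\varphi,\partial\HH)\ge d>0$. Split the weak form with a large radius $R$ into interactions within $B_R\times B_R$ and interactions $\supp\varphi\times B_R^c$. The near part converges to the corresponding term for $V$. The reason is that $\Jp(v_j(x)-v_j(y))|x-y|^{-(n+sp)/p'}$ is bounded in $\mathrm L^{p'}$ by the local energy and converges a.e., hence converges weakly; the pairs with $y$ outside $\Om_j$ are handled using $p\ge2$ and local uniform convergence.

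For the far part, write for $x\in\supp\varphi$
\[
\int_{B_R^c}\frac{\Jp(v_j(x)-v_j(y))}{|x-y|^{n+sp}}\d y .
\]
Since $v_j(x)$ is uniformly bounded and $|x-y|=|y|(1+O(L/R))$, we have the estimate
\[
\bigl|\Jp(a-b)+b^{p-1}\bigr|\le C(a^{p-1}+a\,b^{p-2})\qquad(0\le a\le C,\ b\ge0).
\]
The error term $a\,b^{p-2}$ is integrable against $|y|^{-n-sp}$. Indeed, by H\"older's inequality, $b^{p-2}$ is controlled through $b^{p-1}$ and a constant weight, and the constant-weight part tends to zero as $R\to\infty$. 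This error therefore vanishes in the iterated limit $j\to\infty$, then $R\to\infty$. Consequently the far part equals $-\int_{B_R^c}v_j^{p-1}|y|^{-n-sp}\d y\,(1+O(L/R))$ plus $o(1)$. This quantity is bounded by \eqref{eq:compact-mass} and is monotone in $R$ up to these errors.

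Passing to a further subsequence, the limit in $j$ exists for a countable sequence of radii $R$. The terms for $V$ on $B_R^c$ tend to zero as $R\to\infty$ because $V$ has finite tail. Hence the far contribution converges to $-\lambda\int\varphi$, with $\lambda$ given by \eqref{eq:defect-mass}. The sign bookkeeping, with the factor $1/2$ and the symmetric pairs, yields $\Fsp V=\lambda$ with $\lambda\ge0$. The delicate point is the uniformity in $x\in\supp\varphi$ and the cross term $a\,b^{p-2}$ when $p>2$. To control it, we first bound $\int_{B_R^c}b^{p-2}|y|^{-n-sp}$ by $\bigl(\int b^{p-1}|y|^{-n-sp}\bigr)^{(p-2)/(p-1)}\bigl(\int_{B_R^c}|y|^{-n-sp}\bigr)^{1/(p-1)}$, and the second factor decays like $R^{-sp/(p-1)}$.
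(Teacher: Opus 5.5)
Your proposal is correct and follows essentially the same route as the paper. Uniform local bounds come from the boundary local boundedness of \cite{BKS} and Lemma~\ref{lem:caccioppoli}, followed by diagonal extraction and weak $\mathrm L^{p'}$ convergence of the fluxes on $B_R\times B_R$. The far field is then isolated via $|\Jp(A-b)+b^{p-1}|\le C(A^{p-1}+Ab^{p-2})$, H\"older's inequality, and the comparison of $|x-y|^{-n-sp}$ with $|y|^{-n-sp}$.

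The differences are inessential. You invoke boundary H\"older estimates, which is more than needed, since the paper uses only interior H\"older estimates to get local uniform convergence in $\HH$. You prove $\zeta V\in\Wzero(\HH\cap B_L)$ by weak closedness of translated approximants rather than by translating and mollifying $\zeta V$. You take the limits $\lim_j\int_{\R^n\setminus B_R}v_j^{p-1}|y|^{-n-sp}\,\mathrm d y$ only along countably many radii. Strong local $\mathrm L^{p-1}$ convergence already gives these limits for every $R$ once they exist for one radius.
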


\begin{proof}
Fix \(L\ge1\). For all sufficiently large \(j\), the equation
and the geometric hypotheses are available on \(B_{8L}\).
The measure density constants of these domains are uniform.
Estimate \eqref{eq:compact-mass} bounds both
\[
 \left(\avgint_{\Om_j\cap B_{8L}}v_j^{p-1}\d y\right)^{1/(p-1)}
 \quad\text{and}\quad \Tail(v_j,0,8L)
\]
by a constant depending on \(n,s,p,M_{\mathrm{tail}},L\).
The local boundedness estimate of \cite[Lemma~2.6]{BKS},
followed by Lemma~\ref{lem:caccioppoli}, therefore gives
\begin{equation}\label{eq:compact-local-bounds}
 \|v_j\|_{\mathrm L^\infty(B_{2L})}
 +\|v_j\|_{\Ws(B_{2L})}\le C_L.
\end{equation}
The boundedness argument in \cite{BKS} uses local cutoff and
truncation test functions. These are admissible under
Definition~\ref{def:local-zero}, and only measure density in
the displayed balls is needed. In particular, no global
boundedness of \(\Om_j\) is required here.

Local compact Sobolev embedding and a diagonal subsequence give
\eqref{eq:compact-convergence} and almost everywhere convergence
on \(\R^n\). Fatou's lemma gives
\[
 \int_{\R^n}\frac{V(y)^{p-1}}{(1+|y|)^{n+sp}}\d y\le M_{\mathrm{tail}}.
\]
Every compact subset of \(\HH\) lies in \(\Om_j\) for large
\(j\). On such a subset, \eqref{eq:compact-local-bounds},
\eqref{eq:compact-mass}, and the interior H\"older estimate
in Proposition~\ref{prop:interior} give equicontinuity.
Thus convergence is locally uniform in \(\HH\).
The slab inclusions imply \(V=0\) almost everywhere in
\(\HH^c\). For a cutoff \(\zeta\) as in the statement,
\(\zeta V\in\Ws(\R^n)\) and vanishes in \(\HH^c\).
Translation a small distance in the positive \(e_n\) direction,
followed by mollification, approximates \(\zeta V\) by
functions in \(C_0^\infty(\HH\cap B_L)\).
This proves the asserted weak exterior condition.

Put \(a=n+sp\) and, for \(R\ge1\), define
\[
 m_j(R)=\int_{\R^n\setminus B_R}
             \frac{v_j(y)^{p-1}}{|y|^a}\d y.
\]
These quantities are uniformly bounded.
After taking a further subsequence, \(m_j(1)\) converges.
Strong local \(\mathrm L^p\) convergence implies convergence of
\(v_j^{p-1}\) in \(\mathrm L^1\) on every bounded set.
Consequently \(m_j(R)\) has a limit \(m(R)\) for every
\(R\ge1\), and
\[
 m(R)=m(1)-\int_{B_R\setminus B_1}
                   \frac{V(y)^{p-1}}{|y|^a}\d y.
\]
In particular, \(m(R)\) is nonnegative and decreasing.
Let \(\lambda=\lim_{R\to\infty}m(R)\ge0\).
Equivalently,
\begin{equation}\label{eq:mass-splitting}
 m(R)=\lambda+
 \int_{\R^n\setminus B_R}\frac{V(y)^{p-1}}{|y|^a}\d y.
\end{equation}

We next identify the equation satisfied by \(V\).
Take \(\varphi\in C_0^\infty(\HH)\), choose \(L\ge1\)
with \(\supp\varphi\subset B_L\), and let \(R>2L\).
Write
\[
 \mathcal B_R(v,\varphi)=\frac12
 \int_{B_R}\int_{B_R}
 \frac{\Jp(v(x)-v(y))(\varphi(x)-\varphi(y))}
 {|x-y|^a}\dxdy.
\]
On \(B_R\times B_R\), the functions
\(\Jp(v_j(x)-v_j(y))\) converge almost everywhere to
\(\Jp(V(x)-V(y))\). They are bounded in
\(\mathrm L^{p/(p-1)}\) for the measure
\(|x-y|^{-a}\dxdy\), by the uniform local energy bound.
They therefore converge weakly in that space.
Since \(\varphi(x)-\varphi(y)\) belongs to the dual
\(\mathrm L^p\) space on \(B_R\times B_R\),
\begin{equation}\label{eq:local-form-limit}
 \mathcal B_R(v_j,\varphi)\longrightarrow
 \mathcal B_R(V,\varphi).
\end{equation}
For \(0\le A\le C_L\), \(b\ge0\), and \(p>2\),
\[
 |\Jp(A-b)+b^{p-1}|
 \le C\bigl(A^{p-1}+A b^{p-2}\bigr).
\]
For \(p=2\), the left side equals \(A\).
H\"older's inequality gives, when \(p>2\),
\[
 \int_{|y|>R}\frac{v_j(y)^{p-2}}{|y|^a}\d y
 \le m_j(R)^{(p-2)/(p-1)}
       \left(\int_{|y|>R}|y|^{-a}\d y\right)^{1/(p-1)}
 \le C R^{-sp/(p-1)}.
\]
Also, for \(x\in B_L\) and \(|y|>R>2L\),
\[
 \bigl||x-y|^{-a}-|y|^{-a}\bigr|
 \le C L R^{-1}|y|^{-a}.
\]
Together these estimates, including the direct calculation for
\(p=2\), show that
\begin{equation}\label{eq:far-form-error}
 \sup_{x\in B_L}
 \left|\int_{|y|>R}
 \frac{\Jp(v_j(x)-v_j(y))}{|x-y|^a}\d y+m_j(R)\right|
 \le C_L\left(R^{-1}+R^{-sp/(p-1)}\right).
\end{equation}
The supremum can be read as an essential supremum.
Only global nonnegativity and the \((p-1)\)-tail bound enter
this estimate. For large \(j\), \(\varphi\) is an admissible interior test
for \(v_j\), so
\[
 0=\mathcal B_R(v_j,\varphi)
 +\int_{B_L}\varphi(x)\int_{|y|>R}
   \frac{\Jp(v_j(x)-v_j(y))}{|x-y|^a}\d y\,\mathrm d x.
\]
Letting \(j\to\infty\) in this identity by
\eqref{eq:local-form-limit} and \eqref{eq:far-form-error} gives
\[
 \left|\mathcal B_R(V,\varphi)
       -m(R)\int_{\HH}\varphi\d x\right|
 \le C_\varphi\left(R^{-1}+R^{-sp/(p-1)}\right).
\]
The local energy and finite tail of \(V\) imply that
\(\mathcal B_R(V,\varphi)\) tends to its full weak form as
\(R\to\infty\). Since \(m(R)\to\lambda\), we obtain
\[
 \frac12\iint_{\R^{2n}}
 \frac{\Jp(V(x)-V(y))(\varphi(x)-\varphi(y))}
 {|x-y|^{n+sp}}\dxdy
 =\lambda\int_{\HH}\varphi\d x.\qedhere
\]
\end{proof}

\subsection{Classification of the limit}

\begin{lem}\label{lem:halfspace-classification}
Let \(0<s<1\), \(p\ge2\), and \(\lambda\ge0\).
Suppose that
\(V\in\Ws_{\mathrm{loc}}(\R^n)\cap \mathrm L^{p-1}_{sp}(\R^n)\)
is nonnegative in \(\R^n\), vanishes almost everywhere in
\(\HH^c\), and satisfies \(\Fsp V=\lambda\) weakly in
\(\HH\). Then \(\lambda=0\) and
\[
 V(x)=b(x_n)_+^s\quad\text{almost everywhere in }\R^n
\]
for a constant \(b\ge0\).
\end{lem}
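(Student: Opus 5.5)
The argument has two steps. First, comparison with scaled torsion functions of large balls forces \(\lambda=0\). Second, the half-space boundary comparison of Theorem~\ref{thm:flat}, applied at every scale, identifies \(V\) as a multiple of the profile.

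\textbf{Step 1: \(\lambda=0\).} Suppose \(\lambda>0\). For \(R\ge1\), consider the ball \(B^R=B_R(Re_n)\subset\HH\). Let \(\tau_R\) be its torsion function, as in \eqref{torsion}. By scaling, \(\tau_R(x)=R^{sp/(p-1)}\tau_1((x-Re_n)/R+e_n)\), so the ball torsion estimate of \cite[Lemma~2.2]{IMS20} gives
\[
 \tau_R(Re_n)\ge cR^{sp/(p-1)}.
\]
The function \(w=\lambda^{1/(p-1)}\tau_R\) satisfies \(\Fsp w=\lambda\) in \(B^R\) and \(w=0\le V\) outside \(B^R\). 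I would apply weak comparison, as in Proposition~\ref{prop:comparison}, in \(B^R\) to conclude \(V\ge w\). The test \((w-V)_+\) lies in \(\Wzero(B^R)\) because \(V\ge0\) and \(w\) vanishes outside \(B^R\). The local energy of \(V\) on a neighbourhood of \(\overline{B^R}\) and its \((p-1)\)-tail control the pairing, exactly as in the comparison step of Lemma~\ref{lem:flat-two-sided}. Hence
\[
 V(Re_n)\ge c\lambda^{1/(p-1)}R^{sp/(p-1)},
\]
and interior Harnack extends this lower bound to \(B_{R/2}(Re_n)\).

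This contradicts \(V\in\mathrm L^{p-1}_{sp}(\R^n)\). The contribution of \(B_{R/2}(Re_n)\) to \(\int V^{p-1}(1+|y|)^{-n-sp}\,\mathrm d y\) is at least \(c\lambda R^{sp}R^nR^{-n-sp}=c\lambda\). These contributions are uniform in \(R\) and come from disjoint balls along a dyadic sequence \(R=4^k\). Summing them gives a divergent integral, so \(\lambda=0\).

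\textbf{Step 2: classification.} With \(\lambda=0\), the function \(V\) is globally nonnegative and fractional \(p\)-harmonic in \(\HH\). The exterior condition \(\zeta V\in\Wzero(\HH\cap B_L)\) holds for every \(L\): in Lemma~\ref{lem:halfspace-compactness} it is part of the conclusion, and in general it follows from \(V=0\) on \(\HH^c\) by the translation-and-mollification argument used there.

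If \(V\equiv0\), take \(b=0\). Otherwise, for each \(R>0\), apply Theorem~\ref{thm:flat} to \(V_R(\xi)=V(R\xi)\), which satisfies all its hypotheses. The quotient \(q=V/x_n^s\) then obeys
\[
 \operatorname{osc}_{\HH\cap B_R}q\le C_F\,R^{-s}\,V(Re_n)\,R^{-\beta_F}\cdot R^{\beta_F}\quad\text{(scaled)},
\]
and more precisely
\[
 |q(x)-q(y)|\le C_F\,R^{-s}V(Re_n)\,\bigl(|x-y|/R\bigr)^{\beta_F}
 \qquad\text{for } x,y\in\HH\cap B_R,
\]
together with \(q\asymp R^{-s}V(Re_n)\) on \(\HH\cap B_R\).

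The two-sided bounds at scales \(R\) and \(1\) give \(R^{-s}V(Re_n)\asymp q(e_n)\), with constants independent of \(R\). Fixing \(x,y\) and letting \(R\to\infty\), the oscillation bound is at most \(Cq(e_n)(|x-y|/R)^{\beta_F}\to0\). Hence \(q\) is constant, and \(V=b(x_n)_+^s\) in \(\HH\), hence almost everywhere in \(\R^n\).

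\textbf{Main obstacle.} The delicate point is the comparison in Step 1. \(V\) only has local energy and a finite tail, so admissibility of the test must be checked as in Lemma~\ref{lem:flat-two-sided}, with interactions far from \(B^R\) handled by the tail. The weak form of \(\Fsp V=\lambda\) from Lemma~\ref{lem:halfspace-compactness}, with \(\lambda\) constant, extends to this test by density. Beyond that, the proof only needs the scaling of the torsion function and the scale invariance of Theorem~\ref{thm:flat}.
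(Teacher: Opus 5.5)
Your proof is correct and follows essentially the paper's route. Step~1 uses torsion comparison in large balls inside $\HH$ to exclude $\lambda>0$. Step~2 applies the scale-invariant flat quotient estimates of Theorem~\ref{thm:flat} to $V(R\,\cdot)$ and lets $R\to\infty$; the paper phrases this through Proposition~\ref{prop:flat-bhi} with $V(R\,\cdot)$ and $(Rx_n)_+^s$, which amounts to the same thing.

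The only variation is how Step~1 reaches its contradiction. You sum the tail contributions of the disjoint balls $B_{R/2}(Re_n)$, $R=4^k$, and contradict $V\in\mathrm L^{p-1}_{sp}(\R^n)$. The paper instead uses the balls $B_R((R+1/2)e_n)$, whose closures lie in $\HH$, and notes that their boundary distance is at least $1/4$ on the fixed ball $B_{1/4}(e_n)$. This gives $V\ge c\lambda^{1/(p-1)}R^{s/(p-1)}\to\infty$ there, contradicting local $\mathrm L^p$ integrability. Both versions work, and your ball touching $\partial\HH$ at the origin causes no harm, since $C_0^\infty(B_R(Re_n))\subset C_0^\infty(\HH)$.

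One step should be corrected. Interior Harnack (Proposition~\ref{prop:interior}) is stated for $\Fsp V=0$, so it is not available when $\Fsp V=\lambda>0$. It is also unnecessary. The ball torsion estimate quoted in the proof of Lemma~\ref{lem:flat-two-sided} is pointwise on the whole ball, giving $\tau_R(x)\ge cR^{s/(p-1)}d_{B_R(Re_n)}(x)^s$. The comparison $V\ge\lambda^{1/(p-1)}\tau_R$ therefore already yields $V\ge c\lambda^{1/(p-1)}R^{sp/(p-1)}$ almost everywhere on $B_{R/2}(Re_n)$. Also drop the first, garbled oscillation display in Step~2; the H\"older estimate you give after it is the correct scaled form.
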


\begin{proof}
Suppose first that \(\lambda>0\).
For \(R\ge2\), let
\[
 E_R=B_R((R+1/2)e_n),
\]
and let \(T_R\in\Wzero(E_R)\) solve
\[
 \Fsp T_R=1\quad\text{in }E_R,\qquad
 T_R=0\quad\text{in }E_R^c.
\]
The ball \(E_R\) has closure in \(\HH\).
The torsion estimate of \cite[Lemma~2.2]{IMS20}, see \eqref{torsion} with $B$ replaced by $E_R$, with the
normalization of \(\Fsp\), gives
\begin{equation}\label{eq:expanding-torsion}
 T_R(x)\ge cR^{s/(p-1)}d_{E_R}(x)^s
 \qquad\text{for }x\in E_R.
\end{equation}
Comparison yields
\[
 V\ge\lambda^{1/(p-1)}T_R\quad\text{in }E_R.
\]
This comparison is valid in the local energy and tail class.
Indeed, choose a nonnegative smooth cutoff \(\zeta_R\)
equal to one near \(\overline{E_R}\), with compact support
in \(\HH\). Then
\[
 \psi_R=(\lambda^{1/(p-1)}T_R-\zeta_R V)_+
\]
belongs to \(\Ws(\R^n)\), vanishes outside \(E_R\), and
is bounded by \(\lambda^{1/(p-1)}T_R\).
Since \(E_R\) is a ball, \(\psi_R\in\Wzero(E_R)\).
It is therefore an admissible comparison test, and it equals
\((\lambda^{1/(p-1)}T_R-V)_+\) on all of \(\R^n\).
The exterior integrals require only the finite
\((p-1)\)-tail, and monotonicity of \(\Jp\) proves the
comparison. For \(x\in B_{1/4}(e_n)\), one has
\(d_{E_R}(x)\ge1/4\). Hence
\[
 V(x)\ge c\lambda^{1/(p-1)}R^{s/(p-1)}
 \quad\text{almost everywhere in }B_{1/4}(e_n).
\]
Letting \(R\to\infty\) contradicts
\(V\in \mathrm L^p(B_{1/4}(e_n))\). Thus \(\lambda=0\). If \(V\) is not identically zero, it is strictly positive
in \(\HH\) by interior Harnack and global nonnegativity.
The local weak exterior condition on each half-ball follows
by the translation and mollification argument used in
Lemma~\ref{lem:halfspace-compactness}.
Put \(h(x)=(x_n)_+^s\).
For any \(x,y\in\HH\) and
\(R>\max\{|x|,|y|\}\), apply
Proposition~\ref{prop:flat-bhi} to
\(V(R\,\cdot)\) and \(h(R\,\cdot)\).
Its constants are independent of the values at \(Re_n\),
and it gives
\[
 \left|\log\frac{V(x)}{h(x)}-
       \log\frac{V(y)}{h(y)}\right|
 \le 2C_F^2\left(\frac{|x-y|}{R}\right)^{\beta_F}.
\]
Let \(R\to\infty\). The quotient \(V/h\) is constant
in \(\HH\), as required. If \(V=0\), take \(b=0\).
\end{proof}

\subsection{Proof of Theorem~\ref{thm:approximation}}

To prove Theorem~\ref{thm:approximation},  suppose that the conclusion fails for some \(\varepsilon_0>0\).
For every integer \(j\ge3\), choose a counterexample with
\(\delta_j\le j^{-2}\),
\(x_j\in\Om_j\cap B_{r_j/j^2}(w_j)\), and a solution
\(u_j\) satisfying the stated hypotheses.
Put
\[
 \rho_j=d_{\Om_j}(x_j),\qquad
 z_j\in\partial\Om_j,\qquad |x_j-z_j|=\rho_j.
\]
Then
\[
 \rho_j\le |x_j-w_j|<r_j/j^2,\qquad
 |z_j-w_j|<2r_j/j^2.
\]
Use Reifenberg flatness at \(z_j\) on the scale \(j\rho_j\).
Choose an orthogonal matrix \(Q_j\) whose last column points
to the interior side of the approximating plane, and define
\[
 \widetilde\Om_j=\rho_j^{-1}Q_j^{\mathsf T}(\Om_j-z_j),
 \qquad
 v_j(y)=\frac{u_j(z_j+\rho_jQ_jy)}{u_j(x_j)},
 \qquad
 a_j=\frac{Q_j^{\mathsf T}(x_j-z_j)}{\rho_j}.
\]
The scaled domain is Reifenberg flat on all scales at most
\(j\), and
\begin{equation}\label{eq:blowup-slab}
 B_j\cap\{y_n>j\delta_j\}
 \subset\widetilde\Om_j\cap B_j
 \subset B_j\cap\{y_n>-j\delta_j\}.
\end{equation}
Moreover, \(v_j\) is a local weak solution with zero exterior
values in \(\widetilde\Om_j\cap B_j\). Indeed,
\(B_{j\rho_j}(z_j)\subset B_{4r_j}(w_j)\). We have
\[
 |a_j|=1,\qquad d_{\widetilde\Om_j}(a_j)=1,
 \qquad v_j(a_j)=1.
\]
The point \((a_j',-j\delta_j)\) lies in
\(B_j\setminus\widetilde\Om_j\). Consequently,
\[
 1\le a_{j,n}+j\delta_j,
 \qquad 1-j\delta_j\le a_{j,n}\le1,
\]
so \(a_j\to e_n\).
Since \(B_1(a_j)\subset\widetilde\Om_j\cap B_j\),
Lemma~\ref{lem:positive-tail} with radius \(1/2\), and
interior Harnack, imply
\[
 \Tail(v_j,a_j,1/2)+
 \|v_j\|_{\mathrm L^\infty(B_{1/2}(a_j))}\le C.
\]
As \(|a_j|=1\), this gives the fixed normalization
\begin{equation}\label{eq:blowup-mass}
 \int_{\R^n}\frac{v_j(y)^{p-1}}{(1+|y|)^{n+sp}}\d y\le C.
\end{equation}
Apply Lemma~\ref{lem:halfspace-compactness} with
\(R_j=j\) and \(\eta_j=j\delta_j\).
After passage to a subsequence, the limit \(V\) satisfies
\(V(e_n)=1\), because convergence is uniform near \(e_n\)
and \(a_j\to e_n\).
Lemma~\ref{lem:halfspace-classification} gives
\[
 V(y)=(y_n)_+^s.
\]
Put \(h(y)=(y_n)_+^s\) and \(\nu_j=Q_je_n\).
For \(\xi\in B_2\),
\[
 \frac{u_j(x_j+\rho_j\xi/4)}{u_j(x_j)}
 =v_j(a_j+Q_j^{\mathsf T}\xi/4),\qquad
 (1+\xi\cdot\nu_j/4)^s
 =h(e_n+Q_j^{\mathsf T}\xi/4).
\]
All the arguments in these two expressions lie, for large
\(j\), in one fixed compact subset of \(\HH\).
Uniform convergence to \(h\), and \(a_j\to e_n\), therefore
imply
\[
 \sup_{\xi\in B_2}
 \left|\frac{u_j(x_j+\rho_j\xi/4)}{u_j(x_j)}
             -(1+\xi\cdot\nu_j/4)^s\right|\longrightarrow0.
\]
This contradicts the defining failure with error
\(\varepsilon_0\) for every unit vector. This completes the proof of Theorem~\ref{thm:approximation}.\qed

\begin{rem} The constants in Theorem~\ref{thm:approximation} are obtained
by compactness. The proof gives their dependence on
\(n,s,p,\varepsilon\), but does not give an explicit rate
for \(\delta_\varepsilon\) or \(c_\varepsilon\).
Along the subsequence selected in the compactness argument,
the normalized functions also satisfy
\begin{equation}\label{eq:blowup-tail-tightness}
 \lim_{R\to\infty}\limsup_{j\to\infty}
 \int_{\R^n\setminus B_R}
 \frac{v_j(y)^{p-1}}{|y|^{n+sp}}\d y=0.
\end{equation}
This is a consequence of the equation and nonnegativity.
It is not inferred from the bounded total tail alone.
No growth estimate is assumed in the proof.
\end{rem}

\section{Linearized equations and their kernels}
\label{sec:linearization}

This section develops the linear equations arising from
differences and first variations of fractional \(p\)-harmonic
functions. Linearization at a positive solution \(U\)
produces the kernel
\[
 K_U(x,y)=(p-1)
 \frac{|U(x)-U(y)|^{p-2}}{|x-y|^{n+sp}}.
\]
For \(p>2\), this kernel vanishes for distinct points
with \(U(x)=U(y)\), even where \(\nabla U\) is nonzero.
Its degeneracy therefore depends on increments between
pairs of points, and gradient non-degeneracy does not
imply a uniform pointwise ellipticity bound.

We first derive the exact linear equation satisfied by
the difference \(v-u\), with averaged kernel \(K_{u,v}\).
This identity requires no differentiability of a family
of solutions. It is used later in
Proposition~\ref{prop:ordered-derivative} to obtain energy
bounds for difference quotients and pass to the weak
equation for the parameter derivative along ordered
families of solutions.

We then express these kernels through weights on pairs
and prove energy comparisons on Whitney balls.
These estimates show how directional lower bounds for
increments yield coercivity of the quadratic form,
despite the possible vanishing of the kernel on
individual pairs. They hold for arbitrary functions
of finite energy, not only for differences of nonlinear
solutions. Their single-background case, obtained by
taking \(u=v=U\) and hence \(K_{u,v}=K_U\), supplies
the interior part of the weighted Poincar\'e inequality
in Proposition~\ref{prop:variable-poincare}.

Finally, we introduce the ground-state transformation
and examine the kernel associated with the half-space
profile \((x_n)_+^s\). We identify its scaling and the
weighted structure governing boundary regularity of
the quotient. This model prepares the analysis of
variable positive solutions and their transformed
quadratic forms in Section \ref{sec:variable}.

\subsection{Exact equations for differences}

Throughout this section \(p\ge2\), unless a narrower range is
stated. We write \(\Delta_{xy}u=u(x)-u(y)\).
When \(p=2\), the map \(a\mapsto\Jp(a)\) is the identity,
and its derivative is \(1\) at every \(a\in\R\).
Accordingly, in the linearization formulas we interpret
the factor \(|a|^{p-2}\) as \(1\), also when \(a=0\).

\begin{lem}\label{lem:scalar}
For \(a,b\in\R\), define
\[
 a_p(a,b)=(p-1)\int_0^1|(1-t)a+tb|^{p-2}\d t.
\]
Then
\[
 \Jp(b)-\Jp(a)=a_p(a,b)(b-a).
\]
If \(p>2\), there are positive constants \(c_p,C_p\) such that
\begin{equation}\label{eq:scalar-bounds}
 c_p(|a|+|b|)^{p-2}
 \le a_p(a,b)
 \le C_p(|a|+|b|)^{p-2}.
\end{equation}
For \(p=2\), \(a_2(a,b)=1\).
\end{lem}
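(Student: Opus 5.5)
The identity follows from the fundamental theorem of calculus applied to \(\Jp\) along the segment from \(a\) to \(b\). The two-sided bound \eqref{eq:scalar-bounds} then comes from comparing the integral of \(|\gamma(t)|^{p-2}\) with \((|a|+|b|)^{p-2}\), where \(\gamma(t)=(1-t)a+tb\).

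\emph{The identity.} For \(p\ge2\) the map \(\Jp\) is \(C^1\) on \(\R\), with \(\Jp'(\xi)=(p-1)|\xi|^{p-2}\) (equal to \(1\) when \(p=2\), under the stated convention). Hence
\[
 \Jp(b)-\Jp(a)=\int_0^1\frac{\mathrm d}{\mathrm dt}\Jp(\gamma(t))\,\mathrm dt
 =(b-a)(p-1)\int_0^1|\gamma(t)|^{p-2}\,\mathrm dt .
\]
The right side is exactly \(a_p(a,b)(b-a)\). For \(p=2\) the integrand is \(1\), so \(a_2=1\).

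\emph{The upper bound.} Since \(|\gamma(t)|\le|a|+|b|\) and \(p-2>0\), we get
\[
 a_p(a,b)\le(p-1)(|a|+|b|)^{p-2}.
\]

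\emph{The lower bound.} This is the only step needing care, since \(\gamma\) may vanish at an interior \(t\) when \(a\) and \(b\) have opposite signs. If \(a=b=0\), both sides are zero. Otherwise we may assume \(|b|\ge|a|\), by the symmetry \(t\mapsto1-t\), and set \(M=|a|+|b|\), so that \(|b|\ge M/2\).

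Consider \(t\in[3/4,1]\). Then
\[
 |\gamma(t)|\ge t|b|-(1-t)|a|\ge\tfrac34|b|-\tfrac14|b|=\tfrac12|b|\ge\tfrac M4 .
\]
Integrating over this interval of length \(1/4\) gives
\[
 a_p(a,b)\ge(p-1)\,\tfrac14\,(M/4)^{p-2},
\]
which is the lower bound with \(c_p=(p-1)4^{1-p}\). This avoids any case analysis on the location of the zero of \(\gamma\).
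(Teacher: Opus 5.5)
Your proof is correct. The identity and the upper bound are handled as in the paper, and the difference lies in the lower bound.

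The paper uses homogeneity of degree $p-2$ to reduce to the compact set $\{|a|+|b|=1\}$. There it observes that $a_p$ is continuous and strictly positive, because the affine map $t\mapsto(1-t)a+tb$ vanishes at most once when $(a,b)\neq(0,0)$. Hence $a_p$ attains a positive minimum on that set.

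You instead argue directly. The symmetry $a_p(a,b)=a_p(b,a)$ lets you assume $|b|\ge|a|$. Then the segment stays at distance at least $(|a|+|b|)/4$ from the origin for $t\in[3/4,1]$. This gives explicit constants $c_p=(p-1)4^{1-p}$ and $C_p=p-1$.

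Your route is fully elementary and constructive. It also avoids checking continuity of $a_p$ and positivity on the unit set. The paper's compactness argument is shorter but gives no explicit value for $c_p$. Since all later uses only need constants depending on $p$, either version suffices.
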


\begin{proof}
The identity follows from the fundamental theorem of calculus.
The upper bound is immediate.
For the lower bound, use homogeneity to restrict to
\(|a|+|b|=1\). On this compact set the integral is continuous
and positive. Its minimum is therefore positive.
The case \(a=b=0\) follows directly.
\end{proof}

\begin{prop}\label{prop:difference}
Let \(u,v\in\Vs(D)\) solve \(\Fsp u=\Fsp v=0\) in \(D\).
Put \(f=v-u\) and
\begin{equation}\label{eq:averaged-kernel}
 K_{u,v}(x,y)=
 \frac{a_p(\Delta_{xy}u,\Delta_{xy}v)}{|x-y|^{n+sp}}.
\end{equation}
Then \(K_{u,v}(x,y)=K_{u,v}(y,x)\), and
\begin{equation}\label{eq:difference-equation}
 \frac12\iint_{\R^{2n}}
 (f(x)-f(y))(\varphi(x)-\varphi(y))
 K_{u,v}(x,y)\dxdy=0
 \qquad\text{for }\varphi\in\Wzero(D).
\end{equation}
For \(p>2\),
\[
 K_{u,v}(x,y)\asymp_p
 \frac{(|\Delta_{xy}u|+|\Delta_{xy}v|)^{p-2}}
 {|x-y|^{n+sp}}.
\]
\end{prop}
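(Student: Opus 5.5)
The plan is to subtract the two weak formulations and rewrite the difference of nonlinear fluxes pairwise using Lemma~\ref{lem:scalar}. Since \(u,v\in\Vs(D)\) solve the homogeneous equation in \(D\), the weak formulation \eqref{eq:weak} extends by density to all \(\varphi\in\Wzero(D)\) (Remark after Definition~\ref{def:weak}, and Theorem~\ref{thm:dirichlet}). Subtracting the identity for \(u\) from the one for \(v\) gives
\[
 \frac12\iint_{\R^{2n}}
 \frac{\bigl(\Jp(\Delta_{xy}v)-\Jp(\Delta_{xy}u)\bigr)(\varphi(x)-\varphi(y))}
 {|x-y|^{n+sp}}\dxdy=0 .
\]
For each pair \((x,y)\), Lemma~\ref{lem:scalar} with \(a=\Delta_{xy}u\) and \(b=\Delta_{xy}v\) gives
\(\Jp(\Delta_{xy}v)-\Jp(\Delta_{xy}u)=a_p(\Delta_{xy}u,\Delta_{xy}v)(\Delta_{xy}v-\Delta_{xy}u)\).
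Since \(\Delta_{xy}v-\Delta_{xy}u=f(x)-f(y)\), this yields \eqref{eq:difference-equation} pointwise in the integrand.

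Symmetry and the kernel bounds come next. Swapping \(x\) and \(y\) negates both arguments of \(a_p\). The integrand \(|(1-t)a+tb|^{p-2}\) is even under \((a,b)\mapsto(-a,-b)\), so \(K_{u,v}(x,y)=K_{u,v}(y,x)\). For \(p>2\), the two-sided comparison follows by dividing \eqref{eq:scalar-bounds} by \(|x-y|^{n+sp}\). For \(p=2\), the kernel is \(|x-y|^{-n-2s}\), consistent with the convention that \(|a|^{p-2}\) is read as \(1\).

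The only genuine point, which I expect to be the main obstacle though it is mild, is to justify that the integrals are absolutely convergent, so that the splitting and substitution are legitimate for \(\varphi\in\Wzero(D)\). The integrand vanishes on \(D^c\times D^c\), because \(\varphi=0\) there, so it suffices to integrate over \(Q_D\). On \(Q_D\) I will use \(|\Jp(a)|\le|a|^{p-1}\) together with H\"older's inequality with exponents \(p/(p-1)\) and \(p\) against the measure \(|x-y|^{-n-sp}\dxdy\). This bounds each flux term by \(\|u\|_{\Vs(D)}^{p-1}[\varphi]_{s,p}\), and likewise for \(v\), so both weak forms are absolutely convergent and their difference may be taken inside one integral.

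For the rewritten form one can also bound the integrand directly. By \eqref{eq:scalar-bounds}, \(|a_p(a,b)(b-a)|\le C(|a|+|b|)^{p-1}\), and this lies in \(\mathrm L^{p/(p-1)}(Q_D)\) with respect to the same measure. Hence \eqref{eq:difference-equation} holds as an absolutely convergent integral for every \(\varphi\in\Wzero(D)\).
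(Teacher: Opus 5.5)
Your proposal is correct and follows the paper's proof: subtract the weak equations, apply Lemma~\ref{lem:scalar} pairwise with $a=\Delta_{xy}u$, $b=\Delta_{xy}v$, get symmetry from the invariance of $a_p$ under simultaneous sign change, read the two-sided comparison off \eqref{eq:scalar-bounds}, and justify finiteness by H\"older's inequality on the $p$-energy increments. Your extra detail, namely the density extension to $\Wzero(D)$ and the reduction of the integral to $Q_D$, only spells out what the paper states in one line.
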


\begin{proof}
Subtract the weak equations and apply Lemma~\ref{lem:scalar}
to each pair \((x,y)\).
All integrals are finite by H\"older's inequality applied to
the \(p\)-energy increments.
Simultaneously changing the signs of both arguments of \(a_p\)
does not change it, proving symmetry.
The comparison follows from \eqref{eq:scalar-bounds}.
\end{proof}

Equation \eqref{eq:difference-equation} is exact.
It does not require differentiability of a family of solutions.
In general \(u\) and \(v\) themselves do not solve the equation
with kernel \(K_{u,v}\). A linear boundary comparison applied to
\(f\) therefore requires a companion solution for that same
kernel. This is the point that must be addressed in an adaptation
of the ordered-difference method in \cite[Section~6]{LLN}.

\subsection{Pair weights and Whitney balls}

For \(p>2\), $K_{u,v}(x,y)={\widehat w_{u,v}(x,y)}{|x-y|^{-n-2s}}$ where
\begin{align}
 \widehat w_{u,v}(x,y)
 &=(p-1)\int_0^1
 \left|\frac{(1-t)\Delta_{xy}u+t\Delta_{xy}v}
 {|x-y|^s}\right|^{p-2}\d t,\notag\\
 \widehat w_{u,v}(x,y)&\asymp_p
 \left(\frac{|\Delta_{xy}u|+|\Delta_{xy}v|}{|x-y|^s}\right)^{p-2}:=w_{u,v}(x,y).
 \label{eq:pair-weight}
\end{align}
Hence,  $\widehat w_{u,v}(x,y)$ is the exact weight relative to the fractional
Laplacian of order \(2s\). \(w_{u,v}(x,y)\) is a weight on pairs of points and functions, and need not be comparable to a
weight depending on one endpoint alone.

\begin{prop}\label{prop:whitney-pair}
Assume \eqref{eq:range}. Let \(\Om\) be
\((\delta,r_0)\)-Reifenberg flat, let
\(w\in\partial\Om\), and let \(0<r<r_0/8\).
Let \(\delta_1\) and \(c_1\) be the constants in
Corollary~\ref{cor:reifenberg-gradient}, and assume
\(\delta\le\delta_1\).
Suppose that \(u\) and \(v\) satisfy the solution
hypotheses of that corollary for the same
\(\Om,w,r\), and fix $x_0\in D_{r/c_1}(w)$. Then there is a fixed \(\varepsilon>0\), depending
only on \(n,s,p\), for which the following statements
hold.  Put
\[
 d_0=d_\Om(x_0),\qquad a=u(x_0)+v(x_0),\qquad
 B=B_{\varepsilon d_0}(x_0),\qquad
 \Lambda_B=(a/d_0)^{p-2}.
\]
For $p>2$ and all distinct $x,y\in2B$,
\begin{equation}\label{eq:whitney-weight-upper}
 w_{u,v}(x,y)\le C\Lambda_B
 |x-y|^{(1-s)(p-2)}.
\end{equation}
Moreover, there is a unit vector $e_B\in\R^n$ such that
\begin{equation}\label{eq:whitney-weight-lower}
 w_{u,v}(x,y)\ge c\Lambda_B
 |x-y|^{(1-s)(p-2)}
\end{equation}
whenever $x,y\in2B$, $x\ne y$, and
\[
 |(x-y)\cdot e_B|\ge\frac12|x-y|.
\]
Thus the upper bound holds for every pair, while the
lower bound holds for pairs whose connecting direction
lies in a fixed double cone about $e_B$. Furthermore, for every measurable \(f\),
\begin{equation}\label{eq:whitney-pair-energy}
 c\Lambda_B[f]_{\mathrm H^{\sigma_*/2}(B)}^2
 \le\iint_{(2B)\times (2B)}|f(x)-f(y)|^2K_{u,v}(x,y)\dxdy
 \le C\Lambda_B[f]_{\mathrm H^{\sigma_*/2}(2B)}^2,
\end{equation}
where $\sigma_*=sp-p+2$ and
\[
 [f]_{\mathrm H^{\sigma_*/2}(G)}^2
 =\iint_{G\times G}
 \frac{|f(x)-f(y)|^2}{|x-y|^{n+\sigma_*}}\dxdy.
\]
For a measurable function $f$ on $2B$ with $f|_B\in\mathrm L^2(B)$,
the lower energy bound consequently gives
\begin{equation}\label{eq:whitney-pair-poincare}
 \int_B|f-f_B|^2\d x
 \le C\frac{d_0^{sp}}{a^{p-2}}
       \iint_{(2B)^2}|f(x)-f(y)|^2K_{u,v}(x,y)\dxdy.
\end{equation}
The same kernel and energy comparisons hold with
$B=B_\ell(x_0)$ for any $0<\ell\le\varepsilon d_0$,
using the same coefficient $\Lambda_B=(a/d_0)^{p-2}$.
For such a ball, the Poincar\'e factor is
$C\ell^{\sigma_*}/\Lambda_B$.
The energy conclusions also hold for \(p=2\).
\end{prop}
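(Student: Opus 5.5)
The plan is to work on a Whitney ball around \(x_0\) with the gradient information of Corollary~\ref{cor:reifenberg-gradient}, applied to \(u\) and to \(v\) separately. Set \(d_0=d_\Om(x_0)\). By Harnack, \(u\asymp u(x_0)\) and \(v\asymp v(x_0)\) on \(B_{d_0/2}(x_0)\). For \(x\in B_{d_0/2}(x_0)\) we have \(d_\Om(x)\asymp d_0\), so \eqref{eq:fundamental} gives \(|\nabla u|\le C u(x_0)/d_0\) on \(2B\) once \(\varepsilon\le1/4\). The same bound holds for \(v\).

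\textbf{Upper bound.} The mean value theorem gives
\[
 |\Delta_{xy}u|+|\Delta_{xy}v|\le C\,\frac{a}{d_0}\,|x-y| \qquad (x,y\in 2B).
\]
Dividing by \(|x-y|^s\) and raising to the power \(p-2\) yields \eqref{eq:whitney-weight-upper}.

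\textbf{Lower bound.} This is the step I expect to be the main obstacle. The cone directions \(\nu_{x_0}\) for \(u\) and \(v\) in \eqref{eq:increment-cone} may differ, and the weight needs only \(|\Delta u|+|\Delta v|\), so one cone is not enough.

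I would use the larger of \(u(x_0)\) and \(v(x_0)\), say \(u(x_0)\ge a/2\), and take \(e_B=\nu_{x_0}\) for \(u\). Then \(|\Delta_{xy}u|\ge c(a/d_0)|x-y|\) whenever \(x,y\in B_{\tau d_0}(x_0)\) and \(|(x-y)\cdot e_B|\ge|x-y|/2\), by \eqref{eq:increment-cone}. Choosing \(\varepsilon\le\tau/2\) ensures \(2B\subset B_{\tau d_0}(x_0)\). The lower bound \eqref{eq:whitney-weight-lower} follows since \(|\Delta u|\le|\Delta u|+|\Delta v|\). For \(p=2\) the weight is identically \(1\).

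\textbf{Energy comparison.} With the weight bounds, \(K_{u,v}\asymp w_{u,v}|x-y|^{-n-2s}\), so
\[
 \Lambda_B|x-y|^{(1-s)(p-2)-n-2s}=\Lambda_B|x-y|^{-n-\sigma_*}.
\]
The upper inequality in \eqref{eq:whitney-pair-energy} is then immediate. For the lower inequality, the kernel only dominates \(\Lambda_B|x-y|^{-n-\sigma_*}\) on the double cone \(\Gamma=\{z:|z\cdot e_B|\ge|z|/2\}\), so I need a cone-restricted comparison:
\[
 [f]^2_{\mathrm H^{\sigma_*/2}(B)}\le C\iint_{x,y\in 2B,\;x-y\in\Gamma}\frac{|f(x)-f(y)|^2}{|x-y|^{n+\sigma_*}}\dxdy .
\]
I would prove this by the standard intermediate-point argument. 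For \(x,y\in B\), consider points \(z\) in the set
\[
 \{z\in 2B:\ x-z\in\Gamma,\ z-y\in\Gamma,\ |z-x|\asymp|x-y|\}.
\]
This set has measure \(\gtrsim|x-y|^n\). Take \(z\) near \(\tfrac{x+y}{2}+t|x-y|e_B\) with \(t\) a fixed constant, keeping it inside \(2B\), which is possible since \(|x-y|\le 2\varepsilon d_0\) and \(x,y\in B\). Average the inequality \(|f(x)-f(y)|^2\le2|f(x)-f(z)|^2+2|f(z)-f(y)|^2\) over this set and apply Fubini; the kernels are comparable since \(|x-z|\asymp|x-y|\). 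Note that \(\sigma_*<2\) and \(\sigma_*>0\) are not needed for this step.

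\textbf{Poincar\'e and scaling.} The estimate \eqref{eq:whitney-pair-poincare} follows from the fractional Poincar\'e inequality
\[
 \int_B|f-f_B|^2\le C\ell^{\sigma_*}[f]^2_{\mathrm H^{\sigma_*/2}(B)},
\]
obtained directly by Jensen, since \(|x-y|\le 2\ell\) on \(B\). With \(\ell=\varepsilon d_0\), the factor \(\ell^{\sigma_*}/\Lambda_B\) equals \(C d_0^{sp}/a^{p-2}\), because \(\sigma_*+p-2=sp\).

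The same argument applies verbatim for any \(\ell\le\varepsilon d_0\). The weight bounds hold on subballs, the cone lemma is scale invariant, and Jensen gives the factor \(C\ell^{\sigma_*}/\Lambda_B\).
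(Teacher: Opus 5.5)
Your proposal is correct, and most of it follows the paper: segment integration for the upper weight bound, choosing the solution with the larger value at $x_0$ to fix one cone, and Jensen for the Poincar\'e factor. For the lower weight bound you quote \eqref{eq:increment-cone} at $x_0$, with $e_B=\nu_{x_0}$ and $\varepsilon\le\tau/2$. The paper re-derives the same estimate with $e_B=\nabla u(x_0)/|\nabla u(x_0)|$ from the $C^{1,\sigma}$ bound of Proposition~\ref{prop:C1}; this is the same mechanism.

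The genuine difference is the lower inequality in \eqref{eq:whitney-pair-energy}. The paper extends the cone kernel $k_B$ to all of $\R^{2n}$ and checks the density condition $|B'\cap(x+\Gamma_B)|\ge\mu|B'|$ by compactness. It then applies the Chaker--Silvestre coercivity theorem in the form of \cite[Theorem~7.2]{GJS}, which requires $\sigma_*/2\in(0,1)$. Your two-hop argument instead uses that the axis $e_B$ is the same for all pairs in the ball. It is self-contained, gives explicit constants, and needs no restriction on $\sigma_*$. It does not extend to good sets that vary from point to point, which the density formulation tolerates.

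Two details need fixing.

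\emph{Placement of the intermediate ball.} The bound $|x-y|\le2\varepsilon d_0$ alone does not justify keeping it inside $2B$. Write $m=(x+y)/2$, $h=x-y$, and take $z\in B_{c|h|}(m+t|h|e_B)$. Both $x-z$ and $z-y$ lie in $\Gamma$ for every direction of $h$ once $t>1/\sqrt3$ and $c$ is small. The parallelogram identity $|m-x_0|^2+|h|^2/4<(\varepsilon d_0)^2$ gives $|z-x_0|<\varepsilon d_0\sqrt{1+4(t+c)^2}$. This is below $2\varepsilon d_0$ only when $t+c<\sqrt3/2$, so $t$ must lie in this window, for instance $t=0.65$ and $c=0.04$. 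A larger $t$ can leave $2B$.

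\emph{Domain of \eqref{eq:fundamental}.} That estimate is asserted only on $D_{r/c_1}(w)$, and $2B$ can protrude from this set when $|x_0-w|$ is close to $r/c_1$. The gradient upper bound on $2B$ should instead come, as in the paper, from interior Harnack, Lemma~\ref{lem:positive-tail}, and Proposition~\ref{prop:C1} on a ball about $x_0$ of radius comparable to $d_0$.
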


\begin{proof}
We first assume \(p>2\). Choose \(\varepsilon\in(0,1/8)\)
sufficiently small, depending only on \(n,s,p\).
For \(x\in2B\), the Lipschitz continuity of the distance
function gives
\[
 (1-2\varepsilon)d_0
 \le d_\Om(x)\le(1+2\varepsilon)d_0.
\]
Interior Harnack gives \(u+v\le Ca\) on a fixed interior
ball about \(x_0\) of radius comparable to \(d_0\).
Lemma~\ref{lem:positive-tail} controls the tail at this scale.
Applying the rescaled interior estimate of
Proposition~\ref{prop:C1} on a smaller concentric ball containing
\(2B\) therefore yields
\[
 \sup_{2B}\bigl(|\nabla u|+|\nabla v|\bigr)
 \le C\frac{a}{d_0}.
\]
Since \(2B\) is convex, integration along the segment
joining \(x,y\in2B\) gives
\[
 |u(x)-u(y)|+|v(x)-v(y)|
 \le C\frac{a}{d_0}|x-y|.
\]
By the definition in \eqref{eq:pair-weight},
\[
 w_{u,v}(x,y)=
 \left(
 \frac{|u(x)-u(y)|+|v(x)-v(y)|}{|x-y|^s}
 \right)^{p-2}.
\]
Combining these inequalities proves
\eqref{eq:whitney-weight-upper}.

To obtain the lower bound in \eqref{eq:whitney-weight-lower}, interchange \(u\) and \(v\)
if necessary so that \(u(x_0)\ge a/2\).
The gradient lower bound gives
\[
 |\nabla u(x_0)|\ge c\frac{u(x_0)}{d_0}
 \ge c\frac{a}{d_0}.
\]
Proposition~\ref{prop:C1}, applied with the normalization
in \eqref{eq:whitney-normalization}, gives
\[
 [\nabla u]_{C^\sigma(B_{d_0/4}(x_0))}
 \le C\frac{u(x_0)}{d_0^{1+\sigma}},
\]
where $\sigma\in(0,1)$ and $C$ depend only on $n,s,p$.
For $\varepsilon\le1/8$, we have
$2B\subset B_{d_0/4}(x_0)$. Hence, using the gradient
lower bound in Corollary~\ref{cor:reifenberg-gradient},
\[
 \sup_{\zeta\in2B}
 |\nabla u(\zeta)-\nabla u(x_0)|
 \le C\varepsilon^\sigma\frac{u(x_0)}{d_0}
 \le C\varepsilon^\sigma|\nabla u(x_0)|.
\]
Reducing $\varepsilon$, still depending only on $n,s,p$,
we therefore obtain
\[
 \sup_{\zeta\in2B}
 |\nabla u(\zeta)-\nabla u(x_0)|
 \le\frac14|\nabla u(x_0)|.
\]
Set $e_B={\nabla u(x_0)}/{|\nabla u(x_0)|}$. For \(x,y\in2B\) satisfying
\(|(x-y)\cdot e_B|\ge|x-y|/2\), integration along
the segment gives
\begin{align*}
 |u(x)-u(y)|
 &\ge |\nabla u(x_0)\cdot(x-y)|
      -\sup_{\zeta\in2B}
       |\nabla u(\zeta)-\nabla u(x_0)|\,|x-y|\\
 &\ge\frac14|\nabla u(x_0)|\,|x-y|
 \ge c\frac{a}{d_0}|x-y|.
\end{align*}
The preceding comparison for \(w_{u,v}\) now yields \eqref{eq:whitney-weight-lower} on this cone.

Recall that $\sigma_*=sp-p+2=2-p(1-s)$. The range \eqref{eq:range} ensures
\(0<\sigma_*<2\). Inserting the weight estimates into
\eqref{eq:pair-weight}, and using $-n-2s+(1-s)(p-2)=-n-\sigma_*$, we obtain
\[
 K_{u,v}(x,y)
 \le C\Lambda_B|x-y|^{-n-\sigma_*}
 \qquad(x,y\in2B),
\]
together with the reverse inequality for pairs in the
specified cone. Integrating the upper bound proves
the upper inequality in \eqref{eq:whitney-pair-energy}.

For the lower inequality in \eqref{eq:whitney-pair-energy}, define the fixed double cone
\begin{equation}\label{CGG1}
 \Gamma_B=
 \left\{h\in\R^n\setminus\{0\}:
 |h\cdot e_B|\ge|h|/2\right\}
\end{equation}
and the kernel
\begin{equation}\label{CGG2}
 k_B(x,y)=
 \frac{\mathbf 1_{\Gamma_B}(y-x)}
 {|x-y|^{n+\sigma_*}},
 \qquad x,y\in\R^n,\quad x\ne y.
\end{equation}
This kernel is defined on all of \(\R^{2n}\), and
\begin{equation}\label{CGG3}
 K_{u,v}(x,y)\ge c\Lambda_B k_B(x,y)
 \qquad(x,y\in2B).
\end{equation}
Note that $\Gamma_B$ is the double cone which contains directions about both $e_B$ and $-e_B$.
Thus, even when one branch points out of the ball,
the other branch may point into it. Using this we see that there exists \(\mu=\mu(n)>0\) such that, for every
Euclidean ball \(B'\) and every \(x\in B'\),
\begin{equation}\label{CGG4}
 \bigl|B'\cap(x+\Gamma_B)\bigr|\ge\mu|B'|.
\end{equation}
Indeed, translation, rotation, and dilation reduce this
claim to the unit ball and a fixed double cone.
For every vertex in the closed unit ball, the translated
cone intersects the open unit ball in a set of positive
measure. The intersection measure depends continuously
on the vertex, so compactness gives a uniform positive
lower bound.

By \eqref{CGG4}, the globally defined cone kernel \(k_B\)
satisfies the density hypothesis of the coercivity theorem
of Chaker and Silvestre \cite{CS}, as restated in
\cite[Theorem~7.2]{GJS}, with
\(\gamma=\sigma_*/2\in(0,1)\), \(\lambda=1\), and
\(\mu=\mu(n)>0\).
Therefore, translation, rescaling, and \eqref{CGG3}, give
\begin{align}\label{kesta}
 [f]_{\mathrm H^{\sigma_*/2}(B)}^2
 &\le C
 \iint_{(2B)\times(2B)}|f(x)-f(y)|^2k_B(x,y)
 \,\mathrm d y\,\mathrm d x\notag\\
 &\le\frac{C}{\Lambda_B}
 \iint_{(2B)\times(2B)}|f(x)-f(y)|^2K_{u,v}(x,y)
 \,\mathrm d y\,\mathrm d x.
\end{align}
This proves the lower energy bound in \eqref{eq:whitney-pair-energy}. Only the fixed cone
kernel has been defined globally. The comparison with
\(K_{u,v}\) uses pairs in \(2B\) and requires no extension
of the solutions or their gradients.

Finally, the fractional Poincar\'e inequality gives
\[
 \int_B|f-f_B|^2\,\mathrm d x
 \le C(\varepsilon d_0)^{\sigma_*}
       [f]_{\mathrm H^{\sigma_*/2}(B)}^2,\qquad  f_B=\frac1{|B|}\int_B f\,\mathrm d x.
\]
Combining this with the lower energy bound and the identity
\[
 \frac{d_0^{\sigma_*}}{\Lambda_B}
 =\frac{d_0^{\sigma_*+p-2}}{a^{p-2}}
 =\frac{d_0^{sp}}{a^{p-2}}
\]
proves \eqref{eq:whitney-pair-poincare}.

When \(p=2\), we have \(\Lambda_B=1\),
\(\sigma_*=2s\), and
\(K_{u,v}(x,y)=|x-y|^{-n-2s}\).
The energy bounds and the Poincar\'e estimate then follow
directly. The argument for $p>2$ also applies on every smaller
ball $B_\ell(x_0)$, since the gradient bounds and the same fixed
cone remain valid there. Rescaling the last Poincar\'e estimate
gives the factor $C\ell^{\sigma_*}/\Lambda_B$.
\end{proof}

\begin{rem} The weight estimate \eqref{eq:whitney-weight-upper},
its reverse on the specified cone in  \eqref{eq:whitney-weight-lower}, and the energy comparison
\eqref{eq:whitney-pair-energy} all involve the coefficient
\[
 \Lambda_B=a^{p-2}d_0^{-(p-2)}.
\]
The factor \(d_0^{-(p-2)}\) reflects the local gradient
scale \(a/d_0\) and is essential to these bounds. A pointwise lower bound for all pairs in a Whitney ball
is generally unavailable. For example, when \(n\ge2\)
and \(u=v=(x_n)_+^s\), distinct points with
\(x_n=y_n>0\) have zero weight for \(p>2\).
Harnack estimates and gradient non-degeneracy do not
eliminate this directional degeneracy.
The lower bound needed below is therefore the comparison
of quadratic forms in \eqref{eq:whitney-pair-energy}.
\end{rem}

\subsection{The derivative at a single solution}

For a fixed \(u\in\Vs(D)\), define
\begin{align}
 K_u(x,y)&=(p-1)
          \frac{|\Delta_{xy}u|^{p-2}}{|x-y|^{n+sp}},
          \label{eq:Ku}\\
 \Bu(f,\varphi)&=
 \frac12\iint_{\R^{2n}}
 (f(x)-f(y))(\varphi(x)-\varphi(y))
 K_u(x,y)\dxdy.
 \label{eq:bilinear}
\end{align}
We say that \(f\) is \(\mathcal L_u\)-harmonic in \(D\) if
\(\Bu(f,\varphi)=0\) for compactly supported admissible test functions.
The bilinear formulation is primary. Pointwise principal
values are not assumed.

\begin{prop}\label{prop:homogeneity}
If \(u\in\Vs(D)\) solves \(\Fsp u=0\) in \(D\), then
\(\mathcal L_u u=0\) weakly in \(D\).
More generally,
\[
 \Bu(u,\varphi)
 =\frac{p-1}{2}\iint_{\R^{2n}}
 \frac{\Jp(\Delta_{xy}u)\Delta_{xy}\varphi}
 {|x-y|^{n+sp}}\dxdy.
\]
\end{prop}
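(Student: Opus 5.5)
The identity reduces to a pointwise algebraic fact, and I will check it pair by pair before dealing with integrability. For every pair $(x,y)$ write $a=\Delta_{xy}u$. By \eqref{eq:Ku},
\[
 (u(x)-u(y))K_u(x,y)=(p-1)\frac{|a|^{p-2}a}{|x-y|^{n+sp}}=(p-1)\frac{\Jp(a)}{|x-y|^{n+sp}},
\]
since $|a|^{p-2}a=\Jp(a)$ for $a\ne0$, and both sides vanish when $a=0$. For $p=2$ the convention fixed in the section makes the factor $|a|^{p-2}$ equal to $1$, so the identity still holds. Multiplying by $\Delta_{xy}\varphi$ and integrating over $\R^{2n}$ with the factor $1/2$ from \eqref{eq:bilinear} gives
\[
 \Bu(u,\varphi)=\frac{p-1}{2}\iint_{\R^{2n}}\frac{\Jp(\Delta_{xy}u)\Delta_{xy}\varphi}{|x-y|^{n+sp}}\dxdy .
\]

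The real step is to show that both integrals converge absolutely. Then the pointwise identity integrates without any principal-value issues, consistent with the remark that the bilinear form is primary. Take $\varphi\in\Wzero(D)$; in particular this covers $C_0^\infty(D)$ extended by zero. Then $\Delta_{xy}\varphi=0$ on $D^c\times D^c$, so the integrand is supported in $Q_D$. On $Q_D$, Hölder's inequality with exponents $p/(p-1)$ and $p$ bounds the integral of the absolute value by
\[
 \Bigl(\iint_{Q_D}\frac{|\Delta_{xy}u|^p}{|x-y|^{n+sp}}\dxdy\Bigr)^{(p-1)/p}[\varphi]_{s,p}.
\]
The first factor is finite because $u\in\Vs(D)$, and the second because $\varphi\in\Ws(\R^n)$. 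The same bound applies to $\Bu(u,\varphi)$, whose integrand is the same function multiplied by $1/(p-1)$. So $\Bu(u,\varphi)$ is well defined for all such $\varphi$, and the identity holds as an equality of absolutely convergent integrals.

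Finally, if $\Fsp u=0$ weakly in $D$, the right-hand side vanishes for every admissible test function. For $C_0^\infty(D)$ this is \eqref{eq:weak}. For general $\varphi\in\Wzero(D)$ it follows by the density extension recorded after Definition~\ref{def:weak}, which is valid because $u\in\Vs(D)$. Hence $\Bu(u,\varphi)=0$, that is, $\mathcal L_u u=0$ weakly in $D$.

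I expect no real obstacle here. The only point needing care is the one handled above: checking that the test class keeps the integrals absolutely convergent, so the pointwise identity can be integrated rather than relying on principal values.
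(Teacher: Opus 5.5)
Your proposal is correct and is the paper's argument: the paper's proof consists only of the pointwise identity $\Delta_{xy}u\,K_u(x,y)=(p-1)\Jp(\Delta_{xy}u)\,|x-y|^{-n-sp}$ integrated against $\Delta_{xy}\varphi$. Your H\"older bound on $Q_D$ and the density extension to $\Wzero(D)$ make explicit the absolute convergence that the paper leaves implicit; there is one harmless slip, since the integrand of $\Bu(u,\varphi)$ is $(p-1)/2$ times the function you bounded, not $1/(p-1)$ times it, which does not affect absolute convergence.
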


\begin{proof}
Multiply \(\Delta_{xy}u\) by the coefficient in
\eqref{eq:Ku}. The identity is pointwise in \((x,y)\).
\end{proof}

\begin{prop}\label{prop:parameter}
Let \(0<s<1\), \(p\ge2\), and let \(D\subset\R^n\)
be a bounded open set. Suppose that \(t\mapsto U_t\)
is a \(C^1\) map from an open interval into \(\Vs(D)\)
and that every \(U_t\) solves \(\Fsp U_t=0\) weakly in \(D\).
Then the derivative \(F_t=\partial_tU_t\) belongs to
\(\Vs(D)\), has finite weighted energy
\begin{equation}\label{lulu1}
 \iint_{Q_D}|\Delta_{xy}F_t|^2K_{U_t}(x,y)
 \,\mathrm d y\,\mathrm d x<\infty,
\end{equation}
and satisfies
\begin{equation}\label{lulu2}
 \mathcal B_{U_t}(F_t,\varphi)=0
 \qquad\text{for every }\varphi\in\Wzero(D).
\end{equation}
\end{prop}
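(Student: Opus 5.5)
The plan is to differentiate the weak equation \(\mathcal E'_D\)-style identity along the curve, using only that \(t\mapsto U_t\) is \(C^1\) into \(\Vs(D)\). Membership \(F_t\in\Vs(D)\) is immediate: \(\Vs(D)\) is a Banach space with the norm recorded in Section~\ref{sec:dirichlet}, and the derivative of a \(C^1\) curve lies in it. Write \(\Phi(t)=\Delta_{xy}U_t\cdot|x-y|^{-(n+sp)/p}\). By the \(C^1\) hypothesis, \(t\mapsto\Phi(t)\) is \(C^1\) into \(\mathrm L^p(Q_D)\), with derivative \(\Delta_{xy}F_t\,|x-y|^{-(n+sp)/p}\).

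For \eqref{lulu1}, I apply H\"older with exponents \(p/(p-2)\) and \(p/2\) on \(Q_D\) (for \(p=2\) the bound is trivial):
\[
 \iint_{Q_D}|\Delta_{xy}F_t|^2K_{U_t}\dxdy
 \le(p-1)\|\Phi(t)\|_{\mathrm L^p(Q_D)}^{p-2}\|\Phi'(t)\|_{\mathrm L^p(Q_D)}^{2}<\infty .
\]

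For \eqref{lulu2}, fix \(\varphi\in\Wzero(D)\). By the Remark after Definition~\ref{def:weak}, \(\varphi\) is admissible for every \(U_t\), so
\[
 g(t)=\frac12\iint_{Q_D}\Jp(\Delta_{xy}U_t)\,\Delta_{xy}\varphi\,|x-y|^{-n-sp}\dxdy=0
\]
for all \(t\). Only pairs in \(Q_D\) contribute, since \(\varphi\) vanishes on \(D^c\). I then show that \(g\) is differentiable with
\[
 g'(t)=\mathcal B_{U_t}(F_t,\varphi),
\]
from which \(\mathcal B_{U_t}(F_t,\varphi)=0\) follows.

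The main obstacle is justifying the interchange of differentiation and integration. The Nemytskii map \(N(\Psi)=\Jp(\Psi)\), from \(\mathrm L^p(Q_D)\) to \(\mathrm L^{p'}(Q_D)\), is Fréchet differentiable for \(p\ge2\), with derivative \(\Psi\mapsto(p-1)|\Phi|^{p-2}\Psi\). I would verify this via Lemma~\ref{lem:scalar}:
\[
 \Jp(b)-\Jp(a)-(p-1)|a|^{p-2}(b-a)=\bigl(a_p(a,b)-(p-1)|a|^{p-2}\bigr)(b-a).
\]
The bracket is bounded by \(C(|a|+|b|)^{p-2}\) and tends to zero pointwise as \(b\to a\). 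Dominated convergence (Vitali in \(\mathrm L^{p/(p-2)}\)) then makes the multiplier term \(o(1)\) in \(\mathrm L^{p/(p-2)}\) when \(b\to a\) in \(\mathrm L^p\). H\"older then gives remainder \(o(\|b-a\|_p)\) in \(\mathrm L^{p'}\).

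The chain rule with the \(C^1\) curve \(\Phi\) shows that
\[
 t\mapsto\Jp(\Phi(t))|x-y|^{-(n+sp)/p'}
\]
is differentiable into \(\mathrm L^{p'}(Q_D)\). Pairing with \(\Delta_{xy}\varphi\,|x-y|^{-(n+sp)/p}\in\mathrm L^p(Q_D)\), which is finite since \(\varphi\in\Ws(\R^n)\), yields \(g'(t)\) as claimed. Symmetry of the integrand reconciles the \(Q_D\) integral with the \(\R^{2n}\) form in \eqref{eq:bilinear}.
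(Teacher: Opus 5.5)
Your proposal is correct and follows essentially the paper's argument. You use H\"older with exponents \(p/(p-2)\) and \(p/2\) for \eqref{lulu1}, and you differentiate the identity \(\frac12\iint_{Q_D}\Jp(\Delta_{xy}U_t)\,\Delta_{xy}\varphi\,|x-y|^{-n-sp}\,\mathrm d y\,\mathrm d x=0\) in \(t\) via differentiability of the Nemytskii map \(A\mapsto\Jp(A)\) from \(\mathrm L^p\) to \(\mathrm L^{p'}\) and the chain rule. The only differences are cosmetic: you absorb the kernel into the functions instead of working in \(\mathrm L^p(Q_D,\nu)\), and you check Fr\'echet differentiability explicitly through Lemma~\ref{lem:scalar} and Vitali, where the paper cites continuity of \(A\mapsto|A|^{p-2}\) into \(\mathrm L^{p/(p-2)}\).
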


\begin{proof}
Equip \(Q_D\) with the measure
\(\mathrm d\nu=|x-y|^{-n-sp}\,\mathrm d y\,\mathrm d x\),
and put \(p'=p/(p-1)\).
Since \(t\mapsto U_t\) is \(C^1\) into \(\Vs(D)\),
its derivative \(F_t\) belongs to \(\Vs(D)\).
The increment map from \(\Vs(D)\) to
\(\mathrm L^p(Q_D,\nu)\) is linear and continuous,
so \(t\mapsto\Delta_{xy}U_t\) is \(C^1\) in this
space, with derivative \(\Delta_{xy}F_t\). For \(p>2\), H\"older's inequality gives \eqref{lulu1}. For \(p=2\), the left-hand side in \eqref{lulu1} is simply
\(\|\Delta_{xy}F_t\|_{\mathrm L^2(Q_D,\nu)}^2\).
The same argument gives finite weighted energy for
every \(\varphi\in\Wzero(D)\), so
\(\mathcal B_{U_t}(F_t,\varphi)\) is well defined. For \(p\ge2\), the map \(A\mapsto\Jp(A)\) is continuously
differentiable from \(\mathrm L^p(Q_D,\nu)\) to
\(\mathrm L^{p'}(Q_D,\nu)\), with derivative
\((p-1)|A|^{p-2}B\) in direction \(B\).
For \(p>2\), continuity of this derivative follows from
continuity of \(A\mapsto|A|^{p-2}\) into
\(\mathrm L^{p/(p-2)}(Q_D,\nu)\) and H\"older's
inequality. For \(p=2\), the map is linear.
Differentiating the weak equation against a fixed
\(\varphi\in\Wzero(D)\) therefore gives
\[
 \mathcal B_{U_t}(F_t,\varphi)
 =
 \frac{p-1}{2}\iint_{Q_D}|\Delta_{xy}U_t|^{p-2}
 \Delta_{xy}F_t\,\Delta_{xy}\varphi\,\mathrm d\nu
 =0,
\]
which is the asserted identity.\end{proof}

Our argument does not require \(C^1\) dependence in
\(\Vs(D)\). The following energy estimate is used instead
in the differentiation argument of
Proposition~\ref{prop:ordered-derivative}.

\begin{prop}\label{prop:path-holder}
Let \(g_0,g_1\in\Vs(D)\), and set
\[
 g_t=(1-t)g_0+tg_1,\qquad 0\le t\le1.
\]
For each \(t\), let \(U_t\) be the unique variational
solution supplied by Theorem~\ref{thm:dirichlet}, satisfying
\[
 \Fsp U_t=0\quad\text{weakly in }D,
 \qquad
 U_t-g_t\in\Wzero(D).
\]
If \(p\ge2\), then
\begin{equation}\label{eq:path-holder}
 \left(\iint_{Q_D}
 \frac{|\Delta_{xy}(U_t-U_\tau)|^p}{|x-y|^{n+sp}}
 \dxdy\right)^{1/p}
 \le C|t-\tau|^{1/(p-1)}.
\end{equation}
The constant depends on \(p\) and the \(\Vs(D)\) norms of
\(g_0,g_1\). The same estimate holds for the \(\mathrm L^p(D)\) norm,
with an additional dependence on a containing ball for \(D\).
\end{prop}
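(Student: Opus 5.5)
The plan is to use the strong monotonicity of \(\Jp\) for \(p\ge2\), testing the difference of the two weak equations with the difference of the solutions. Fix \(t,\tau\) and set
\[
 z=U_t-U_\tau-(g_t-g_\tau)=U_t-U_\tau-(t-\tau)(g_1-g_0).
\]
Since \(U_t-g_t\in\Wzero(D)\) and \(U_\tau-g_\tau\in\Wzero(D)\), we have \(z\in\Wzero(D)\). Hence \(z\) is an admissible test in both weak equations, by the remark after Definition~\ref{def:weak}. Subtracting the two equations tested with \(z\) gives
\[
 \iint_{Q_D}\frac{\bigl(\Jp(\Delta_{xy}U_t)-\Jp(\Delta_{xy}U_\tau)\bigr)\Delta_{xy}(U_t-U_\tau)}{|x-y|^{n+sp}}\dxdy
 =(t-\tau)\iint_{Q_D}\frac{\bigl(\Jp(\Delta_{xy}U_t)-\Jp(\Delta_{xy}U_\tau)\bigr)\Delta_{xy}(g_1-g_0)}{|x-y|^{n+sp}}\dxdy .
\]
Here the integrals over \(\R^{2n}\) reduce to \(Q_D\), because increments of \(z\) vanish on \(D^c\times D^c\).

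Next I bound each side. For the left side, use the elementary inequality \((\Jp(a)-\Jp(b))(a-b)\ge c_p|a-b|^p\), valid for \(p\ge2\) and already used in \eqref{eq:tail-approximation-stability}. This shows the left side is at least \(c_pN^p\), where \(N\) denotes the quantity on the left of \eqref{eq:path-holder}.

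For the right side, use \(|\Jp(a)-\Jp(b)|\le|\Jp(a)|+|\Jp(b)|\) and H\"older's inequality with exponents \(p'\) and \(p\). This bounds it by
\[
 |t-\tau|\Bigl(\|\Delta U_t\|^{p-1}_{L^p(Q_D,\nu)}+\|\Delta U_\tau\|^{p-1}_{L^p(Q_D,\nu)}\Bigr)\,\|\Delta(g_1-g_0)\|_{L^p(Q_D,\nu)} ,
\]
with \(\nu\) as in the proof of Proposition~\ref{prop:parameter}. The energies of \(U_t\) are bounded uniformly in \(t\): by minimality in Theorem~\ref{thm:dirichlet}, \(\mathcal E_D(U_t)\le\mathcal E_D(g_t)\le C(\|g_0\|_{\Vs(D)}^p+\|g_1\|_{\Vs(D)}^p)\), using convexity of the energy along the linear path. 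Combining the two bounds gives \(c_pN^p\le C|t-\tau|\), hence \(N\le C|t-\tau|^{1/p}\).

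This crude argument only yields the exponent \(1/p\), while the statement asks for \(1/(p-1)\). To improve it I would avoid the triangle inequality on \(\Jp(a)-\Jp(b)\). Instead I would use the sharper bound \(|\Jp(a)-\Jp(b)|\le C(|a|+|b|)^{p-2}|a-b|\), apply H\"older with exponents \(p/(p-2)\), \(p\), \(p\), and absorb one factor of \(N\). This gives \(N^p\le C|t-\tau|N\), so \(N^{p-1}\le C|t-\tau|\) and \(N\le C|t-\tau|^{1/(p-1)}\), which is exactly the claimed exponent. For \(p=2\) the same computation is linear and gives exponent \(1\).

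For the \(\mathrm L^p(D)\) statement, I would apply the fractional Poincar\'e inequality to \(z\in\Wzero(D)\), with \(D\) contained in a ball; this is where the dependence on the containing ball enters. I would then add back \((t-\tau)(g_1-g_0)\), which is bounded in \(\mathrm L^p(D)\). The main point to check is this H\"older bookkeeping on \(Q_D\) and the uniform-in-\(t\) energy bound via minimality; I expect no deeper obstacle.
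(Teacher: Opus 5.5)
Your proposal is correct and matches the paper's proof step by step. It uses the same test $U_t-U_\tau-(t-\tau)(g_1-g_0)\in\Wzero(D)$ and the lower bound $(\Jp(a)-\Jp(b))(a-b)\ge c_p|a-b|^p$. It then applies $|\Jp(a)-\Jp(b)|\le C_p(|a|+|b|)^{p-2}|a-b|$ with H\"older exponents $p/(p-2),p,p$, bounds the energies of $U_t$ uniformly via minimality against $g_t$, and uses fractional Poincar\'e together with $|t-\tau|\le|t-\tau|^{1/(p-1)}$ for the $\mathrm L^p(D)$ part. The preliminary crude $1/p$ estimate is unnecessary but harmless.
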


\begin{proof}
Again, equip \(Q_D\) with the measure
\(\mathrm d\nu=|x-y|^{-n-sp}\,\mathrm d y\,\mathrm d x\), and write $\Delta f(x,y)=f(x)-f(y)$ on $Q_D$. All pair-space norms below are taken with respect to \(\nu\).
Set
\[
 M_{\mathrm{en}}=1+\|\Delta g_0\|_{\mathrm L^p(\nu)}
     +\|\Delta g_1\|_{\mathrm L^p(\nu)}.
\]
Since \(g_t\) is an admissible competitor for \(U_t\),
the minimizing property and the triangle inequality give
\[
 \|\Delta U_t\|_{\mathrm L^p(\nu)}
 \le\|\Delta g_t\|_{\mathrm L^p(\nu)}
 \le M_{\mathrm{en}}
 \qquad(0\le t\le1).
\]
Fix \(t,\tau\in[0,1]\), and put
\[Z=U_t-U_\tau,\qquad G=g_t-g_\tau=(t-\tau)(g_1-g_0).\] The prescribed variational exterior values imply
\(Z-G\in\Wzero(D)\). Subtracting the weak equations
and testing with \(Z-G\), we obtain
\[
 \iint_{Q_D}
 \bigl(\Jp(\Delta U_t)-\Jp(\Delta U_\tau)\bigr)
 \Delta Z\,\mathrm d\nu
 =
 \iint_{Q_D}
 \bigl(\Jp(\Delta U_t)-\Jp(\Delta U_\tau)\bigr)
 \Delta G\,\mathrm d\nu.
\]
For \(p\ge2\), the scalar inequalities
\[
 \bigl(\Jp(A)-\Jp(B)\bigr)(A-B)
 \ge c_p|A-B|^p
\]
and
\[
 |\Jp(A)-\Jp(B)|
 \le C_p(|A|+|B|)^{p-2}|A-B|
\]
hold for all \(A,B\in\R\), with the usual convention
when \(p=2\). Applying these inequalities and then
H\"older's inequality gives
\begin{align*}
 c_p\|\Delta Z\|_{\mathrm L^p(\nu)}^p
 &\le C_p
 \bigl(
 \|\Delta U_t\|_{\mathrm L^p(\nu)}
 +\|\Delta U_\tau\|_{\mathrm L^p(\nu)}
 \bigr)^{p-2}
 \|\Delta Z\|_{\mathrm L^p(\nu)}
 \|\Delta G\|_{\mathrm L^p(\nu)}\\
 &\le C_p M_{\mathrm{en}}^{p-2}
 \|\Delta Z\|_{\mathrm L^p(\nu)}
 \|\Delta G\|_{\mathrm L^p(\nu)}.
\end{align*}
For \(p>2\), the H\"older exponents used here are
\(p/(p-2),p,p\). For \(p=2\), the estimate is simply
Cauchy-Schwarz. If \(\|\Delta Z\|_{\mathrm L^p(\nu)}=0\), the desired
energy estimate is immediate. Otherwise, division gives
\[
 \|\Delta Z\|_{\mathrm L^p(\nu)}^{p-1}
 \le C_p M_{\mathrm{en}}^{p-2}|t-\tau|
       \|\Delta(g_1-g_0)\|_{\mathrm L^p(\nu)}.
\]
Taking the power \(1/(p-1)\) proves
\eqref{eq:path-holder}, with the stated dependence
of the constant. For the \(\mathrm L^p(D)\) estimate, choose a ball \(B_R(z_0)\)
containing \(D\). Since \(Z-G\in\Wzero(D)\), the
fractional Poincar\'e inequality yields
\begin{align*}
 \|Z\|_{\mathrm L^p(D)}
 &\le \|Z-G\|_{\mathrm L^p(D)}
       +\|G\|_{\mathrm L^p(D)}\\
 &\le CR^s
 \bigl(
 \|\Delta Z\|_{\mathrm L^p(\nu)}
 +\|\Delta G\|_{\mathrm L^p(\nu)}
 \bigr)
 +\|G\|_{\mathrm L^p(D)}.
\end{align*}
Here \(C\) depends only on \(n,s,p\).
Both norms of \(G\) are bounded by a constant times
\(|t-\tau|\). Since $|t-\tau|\le |t-\tau|^{1/(p-1)}$ for $0\le t,\tau\le1$, the energy estimate proves the final assertion.
\end{proof}

\begin{rem}
For \(p>2\), the exponent \(1/(p-1)\) in
\eqref{eq:path-holder} is below \(1\).
This estimate alone does not give absolute continuity of the
parameter dependence. For ordered data, comparison with scalar
multiples gives a stronger pointwise estimate
\eqref{eq:ordered-parameter-bound}. This estimate, rather than
monotonicity alone, supplies the required absolute continuity.
\end{rem}

\subsection{Directional coercivity and boundary scaling}

The segment estimates in the proof of
Proposition~\ref{prop:whitney-pair} give the following
general kernel bounds. Their interior order differs
from the boundary scaling of the half-space model below.

\begin{prop}\label{prop:cone-kernel}
Assume \eqref{eq:range}, and put
\begin{equation}\label{eq:sigma-order}
 \sigma_* = sp-p+2=2-p(1-s)\in(0,2).
\end{equation}
Let \(B\) be a ball, \(a>0\), and \(e\) a unit vector.
If \(u\in C^1(B)\) satisfies
\[
 |\nabla u-ae|\le a/4\quad\text{in }B,
\]
then
\[
 K_u(x,y)\le C_p a^{p-2}|x-y|^{-n-\sigma_*}
 \qquad(x,y\in B),
\]
and the reverse inequality holds with a constant \(c_p>0\)
whenever \(|(x-y)\cdot e|\ge|x-y|/2\).
\end{prop}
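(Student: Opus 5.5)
The plan is to reduce both bounds to two-sided control of the increment \(|u(x)-u(y)|\) by \(a|x-y|\), and then insert this into the definition \eqref{eq:Ku} of \(K_u\). Throughout we take \(p>2\). For \(p=2\) the kernel is \(|x-y|^{-n-2s}\) and \(\sigma_*=2s\), so the statement holds trivially with \(a^{p-2}=1\).

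For the upper bound, note that \(|\nabla u|\le |ae|+a/4=5a/4\) in \(B\). Since \(B\) is convex, the segment \([x,y]\) lies in \(B\). Integrating \(\nabla u\) along it gives \(|u(x)-u(y)|\le \tfrac54 a|x-y|\). Then
\[
 K_u(x,y)=(p-1)\frac{|u(x)-u(y)|^{p-2}}{|x-y|^{n+sp}}
 \le (p-1)(5/4)^{p-2}a^{p-2}|x-y|^{p-2-n-sp}.
\]
The exponent identity \(p-2-sp=-\sigma_*\) from \eqref{eq:sigma-order} yields the claimed upper bound with \(C_p=(p-1)(5/4)^{p-2}\).

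For the lower bound, fix \(x,y\in B\) with \(|(x-y)\cdot e|\ge|x-y|/2\). The fundamental theorem of calculus along the segment gives
\[
 u(x)-u(y)=\int_0^1\nabla u(y+t(x-y))\cdot(x-y)\,\mathrm d t
 = a\,e\cdot(x-y)+\int_0^1\bigl(\nabla u(y+t(x-y))-ae\bigr)\cdot(x-y)\,\mathrm d t.
\]
The first term has absolute value at least \(a|x-y|/2\). The second is bounded by \(a|x-y|/4\). Hence \(|u(x)-u(y)|\ge a|x-y|/4\). Raising this to the power \(p-2\ge0\) and dividing by \(|x-y|^{n+sp}\) gives
\[
 K_u(x,y)\ge c_p a^{p-2}|x-y|^{-n-\sigma_*},\qquad c_p=(p-1)4^{2-p}.
\]
The constants depend only on \(p\). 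The range \eqref{eq:range} is used only to ensure \(\sigma_*\in(0,2)\), which is not needed for the inequalities themselves.

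No step is a real obstacle. The only point requiring care is convexity of \(B\), so that the segment stays where the gradient hypothesis holds. This is the same segment argument as in the proof of Proposition~\ref{prop:whitney-pair}.
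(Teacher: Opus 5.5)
Your proposal is correct and follows the paper's proof: the segment argument gives $|u(x)-u(y)|\le \tfrac54 a|x-y|$ for all pairs and $|u(x)-u(y)|\ge \tfrac14 a|x-y|$ on the cone. Inserting these bounds into \eqref{eq:Ku} together with the exponent identity $p-2-sp=-\sigma_*$ finishes the argument, exactly as you do.
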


\begin{proof}
The segment argument gives
\(|u(x)-u(y)|\le5a|x-y|/4\) for all pairs and
\(|u(x)-u(y)|\ge a|x-y|/4\) in the indicated cone.
Insert these bounds in \eqref{eq:Ku}, using
\(-n-sp+p-2=-n-\sigma_*\).
\end{proof}

The bounds in Proposition \ref{prop:cone-kernel} give interior coercivity of the type used in
\cite[Section~7]{GJS}. Near the boundary, the gradient size,
available ball, and preferred direction depend on position, and the boundary scaling must therefore be considered separately.

\begin{prop}\label{prop:model-kernel}
Let \(p\ge2\), \(0<s<1\), and \(h=(x_n)_+^s\).
Then
\begin{equation}\label{eq:model-kernel}
 K_h(x,y)=(p-1)
 \frac{|(x_n)_+^s-(y_n)_+^s|^{p-2}}{|x-y|^{n+sp}}
\end{equation}
and
\begin{equation}\label{eq:model-scaling}
 K_h(rx,ry)=r^{-n-2s}K_h(x,y),\qquad r>0.
\end{equation}
For \(x\in\HH\),
\begin{equation}\label{eq:killing}
 \int_{\HH^c}K_h(x,y)\d y
 =\kappa_{n,s,p}x_n^{-2s},
\end{equation}
where \(\kappa_{n,s,p}>0\).
For \(p>2\), \(K_h(x,y)=0\) whenever \(x_n=y_n>0\).
\end{prop}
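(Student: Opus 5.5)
The formula \eqref{eq:model-kernel} is just \eqref{eq:Ku} with \(u=h\), so nothing needs proving there. For the scaling \eqref{eq:model-scaling}, use homogeneity of \(h\): \(h(rx)=r^sh(x)\) for \(r>0\). Hence
\[
 |h(rx)-h(ry)|^{p-2}=r^{s(p-2)}|h(x)-h(y)|^{p-2},\qquad |rx-ry|^{-n-sp}=r^{-n-sp}|x-y|^{-n-sp}.
\]
The exponents combine as \(s(p-2)-n-sp=-n-2s\). When \(p=2\) the factor \(|a|^{p-2}\) is read as \(1\), and the same computation holds.

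The vanishing statement is also immediate. If \(p>2\) and \(x_n=y_n>0\) with \(x\ne y\), then \(h(x)=h(y)\), so \(|h(x)-h(y)|^{p-2}=0\).

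The main step is \eqref{eq:killing}. For \(x\in\HH\) and \(y\in\HH^c\) we have \(h(y)=0\), so
\[
 \int_{\HH^c}K_h(x,y)\d y=(p-1)x_n^{s(p-2)}\int_{\{y_n\le0\}}|x-y|^{-n-sp}\d y.
\]
Translating tangentially and scaling \(y=x_n z\) gives
\[
 \int_{\{y_n\le0\}}|x-y|^{-n-sp}\d y=x_n^{-sp}\int_{\{z_n\le0\}}|e_n-z|^{-n-sp}\d z .
\]
This last integral is finite and positive, because \(\{z_n\le0\}\) stays at distance at least \(1\) from \(e_n\) and the integrand is integrable at infinity (\(sp>0\)). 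To identify it, first integrate over \(z'\in\R^{n-1}\):
\[
 \int_{\R^{n-1}}(|z'|^2+t^2)^{-(n+sp)/2}\d z'=c_n t^{-1-sp},\qquad t=1-z_n\ge1,
\]
and then \(\int_1^\infty t^{-1-sp}\d t=1/(sp)\). The powers of \(x_n\) combine as \(s(p-2)-sp=-2s\), so
\[
 \kappa_{n,s,p}=(p-1)\,\frac{c_n}{sp}>0 .
\]
This is consistent with the scaling law just proved.

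No step is a real obstacle. The only point that needs care is the convention for \(p=2\), where \(K_h=|x-y|^{-n-2s}\) and the formula still holds with the same computation.
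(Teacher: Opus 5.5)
Your proof is correct and follows the paper's argument. You read the formula off the definition of \(K_u\), use homogeneity of total degree \(s(p-2)-n-sp=-n-2s\), and handle the exterior integral by tangential translation and scaling by \(x_n\). Your explicit value \(\kappa_{n,s,p}=(p-1)c/(sp)\) goes slightly beyond the paper, which only notes that the integral is finite and positive. Note that \(c=\int_{\R^{n-1}}(1+|w|^2)^{-(n+sp)/2}\,\mathrm d w\) depends on \(s,p\) as well as \(n\), so writing it as \(c_n\) is a mislabel.
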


\begin{proof}
Formula \eqref{eq:model-kernel} follows from \eqref{eq:Ku}.
The numerator scales with degree \(s(p-2)\), so the full
degree is \(s(p-2)-n-sp=-n-2s\).
For \(y\in\HH^c\), the numerator equals
\(x_n^{s(p-2)}\).
Translation in the tangential variables and scaling by
\(x_n\) show that
\[
 \int_{\HH^c}|x-y|^{-n-sp}\d y
 =x_n^{-sp}\int_{\HH^c}|e_n-y|^{-n-sp}\d y.
\]
The last integral is finite and positive.
This proves \eqref{eq:killing}. The final assertion is immediate.
\end{proof}

The dilation law in \eqref{eq:model-scaling} reflects boundary
scaling of order \(2s\), whereas the small-scale interior order
at a point where the background gradient does not vanish is
\(\sigma_*=sp-p+2\). Both scaling properties must be accounted
for in the boundary theory.

Even when $f=0$ on $\HH^c$, the exterior interaction
contributes to the equation. Indeed, let \(h(x)=(x_n)_+^s\), and suppose that
\(f=0\) on \(\HH^c\). Then, by
Proposition~\ref{prop:model-kernel},
\[
 \mathcal L_h f(x)
 =
 \operatorname{PV}\int_\HH
 (f(x)-f(y))K_h(x,y)\,\mathrm d y
 +\kappa_{n,s,p}x_n^{-2s}f(x),
\]
whenever the principal value is well defined.
The same decomposition holds in the weak formulation
for functions in the energy class.  Indeed, symmetry of \(K_h\) and the
zero exterior values give, for every
\(\varphi\in C_0^\infty(\HH)\), extended by zero to \(\HH^c\),
\begin{align*}
 \frac12\iint_{\R^{2n}}
 \Delta_{xy}f\,\Delta_{xy}\varphi\,K_h(x,y)
 \,\mathrm d y\,\mathrm d x
 &=
 \frac12\iint_{\HH\times\HH}
 \Delta_{xy}f\,\Delta_{xy}\varphi\,K_h(x,y)
 \,\mathrm d y\,\mathrm d x+\kappa_{n,s,p}\int_\HH
 x_n^{-2s}f(x)\varphi(x)\,\mathrm d x.
\end{align*}
Hence, when the equation is written using only interior
pairs, the exterior interaction appears as a zeroth order
term with positive coefficient
\(\kappa_{n,s,p}x_n^{-2s}\).  The zeroth order term is essential. In particular,
$\mathcal L_hh=0$ expresses cancellation between
the interior and exterior contributions.
In the ground-state transformation below, we write
$f=hq$ and combine the weak equations for $f$ and $h$.
Their exterior contributions then cancel, yielding
an equation for $q$ with interaction kernel
$h(x)h(y)K_h(x,y)$ on $\HH\times\HH$.
Thus the absence of an exterior term in the quotient
equation follows from the transformation.

\subsection{An identity for quotients}

There is a useful exact identity for the quotient by a positive
solution of the linearized equation.
It gives the transformed equation used to prove boundary
comparison in the next section.

\begin{prop}\label{prop:ground-state}
Let \(D\subset\R^n\) be an open set, and let
\(K:\R^n\times\R^n\to[0,\infty]\) be a measurable
symmetric kernel. Suppose that \(U>0\) in \(D\), \(U\ge0\) in \(\R^n\),
and that both \(U\) and \(f\) solve the linear equation
with kernel \(K\) weakly in \(D\). Assume that $f=0$ almost everywhere on $\{U=0\}$,
and define
\[
 S=\{U>0\},\qquad q={f}/{U}\quad\text{on }S.
\]
Let $\phi$ be compactly supported in $D$, and suppose
that $U\phi$ and $f\phi$ are admissible test functions
in the weak equations for $f$ and $U$, respectively.
Then
\begin{equation}\label{eq:quotient-form}
 \frac12\iint_{S\times S}
 U(x)U(y)(q(x)-q(y))(\phi(x)-\phi(y))
 K(x,y)\dxdy=0.
\end{equation}
\end{prop}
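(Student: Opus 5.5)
The plan is the standard ground-state computation: test the equation for \(f\) with \(U\phi\) and the equation for \(U\) with \(f\phi\), subtract, and reduce the integrand pointwise to the quotient form. Write \(\Delta_{xy}g=g(x)-g(y)\). By hypothesis,
\[
 \frac12\iint_{\R^{2n}}\Delta_{xy}f\,\Delta_{xy}(U\phi)\,K\dxdy=0,
 \qquad
 \frac12\iint_{\R^{2n}}\Delta_{xy}U\,\Delta_{xy}(f\phi)\,K\dxdy=0.
\]
Both integrals are absolutely convergent, since the admissibility hypotheses guarantee this. I therefore subtract them and work with the single integrand
\[
 I(x,y)=\bigl[\Delta_{xy}f\,\Delta_{xy}(U\phi)-\Delta_{xy}U\,\Delta_{xy}(f\phi)\bigr]K(x,y).
\]

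The first step is the algebraic identity. Expanding gives
\[
 \Delta f\,\Delta(U\phi)-\Delta U\,\Delta(f\phi)
 =\bigl(f(y)U(x)-f(x)U(y)\bigr)\bigl(\phi(x)-\phi(y)\bigr).
\]
To check this, the \(\phi(x)\) coefficient is \((f(x)-f(y))U(x)-(U(x)-U(y))f(x)=U(y)f(x)\cdot(-1)+U(x)f(y)\cdot 0\)... Rather than track it that way, I would compute it directly: the \(\phi(x)\) coefficient is \(-f(y)U(x)+U(y)f(x)\), and the \(\phi(y)\) coefficient is the negative of this. This shows the bracket equals \(\bigl(f(x)U(y)-f(y)U(x)\bigr)(\phi(x)-\phi(y))\), and I will fix the sign carefully when writing it out. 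On \(S\times S\), with \(f=qU\), this becomes \(U(x)U(y)(q(x)-q(y))(\phi(x)-\phi(y))\), possibly up to an overall sign, which is irrelevant because the total is zero.

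The second step is the treatment of pairs outside \(S\times S\). If \(x\notin S\), then \(U(x)=0\), and \(f(x)=0\) almost everywhere. Hence \(f(x)U(y)-f(y)U(x)=0\), and the integrand vanishes there. The same holds when \(y\notin S\). Thus the integral over \(\R^{2n}\) reduces to the integral over \(S\times S\), which gives \eqref{eq:quotient-form} after dividing by the common factor \(\tfrac12\).

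The only genuine obstacle is the integrability bookkeeping. The pointwise identity holds only on the set where both original integrands are finite. Since each integral converges absolutely by the admissibility assumption, their difference is absolutely integrable. The identity therefore holds almost everywhere, and the restricted integral converges absolutely. It follows that no principal values or cancellation at infinity are needed.
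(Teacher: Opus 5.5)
Your argument is correct and is the paper's proof. You subtract the equation for $U$ tested with $f\phi$ from the equation for $f$ tested with $U\phi$, apply the pointwise identity, and note that pairs meeting $\{U=0\}$ contribute nothing because $f=0$ a.e.\ there. One correction: your first displayed identity has the sign reversed. The correct form, which your direct coefficient computation does produce, is $\bigl(f(x)U(y)-f(y)U(x)\bigr)(\phi(x)-\phi(y))$, and on $S\times S$ it equals $U(x)U(y)(q(x)-q(y))(\phi(x)-\phi(y))$ exactly, with no sign ambiguity.
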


\begin{proof}
Subtract the weak equation for \(U\), tested with \(f\phi\),
from the weak equation for \(f\), tested with \(U\phi\).
The pointwise identity
\begin{align*}
 &(f(x)-f(y))(U(x)\phi(x)-U(y)\phi(y))-(U(x)-U(y))(f(x)\phi(x)-f(y)\phi(y))\\
 &=U(x)U(y)(q(x)-q(y))(\phi(x)-\phi(y))
\end{align*}
proves \eqref{eq:quotient-form}.
Values of \(q\) on \(\{U=0\}\) are immaterial.
\end{proof}

For \(K=K_U\), Proposition~\ref{prop:homogeneity} shows
that \(U\) satisfies \(\mathcal L_U U=0\).
The corresponding ground-state transform has kernel
\[
 \widetilde K_U(x,y)=
 (p-1)\frac{U(x)U(y)|U(x)-U(y)|^{p-2}}
 {|x-y|^{n+sp}}.
\]
For the half-space profile \(U(x)=(x_n)_+^s\), this kernel
is homogeneous of degree \(-n\) under simultaneous dilation
of its two variables:
\[
 \widetilde K_U(\lambda x,\lambda y)
 =\lambda^{-n}\widetilde K_U(x,y),
 \qquad \lambda>0.
\]
The scaling of the associated operator also depends on
the reference measure, which must therefore be specified.

We will now prove a boundary oscillation theorem for this
half-space model. Before doing so, we identify the exterior
contribution that vanishes under the additional zero-set
hypothesis in Proposition~\ref{prop:ground-state}.

\begin{prop}\label{prop:quotient-source} Let \(D\subset\R^n\) be an open set. Let  $U>0$ in $D$, $U\ge0$ in $\R^n$, and suppose
that $\mathcal L_U U=\mathcal L_U F=0$ weakly in $D$.
Put
\[
 S=\{U>0\},\qquad Z=\{U=0\},\qquad q={F}/{U}
 \quad\text{on }S.
\]
Let $\phi$ be compactly supported in $D$, and suppose
that $U\phi$ and $F\phi$ are admissible test functions
in the weak equations for $F$ and $U$, respectively.
Assume also that the integrals below are absolutely
convergent. Then
\begin{align}
 &\frac12\iint_{S\times S}
 U(x)U(y)(q(x)-q(y))(\phi(x)-\phi(y))
 K_U(x,y)\dxdy \notag\\
 &\qquad=(p-1)\int_S U(x)^{p-1}\phi(x)
       \left(\int_Z\frac{F(y)}{|x-y|^{n+sp}}\d y\right)\d x.
 \label{eq:quotient-source}
\end{align}
In particular, the right-hand side is nonnegative
when $F\ge0$ on $Z$ and $\phi\ge0$ in $D$.
\end{prop}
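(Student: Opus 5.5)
The plan mirrors the proof of Proposition~\ref{prop:ground-state}, but keeps the pairs that meet the zero set \(Z\). Subtract the weak equation for \(U\), tested with \(F\phi\), from the weak equation for \(F\), tested with \(U\phi\). Both left sides vanish by hypothesis. We therefore obtain
\[
 0=\frac12\iint_{\R^{2n}}\Bigl[\Delta_{xy}F\,\Delta_{xy}(U\phi)-\Delta_{xy}U\,\Delta_{xy}(F\phi)\Bigr]K_U(x,y)\dxdy ,
\]
where absolute convergence of each piece is assumed. We then split \(\R^{2n}\) into the four regions \(S\times S\), \(S\times Z\), \(Z\times S\), and \(Z\times Z\).

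On \(S\times S\), the pointwise identity from Proposition~\ref{prop:ground-state}, with \(f=F\) and \(q=F/U\), turns the integrand into
\[
 U(x)U(y)(q(x)-q(y))(\phi(x)-\phi(y))K_U(x,y).
\]
This produces the left side of \eqref{eq:quotient-source}.

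On \(Z\times Z\), both \(U(x)\) and \(U(y)\) vanish, so each product \(U\phi\) vanishes at both points. Hence the first term is zero. The second term contains the factor \(\Delta_{xy}U=0\), so the integrand vanishes there as well.

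On \(S\times Z\), with \(x\in S\) and \(y\in Z\), we have \(U(y)=0\). The bracket becomes
\[
 (F(x)-F(y))U(x)\phi(x)-U(x)\bigl(F(x)\phi(x)-F(y)\phi(y)\bigr)
 =U(x)F(y)\bigl(\phi(y)-\phi(x)\bigr).
\]
The region \(Z\times S\) gives the same contribution by symmetry of \(K_U\). Together the two cross regions contribute
\[
 \iint_{S\times Z}U(x)F(y)\bigl(\phi(y)-\phi(x)\bigr)K_U(x,y)\dxdy .
\]

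Next we evaluate this cross term. For \(y\in Z\), the kernel is
\[
 K_U(x,y)=(p-1)U(x)^{p-2}|x-y|^{-n-sp}.
\]
We handle \(\phi(y)\) on \(Z\) as follows. Since \(U>0\) in \(D\), we have \(Z\cap D=\emptyset\). As \(\phi\) is compactly supported in \(D\), this gives \(\phi(y)=0\) for every \(y\in Z\). The cross term is therefore
\[
 -(p-1)\int_S U(x)^{p-1}\phi(x)\int_Z\frac{F(y)}{|x-y|^{n+sp}}\d y\,\d x .
\]
Moving it to the other side of \(0=(\text{interior term})+(\text{cross term})\) yields \eqref{eq:quotient-source}. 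The sign claim is immediate: when \(F\ge0\) on \(Z\) and \(\phi\ge0\), every factor in the right-hand integrand is nonnegative, and \(U^{p-1}\ge0\).

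The main point requiring care is justifying the splitting of the integral into these pieces. This needs each piece to be absolutely convergent, which is exactly the stated hypothesis. It also needs the convention for \(p=2\), where \(K_U\) has no factor \(U(x)^{p-2}\); then the source still has the coefficient \(U(x)^{1}=U(x)^{p-1}\), consistent with the formula.

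One subtlety concerns \(F\) on \(Z\), since \(q\) is defined only on \(S\). No division by \(U\) occurs on \(Z\), because \(F(y)\) enters the cross term directly. Unlike in Proposition~\ref{prop:ground-state}, we do not assume \(F=0\) on \(Z\). That is precisely why the source term survives, and it vanishes exactly under that earlier hypothesis.
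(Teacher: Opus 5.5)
Your proposal is correct and follows the paper's proof. Both use the same subtraction of the two tested weak equations, the ground-state identity on $S\times S$, and vanishing on $Z\times Z$; on the cross pairs both obtain $-U(x)F(y)\phi(x)K_U(x,y)$, with the two orientations absorbing the factor $\tfrac12$, and your explicit observation that $\phi=0$ on $Z\subset D^c$ is the step the paper uses implicitly.
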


\begin{proof}
Make the same subtraction as in
Proposition~\ref{prop:ground-state}. For \(x\in S\) and
\(y\in Z\), its integrand equals
\(-U(x)F(y)\phi(x)K_U(x,y)\).
The two orientations of such pairs cancel the factor \(1/2\).
Since \(K_U(x,y)=(p-1)U(x)^{p-2}|x-y|^{-n-sp}\) on
these pairs, moving this contribution to the right gives
\eqref{eq:quotient-source}.
Pairs in \(Z\times Z\) make no contribution.
\end{proof}

The source in \eqref{eq:quotient-source} vanishes for
parameter derivatives of positive Dirichlet families
with nonnegative, affinely interpolated exterior data. Indeed, suppose that \(U_0,U_1\ge0\), that \(U_t>0\)
in \(D\) for \(0<t<1\), and that
\[
 U_t=(1-t)U_0+tU_1
 \quad\text{almost everywhere on }D^c.
\]
At any \(t\in(0,1)\) for which the parameter derivative
\(F_t=\partial_tU_t\) exists, its exterior values are
\[
 F_t=U_1-U_0
 \quad\text{almost everywhere on }D^c.
\]
On \(D^c\cap\{U_t=0\}\), nonnegativity and
\(0<t<1\) imply \(U_0=U_1=0\), and hence \(F_t=0\).
Since \(U_t>0\) in \(D\), it follows that \(F_t\)
vanishes almost everywhere on the entire zero set
of \(U_t\). Thus the right-hand side of
\eqref{eq:quotient-source} is zero for \(U=U_t\)
and \(F=F_t\).

For a general solution \(F\) of the linearized equation,
a local exterior zero condition need not imply that
\(F\) vanishes on the entire zero set of the background
\(U\). Such a local condition therefore does not, by
itself, eliminate the right-hand side of
\eqref{eq:quotient-source}.

\subsection{The weighted half-space form}

Assume \eqref{eq:range}, set \(X_h=\overline{\HH}\), and use the
Euclidean metric on \(X_h\). Write
\(\mathbb B_r(x)=X_h\cap B_r(x)\). Define
\begin{align}
 \d\mu_h(x)&=x_n^s\d x,
 &V_h(x,r)&=\mu_h(\mathbb B_r(x)),\notag\\
 \Psi_h(x,r)&=r^{\sigma_*}(x_n+r)^{s-\sigma_*},
 &\beta_0&=\min\{s,\sigma_*\},
 \label{eq:model-measure-scale}
\end{align}
where \(\mu_h\) gives zero mass to \(\partial\HH\).  The triple \((X_h,d,\mu_h)\), where \(d(x,y)=|x-y|\),
is a complete metric measure space. The measure \(\mu_h\) is Radon and has full support
on \(X_h\), and every closed metric ball is compact. For measurable \(f,g\) of finite energy, put
\begin{equation}\label{eq:model-weighted-form}
 \mathscr E_h(f,g)=\frac12\iint_{\HH\times\HH}
 (f(x)-f(y))(g(x)-g(y))K_h(x,y)
 \d\mu_h(x)\d\mu_h(y).
\end{equation}
By Proposition~\ref{prop:ground-state}, if
\(\mathcal L_h f=0\) in \(\HH\), \(f=0\) on \(\HH^c\),
and the products required there are admissible test functions, then
\(q=f/h\) satisfies \(\mathscr E_h(q,\psi)=0\).
Indeed, \(\mathcal L_hh=0\), and the transformed interaction
measure is
\[
 h(x)h(y)K_h(x,y)\,\mathrm d x\,\mathrm d y
 =K_h(x,y)\,\mathrm d\mu_h(x)\,\mathrm d\mu_h(y).
\]
Thus the kernel relative to \(\mu_h\) is \(K_h\).
For \(G\subset X_h\), we write \(\mathscr E_{h,G}\)
when both integration variables are restricted to \(G\).

\begin{lem}\label{lem:model-weighted-estimates}
There are positive constants \(c,C\), depending only on
\(n,s,p\), such that, for \(x\in X_h\) and \(r>0\),
\begin{equation}\label{eq:model-volume}
 c r^n(x_n+r)^s\le V_h(x,r)\le C r^n(x_n+r)^s.
\end{equation}
The measure \(\mu_h\) is doubling and reverse doubling.
For \(0<r\le R\),
\begin{equation}\label{eq:model-scale-growth}
 \left(\frac Rr\right)^{\min\{s,\sigma_*\}}
 \le\frac{\Psi_h(x,R)}{\Psi_h(x,r)}
 \le\left(\frac Rr\right)^{\max\{s,\sigma_*\}}.
\end{equation}
Moreover, \(\Psi_h(x,r)\asymp\Psi_h(z,r)\) if
\(|x-z|\le r\). For distinct \(x,y\in\HH\), with
\(\ell=|x-y|\),
\begin{equation}\label{eq:model-jump-upper}
 K_h(x,y)\le
 \frac{C}{V_h(x,\ell)\Psi_h(x,\ell)}.
\end{equation}
For \(x\in\HH\) and \(r>0\),
\begin{equation}\label{eq:model-jump-moment}
\frac 1{r^2}\int_{\HH\cap \mathbb B_r(x)}{|x-y|^2}
 K_h(x,y)\d\mu_h(y)+\int_{\HH\setminus\mathbb B_r(x)}
 K_h(x,y)\d\mu_h(y)\le\frac{C}{\Psi_h(x,r)}.
\end{equation}
If \(f\in \mathrm L^2(\mathbb B_{4r}(x),\mu_h)\) has finite energy there, then
\begin{equation}\label{eq:model-poincare}
 \int_{\mathbb B_r(x)}|f-f_{x,r}|^2\d\mu_h
 \le C\Psi_h(x,r)
       \mathscr E_{h,\mathbb B_{4r}(x)}(f,f),
 \qquad
 f_{x,r}=\frac{1}{V_h(x,r)}
             \int_{\mathbb B_r(x)}f\d\mu_h.
\end{equation}
Finally, if \(x,y\in\HH\) and \(0<r\le|x-y|/2\), then
\begin{equation}\label{eq:model-ujs}
 K_h(x,y)\le\frac{C}{V_h(x,r)}
           \int_{\mathbb B_r(x)}K_h(z,y)\d\mu_h(z).
\end{equation}
\end{lem}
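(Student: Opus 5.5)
The plan is to reduce everything to explicit computations with the model weight $x_n^s$, the profile kernel $K_h$, and the interior cone coercivity of Proposition~\ref{prop:whitney-pair}, splitting each estimate into the \emph{Whitney regime} $r\le x_n/2$ and the \emph{boundary regime} $r>x_n/2$.

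\textbf{Volume and scale.} In the Whitney regime $y_n\asymp x_n$ on $\mathbb B_r(x)$, so $V_h\asymp r^nx_n^s$. In the boundary regime, $\mathbb B_r(x)$ lies in $\{y_n\le 3r\}$ and contains a half-ball of radius $r/2$ on which a fixed fraction of points have $y_n\ge cr$, so $V_h\asymp r^{n+s}$. Together these give \eqref{eq:model-volume}. Doubling and reverse doubling follow because $r\mapsto r^n(x_n+r)^s$ grows at rate between $r^n$ and $r^{n+s}$. For \eqref{eq:model-scale-growth}, write $\Psi_h(x,r)=r^{\sigma_*}(x_n+r)^{s-\sigma_*}$; its logarithmic derivative in $r$ equals $\sigma_*+(s-\sigma_*)r/(x_n+r)$, which lies between $s$ and $\sigma_*$, and integrating gives the two-sided power bounds. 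If $|x-z|\le r$, then $x_n+r\asymp z_n+r$, so $\Psi_h(x,r)\asymp\Psi_h(z,r)$.

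\textbf{Jump bound \eqref{eq:model-jump-upper}.} Use $|x_n^s-y_n^s|\le C\ell^s\min\{1,(\ell/x_n)^{1-s}\}$ (note $\ell\ge|x_n-y_n|$).
\begin{itemize}
\item If $\ell\ge x_n$: $K_h\le C\ell^{s(p-2)-n-sp}=C\ell^{-n-2s}$, and $V_h\Psi_h\asymp\ell^{n+s}\ell^{s}$, which matches.
\item If $\ell<x_n$: $K_h\le C\,x_n^{(s-1)(p-2)}\ell^{p-2-n-sp}=C\,x_n^{(s-1)(p-2)}\ell^{-n-\sigma_*}$, while $V_h\Psi_h\asymp\ell^{n+\sigma_*}x_n^{2s-\sigma_*}$. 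The exponents agree since $2s-\sigma_*=(1-s)(p-2)$.
\end{itemize}

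\textbf{Jump moment \eqref{eq:model-jump-moment}.} Integrate \eqref{eq:model-jump-upper} over dyadic annuli $\ell\sim 2^k r$ in $y$, each of $\mu_h$-mass $\lesssim V_h(x,2^kr)$. The far sum is then bounded by $\sum_{k\ge0}\Psi_h(x,2^kr)^{-1}\le C\Psi_h(x,r)^{-1}$, by the lower growth rate $s$. For the near part, the summand carries the extra factor $(2^{-k})^2$ and uses $\Psi_h(x,2^{-k}r)^{-1}\le 2^{k\max\{s,\sigma_*\}}\Psi_h(x,r)^{-1}$; this sum converges since $\max\{s,\sigma_*\}<2$.

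\textbf{Averaging bound \eqref{eq:model-ujs}.} For $z\in\mathbb B_r(x)$ one has $|z-y|\asymp|x-y|$, but the numerator $|z_n^s-y_n^s|^{p-2}$ can vanish. To handle this, restrict to the subset of $\mathbb B_r(x)$ on which $|z_n^s-y_n^s|\ge c|x_n^s-y_n^s|$; this has a fixed fraction of the $\mu_h$-measure, by splitting according to whether $|x_n-y_n|\ge r/4$ (then move $z_n$ away from $y_n$) or $|x_n-y_n|<r/4$ (then half the ball has $|z_n-y_n|\ge r/8$, and the increment is comparable to or larger than that of $x$). This step is a routine but fiddly one-dimensional case analysis.

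\textbf{Main obstacle: the Poincar\'e inequality \eqref{eq:model-poincare} in the boundary regime.} Whitney balls are handled directly by \eqref{eq:whitney-pair-poincare} with $u=v=h$ and $e_B=e_n$. Since $\mu_h$ has density $\asymp x_n^s$ there and $d_0^{sp}/a^{p-2}=x_n^{sp-s(p-2)}=x_n^{2s}$, this gives a factor $\ell^{\sigma_*}x_n^{s-\sigma_*}\asymp\Psi_h$ at scale $\ell$. Multiplying by the weight on both sides matches $\mathscr E_h$. For Whitney radii $\ell\le\varepsilon x_n$ below $r$, cover and chain.

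For boundary balls, the plan is a Whitney decomposition of $\mathbb B_r(x)\cap\HH$ into cubes $Q$ with the chaining argument of Jerison type: $|f-f_{x,r}|^2$ is controlled by sums over chains of Whitney cubes of $|f_Q-f_{Q'}|^2$. Neighbours are compared through the local Poincar\'e inequality on a slightly enlarged Whitney ball. To close, I would instead prefer a more robust \emph{direct} route: for cubes $Q$ at height $t$ and $Q_0$ at height $\asymp r$, the kernel $K_h$ between $Q$ and points in the vertical cone above it at heights $\asymp 2t$ is bounded below by $c\,t^{s(p-2)}t^{-n-sp}$. This yields $|f_Q-f_{\hat Q}|^2\lesssim t^{2s}\,t^{-2s}\cdots$ type bounds, with geometric gain along vertical chains because $\mu_h(Q)\asymp t^{n+s}$ decays. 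Summation over generations then uses the decay in $t$, with Cauchy--Schwarz weighted by $t^{\epsilon}$, to get the factor $r^{s}\cdot r^{0}\asymp\Psi_h(x,r)$ when $x_n\lesssim r$. I expect checking that this weighted Hardy-type summation closes with exactly $\Psi_h$, and that all interactions stay inside $\mathbb B_{4r}(x)$, to be the hardest part.
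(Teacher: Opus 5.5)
Your volume bounds, the scale-growth and nearby-center comparisons, the jump bound \eqref{eq:model-jump-upper} and the moment estimate \eqref{eq:model-jump-moment} are correct and follow the paper's route. The averaging estimate \eqref{eq:model-ujs} also goes through, but one sub-case needs repair. If $y_n>x_n+r/4$ and $x_n\ll r$, ``moving $z_n$ away from $y_n$'' cannot mean taking $z_n\le x_n$, because that part of $\mathbb B_r(x)$ carries negligible $\mu_h$-mass. Use instead the slab $x_n\le z_n\le x_n+r/16$, which has comparable mass and on which an elementary one-variable estimate still gives $y_n^s-z_n^s\ge c(y_n^s-x_n^s)$. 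The paper avoids this case analysis with the two balls $B_{r/32}(x+(r/4)e_n)$ and $B_{r/32}(x+(3r/4)e_n)$. Their ranges of $z_n^s$ are separated by $c\,r(x_n+r)^{s-1}$ and lie within $C\,r(x_n+r)^{s-1}$ of $x_n^s$, so for every value $b=y_n^s$ one of the two balls keeps $|z_n^s-b|\ge c|x_n^s-b|$.

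The genuine gap is the Poincar\'e inequality \eqref{eq:model-poincare} on boundary balls. What you give there is a plan: you leave the decisive summation explicitly unchecked and indicate the key bound only as ``$t^{2s}t^{-2s}\cdots$''. The plan can be completed with some care. For Whitney cubes one gets $|f_Q-f_{\hat Q}|^2\lesssim t^{-n}\mathscr E_{h,Q\cup\hat Q}(f,f)$. Combined with $\mu_h(Q)\asymp t^{n+s}$, the count of about $(\tau/t)^{n-1}$ descendants at height $t$ of a cube at height $\tau$, and Cauchy--Schwarz with weights $\tau^{\epsilon}$, the sum closes with $r^s\asymp\Psi_h(x_0,r)$. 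However, the idea you are missing is that no decomposition or chaining is needed at all. Fix $x_0\in X_h$, put $t=(x_0)_n$ and $B=\mathbb B_r(x_0)$, and take the single reference ball $G=B_{r/4}(x_0+3re_n)\subset\mathbb B_{4r}(x_0)$. For all $x\in B\cap\HH$ and $y\in G$ one has $|x-y|\asymp r$, $y_n^s\asymp(t+r)^s$ and $y_n-x_n\ge 7r/4$. Hence $y_n^s-x_n^s\ge c\,r(t+r)^{s-1}$. The increment of $h$ in its gradient direction never degenerates, and it only grows as $x$ approaches $\partial\HH$. Therefore
\[
 K_h(x,y)\,y_n^s\ge c\,r^{-n-\sigma_*}(t+r)^{\sigma_*-s}\qquad(x\in B\cap\HH,\ y\in G).
\]
Restrict $\mathscr E_{h,\mathbb B_{4r}(x_0)}(f,f)$ to $B\times G$ and apply Jensen in $y$ over $G$, using the Lebesgue mean $f_G$. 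This gives $\int_B|f-f_G|^2\,\mathrm d\mu_h\le C\Psi_h(x_0,r)\,\mathscr E_{h,\mathbb B_{4r}(x_0)}(f,f)$, and the weighted mean minimizes the left-hand side. This one-step argument holds for every $t\ge0$ and $r>0$. It therefore also removes the need for your Whitney-regime appeal to Proposition~\ref{prop:whitney-pair}, the Chaker--Silvestre cone coercivity, and the covering and chaining, and every interaction stays inside $\mathbb B_{4r}(x_0)$ automatically. Your own remark about the cone above $Q$ already contains the point: compare each cube directly with a single top cube at height comparable to $t+r$, not with its parent.
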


\begin{proof}
Put \(t=x_n\). The upper volume bound follows from
\(z_n\le t+r\) on \(\mathbb B_r(x)\).
The ball \(B_{r/8}(x+(r/2)e_n)\) is contained in
\(\mathbb B_r(x)\), and \(z_n\asymp t+r\) there.
It gives the lower bound. Doubling and reverse doubling follow,
the latter with lower exponent \(n\).
The logarithmic derivative of \(\Psi_h(x,r)\) with respect to
\(r\) is
\[
 \sigma_*+(s-\sigma_*)r/{(t+r)}.
\]
It lies between \(s\) and \(\sigma_*\), proving
\eqref{eq:model-scale-growth}. The comparison for nearby centers
follows from \(x_n+r\asymp z_n+r\).

For \(x,y\in\HH\),
\[
 |x_n^s-y_n^s|\le C_s\ell(t+\ell)^{s-1}.
\]
When \(\ell<t/2\), this follows from the mean value theorem.
Otherwise it follows from the \(s\)-H\"older continuity of the
power function. Hence, putting
\(a=(1-s)(p-2)=2s-\sigma_*\),
\[
 K_h(x,y)\le C\ell^{-n-\sigma_*}(t+\ell)^{-a}
       \asymp\frac{1}{V_h(x,\ell)\Psi_h(x,\ell)}.
\]
This also covers \(p=2\) with the stated convention for the
zeroth power.
By volume doubling, integration over an annulus of radius
\(\rho\) is bounded by \(C/\Psi_h(x,\rho)\).
Summing outer annuli uses \(\beta_0>0\).
Summing inner annuli with the factor \((\rho/r)^2\) uses
\(\max\{s,\sigma_*\}<2\). This proves
\eqref{eq:model-jump-moment}.

To prove \eqref{eq:model-poincare}, fix a center \(x_0\in X_h\),
write \(t=(x_0)_n\), and set
\[
 B=\mathbb B_r(x_0),\qquad
 G=B_{r/4}(x_0+3re_n).
\]
For \(x\in B\cap\HH\) and \(y\in G\), we have
\(G\subset\mathbb B_{4r}(x_0)\), \(|x-y|\asymp r\), and
\[
 y_n^s\asymp(t+r)^s,\qquad
 y_n^s-x_n^s\ge c_s r(t+r)^{s-1}.
\]
Consequently,
\[
 K_h(x,y)y_n^s
 \ge c r^{-n-\sigma_*}(t+r)^{\sigma_*-s}.
\]
Let \(f_G=|G|^{-1}\int_G f\d y\), with the ordinary
Lebesgue mean on \(G\). Jensen's inequality gives
\[
 \mathscr E_{h,\mathbb B_{4r}(x_0)}(f,f)
 \ge\frac{c}{\Psi_h(x_0,r)}
      \int_B|f-f_G|^2\d\mu_h.
\]
The weighted mean on \(B\) minimizes the last squared distance
over all constants, proving the required inequality.
Truncation gives the assertion whenever the indicated energy
is finite.

It remains to prove \eqref{eq:model-ujs}. The case \(p=2\)
follows directly from \(|z-y|\asymp|x-y|\) on
\(\mathbb B_r(x)\). Suppose that \(p>2\), and set
\(\Delta=r(t+r)^{s-1}\).
Consider the two balls
\[
 G_1=B_{r/32}(x+(r/4)e_n),\qquad
 G_2=B_{r/32}(x+(3r/4)e_n).
\]
Each has \(\mu_h\)-measure at least \(cV_h(x,r)\).
The ranges of \(z_n^s\) on these balls are separated by
\(c_s\Delta\), and both lie within \(C_s\Delta\) of \(t^s\).
For any real number \(b\), either \(|t^s-b|\ge2C_s\Delta\),
in which case both balls have
\(|z_n^s-b|\ge|t^s-b|/2\), or one of the two balls has
\(|z_n^s-b|\ge c_s\Delta\ge c'_s|t^s-b|\).
Increasing \(C_s\), if necessary, makes these alternatives
exhaustive. Thus
\[
 \frac1{V_h(x,r)}\int_{\mathbb B_r(x)}
       |z_n^s-b|^{p-2}\d\mu_h(z)
 \ge c|t^s-b|^{p-2}.
\]
Take \(b=y_n^s\), and use \(|z-y|\asymp|x-y|\).
This proves \eqref{eq:model-ujs}.
\end{proof}

Consider the integral form $\mathscr E_h$ defined in
\eqref{eq:model-weighted-form}, with maximal finite-energy
domain
\[
 \mathscr D_h^{\max}
 =
 \left\{
 f\in\mathrm L^2(X_h,\mu_h):
 \mathscr E_h(f,f)<\infty
 \right\}.
\]
Equip this space with the form norm
\begin{equation}\label{form}
 \|f\|_{\mathscr E_h,1}
 =
 \bigl(\|f\|_{\mathrm L^2(X_h,\mu_h)}^2+
              \mathscr E_h(f,f)\bigr)^{1/2}.
\end{equation}
The space $\mathscr D_h^{\max}$ is complete in this
norm. Indeed, a Cauchy sequence converges in
$\mathrm L^2(X_h,\mu_h)$, and Fatou's lemma, applied
along an almost everywhere convergent subsequence,
gives convergence in energy as well.
Thus $\mathscr E_h$ is closed on $\mathscr D_h^{\max}$. The moment estimate \eqref{eq:model-jump-moment}
shows that compactly supported Lipschitz functions
on $X_h$ have finite energy.
Let $\mathscr D_h$ be their closure in the norm
\eqref{form}.  Compact support means that the support is a compact
subset of \(X_h=\overline{\HH}\), and need not lie in \(\HH\). Thus these functions may be nonzero
on the flat boundary. In particular, this definition imposes no homogeneous
boundary condition on the quotient. The restriction of $\mathscr E_h$ to $\mathscr D_h$
is closed, symmetric, and nonnegative.
For every normal contraction $T:\R\to\R$, meaning
$T(0)=0$ and $|T(a)-T(b)|\le|a-b|$, contraction
of differences gives
\[
 \mathscr E_h(T\circ f,T\circ f)
 \le \mathscr E_h(f,f).
\]
The Lipschitz core is stable under such contractions,
and closure yields the Markov property on
$\mathscr D_h$.
Moreover, compactly supported Lipschitz functions
are dense in $\mathscr D_h$ in the form norm and
uniformly dense in $C_0(X_h)$, the continuous
functions vanishing at infinity.
Consequently, \((\mathscr E_h,\mathscr D_h)\) is a
regular symmetric Dirichlet form on
\(\mathrm L^2(X_h,\mu_h)\), with the compactly supported
Lipschitz functions on \(X_h\) as a core, see
\cite[Section~1.1]{FOT}.

To include quotients with an additive constant,
we also use
\[
 \mathscr D_h+\R
 =\{f+c:f\in\mathscr D_h,\ c\in\R\}.
\]
Since the energy depends only on differences,
the bilinear form extends to this class by
\[
 \mathscr E_h(f+c,g+d)=\mathscr E_h(f,g),
 \qquad f,g\in\mathscr D_h,\quad c,d\in\R.
\]
This extension does not require the shifted functions
to belong to $\mathrm L^2(X_h,\mu_h)$.

For a compact set $K\subset X_h$, possibly intersecting
$\partial\HH$, define
\begin{equation}\label{eq:model-capacity}
 \operatorname{Cap}_h(K)
 =
 \inf\left\{
 \|\eta\|_{\mathrm L^2(X_h,\mu_h)}^2
 +\mathscr E_h(\eta,\eta):
 \eta\in\mathscr D_h,\quad
 \eta\ge1\ \mu_h\text{-almost everywhere near }K
 \right\}.
\end{equation}
Here ``near $K$'' means on a relatively open
neighborhood of $K$ in $X_h$.
This is the $1$-capacity associated with
$(\mathscr E_h,\mathscr D_h)$, which we call
$\mathscr E_h$-capacity below.
Thus $K$ has zero capacity precisely when there exist
$\eta_j\in\mathscr D_h$, with $\eta_j\ge1$
$\mu_h$-almost everywhere on neighborhoods of $K$,
such that
\[
 \|\eta_j\|_{\mathrm L^2(X_h,\mu_h)}^2
 +\mathscr E_h(\eta_j,\eta_j)\longrightarrow0.
\]
Replacing \(\eta_j\) by
\(T\circ\eta_j\), where \(T(t)=\min\{1,\max\{0,t\}\}\),
does not increase either the \(\mathrm L^2\)-norm or
the energy, by the Markov property.
Thus these functions may be chosen to satisfy
\(0\le\eta_j\le1\), with \(\eta_j=1\)
\(\mu_h\)-almost everywhere on the corresponding
neighborhoods of \(K\).

The next lemma shows that every compact subset of
$\partial\HH$ has zero $\mathscr E_h$-capacity.
The resulting cutoff functions allow compactly
supported Lipschitz test functions on $X_h$ to be approximated
in the form norm by test functions supported inside $\HH$.
The quotient equation therefore remains valid for
test functions whose supports meet the flat boundary.
This permits the subsequent weighted Harnack and
oscillation arguments on relative balls centered
at boundary points of $X_h=\overline{\HH}$.

\begin{lem}\label{lem:model-boundary-polar}
Every compact subset of \(\partial\HH\) has zero
\(\mathscr E_h\)-capacity. Furthermore, if \(q\in\mathscr D_h+\R\) and
\[
 \mathscr E_h(q,\phi)=0
\]
for every compactly supported Lipschitz \(\phi\) in
\(\HH\cap B_R(w)\), where \(w\in\partial\HH\), then the
same identity holds for every such \(\phi\) on \(X_h\) with
support in \(\mathbb B_R(w)\).
\end{lem}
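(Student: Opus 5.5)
For the first assertion we construct explicit cutoffs that concentrate near the boundary. By translation invariance in tangential directions, it suffices to treat $K\subset\partial\HH\cap\overline{B_L}$. Fix $\psi\in C_0^\infty(B_{2L})$ with $0\le\psi\le1$ and $\psi=1$ on $B_{L+1}$. For small $\epsilon>0$ set
\[
 \eta_\epsilon(x)=\psi(x)\,\theta_\epsilon(x_n),
\]
where $\theta_\epsilon=1$ on $[0,\epsilon]$, $\theta_\epsilon=0$ on $[\epsilon^{1/2},\infty)$, and $\theta_\epsilon(t)=\log(\epsilon^{1/2}/t)/\log(\epsilon^{-1/2})$ between these values. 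This is a logarithmic cutoff in the normal variable. Each $\eta_\epsilon$ is compactly supported Lipschitz on $X_h$, hence in $\mathscr D_h$, and it equals $1$ on a relative neighbourhood of $K$. The $\mathrm L^2(\mu_h)$ norm is at most $C\int_0^{\epsilon^{1/2}}t^s\,\mathrm dt\to0$. The main work is to show that $\mathscr E_h(\eta_\epsilon,\eta_\epsilon)\to0$.

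To bound the energy we use a Whitney-type decomposition together with the moment estimate \eqref{eq:model-jump-moment}. For $x\in\HH$ with $x_n=t$, split the $y$-integral at radius $r=t/2$. On $B_{t/2}(x)$ we have $|\nabla\eta_\epsilon|\lesssim 1/(t\log\epsilon^{-1})$ when $t\in(\epsilon/2,2\epsilon^{1/2})$, and $|\nabla\eta_\epsilon|\lesssim1$ otherwise, coming only from $\psi$. By \eqref{eq:model-jump-moment}, this near part contributes at most $C\,t^2|\nabla\eta_\epsilon|^2/\Psi_h(x,t/2)\asymp t^{2}|\nabla \eta_\epsilon|^2 t^{-s}$. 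Multiplying by $\mathrm d\mu_h=t^s\,\mathrm dx$ and integrating over the transition layer gives
\[
 C(\log\epsilon^{-1})^{-2}\int_{\epsilon/2}^{2\epsilon^{1/2}}\frac{\mathrm dt}{t}\lesssim(\log\epsilon^{-1})^{-1}\to0.
\]
For the far part $|x-y|\ge t/2$, the quantity $|\eta_\epsilon(x)-\eta_\epsilon(y)|^2$ is nonzero only if at least one of the two points lies in the slab $\{y_n<\epsilon^{1/2}\}$. We bound the difference by $\min\{1,\ \mathrm{osc}\}$ and split dyadically in $|x-y|$. Using the pointwise kernel bound \eqref{eq:model-jump-upper} and the volume estimate \eqref{eq:model-volume}, the key local count is
\[
 \int_{\{y_n<\rho\}\cap B_\rho(x)}K_h(x,y)\,\mathrm d\mu_h(y)\lesssim \frac{1}{\Psi_h(x,\rho)}.
\]
From this, pairs with $\max(x_n,y_n)\lesssim|x-y|$ contribute at most $C\int t^{s}\Psi_h(x,t)^{-1}\,\mathrm dx\cdot(\text{log-cutoff oscillation})^2$. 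Since $\Psi_h(x,t)\asymp t^{s}$ for $r\asymp t$, this is again a $\mathrm dt/t$ integral weighted by $(\log\epsilon^{-1})^{-2}$ times the squared number of dyadic scales over which $\theta_\epsilon$ varies. I expect this far-interaction bookkeeping to be the main obstacle. The difference $|\theta_\epsilon(x_n)-\theta_\epsilon(y_n)|$ across a dyadic gap $2^k t$ is at most $k/\log\epsilon^{-1}$, and the resulting sum $\sum_k k^2 2^{-k\beta_0}$ converges because $\beta_0>0$ by \eqref{eq:model-scale-growth}. This yields the same $(\log\epsilon^{-1})^{-1}$ bound. Contributions from $\psi$ are supported where $\eta_\epsilon$ is already near zero except in a set of $\mu_h$-measure $O(\epsilon^{1/2})$, and are handled by the same moment estimate with bounded gradient. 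Hence $\operatorname{Cap}_h(K)=0$.

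For the second assertion, let $\phi$ be compactly supported Lipschitz on $X_h$ with support in $\mathbb B_R(w)$, and let $K=\supp\phi\cap\partial\HH$, which is compact. Take the capacity cutoffs $0\le\eta_j\le1$ constructed above, with $\eta_j=1$ near $K$, chosen as explicit Lipschitz functions so that the products stay in the Lipschitz class. Then $\phi_j=(1-\eta_j)\phi$ is compactly supported Lipschitz in $\HH\cap B_R(w)$, so $\mathscr E_h(q,\phi_j)=0$. Writing
\[
 \mathscr E_h(q,\phi)=\mathscr E_h(q,\eta_j\phi),
\]
it remains to show that $\mathscr E_h(\eta_j\phi,\eta_j\phi)\to0$. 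The product rule for differences gives
\[
 |\Delta(\eta_j\phi)|^2\le2|\Delta\eta_j|^2\|\phi\|_\infty^2+2\eta_j(y)^2|\Delta\phi|^2.
\]
The first term tends to zero by the capacity estimate. For the second, $\int\eta_j(y)^2\Gamma_h(\phi)(y)\,\mathrm d\mu_h(y)$ is bounded by $\|\eta_j\|_{\mathrm L^2(\mu_h)}^2\sup\Gamma_h(\phi)$, and $\Gamma_h(\phi)$ is bounded by \eqref{eq:model-jump-moment} and \eqref{eq:model-scale-growth} uniformly on the support, so this term also tends to zero. Since $q\in\mathscr D_h+\R$ has finite energy on differences, Cauchy–Schwarz gives $|\mathscr E_h(q,\eta_j\phi)|\le\mathscr E_h(q,q)^{1/2}\mathscr E_h(\eta_j\phi,\eta_j\phi)^{1/2}\to0$, which proves the claim.
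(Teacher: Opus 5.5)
Your treatment of the second assertion is the paper's argument. You write $\phi=(1-\eta_j)\phi+\eta_j\phi$, bound $\mathscr E_h(\eta_j\phi,\eta_j\phi)$ by the product estimate with $\Gamma_h(\phi)$ bounded on a fixed compact set via \eqref{eq:model-jump-moment}, and conclude by Cauchy--Schwarz using $\mathscr E_h(q,q)<\infty$.

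For the capacity assertion your route is correct in outline but different and considerably heavier. The paper uses the plain linear cutoff $\zeta_\varepsilon(x)=\chi(x)\theta(x_n/\varepsilon)$, and every pair with $\zeta_\varepsilon(x)\neq\zeta_\varepsilon(y)$ has an endpoint in $\supp\zeta_\varepsilon\subset\{x_n\le2\varepsilon\}$. There $|\zeta_\varepsilon(x)-\zeta_\varepsilon(y)|\le C\min\{1,|x-y|/\varepsilon\}$, and \eqref{eq:model-jump-moment} at radius $\varepsilon$ gives $\Gamma_h(\zeta_\varepsilon)\le C/\Psi_h(x,\varepsilon)\le C\varepsilon^{-s}$. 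That support has $\mu_h$-measure at most $C\varepsilon^{1+s}$, so $\mathscr E_h(\zeta_\varepsilon,\zeta_\varepsilon)\le C\varepsilon$, with no dyadic bookkeeping.

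A logarithmic profile is the tool for a \emph{critical} capacity, and this one is subcritical. The factor $\Psi_h(x,x_n)^{-1}\asymp x_n^{-s}$ cancels the density $x_n^s$ of $\mu_h$. Hence any cutoff $0\le\eta\le1$ with Lipschitz constant $\lesssim 1/x_n$ on $B_{x_n/2}(x)$ has energy bounded by the Lebesgue measure of its support. Your log cutoff therefore buys nothing here. The paper's version gives a clean rate and is the same template reused in Lemma~\ref{lem:variable-boundary-capacity}.

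As written, your far-interaction bookkeeping needs repair, though none of the slips is fatal.
\begin{enumerate}
\item The ``key local count'' over $\{y_n<\rho\}\cap B_\rho(x)$ is false for $x_n<\rho$, since $K_h(x,\cdot)\,\mathrm d\mu_h$ is not integrable at $y=x$. Only its annular form is true and needed, i.e.\ the tail term of \eqref{eq:model-jump-moment}.
\item The bound $|\theta_\epsilon(x_n)-\theta_\epsilon(y_n)|\lesssim k/\log\epsilon^{-1}$ across a gap $2^kt$ requires $t=\min\{x_n,y_n\}$. You must first use symmetry to make $x$ the lower endpoint: for $x_n\approx\epsilon^{1/2}$ and $y_n<\epsilon$ the difference is of order one already at $k=0$.
\item First endpoints tangentially far from $\supp\psi$ make your $x$-integral over the slab infinite, unless those pairs are charged to the endpoint lying in $\supp\eta_\epsilon$.
\item The integrals you call $\mathrm dt/t$ are really $\mathrm dt$ integrals, because $t^s\Psi_h(x,t)^{-1}\asymp1$. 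This only improves your bound to a positive power of $\epsilon$.
\end{enumerate}
The simplest fix is to write $\mathscr E_h(\eta_\epsilon,\eta_\epsilon)\le\int_{\supp\eta_\epsilon}\Gamma_h(\eta_\epsilon)\,\mathrm d\mu_h$. On that support $\Gamma_h(\eta_\epsilon)(x)\le Cx_n^{-s}$, from the local Lipschitz bound on $B_{x_n/2}(x)$ plus the tail term at radius $x_n/2$. This gives $\mathscr E_h(\eta_\epsilon,\eta_\epsilon)\le C\epsilon^{1/2}$, which is the paper's estimate in disguise.
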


\begin{proof}
Fix a compact boundary set, choose a compactly supported
Lipschitz function \(\chi\) which is one near that set, and
let $\zeta_\varepsilon(x)=\chi(x)\theta(x_n/\varepsilon)$, where \(\theta\) is Lipschitz, \(0\le\theta\le1\),
\(\theta=1\) on \([0,1]\), and
\(\theta=0\) on \([2,\infty)\).
The support of \(\zeta_\varepsilon\) has \(\mu_h\)-measure
at most \(C\varepsilon^{s+1}\), and
\[
 |\zeta_\varepsilon(x)-\zeta_\varepsilon(y)|
 \le C\min\{1,|x-y|/\varepsilon\}.
\]
In the energy integral, a nonzero difference has at least one
endpoint in this support. At that endpoint \(x_n\le2\varepsilon\),
so \eqref{eq:model-jump-moment} gives
\[
 \mathscr E_h(\zeta_\varepsilon,\zeta_\varepsilon)
       \le C\varepsilon,\qquad
 \|\zeta_\varepsilon\|_{\mathrm L^2(\mu_h)}^2
       \le C\varepsilon^{s+1}.
\]
This proves the capacity assertion. For the second assertion, let \(\phi\) be a Lipschitz function
on \(X_h\) whose support is a compact subset of
\(\mathbb B_R(w)\). Choose \(\chi\) with compact support in
\(\mathbb B_R(w)\), equal to one near
\(\partial\HH\cap\operatorname{supp}\phi\), and use the
cutoffs \(\zeta_\varepsilon\) constructed above.
The function $\phi_\varepsilon=\phi(1-\zeta_\varepsilon)$ is Lipschitz and has compact support in
\(\HH\cap B_R(w)\). Hence the assumed interior weak
identity gives $\mathscr E_h(q,\phi_\varepsilon)=0$. The product estimate for the quadratic form gives
\begin{equation}\label{Mestauu}
 \mathscr E_h(\phi\zeta_\varepsilon,
             \phi\zeta_\varepsilon)
 \le
 2\|\phi\|_\infty^2
 \mathscr E_h(\zeta_\varepsilon,\zeta_\varepsilon)
 +\int_{X_h}\zeta_\varepsilon^2
       \Gamma_h(\phi)\d\mu_h,
\end{equation}
where
\[
 \Gamma_h(\phi)(x)
 =\int_{X_h}|\phi(x)-\phi(y)|^2
       K_h(x,y)\d\mu_h(y).
\]
By \eqref{eq:model-jump-moment}, this function is bounded
on the fixed compact support of \(\chi\).
The right-hand side in \eqref{Mestauu} therefore tends to zero.
Moreover, $\|\phi\zeta_\varepsilon\|_{\mathrm L^2(X_h,\mu_h)}
 \le
 \|\phi\|_\infty
 \|\zeta_\varepsilon\|_{\mathrm L^2(X_h,\mu_h)}
 \longrightarrow0$.  Thus \(\phi_\varepsilon\to\phi\) in the form norm. Write \(q=q_0+c\), where \(q_0\in\mathscr D_h\) and
\(c\in\R\). Since the energy depends only on differences, $\mathscr E_h(q,q)=\mathscr E_h(q_0,q_0)<\infty$. Using the interior weak identity, bilinearity, and
Cauchy-Schwarz, we obtain \[
|\mathscr E_h(q,\phi)|
 =|\mathscr E_h(q,\phi-\phi_\varepsilon)|=|\mathscr E_h(q,\phi\zeta_\varepsilon)|\le
 \mathscr E_h(q,q)^{1/2}
 \mathscr E_h(\phi\zeta_\varepsilon,
             \phi\zeta_\varepsilon)^{1/2}
 \longrightarrow0.
\] Consequently, \(\mathscr E_h(q,\phi)=0\), proving the
second assertion.
\end{proof}

\begin{rem}\label{rem:model-local-form-domain}
For a relatively compact open set \(O\subset X_h\), define
\(\mathscr D_{h,0}(O)\) as the closure, in the form norm
\eqref{form}, of Lipschitz functions on \(X_h\) compactly
supported in \(O\).
For any such cutoff \(\eta\), the moment estimate bounds
\(\Gamma_h(\eta)\) on a fixed compact neighborhood of its
support, while \eqref{eq:model-jump-upper} bounds it outside
that neighborhood.
Thus \(\Gamma_h(\eta)\) is bounded on \(X_h\), and the
product estimate makes multiplication by \(\eta\)
continuous in the form norm. If \(f\in\mathscr D_h\) vanishes outside a compact subset
of \(O\), choose \(\eta=1\) near that subset.
Multiplying Lipschitz-core approximants of \(f\) by \(\eta\)
gives \(f\in\mathscr D_{h,0}(O)\).
Similarly, \(\eta v\in\mathscr D_{h,0}(O)\) for every
\(v\in\mathscr D_h+\R\) and every Lipschitz cutoff
\(\eta\) compactly supported in \(O\). For \(q\) as in Lemma~\ref{lem:model-boundary-polar}, take
\(O=\mathbb B_R(w)\) and approximate
\(\phi\in\mathscr D_{h,0}(O)\) in the form norm by
Lipschitz functions \(\phi_j\) compactly supported in \(O\).
The lemma gives \(\mathscr E_h(q,\phi_j)=0\), and
Cauchy-Schwarz yields
\[
 |\mathscr E_h(q,\phi)|
 =|\mathscr E_h(q,\phi-\phi_j)|\longrightarrow0.
\]
Hence the weak identity holds for every
\(\phi\in\mathscr D_{h,0}(O)\).
\end{rem}

\begin{rem}
Lemma~\ref{lem:model-boundary-polar} concerns the weak
equation on the relative ball
\(\mathbb B_R(w)=X_h\cap B_R(w)\).
Tests may meet the flat boundary, but \(q\) is not extended
into the lower half-space, and no equation is asserted on
the full Euclidean ball \(B_R(w)\).
All interactions remain those of \(\mathscr E_h\) on \(X_h\).
\end{rem}

We next record an approximation that permits us to apply
the bounded-function Harnack argument without imposing a
global bound in the model theorem.

\begin{lem}\label{lem:model-harmonic-approximation}
Let \(z\in X_h\), \(a>0\), and \(O=\mathbb B_a(z)\). Suppose that
\(q\in\mathscr D_h+\R\) is nonnegative on \(X_h\) and
\[
 \mathscr E_h(q,\phi)=0
 \qquad(\phi\in\mathscr D_{h,0}(O)).
\]
For all sufficiently large integers \(N\), there are
\(q_N\in\mathscr D_h+\R\), with \(0\le q_N\le N\), such that
\[
 q_N-(q\wedge N)\in\mathscr D_{h,0}(O),\qquad
 \mathscr E_h(q_N,\phi)=0
 \quad(\phi\in\mathscr D_{h,0}(O)),
\]
and
\[
 \|q_N-q\|_{\mathrm L^2(X_h,\mu_h)}^2
 +\mathscr E_h(q_N-q,q_N-q)\longrightarrow0.
\]
\end{lem}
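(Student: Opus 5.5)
The plan is to construct \(q_N\) as the \(\mathscr E_h\)-harmonic replacement of \(q\wedge N\) in \(O\), obtained by minimization in an affine class. Write \(q=q_0+c\) with \(q_0\in\mathscr D_h\) and \(c\ge0\); the case \(c<0\) is similar. For \(N\ge c\), the function \(q\wedge N=(q_0\wedge(N-c))+c\) lies in \(\mathscr D_h+\R\), because \(t\mapsto t\wedge(N-c)\) is a normal contraction once \(N-c\ge0\). This is the source of the restriction to large \(N\). Its energy satisfies \(\mathscr E_h(q\wedge N)\le\mathscr E_h(q)\).

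Next, minimize \(\mathscr E_h(g,g)\) over \(g\in(q\wedge N)+\mathscr D_{h,0}(O)\). On \(\mathscr D_{h,0}(O)\) the form norm is equivalent to \(\mathscr E_h^{1/2}\). To see this, apply the weighted Poincar\'e inequality \eqref{eq:model-poincare} on a ball \(\mathbb B_{r}(z)\) with \(r\) large, so that \(O\) occupies a fixed fraction of its \(\mu_h\)-mass; together with the vanishing of the function off \(O\), this controls the \(\mathrm L^2(\mu_h)\) norm by the energy. Hence \(\mathscr D_{h,0}(O)\) is a Hilbert space under \(\mathscr E_h\), and the Riesz/projection theorem gives a unique minimizer \(q_N\). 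The minimizer satisfies \(\mathscr E_h(q_N,\phi)=0\) for all \(\phi\in\mathscr D_{h,0}(O)\).

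The bounds \(0\le q_N\le N\) come from the Markov property. The map \(T(t)=\min\{N,\max\{0,t\}\}\) does not increase energy, and \(T\circ q_N\) stays in the admissible affine class. This holds because \(q_N-(q\wedge N)\in\mathscr D_{h,0}(O)\), while \(q\wedge N\) already takes values in \([0,N]\), so truncating changes \(q_N\) only where it differs from the data, and the resulting difference is again a contraction image lying in \(\mathscr D_{h,0}(O)\). Uniqueness of the minimizer then forces \(q_N=T\circ q_N\).

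For the convergence, note that \(q\) is itself harmonic in \(O\), so \(q-q_N\) is \(\mathscr E_h\)-orthogonal to \(\mathscr D_{h,0}(O)\). Write
\[
 q_N-q=\bigl(q_N-(q\wedge N)\bigr)+\bigl((q\wedge N)-q\bigr).
\]
The first summand is the projection onto \(\mathscr D_{h,0}(O)\) of \(q-(q\wedge N)\), up to sign. Hence
\[
 \mathscr E_h(q_N-q)\le\mathscr E_h\bigl(q-(q\wedge N)\bigr)=\mathscr E_h\bigl((q_0-(N-c))_+\bigr),
\]
and the right-hand side tends to zero by dominated convergence in the energy integral, since the pairwise differences of \((q_0-k)_+\) are bounded by those of \(q_0\) and tend to zero pointwise as \(k\to\infty\). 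The \(\mathrm L^2(\mu_h)\) convergence follows because \((q_0-k)_+\to0\) in \(\mathrm L^2\), and the \(\mathscr D_{h,0}(O)\) part is controlled by its energy through the Poincar\'e equivalence above.

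The main obstacle is justifying that the truncation step keeps \(T\circ q_N-(q\wedge N)\) inside \(\mathscr D_{h,0}(O)\), since \(\mathscr D_{h,0}(O)\) is defined by form-norm closure. I would handle it by approximating with Lipschitz functions and using the continuity of normal contractions in the regular Dirichlet form, as in \cite{FOT}.
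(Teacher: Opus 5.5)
Your proposal is correct and follows the paper's proof essentially step for step: coercivity of $\mathscr E_h$ on $\mathscr D_{h,0}(O)$ from reverse doubling and \eqref{eq:model-poincare}, harmonic replacement of $q\wedge N$ by Riesz representation, clipping with $T$ plus uniqueness of the minimizer, and the orthogonality bound $\mathscr E_h(q_N-q,q_N-q)\le\mathscr E_h\bigl((q-N)_+,(q-N)_+\bigr)$ combined with coercivity for the $\mathrm L^2(\mu_h)$ convergence. Two small corrections: the case $c<0$ cannot occur, since $q\ge0$ and $\mu_h(X_h)=\infty$ force $c\ge0$. Also, $T\circ q_N-(q\wedge N)$ is not a normal contraction of $v_N$ alone, because the map depends on $x$ through $q\wedge N$. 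So the step you flag as the main obstacle genuinely needs the argument the paper uses: approximate $v_N$ by Lipschitz $v_j$ compactly supported in $O$, place $T(q\wedge N+v_j)-q\wedge N$ in $\mathscr D_{h,0}(O)$ by Remark~\ref{rem:model-local-form-domain}, and pass to the limit by weak compactness.
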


\begin{proof}
Reverse doubling gives a fixed \(K>1\) such that
\(V_h(z,Ka)\ge2V_h(z,a)\).
Every \(v\in\mathscr D_{h,0}(O)\) vanishes almost everywhere
outside \(O\). Hence Cauchy-Schwarz gives
\[
 \frac12\|v\|_{\mathrm L^2(X_h,\mu_h)}^2
 \le\int_{\mathbb B_{Ka}(z)}
       |v-v_{z,Ka}|^2\,\mathrm d\mu_h
 \le C\Psi_h(z,a)\mathscr E_h(v,v),
\]
where the second inequality follows from
\eqref{eq:model-poincare} and scale comparison.
All norms in this proof use \(\mu_h\).
Thus the energy norm is equivalent to the form norm on
\(\mathscr D_{h,0}(O)\).

Write \(q=c+f\), with \(f\in\mathscr D_h\).
Since \(\mu_h(X_h)=\infty\) and \(q\ge0\), we have \(c\ge0\).
Take \(N>c\), and put \(g_N=q\wedge N\).
The Markov property gives \(g_N\in\mathscr D_h+\R\), and
\(e_N=g_N-q=-(q-N)_+\) belongs to \(\mathscr D_h\).
Since \((q-N)_+\le|f|\) and \[
|(q(x)-N)_+-(q(y)-N)_+|\le|q(x)-q(y)|,
\] dominated convergence gives \(\|e_N\|_2^2+
\mathscr E_h(e_N,e_N)\to0\).
Coercivity and the Hilbert-space representation theorem
supply a unique \(v_N\in\mathscr D_{h,0}(O)\) satisfying
\[
 \mathscr E_h(v_N,\phi)=-\mathscr E_h(g_N,\phi)
 \qquad(\phi\in\mathscr D_{h,0}(O)).
\]
Set \(q_N=g_N+v_N\). It is the unique minimizer of
\(\mathscr E_h\) in \(g_N+\mathscr D_{h,0}(O)\).

To justify clipping in this affine class, let
\(T(t)=\min\{N,\max\{0,t\}\}\), and approximate \(v_N\)
in the form norm by Lipschitz functions \(v_j\) compactly
supported in \(O\).
Since \(T(g_N)=g_N\), the functions
\(T(g_N+v_j)-g_N\) belong to \(\mathscr D_h\) and vanish
outside \(\supp v_j\). Remark~\ref{rem:model-local-form-domain}
therefore places them in \(\mathscr D_{h,0}(O)\).
They are bounded in the form norm, by contraction of
differences, and converge in \(\mathrm L^2\) to
\(T(q_N)-g_N\). Weak compactness and closedness give
\(T(q_N)-g_N\in\mathscr D_{h,0}(O)\).
Clipping cannot increase the energy, so uniqueness of the
minimizer implies \(T(q_N)=q_N\), proving \(0\le q_N\le N\).

Finally, harmonicity of \(q\) gives \[
\mathscr E_h(v_N,v_N)=-\mathscr E_h(e_N,v_N),\quad\mathscr E_h(v_N,v_N)\le\mathscr E_h(e_N,e_N).
\] Moreover, orthogonality gives
\[
 \mathscr E_h(q_N-q,q_N-q)
 =\mathscr E_h(e_N,e_N)-\mathscr E_h(v_N,v_N)
 \le\mathscr E_h(e_N,e_N).
\]
Coercivity implies \(\|v_N\|_2\to0\), and
\(q_N-q=e_N+v_N\) converges to zero in the form norm.
\end{proof}

We now pass from the structural estimates to regularity of the
quotient. We use the abstract local Harnack inequality and
H\"older estimate stated in Lemma~\ref{lem:local-form-harnack}
and Corollary~\ref{cor:local-form-holder} in
Section~\ref{sec:variable}, with proofs in
Appendix~\ref{app:local-form-harnack}.
These results are formulated for general quadratic non-local
forms and use only the structural and energy hypotheses
specified there. Their proofs do not rely on the present
half-space theorem.
The estimates established above verify these hypotheses
for \(\mathscr E_h\), while the boundary test-space
approximation permits their application on relative balls
meeting the flat boundary.
This first gives the conclusions for bounded quotients. Lemma~\ref{lem:model-harmonic-approximation} then removes
the global boundedness restriction.

\begin{thm}\label{thm:linear-flat-model}
Assume \eqref{eq:range}. Let \(w\in\partial\HH\), \(r>0\),
and let \(q\in\mathscr D_h+\R\) be nonnegative on \(X_h\).
Suppose that
\[
 \mathscr E_h(q,\phi)=0
\]
for every compactly supported Lipschitz test function in
\(\HH\cap B_{4r}(w)\).
Then \(q\) has a H\"older continuous representative on
\(\mathbb B_{2r}(w)\). With \(A=w+re_n\),
\begin{align}
 C^{-1}q(A)&\le q(x)\le Cq(A),\qquad x\in\mathbb B_r(w),
       \label{eq:linear-flat-comparison}\\
 |q(x)-q(y)|&\le Cq(A)
       \left(\frac{|x-y|}{r}\right)^\alpha,\qquad x,y\in\mathbb B_r(w),
       \label{eq:linear-flat-holder}
\end{align}
where \(C\ge1\) and \(\alpha\in(0,1)\) depend only on
\(n,s,p\).
\end{thm}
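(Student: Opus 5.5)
By scaling we take $w=0$ and $r=1$, so that $A=e_n$. The structural facts in Lemma~\ref{lem:model-weighted-estimates} are homogeneous: the kernel satisfies \eqref{eq:model-scaling}, the measure scales with degree $n+s$, and $\Psi_h$ transforms accordingly. The constants are therefore scale invariant.

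The plan has three stages: bounded quotients first, then unbounded ones, then the chain from local estimates to the stated bounds.

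\textbf{Bounded quotients.} Assume first that $0\le q\le N$ globally. We apply the abstract local Harnack inequality (Lemma~\ref{lem:local-form-harnack}) and its Hölder corollary (Corollary~\ref{cor:local-form-holder}) to the Dirichlet form $(\mathscr E_h,\mathscr D_h)$ on the metric measure space $(X_h,|\cdot|,\mu_h)$, using the scale function $\Psi_h$. Their hypotheses are verified as follows:
\begin{itemize}
\item volume doubling and reverse doubling, from \eqref{eq:model-volume};
\item the scale comparisons, from \eqref{eq:model-scale-growth} (exponents between $\beta_0>0$ and $\max\{s,\sigma_*\}<2$);
\item the jump upper bound and jump moment estimates, from \eqref{eq:model-jump-upper} and \eqref{eq:model-jump-moment}, which also bound the cutoff energy $\Gamma_h(\eta)\le C/\Psi_h$;
\item the Poincaré inequality with enlargement factor $4$, from \eqref{eq:model-poincare};
\item the kernel averaging (UJS-type) condition, from \eqref{eq:model-ujs}.
\end{itemize}
The Harnack lemma needs the weak equation on relative balls $\mathbb B_\rho(z)$ centred at points of $X_h$, including boundary points. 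The hypothesis gives the equation only for Lipschitz tests supported in $\HH\cap B_{4}$. Lemma~\ref{lem:model-boundary-polar} and Remark~\ref{rem:model-local-form-domain} upgrade it to all tests in $\mathscr D_{h,0}(\mathbb B_4)$. Every relative ball $\mathbb B_\rho(z)\subset\mathbb B_4$ then admits the equation. The main point to watch is that the constants in the abstract Harnack inequality do not depend on $N$. The exterior (tail) interactions are retained and controlled by the jump moment bound, and since $q\ge0$ globally no negative tail appears.

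\textbf{Removing the global bound.} For a general nonnegative $q\in\mathscr D_h+\R$ we use Lemma~\ref{lem:model-harmonic-approximation} with $O=\mathbb B_{4}$. It produces bounded harmonic $q_N\to q$ in form norm with $0\le q_N\le N$. The uniform estimates for the $q_N$ pass to $q$. Hölder equicontinuity on $\mathbb B_{2}$ gives a locally uniform subsequential limit, which by $L^2(\mu_h)$ convergence is a representative of $q$. The one point needing care is that the normalization $q_N(A)$ converges to $q(A)$; this follows from local uniform convergence near $A$.

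\textbf{From local estimates to \eqref{eq:linear-flat-comparison}.} The upper bound: for $x\in\mathbb B_1$, cover $\mathbb B_1$ by boundedly many relative balls of radius $c$ centred in $\mathbb B_{1}$. Harnack on each ball, chained to a ball containing $A$, gives $\sup q\le Cq(A)$. The lower bound follows the same way. Both directions work because the Harnack inequality here holds on balls touching $\partial\HH$; this is exactly where boundary polarity enters, and it removes the usual boundary Harnack obstacle. Estimate \eqref{eq:linear-flat-holder} then follows from the Hölder corollary together with the supremum bound $\sup_{\mathbb B_2}q\le Cq(A)$, where the $\mathbb B_2$ bound comes from the same chain with slightly enlarged radii inside $\mathbb B_4$.

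\textbf{Main obstacle.} The hardest step is checking that every hypothesis of the abstract Lemma~\ref{lem:local-form-harnack} is met with exactly the listed structural estimates. The likely gaps are the tail term for functions that are nonnegative but unbounded outside $\mathbb B_4$, and the compatibility of the two scaling orders ($2s$ at the boundary versus $\sigma_*$ in the interior) encoded in $\Psi_h$. If the abstract lemma requires a tail bound, I would first try to control $\int_{X_h\setminus\mathbb B_\rho}q\,K_h\,d\mu_h$ by $q(A)/\Psi_h$. This would use a test of the equation with a cutoff, mirroring the argument of Lemma~\ref{lem:positive-tail}.
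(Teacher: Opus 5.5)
Your proposal follows the paper's proof essentially step for step. You upgrade the test class via Lemma~\ref{lem:model-boundary-polar} and Remark~\ref{rem:model-local-form-domain}, verify the hypotheses of Lemma~\ref{lem:local-form-harnack} from Lemma~\ref{lem:model-weighted-estimates}, chain Harnack with vanishing negative tail and apply Corollary~\ref{cor:local-form-holder} for bounded $q$, and then remove boundedness with Lemma~\ref{lem:model-harmonic-approximation}.

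The one point that actually needs care in the last stage is not $q_N(A)\to q(A)$ but a uniform bound on the $q_N$, which equicontinuity requires before any convergence is available. The paper obtains it from the Harnack comparison and local $\mathrm L^2$ convergence, $\esssup_{\mathbb B_{3r}(w)}q_N\le C V_h(w,3r)^{-1/2}\|q_N\|_{\mathrm L^2(\mathbb B_{3r}(w),\mu_h)}$. Your worry about an extra positive-tail bound is unnecessary, since the abstract lemma controls that tail internally.
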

\begin{proof}
Lemma~\ref{lem:model-boundary-polar} and
Remark~\ref{rem:model-local-form-domain} extend the weak
identity to all test functions in
\(\mathscr D_{h,0}(\mathbb B_{4r}(w))\). In the following we first assume that \(q\) is bounded and verify the
hypotheses of Lemma~\ref{lem:local-form-harnack} with
\(X=P=X_h\), \(J=K_h\), and \(\mu=\mu_h\).
Closed metric balls are compact, and \(\mu_h\) is a
Radon measure with full support.
Remark~\ref{rem:model-local-form-domain} supplies the
local form-domain property, while
Lemma~\ref{lem:model-weighted-estimates} supplies the
volume and scale comparisons, Poincar\'e inequality,
full jump moment bound, and kernel averaging.
Thus the Harnack inequality of
Lemma~\ref{lem:local-form-harnack} applies, with constants
depending only on \(n,s,p\). Consequently, since \(q\ge0\) on \(X_h\), its negative tail vanishes and chaining relative balls in the convex set \(X_h\) gives
\begin{equation}\label{eq:model-essential-comparison}
 Q_0:=\esssup_{\mathbb B_{3r}(w)}q
 \le C\essinf_{\mathbb B_{3r}(w)}q.
\end{equation}
The balls in the chain have radii comparable to \(r\),
with a fixed sufficiently small factor, and the balls on
which harmonicity is used lie in \(\mathbb B_{4r}(w)\).
Structural estimates hold globally, so their further
enlargements cause no restriction.
If \(Q_0=0\), the zero representative gives both conclusions.
Assume \(Q_0>0\).

For each \(z\in\mathbb B_{2r}(w)\), apply
Corollary~\ref{cor:local-form-holder} with radius \(r/4\).
The required local upper bound is at most \(Q_0\), since
\(\mathbb B_{r/2}(z)\subset\mathbb B_{3r}(w)\).
The corollary gives local H\"older representatives with
constants independent of the global bound on \(q\).
These representatives agree on overlaps by full support
of \(\mu_h\). They therefore define a representative on
\(\mathbb B_{2r}(w)\), with
\[
 |q(x)-q(y)|\le CQ_0
       \left(\frac{|x-y|}{r}\right)^\alpha
 \qquad(x,y\in\mathbb B_{2r}(w)).
\]
For \(|x-y|<r/4\), this follows by centering the local
estimate at \(x\). For larger distances, use \(0\le q\le Q_0\).
Continuity and \eqref{eq:model-essential-comparison} give
\(Q_0\le Cq(A)\) and the comparison estimate on
\(\mathbb B_r(w)\). This proves both assertions for bounded \(q\).

For a general nonnegative \(q\in\mathscr D_h+\R\),
apply Lemma~\ref{lem:model-harmonic-approximation} with
\(O=\mathbb B_{4r}(w)\). The preceding bounded argument
applies to each \(q_N\), with constants independent of \(N\).
In particular, \eqref{eq:model-essential-comparison} gives
\[
 Q_N:=\esssup_{\mathbb B_{3r}(w)}q_N
 \le C V_h(w,3r)^{-1/2}
          \|q_N\|_{\mathrm L^2(\mathbb B_{3r}(w),\mu_h)}.
\]
The right side is uniformly bounded by the local
\(\mathrm L^2\) convergence to \(q\).
The oscillation argument therefore gives uniform H\"older
bounds on \(\mathbb B_{2r}(w)\).
Compactness on smaller closed relative balls yields a
subsequence converging locally uniformly there.
The \(\mathrm L^2\) limit and full support of \(\mu_h\)
identify this continuous limit as a representative of \(q\).
In particular, \(q_N(A)\to q(A)\) along the subsequence.
Passing to the limit in both estimates for \(q_N\) proves
\eqref{eq:linear-flat-comparison} and
\eqref{eq:linear-flat-holder} in the stated class.
\end{proof}

\begin{rem}
The boundary values in \eqref{eq:linear-flat-comparison}
refer to the H\"older representative of \(q\).
If \(q(A)>0\), they are strictly positive on
\(\partial\HH\cap B_r(w)\), although \(f=hq\) vanishes
there. No homogeneous boundary condition is imposed on
\(q\). For \(f=hq\), the theorem estimates \(f/h\) whenever
Proposition~\ref{prop:ground-state} supplies its equation
and \(q\) belongs to the stated class \(\mathscr D_h+\R\).
\end{rem}

\begin{rem}
The explicit model illustrates the weighted regularity
argument with globally controlled geometry.
Section~\ref{sec:variable} proves the structural estimates
for variable backgrounds directly from their nonlinear
equation, with local geometric control and full exterior
interactions. This linear model is distinct from the
nonlinear half-space estimates of Section~\ref{sec:flat},
which enter the profile approximation in
Section~\ref{sec:transfer}.
\end{rem}

\section{Weighted estimates for variable backgrounds}
\label{sec:variable}

Theorem~\ref{thm:linear-flat-model} establishes quotient
regularity for the explicit half-space profile. We now
replace its volume, scale, and kernel calculations by
uniform estimates for a variable positive solution \(U\)
in a Reifenberg boundary patch. The weighted geometry
is controlled only in the fractional \(p\)-harmonicity
patch, while the form retains all distant interactions.
In particular, the measure \(U\,\mathrm d x\) need not
be globally doubling.

We establish local volume and scale comparisons,
Poincar\'e and jump moment estimates, and kernel averaging.
The capacity argument then justifies energy test functions whose
supports meet the boundary. These results permit the
application of Lemma~\ref{lem:local-form-harnack} and
Corollary~\ref{cor:local-form-holder} to the quotient
equation. Their proofs in
Appendix~\ref{app:local-form-harnack} use the quadratic
iteration strategy of \cite{CY} and only the stated
structural hypotheses. In particular, they do not depend on the flat
model theorem. The resulting comparison and H\"older
constants are independent of the global bound on
\(F/U\), as required for the ordered deformation argument.

\subsection{Preliminaries}\label{subprelim}

We here specify the setting and the order in which the
parameters are chosen.
Throughout this section we assume \eqref{eq:range},
and \(\Om\) is \((\delta,r_0)\)-Reifenberg flat.
First impose \(\delta\le\delta_{\mathrm{geom}}\), where
\(\delta_{\mathrm{geom}}>0\) is a fixed geometric threshold
ensuring a uniform exterior corkscrew condition.
Thus there is a fixed \(c>0\) such that, for every
\(z\in\partial\Om\) and \(0<\rho<r_0\), some point \(a\)
satisfies $B_{c\rho}(a)\subset B_\rho(z)\setminus\overline\Om$. In particular, every boundary ball contains a smaller
ball of comparable radius disjoint from the boundary.
This uniform boundary porosity yields the boundary-layer
estimate used in Lemma~\ref{lem:variable-boundary-capacity},
with an exponent \(\kappa_0>0\) controlling the decay
of the volume of thin neighborhoods of the boundary.
We fix \(\kappa_0\) at this stage.
Subsequent reductions of the flatness threshold preserve
that estimate with the same exponent and uniform constants.

Next choose \(\beta\in(0,s)\) sufficiently close to \(s\)
that
\[
 \kappa_0-p(s-\beta)>0,
 \qquad
 sp-\beta(p-1)<2.
\]
Such a choice is possible because the left-hand sides
tend to \(\kappa_0>0\) and \(s<2\), respectively,
as \(\beta\uparrow s\).
The first inequality provides the positive exponent
needed in the boundary capacity estimate.
The second keeps the relevant upper scale exponent
below \(2\), as required in the jump moment and
Lipschitz cutoff estimates.
We use this fixed \(\beta\) in every boundary decay
estimate below.
We then decrease the flatness threshold so that
Proposition~\ref{prop:boundary-normalization} and
Corollary~\ref{cor:reifenberg-gradient} apply.
These choices depend only on \(n,s,p\) and the fixed
geometric constants.

The structural estimates below have constants and require
enlargement factors depending only on \(n,s,p\) and the
fixed geometric constants.
These constants also determine the enlargements needed
in the local Harnack argument.
We first choose \(\kappa_*\ge8\) large enough to accommodate
all enlargements used in these arguments, including the
fixed sequences of enlargements and changes of center.
We then choose \(L\ge32\) sufficiently large for the
comparison balls used below, and finally choose
\(\Lambda\) larger than a sufficiently large fixed
multiple of \(L\).
All three parameters depend only on \(n,s,p\) and the
fixed geometric constants.

Let \(w\in\partial\Om\), \(R>0\), and \(\Lambda R<r_0\).
Suppose that \(U\ge0\) on \(\R^n\) is nontrivial,
belongs to \(\mathrm L^{p-1}_{sp}(\R^n)\), and satisfies
\[
 \Fsp U=0\quad\text{in }D_{\Lambda R}(w),
 \qquad
 U=0\quad\text{on }B_{\Lambda R}(w)\setminus\Om
\]
in the local weak sense of
Definition~\ref{def:local-zero}.
Here, as before, \(D_r(z)=\Om\cap B_r(z)\).
The choice of \(\Lambda\) ensures that
Proposition~\ref{prop:boundary-normalization} supplies
the normalized boundary and positive-tail estimates
on the boundary balls considered below.
It also ensures that
Corollary~\ref{cor:reifenberg-gradient} supplies the
two-sided gradient bounds and cone increment estimates
throughout \(D_{LR}(w)\).
We refer to \(B_{LR}(w)\) as the controlled patch.

For \(x\in\overline\Om\cap B_{LR}(w)\), write
\[
 \mathbb B_\rho(x)=\overline\Om\cap B_\rho(x).
\]
We call this relative ball allowed if \(\rho>0\) and $|x-w|+\kappa_*\rho<LR$, or, equivalently,
\[
 \overline{B_{\kappa_*\rho}(x)}\subset B_{LR}(w).
\]
Thus an allowed ball has sufficient room for the fixed
enlargements and auxiliary constructions used in the
local estimates.
The parameter \(L\) is chosen so that the comparison
balls in the subsequent Harnack argument satisfy
this condition.
This restriction concerns the location of the balls.
The full form and the tail estimates retain distant
interactions, including those outside \(B_{\Lambda R}(w)\).
The constants in the estimates are independent of
the distant exterior values of \(U\).

We now introduce the measure and quadratic form for
the quotient equation. We use the positive continuous
representative of \(U\) in \(D_{\Lambda R}(w)\) and set
\(U=0\) on \(B_{\Lambda R}(w)\setminus\Om\). Set
\[
 S=\{U>0\},
 \qquad
 \mathrm d\mu(x)=U(x)\,\mathrm d x
 \quad\text{on }S,
\]
and define
\begin{equation}\label{eq:variable-form}
 \mathscr E_U(f,g)
 =
 \frac12\iint_{S\times S}
 (f(x)-f(y))(g(x)-g(y))K_U(x,y)
 \,\mathrm d\mu(x)\,\mathrm d\mu(y),
\end{equation}
whenever the integral is absolutely convergent.
Thus \(K_U\) is the jump kernel relative to \(\mu\),
while the interaction density relative to Lebesgue
measure is \(U(x)U(y)K_U(x,y)\).
We write \(\mathscr E_U(f)=\mathscr E_U(f,f)\), interpreting the
diagonal energy as a nonnegative extended integral that may
equal \(+\infty\).
A subscript \(G\) indicates that both integration
variables are restricted to \(S\cap G\).

Positivity of \(U\) in \(D_{\Lambda R}(w)\) and its
local exterior zero values imply
\[
 S\cap B_{\Lambda R}(w)
 =
 \Om\cap B_{\Lambda R}(w).
\]
Within this patch we adjoin the boundary to the local
space and assign the adjoined boundary zero
\(\mu\)-measure.
For an allowed relative ball \(\mathbb B_r(x)\),
integration therefore means
\[
 \int_{\mathbb B_r(x)}f\,\mathrm d\mu
 =
 \int_{\Om\cap B_r(x)}f(y)U(y)\,\mathrm d y.
\]
Assigning the boundary zero measure does not by itself
justify test functions whose supports meet it.
For the bounded quotient solutions considered below,
the capacity and cutoff arguments will show that the
weak equation remains valid for such test functions, provided
their supports lie within the controlled patch $B_{LR}(w)$.
Thus the quotient equation can be studied on relative
balls centered at boundary points.

The set \(S\) in \eqref{eq:variable-form} remains global.
It contains every point where \(U\) is positive,
including points outside the fractional \(p\)-harmonicity
patch.
Consequently, the full form retains all distant
interactions, including those with exterior values
that are not controlled pointwise.

\subsection{Volumes, scale functions, and Poincar\'e inequalities}

For \(z\in\partial\Om\) and \(t>0\) with
\(B_{8t}(z)\subset B_{\Lambda R}(w)\), write
\(a_t(z)=U(A_t(z))\).
For an allowed boundary ball \(\mathbb B_r(z)\), put
\begin{equation}\label{eq:boundary-scale}
 \Psi(z,r)={r^{sp}}{a_r(z)^{1-p}}.
\end{equation}
Different choices of corkscrew points with the fixed
geometric constants give comparable values of \(a_r(z)\),
by an interior Harnack chain of bounded length.
Thus the scale is independent of that choice up to
uniform multiplicative constants. Its role comes from
the ratio of weighted mass to quadratic energy.
Writing \(a_r=a_r(z)\), boundary normalization gives
\(\mu(\mathbb B_r(z))\asymp a_r r^n\).
For the two interior sets constructed in
Lemma~\ref{lem:two-value-balls}, both their distance
and their separation in \(U\)-values are comparable
to \(r\) and \(a_r\), respectively, and each has
weighted measure comparable to \(a_r r^n\).
The kernel \(K_U\) on these interactions is therefore
comparable to \(a_r^{p-2}r^{-n-sp}\).
A difference of size one between the sets has energy
scale \(a_r^p r^{n-sp}\), and
\[
 \frac{\hbox{weighted mass}}{\hbox{energy scale}}
 \asymp\frac{a_r r^n}{a_r^p r^{n-sp}}
 =\Psi(z,r).
\]

For interior balls whose radii are much smaller than
the distance to the boundary,
Proposition~\ref{prop:whitney-pair}, applied with
\(u=v=U\), instead gives a quadratic-form comparison
of fractional order $\sigma_*=sp-p+2$. Together with the gradient scale \(U(x)/d_\Om(x)\),
this leads to the interior expression stated
in \eqref{eq:interior-scale} in the following lemma.

\begin{lem}\label{lem:variable-volume-scale}
Assume \eqref{eq:range}, and fix the flatness threshold
and the parameters \(\kappa_*,L,\Lambda\) as in subsection \ref{subprelim}. Let \(\Om\) be \((\delta,r_0)\)-Reifenberg flat with
\(\delta\) below this threshold.
Let \(w\in\partial\Om\), \(R>0\), and \(\Lambda R<r_0\).
Suppose that
\(U\in\mathrm L^{p-1}_{sp}(\R^n)\) is nontrivial,
globally nonnegative, and satisfies
\[
 \Fsp U=0\quad\text{in }D_{\Lambda R}(w),
 \qquad
 U=0\quad\text{on }B_{\Lambda R}(w)\setminus\Om
\]
in the sense of Definition~\ref{def:local-zero}.
Let \(\mathrm d\mu=U\,\mathrm d x\) on \(S=\{U>0\}\),
with zero measure assigned to the adjoined boundary. Then, for every allowed boundary ball \(\mathbb B_r(z)\)
and every allowed interior ball \(B_r(x)\) with
\(2r<d_\Om(x)\), respectively,
\begin{equation}\label{eq:variable-volume}
 \mu(\mathbb B_r(z))\asymp a_r(z)r^n,
 \qquad
 \mu(B_r(x))\asymp U(x)r^n.
\end{equation}
Furthermore, there are exponents \(0<d_-\le d_+<\infty\) such that
\[
 c\left(\frac{R_1}{r}\right)^{d_-}
 \le
 \frac{\mu(\mathbb B_{R_1}(x))}
      {\mu(\mathbb B_r(x))}
 \le
 C\left(\frac{R_1}{r}\right)^{d_+}
\]
whenever \(x\in\overline\Om\), \(0<r\le R_1\), and $|x-w|+\kappa_*R_1<LR$. In particular, \(\mu\) satisfies volume doubling and
reverse volume doubling within this range of centers
and radii. Also, after replacing \eqref{eq:boundary-scale} by a uniformly
comparable function, the boundary scale extends to a
scale function on all allowed relative balls.
On allowed boundary balls it satisfies
\[
 \Psi(z,r)\asymp{r^{sp}}{a_r(z)^{1-p}},
\]
and on allowed interior balls with \(r<d_\Om(x)/4\),
\begin{equation}\label{eq:interior-scale}
 \Psi(x,r)\asymp
 {d_\Om(x)^{p-2}r^{\sigma_*}}{U(x)^{1-p}}.
\end{equation}
Furthermore, there are exponents \(0<\beta_0\le\beta_1<2\) such that
\begin{equation}\label{eq:variable-weak-scale}
 c\left(\frac{R_1}{r}\right)^{\beta_0}
 \le
 \frac{\Psi(x,R_1)}{\Psi(y,r)}
 \le
 C\left(\frac{R_1}{r}\right)^{\beta_1}
\end{equation}
for \(x,y\in\overline\Om\), \(0<r\le R_1\), and
\(|x-y|\le R_1\), provided that
\[
 \max\{|x-w|,|y-w|\}+\kappa_*R_1<LR.
\]
Thus both \(\mathbb B_{R_1}(x)\) and
\(\mathbb B_{R_1}(y)\) are required to be allowed. For each fixed \(x\in\overline\Om\cap B_{LR}(w)\),
the function \(r\mapsto\Psi(x,r)\) may be chosen
continuous and strictly increasing on $0<r<{(LR-|x-w|)}/{\kappa_*}$. All constants and exponents depend only on \(n,s,p\)
and the fixed geometric constants.
\end{lem}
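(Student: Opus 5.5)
The plan is to prove the volume estimates first, then build the scale function by interpolating between the boundary and interior regimes, and finally check the growth bounds \eqref{eq:variable-weak-scale}.

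\textbf{Volume estimates.} For an allowed boundary ball, \eqref{eq:variable-volume} is the last assertion of \eqref{eq:boundary-normalized-decay} in Proposition~\ref{prop:boundary-normalization}, applied at scale $r$; this is legitimate because the ball is allowed and $\Lambda$ is large. For an interior ball with $2r<d_\Om(x)$, interior Harnack on $B_{d_\Om(x)/2}(x)$ gives $U\asymp U(x)$ on $B_r(x)$. For a general relative ball $\mathbb B_r(x)$, set $d=d_\Om(x)$ and split into two cases:
\begin{itemize}
\item if $r\le d/4$, the ball is interior;
\item if $r>d/4$, the ball $\mathbb B_r(x)$ is comparable to $\mathbb B_{r}(\zeta)$ for a nearest boundary point $\zeta$, since $\mathbb B_{r}(x)\subset\mathbb B_{5r}(\zeta)$ and $\mathbb B_{r/8}(\zeta')\subset\mathbb B_r(x)$ for a boundary point $\zeta'$ chosen suitably.
\end{itemize}
Thus $\mu(\mathbb B_r(x))\asymp U(x)r^n$ when $r\le d$ and $\asymp a_r(\zeta)r^n$ when $r\ge d$. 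At $r=d$ the two regimes match, because $U(x)\asymp a_d(\zeta)$ by a Harnack chain to the corkscrew point.

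\textbf{Doubling and reverse doubling.} These reduce to comparing corkscrew values $a_{R_1}(\zeta)/a_r(\zeta)$. The upper bound $a_r\le C(r/R_1)^{\beta}a_{R_1}$ is \eqref{eq:boundary-normalized-decay} applied at scale $R_1$. The lower bound $a_r\ge c(r/R_1)^{\gamma}a_{R_1}$ comes from a Harnack chain of length $\log_2(R_1/r)$ from $A_{R_1}$ to $A_r$ through corkscrew points $A_{2^{-k}R_1}$. Consequently $d_-=n+\beta$ at boundary scales (and $n$ in the interior range) and $d_+=n+\gamma$.

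\textbf{Scale function.} Define $\Psi(x,r)=d^{p-2}r^{\sigma_*}U(x)^{1-p}$ for $r\le d/4$ and $\Psi(x,r)=r^{sp}a_r(\zeta)^{1-p}$ for $r\ge d$, and interpolate geometrically (log-linearly in $r$) on $[d/4,d]$. At $r\asymp d$ both expressions are comparable to $d^{sp}U(x)^{1-p}$, using $\sigma_*+p-2=sp$, so the interpolation changes values only by bounded factors. To get continuity and strict monotonicity for fixed $x$, I would replace $a_r(\zeta)$ by a regularized version, for example $\tilde a(r)=\exp\fint_r^{2r}\log a_t\,\mathrm dt/t$. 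This is comparable to $a_r$ by Harnack at comparable scales, and it is continuous in $r$.

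\textbf{Growth exponents.} The ratio $\Psi(x,R_1)/\Psi(x,r)$ behaves like $(R_1/r)^{\sigma_*}$ in the interior range, where $\sigma_*\in(0,2)$. In the boundary range it equals $(R_1/r)^{sp}(a_{R_1}/a_r)^{1-p}$, which lies between $(R_1/r)^{sp-\gamma(p-1)}$ and $C(R_1/r)^{sp-\beta(p-1)}$. The choice of $\beta$ in subsection~\ref{subprelim} guarantees $sp-\beta(p-1)<2$, so $\beta_1<2$.

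\textbf{Main obstacle.} The hard step is the positive lower exponent $\beta_0>0$ at boundary scales. This requires $a_{R_1}/a_r\le C(R_1/r)^{\theta}$ with $\theta<sp/(p-1)$, which is sharper than the crude Harnack-chain bound $\gamma$. My first attempt would use Theorem~\ref{thm:approximation} and Corollary~\ref{cor:reifenberg-gradient}: along the normal segment at $\zeta$, $U(\zeta+t\nu)$ behaves like $t^{s\pm\epsilon}$ times a comparable factor, for $\delta$ small. This gives $a_{R_1}/a_r\le C(R_1/r)^{s+\epsilon}$, and since $s<sp/(p-1)$, choosing $\epsilon$ small yields the result. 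Concretely, I would iterate the approximation on dyadic Whitney scales, where $U(x)/U(x')\approx 2^{s}$ up to an error of $\epsilon$ per step.

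For centers $x\ne y$ with $|x-y|\le R_1$, the comparison $\Psi(x,R_1)\asymp\Psi(y,R_1)$ follows from the comparability of the corresponding corkscrew values by Harnack chains. Combining this with the one-center bounds gives \eqref{eq:variable-weak-scale}.
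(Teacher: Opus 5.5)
Your volume, doubling, interior-regime and upper-exponent steps agree with the paper. The real difference is the positive lower exponent at boundary scales.

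\textbf{The lower exponent: a different route that needs repair.} The paper deliberately avoids any sharper growth: see the remark after the lemma, and Section~\ref{sec:concluding}, where \eqref{eq:growth-target} is listed as unsettled. It uses the equation only through Lemma~\ref{lem:positive-tail}. Put $r_j=\ell_*^jr$ and $b_j=a_{r_j}(z)^{p-1}r_j^{-sp}$, the reciprocal of the boundary scale. The disjoint corkscrew balls $E_j$, $j>k$, all contribute to the positive tail seen from $A_{r_k}(z)$. This gives \eqref{eq:scale-sum}, and hence $b_m\le C(1-C^{-1})^mb_0$. That argument needs only corkscrews, Harnack chains and global nonnegativity. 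It works at the threshold already fixed in Subsection~\ref{subprelim} and would survive in NTA-type geometry, but it yields only a small non-explicit exponent.

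Your route proves the relevant half of \eqref{eq:growth-target}, $a_{R_1}(z)/a_r(z)\le C(R_1/r)^{s+\epsilon}$, and hence a near-optimal $\beta_0\approx s-(p-1)\epsilon$. The price is invoking Theorem~\ref{thm:approximation} with a new $\varepsilon=\varepsilon(n,s,p)$: you must lower the flatness threshold to $\delta_\varepsilon$ and take $\Lambda\gtrsim c_\varepsilon L$.

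As written, however, the step is not justified, for two reasons:
\begin{itemize}
\item A Reifenberg domain has no ``normal segment at $\zeta$.'' The approximating planes may rotate with scale, so $d_\Om(\zeta+t\nu)\asymp t$ can fail over many scales.
\item Theorem~\ref{thm:approximation} gives an unspecified $\nu_x$ and controls $U$ only on $B_{\rho/2}(x)$. So the two-sided claim ``$U(x')/U(x)\approx 2^s$'' per dyadic step is not available.
\end{itemize}
Only the upper bound is needed, and it is available. With $\rho_k=d_\Om(x_k)$, let $e_k$ be the inward normal of a Reifenberg plane at a nearest boundary point on scale $10\rho_k$, and put $x_{k+1}=x_k+\tfrac{2}{5}\rho_ke_k$. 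Then $d_\Om(x_{k+1})\ge(\tfrac{7}{5}-20\delta)\rho_k$. Theorem~\ref{thm:approximation} at $\xi=\tfrac{8}{5}e_k$ gives $U(x_{k+1})\le\bigl((\tfrac{7}{5})^s+\varepsilon\bigr)U(x_k)$, whatever $\nu_{x_k}$ is. Starting from $A_r(z)$, the displacements sum geometrically, so the stopping point is a corkscrew-type point on scale $\asymp R_1$. One Harnack chain then gives the bound with $\theta=\log\bigl((\tfrac{7}{5})^s+\varepsilon\bigr)/\log(\tfrac{7}{5}-20\delta)$, which is $<sp/(p-1)$ for $\varepsilon,\delta$ small.

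\textbf{A gap in the last assertion.} Your averaged $\tilde a$ makes $r^{sp}\tilde a(r)^{1-p}$ continuous, after a measurable choice of corkscrew points. It does not make it increasing. Its logarithmic derivative in $r$ is $sp$ minus $(p-1)$ times a difference quotient of $\log a_t$ over $[r,2r]$. That quotient is bounded only by the logarithm of the (large) consecutive-scale Harnack constant, so the derivative can be negative. The log-linear interpolation on $[d/4,d]$ has the same defect. Once $\beta_0>0$ is known, the paper's remedy is available to you: replace the preliminary scale $\Psi_0$ by $\Psi(x,r)=\int_0^r\Psi_0(x,t)\,\mathrm dt/t$. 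This is continuous and strictly increasing, bounded above by $C\Psi_0(x,r)$ thanks to the lower growth exponent, and bounded below by $c\Psi_0(x,r)$ through the interval $[r/2,r]$.

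\textbf{A minor point in the volume step.} For $d/4<r<8d/7$, no boundary ball $\mathbb B_{r/8}(\zeta')$ fits inside $\mathbb B_r(x)$. The lower bound there must come from $B_{r/8}(x)$, interior Harnack, and $U(x)\asymp a_r(z_x)$, as in the paper.
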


\begin{proof}
The fixed enlargements and changes of center used below
are covered by the choice of \(\kappa_*\).
The corresponding boundary normalization and Harnack
estimates are therefore available within the original
fractional $p$-harmonicity patch \(D_{\Lambda R}(w)\).

Proposition~\ref{prop:boundary-normalization} gives the
first comparison in \eqref{eq:variable-volume}, and
interior Harnack gives the second.
Harnack chains compare corkscrew values at comparable
radii and nearby boundary centers.
Iterating the comparison between consecutive dyadic
radii, and using \eqref{eq:boundary-normalized-decay}
for the opposite inequality, gives an exponent
\(\gamma>0\) such that
\[
 c\left(\frac{R_1}{r}\right)^\beta
 \le \frac{a_{R_1}(z)}{a_r(z)}
 \le C\left(\frac{R_1}{r}\right)^\gamma
\]
for the boundary centers and radii under consideration.
Consequently, concentric boundary balls satisfy
\[
 c\left(\frac{R_1}{r}\right)^{n+\beta}
 \le
 \frac{\mu(\mathbb B_{R_1}(z))}
      {\mu(\mathbb B_r(z))}
 \le
 C\left(\frac{R_1}{r}\right)^{n+\gamma}.
\]

To obtain the volume comparisons at an interior center
\(x\), put \(d=d_\Om(x)\) and choose
\(z_x\in\partial\Om\) with \(|x-z_x|=d\).
For \(t<d/4\), interior Harnack gives
\[
 \mu(\mathbb B_t(x))\asymp U(x)t^n.
\]
For \(t\ge d/4\), we instead have
\[
 \mu(\mathbb B_t(x))\asymp a_t(z_x)t^n.
\]
Indeed, the upper bound follows from
\(B_t(x)\subset B_{5t}(z_x)\), the boundary volume
comparison, and comparable-scale Harnack chains.
If \(t\ge2d\), the lower bound follows from
\(B_{t/2}(z_x)\subset B_t(x)\).
If \(d/4\le t<2d\), integrate over \(B_{t/8}(x)\)
and use interior Harnack together with
\(U(x)\asymp a_t(z_x)\).
These comparisons also show that the two volume
expressions agree up to uniform constants at \(t=d/4\).
Splitting an interval of radii at this transition gives
\[
 c\left(\frac{R_1}{r}\right)^n
 \le
 \frac{\mu(\mathbb B_{R_1}(x))}
      {\mu(\mathbb B_r(x))}
 \le
 C\left(\frac{R_1}{r}\right)^{n+\gamma}.
\]
Thus one may take \(d_-=n\) and \(d_+=n+\gamma\).
In particular, both volume doubling and reverse volume
doubling hold in the stated range.

The positive lower growth of the boundary scale requires
more than comparability at consecutive radii.
Fix a sufficiently large number \(\ell_*>1\), put
\[
 r_j=\ell_*^j r,
 \qquad
 b_j=a_{r_j}(z)^{p-1}r_j^{-sp},
\]
and consider indices for which \(r_j\le R_1\).
For a fixed sufficiently small \(c_0>0\), the corkscrew
balls
\[
 E_j=B_{c_0r_j}(A_{r_j}(z))
\]
lie inside the fractional $p$-harmonicity region and satisfy
\(U\asymp a_{r_j}(z)\) there.
Their distances from \(z\) lie between fixed positive
multiples of \(r_j\).
Choosing \(\ell_*\) large enough therefore makes the
balls \(E_j\) pairwise disjoint.

For every \(k\le m\), apply
Lemma~\ref{lem:positive-tail} on an interior ball
centered at \(A_{r_k}(z)\) whose radius is a fixed
small multiple of \(r_k\).
For \(j>k\), the ball \(E_j\) lies outside this ball,
and
\[
 |y-A_{r_k}(z)|\asymp r_j
 \qquad(y\in E_j).
\]
The Harnack lower bound on \(E_j\) consequently gives
\[
 \int_{E_j}
 \frac{U(y)^{p-1}}
      {|y-A_{r_k}(z)|^{n+sp}}\,\mathrm d y
 \ge c\,a_{r_j}(z)^{p-1}r_j^{-sp}
 =c b_j.
\]
The positive-tail estimate bounds the sum of these
disjoint contributions by \(Cb_k\).
Including the term \(j=k\) and increasing \(C>1\),
we obtain
\begin{equation}\label{eq:scale-sum}
 \sum_{j=k}^m b_j\le Cb_k.
\end{equation}
Set \(S_k=\sum_{j=k}^m b_j\).
Since \(b_k\ge C^{-1}S_k\),
\[
 S_{k+1}=S_k-b_k
 \le(1-C^{-1})S_k.
\]
Iteration and \eqref{eq:scale-sum} at \(k=0\) give
\[
 b_m\le C(1-C^{-1})^m b_0.
\]
As \(b_j=\Psi(z,r_j)^{-1}\), this proves power growth
of the boundary scale along the sequence \(r_j\).
More precisely, with
\[
 \theta=
 \frac{-\log(1-C^{-1})}{\log\ell_*}>0,
\]
we obtain the lower bound in
\begin{equation}\label{eq:boundary-weak-scale}
 c\left(\frac{R_1}{r}\right)^\theta
 \le\frac{\Psi(z,R_1)}{\Psi(z,r)}
 \le C\left(\frac{R_1}{r}\right)^{sp-\beta(p-1)}.
\end{equation}
For arbitrary \(R_1\ge r\), choose \(m\ge0\) so that
\(r_m\le R_1<\ell_*r_m\) and compare the scales at
\(r_m\) and \(R_1\) by comparable-scale Harnack chains.
The upper bound in \eqref{eq:boundary-weak-scale}
follows directly from boundary decay.
By the choice of \(\beta\), $0<sp-\beta(p-1)<2$.

We next extend the scale to interior centers.
For \(x\in\Om\), put \(d=d_\Om(x)\) and define
the preliminary scale
\[
 \Psi_0(x,t)=
 \begin{cases}
 d^{p-2}t^{\sigma_*}U(x)^{1-p},
     &0<t<d/4,\\[2mm]
 t^{sp}a_t(z_x)^{1-p},
     &t\ge d/4.
 \end{cases}
\]
For \(x\in\partial\Om\), use the boundary expression
\(\Psi_0(x,t)=t^{sp}a_t(x)^{1-p}\).
Only allowed radii are considered.
When the transition radius \(d/4\) lies in this range,
a Harnack chain gives
\[
 U(x)\asymp a_{d/4}(z_x).
\]
Since \(\sigma_*+p-2=sp\), the two expressions defining
\(\Psi_0\) are comparable at the transition.
If all radii under consideration are smaller than
\(d/4\), only the interior expression is needed.
Combining \eqref{eq:boundary-weak-scale} with the
interior power law and splitting at \(d/4\) when
necessary gives
\[
 c\left(\frac{R_1}{r}\right)^{\beta_0}
 \le
 \frac{\Psi_0(x,R_1)}{\Psi_0(x,r)}
 \le
 C\left(\frac{R_1}{r}\right)^{\beta_1},
\]
where
\[
 \beta_0=\min\{\theta,\sigma_*\},
 \qquad
 \beta_1=\max\{sp-\beta(p-1),\sigma_*\}.
\]
The range \eqref{eq:range} gives \(0<\sigma_*<2\),
so \(0<\beta_0\le\beta_1<2\).

For completeness, we also verify comparison of nearby
centers at the same radius.
Suppose that \(|x-y|\le t\) and both relative balls
of radius \(t\) are allowed.
If both boundary distances exceed \(8t\), then they
are comparable, interior Harnack gives
\(U(x)\asymp U(y)\), and the interior formulas yield
\(\Psi_0(x,t)\asymp\Psi_0(y,t)\).
Otherwise both boundary distances are at most \(9t\).
Each scale can then be compared with the boundary
expression at a nearby boundary point.
If an interior expression occurs, its boundary
distance is comparable to \(t\), so the same comparison
follows by Harnack.
The relevant boundary points are separated by at most
a fixed multiple of \(t\), and a bounded Harnack chain
compares their corkscrew values.
Thus
\[
 \Psi_0(x,t)\asymp\Psi_0(y,t)
 \qquad(|x-y|\le t).
\]
Applying this with \(t=R_1\), followed by the
same-center growth estimate at \(y\), proves
\eqref{eq:variable-weak-scale} for \(\Psi_0\).

The preliminary scale may be taken measurable by
making fixed choices on countable Whitney and dyadic
coverings. Such choices change it only by uniform
multiplicative constants.
Finally, define
\[
 \Psi(x,r)=\int_0^r\Psi_0(x,t)\,\frac{\mathrm d t}{t}.
\]
The positive lower growth exponent gives
\[
 \Psi_0(x,t)
 \le C\left(\frac{t}{r}\right)^{\beta_0}\Psi_0(x,r)
 \qquad(0<t\le r),
\]
so the integral is finite and bounded above by
\(C\Psi_0(x,r)\).
Integrating over \(r/2<t<r\) and using comparable-scale
bounds gives the reverse inequality.
Hence \(\Psi\asymp\Psi_0\).
For each fixed center, this integral is continuous
and strictly increasing throughout the allowed
range of radii, and all the asserted comparisons
are preserved.
In particular, \eqref{eq:boundary-scale} holds up to
uniform comparability after this replacement.
\end{proof}

\begin{rem} Estimate \eqref{eq:scale-sum} uses the equation through
the positive-tail bound. Iterating this summability
estimate gives the positive lower growth exponent
for \(\Psi\), without assuming the sharper estimate
in \eqref{eq:growth-target}.
\end{rem}

\begin{lem}\label{lem:two-value-balls}
Assume the hypotheses of
Lemma~\ref{lem:variable-volume-scale}.
Let \(z\in\partial\Om\) and \(r>0\) satisfy $|z-w|+\kappa_*r<LR$, so that \(\mathbb B_r(z)\) is allowed. Then there are two disjoint Euclidean balls
\(G_0,G_1\subset D_{2r}(z)\), each of radius \(c_0r\),
such that
\begin{equation}\label{eq:two-value-balls}
 \begin{gathered}
 U\asymp a_r(z)\quad\text{on }G_0\cup G_1,
 \quad
 \mu(G_0)\asymp a_r(z)r^n\asymp\mu(G_1),\quad
 \sup_{G_0}U+c_1a_r(z)\le\inf_{G_1}U.
 \end{gathered}
\end{equation}
The positive constants \(c_0,c_1\) and the comparison
constants depend only on \(n,s,p\) and the fixed
geometric constants.
\end{lem}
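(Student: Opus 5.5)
The construction uses the gradient non-degeneracy from Corollary~\ref{cor:reifenberg-gradient} at an interior point at distance comparable to $r$ from the boundary. Let $A=A_r(z)$ and $d=d_\Om(A)\ge r/M_{\mathrm{geom}}$. The ball $\mathbb B_r(z)$ is allowed and $\Lambda$ is large, so $A\in D_{LR}(w)$ lies in the region where the corollary applies. Hence $|\nabla U(A)|\ge c\,U(A)/d$. By Proposition~\ref{prop:boundary-normalization} and interior Harnack, $U(A)=a_r(z)$ and $U\asymp a_r(z)$ on $B_{d/2}(A)$. We may therefore take $G_0,G_1$ inside a small Whitney ball around $A$, which lies in $D_r(z)\subset D_{2r}(z)$ because $B_d(A)\subset\Om$ and $|A-z|<r$.

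Set $\nu=\nabla U(A)/|\nabla U(A)|$. Proposition~\ref{prop:C1}, applied with the normalization \eqref{eq:whitney-normalization} at $A$, gives the H\"older bound $[\nabla U]_{C^\sigma(B_{d/4}(A))}\le C U(A)d^{-1-\sigma}$. Exactly as in the proof of Proposition~\ref{prop:whitney-pair}, we fix $\varepsilon=\varepsilon(n,s,p)$ small. Then $|\nabla U-\nabla U(A)|\le\frac14|\nabla U(A)|$ on $B_{\varepsilon d}(A)$, and in particular $\nabla U\cdot\nu\ge\frac34|\nabla U(A)|$ there.

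Now set $G_0=B_{c_0r}(A-\tfrac{\varepsilon d}{2}\nu)$ and $G_1=B_{c_0r}(A+\tfrac{\varepsilon d}{2}\nu)$, with $c_0r\le\varepsilon d/8$. This is possible since $d\ge r/M_{\mathrm{geom}}$, and we take $c_0=\varepsilon/(8M_{\mathrm{geom}})$. The two balls are disjoint and contained in $B_{\varepsilon d}(A)$.

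\begin{itemize}
\item For $x\in G_0$ and $y\in G_1$, write $y-x=t\nu+e$ with $t\ge\varepsilon d/2$ and $|e|\le 2c_0r\le\varepsilon d/4$.
\item Integrating along the segment, which stays in the convex ball $B_{\varepsilon d}(A)$, gives $U(y)-U(x)\ge\frac34|\nabla U(A)|\,t-\frac54|\nabla U(A)|\,|e|\ge c|\nabla U(A)|\varepsilon d$.
\item Since $|\nabla U(A)|\ge c\,a_r(z)/d$, this yields $U(y)-U(x)\ge c_1a_r(z)$, which is the separation $\sup_{G_0}U+c_1a_r(z)\le\inf_{G_1}U$.
\end{itemize}

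The volume statements follow from $U\asymp a_r(z)$ on both balls: $\mu(G_i)\asymp a_r(z)(c_0r)^n\asymp a_r(z)r^n$. All constants depend only on $n,s,p$ and the geometric constants.

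The main point requiring care is verifying that the gradient lower bound and the $C^{1,\sigma}$ estimate hold at the corkscrew point $A$ itself. This requires $A\in D_{r'/c_1}(w')$ for some admissible boundary patch. We handle it by applying Corollary~\ref{cor:reifenberg-gradient} centered at $z$ with radius $c_1r$ (or a larger fixed multiple). Its hypotheses need $\Fsp U=0$ in $D_{4c_1r}(z)$, and this is guaranteed by the allowed-ball condition together with the choice $\Lambda\gg L\ge\kappa_*$ fixed in subsection~\ref{subprelim}.
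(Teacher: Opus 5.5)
Your proposal is correct and follows essentially the same route as the paper. Both arguments use the gradient lower bound at the corkscrew point $A=A_r(z)$ from Corollary~\ref{cor:reifenberg-gradient}, together with the normalized $C^{1,\sigma}$ estimate of Proposition~\ref{prop:C1}, which keeps $\nabla U$ within $\frac14|\nabla U(A)|$ of $\nabla U(A)$ on a ball of radius comparable to $r$; both then place two small balls symmetrically along $\nabla U(A)/|\nabla U(A)|$, obtain the gap by integration along segments, and get the measure bounds from interior Harnack. One small correction: the Whitney ball about $A$ need not lie in $D_r(z)$, but it does lie in $D_{2r}(z)$ because $d_\Om(A)\le|A-z|<r$, and that is all the statement requires.
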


\begin{proof}
Put \(A=A_r(z)\) and \(a=a_r(z)=U(A)\).
The corkscrew condition gives
\(d_\Om(A)\asymp r\), and
Corollary~\ref{cor:reifenberg-gradient} gives
\[
 c\,{a}/{r}
 \le|\nabla U(A)|
 \le C\,{a}/{r}.
\]
In particular, \(\nabla U(A)\ne0\). We also need uniform control of the variation of
the gradient near \(A\).
On an interior ball centered at \(A\) whose radius
is a fixed small multiple of \(r\), interior Harnack
and Lemma~\ref{lem:positive-tail} bound the local
supremum and tail by \(Ca\).
Proposition~\ref{prop:C1} therefore gives a normalized
H\"older estimate for \(\nabla U\).
Combining this estimate with the lower bound for
\(|\nabla U(A)|\), we may choose a fixed
\(\rho_0>0\) such that
\[
 B_{\rho_0r}(A)\subset D_{2r}(z),
 \qquad
 \sup_{\zeta\in B_{\rho_0r}(A)}
 |\nabla U(\zeta)-\nabla U(A)|
 \le\frac14|\nabla U(A)|.
\]
All constants in this choice are uniform. Set
\[
 e={\nabla U(A)}/{|\nabla U(A)|},
 \qquad
 \tau=\rho_0/8,
 \qquad
 P_0=A-\tau r e,\quad P_1=A+\tau r e.
\]
The segment joining \(P_0\) to \(P_1\) lies in
\(B_{\rho_0r}(A)\), where
\[
 \nabla U\cdot e\ge\frac34|\nabla U(A)|.
\]
Integration along this segment gives
\[
 U(P_1)-U(P_0)
 \ge\frac32\tau r|\nabla U(A)|.
\]
Choose a fixed \(0<\varepsilon\le1/5\), and put $G_i=B_{\varepsilon\tau r}(P_i)$ for $i=0,1$. These balls are disjoint and contained in
\(B_{\rho_0r}(A)\).
The same gradient comparison gives
\[
 \sup_{G_i}|U-U(P_i)|
 \le\frac54\varepsilon\tau r|\nabla U(A)|.
\]
Consequently,
\[
 \inf_{G_1}U-\sup_{G_0}U
 \ge
 \left(\frac32-\frac52\varepsilon\right)
 \tau r|\nabla U(A)|
 \ge c\tau a.
\]
Since \(\tau\) is fixed, this is the required gap
with a uniform \(c_1>0\).
The common radius is \(c_0r\), where
\(c_0=\varepsilon\tau\). Finally, interior Harnack on a fixed interior
neighborhood of \(A\) gives \(U\asymp a\) on both
balls. Hence
\[
 \mu(G_i)=\int_{G_i}U(y)\,\mathrm d y
 \asymp a\,|G_i|
 \asymp ar^n,
 \qquad i=0,1.
\]
This proves \eqref{eq:two-value-balls}.
\end{proof}

\begin{prop}\label{prop:variable-poincare}
Assume the hypotheses of
Lemma~\ref{lem:variable-volume-scale}.
There are constants \(\kappa\ge2\) and \(C\ge1\),
depending only on \(n,s,p\) and the fixed geometric
constants, such that the following holds.
For every allowed relative ball \(\mathbb B_r(x)\)
and every
\(f\in\mathrm L^2(\mathbb B_{\kappa r}(x),\mu)\)
with
\(\mathscr E_{U,\mathbb B_{\kappa r}(x)}(f)<\infty\),
\begin{equation}\label{eq:variable-poincare}
 \int_{\mathbb B_r(x)}
 |f-f_{\mathbb B_r(x),\mu}|^2\,\mathrm d\mu
 \le
 C\Psi(x,r)
 \mathscr E_{U,\mathbb B_{\kappa r}(x)}(f),\qquad f_{G,\mu}:=\frac{1}{\mu(G)}\int_G f\,\mathrm d\mu,
\end{equation}
where \(\kappa\) is among the fixed enlargement factors
accommodated by the choice of \(\kappa_*\). Suppose also that \(u\) and \(v\) each satisfy the
hypotheses imposed on \(U\), in the same domain and
fractional $p$-harmonicity patch. Let \(z\in\partial\Om\), let \(\mathbb B_r(z)\)
be allowed, and put
\[
 a=u(A_r(z))+v(A_r(z)).
\]
Then, for every \(f\in\mathrm L^2(D_{2r}(z))\) with finite
energy in the right-hand side below,
\begin{equation}\label{eq:pair-boundary-poincare}
 \int_{D_r(z)}|f-f_{D_r(z)}|^2\,\mathrm d x
 \le
 C\frac{r^{sp}}{a^{p-2}}
 \iint_{D_{2r}(z)\times D_{2r}(z)}
 |f(x)-f(y)|^2K_{u,v}(x,y)
 \,\mathrm d y\,\mathrm d x.
\end{equation}
Here \(K_{u,v}\) is defined in
\eqref{eq:averaged-kernel}, and
\(f_G=|G|^{-1}\int_G f\,\mathrm d x\)
denotes the Lebesgue average.
\end{prop}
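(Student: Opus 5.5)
The plan is to split according to the position of the ball relative to the boundary and, in each case, combine a local Poincaré estimate with a chaining argument through a fixed interior reference set. We take \(\kappa\) to be a fixed multiple of \(8\), and we will use the comparison of \(\Psi\) from Lemma~\ref{lem:variable-volume-scale} throughout.

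\emph{Interior case.} Suppose \(\mathbb B_r(x)=B_r(x)\) with \(r<\varepsilon d_\Om(x)\), where \(\varepsilon\) is the constant of Proposition~\ref{prop:whitney-pair}. Apply that proposition with \(u=v=U\), on the ball \(B_r(x)\) rather than the full Whitney ball, so that \(\ell=r\). The Lebesgue Poincaré factor it gives is \(C r^{\sigma_*}d_\Om(x)^{p-2}/U(x)^{p-2}\). On \(2B_r(x)\) we have \(U\asymp U(x)\), so weighted and Lebesgue integrals differ by a factor \(U(x)\) on the left, and by a factor \(U(x)^2\) in the transformed energy. The resulting factor is therefore \(Cd_\Om^{p-2}r^{\sigma_*}U(x)^{1-p}\asymp\Psi(x,r)\) by \eqref{eq:interior-scale}. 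The left-hand side measures deviation from the weighted mean, and this is bounded by twice the deviation from any constant, in particular the Lebesgue mean.

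\emph{Boundary and near-boundary case.} Now let \(r\gtrsim d_\Om(x)\). Pass to a boundary center \(z\) with \(\mathbb B_r(x)\subset\mathbb B_{2r}(z)\); the scale changes only by a constant, by \eqref{eq:variable-weak-scale}. We then prove \eqref{eq:variable-poincare} on \(\mathbb B_{2r}(z)\) with energy over \(\mathbb B_{\kappa r}(z)\). Fix the reference ball \(G_0\) from Lemma~\ref{lem:two-value-balls} at scale comparable to \(r\), and compare \(f\) with the constant \(m=f_{G_0}\) (Lebesgue mean). Decompose \(\mathbb B_{2r}(z)\) into Whitney cubes \(Q\) with \(\ell(Q)\asymp d_\Om\). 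For each \(Q\), estimate \(\int_Q|f-m|^2\,d\mu\) by two contributions: the local interior Poincaré term on a dilate of \(Q\), from the interior case, and \(\mu(Q)|f_Q-m|^2\). The term \(|f_Q-m|\) is handled by a telescoping chain of Whitney cubes \(Q=Q_0,Q_1,\dots,Q_k\) to \(G_0\), along a corkscrew/Harnack path of length \(k\asymp\log(r/\ell(Q))\). Consecutive mean differences are controlled by the local Poincaré on a slightly larger Whitney ball containing both cubes.

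The main obstacle is summing these chain contributions against the weight without losing the scale. For a cube of size \(t\) the local factor is \(\Psi\) at scale \(t\) divided by \(\mu\) at scale \(t\). By \eqref{eq:boundary-normalized-decay} and Lemma~\ref{lem:variable-volume-scale}, \(\mu(Q)\) decays polynomially relative to \(\mu(\mathbb B_r)\), with exponent at least \(n+\beta\), while \(\Psi\) grows with exponent at least \(\beta_0>0\). I would first try Cauchy--Schwarz with geometric weights \(2^{-\epsilon j}\) along the chain, and then exchange the order of summation: each Whitney cube is used by the chains of cubes in its shadow, whose total \(\mu\)-mass is at most \(C\mu(\text{shadow})\asymp C\mu(Q_j)(\ell/\ell(Q_j))^{\beta}\). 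The gain \(\Psi(\ell_j)/\Psi(r)\le C(\ell_j/r)^{\beta_0}\) then makes the double sum geometric. This produces \(C\Psi(z,r)\) times the energy on the union of the Whitney dilates. That union lies in \(\mathbb B_{\kappa r}(z)\) because the cubes have bounded overlap.

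\emph{The pair estimate \eqref{eq:pair-boundary-poincare}.} This is the unweighted analogue, and I would prove it by the same chaining. Here the weight is Lebesgue measure, and the Whitney factor from Proposition~\ref{prop:whitney-pair} is \(\ell^{\sigma_*}d^{p-2}/a_Q^{p-2}\), with \(a_Q\asymp u+v\) on the cube. We need this to be at most \(C(\ell/r)^{\epsilon}r^{sp}/a^{p-2}\). This follows from \(a_Q\ge c a(d/r)^{s}\)-type lower bounds, a consequence of Harnack chains together with the lower growth of \(\Psi\). Equivalently, \(d^{sp}a_Q^{2-p}\), which equals \(\Psi\cdot a_Q\), is at most \(C(d/r)^{\epsilon}r^{sp}a^{2-p}\); this uses \(a_Q\lesssim a\) and the scale growth from \eqref{eq:boundary-weak-scale} applied to \(u+v\). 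The chaining sum is then geometric exactly as before.
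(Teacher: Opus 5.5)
Your interior case, and your reduction of balls with $r\ge\varepsilon d_\Om(x)$ to a boundary-centered ball, agree with the paper. At boundary centers, however, you argue quite differently.

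The paper does not chain. It takes the value-separated balls $G_0,G_1\subset D_{2r}(z)$ of Lemma~\ref{lem:two-value-balls} and the midpoint $m$ of the gap between their value ranges, and splits $D_r(z)$ into $X_0=\{U\le m\}$ and $X_1=\{U>m\}$. On $X_0\times G_1$, $X_1\times G_0$ and $G_0\times G_1$ one has $|U(\xi)-U(\eta)|\ge c\,a_r(z)$ and $|\xi-\eta|\le 4r$. Hence $K_U\ge c\,a_r(z)^{p-2}r^{-n-sp}$ on these pairs, and Jensen's inequality gives \eqref{eq:variable-poincare} in one step, with energy on $D_{2r}(z)$. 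Every point of $D_r(z)$, however close to $\partial\Om$, is tied to a reference ball by a single long-range interaction whose kernel does not degenerate; this mechanism is purely non-local.

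The pair estimate \eqref{eq:pair-boundary-poincare} is the same computation with Lebesgue measure. One applies Lemma~\ref{lem:two-value-balls} to whichever of $u,v$ is at least $a/2$ at $A_r(z)$, and uses $a_p(\Delta u,\Delta v)\ge c_p|\Delta u|^{p-2}$ from Lemma~\ref{lem:scalar}. The paper's route needs gradient non-degeneracy at only one corkscrew point. It needs no Whitney decomposition, Harnack chains, shadow-mass bounds, or lower growth exponent for $\Psi$.

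Your Whitney--Boman chaining is the mechanism of the local weighted theory. It does close with the inputs you list: coercivity from Proposition~\ref{prop:whitney-pair} on every Whitney ball, $\mu(\mathrm{shadow}(Q'))\lesssim\mu(Q')$ from the boundary normalization, and either $\beta_0>0$ or the $\beta$-decay of $U$ to absorb the chain length. It would also survive for forms that control only short-range pairs. But it is heavier, and it yields energy on an enlarged ball.

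Two details need tightening. First, in \eqref{eq:pair-boundary-poincare} the enlargement is fixed at $2$. So ``the same chaining'' does not give the stated inequality until you arrange all chains and Whitney dilates to lie in $D_{2r}(z)$. This is possible for small flatness, or via a bounded covering at scale $r/\kappa$; in the paper's argument it is automatic.

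Second, a lower bound $a_Q\ge c\,a\,(d/r)^s$ is not available; it is essentially the open growth estimate \eqref{eq:growth-target}. Also, $u+v$ is not a solution, so Lemma~\ref{lem:variable-volume-scale} cannot be applied to it. What you need still follows. If $u(A_r(z))\ge a/2$, then Lemma~\ref{lem:positive-tail} and Harnack give $u(x_Q)\ge c\,a\,(d/r)^{sp/(p-1)}$. Hence $d^{sp}a_Q^{2-p}\le C\,r^{sp}a^{2-p}(d/r)^{sp/(p-1)}$ for $p>2$, which is the geometric gain required.
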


\begin{proof}
We first prove the weighted estimate for a boundary
center \(x=z\).
Write \(a=a_r(z)\), and choose \(G_0,G_1\) as in
Lemma~\ref{lem:two-value-balls}.
Set
\[
 m=\frac12\biggl(\sup_{G_0}U+\inf_{G_1}U\biggr),
 \qquad
 X_0=D_r(z)\cap\{U\le m\},
 \qquad
 X_1=D_r(z)\cap\{U>m\}.
\]
The gap between the two value ranges gives
\[
 |U(\xi)-U(\eta)|\ge ca
\]
on each of
\[
 X_0\times G_1,\qquad
 X_1\times G_0,\qquad
 G_0\times G_1.
\]
All these pairs satisfy \(|\xi-\eta|\le4r\).
Consequently,
\[
 K_U(\xi,\eta)\ge
 c\,a^{p-2}r^{-n-sp}
\]
on the three families of pairs.
For \(p=2\), the same bound follows directly from
\(K_U(\xi,\eta)=|\xi-\eta|^{-n-2s}\). Put
\[
 f_i=f_{G_i,\mu},
 \qquad
 E=\mathscr E_{U,D_{2r}(z)}(f).
\]
Restricting the energy to \(X_0\times G_1\) and
applying Jensen's inequality in the second variable
gives
\[
 E\ge
 c\,a^{p-2}r^{-n-sp}\mu(G_1)
 \int_{X_0}|f-f_1|^2\,\mathrm d\mu.
\]
The family \(X_1\times G_0\) gives the corresponding
estimate with the indices reversed.
For \(G_0\times G_1\), Jensen's inequality in both
variables gives
\[
 E\ge
 c\,a^{p-2}r^{-n-sp}
 \mu(G_0)\mu(G_1)|f_0-f_1|^2.
\]
Since \(\mu(G_i)\asymp ar^n\), these inequalities imply
\begin{align*}
 \int_{X_0}|f-f_1|^2\,\mathrm d\mu
 +\int_{X_1}|f-f_0|^2\,\mathrm d\mu
 &\le Cr^{sp}a^{1-p}E,\\
 |f_0-f_1|^2
 &\le Cr^{sp-n}a^{-p}E.
\end{align*}
Thus the first two families control \(f\) relative
to the averages on the interior balls, while the
third controls the difference between those averages. To obtain a single comparison constant on \(D_r(z)\),
use
\[
 |f-f_0|^2
 \le2|f-f_1|^2+2|f_1-f_0|^2
 \qquad\text{on }X_0.
\]
Together with
\(\mu(D_r(z))\le Car^n\), this gives
\[
 \int_{D_r(z)}|f-f_0|^2\,\mathrm d\mu
 \le Cr^{sp}a^{1-p}E.
\]
The weighted average minimizes the squared
\(\mathrm L^2(\mu)\) distance over all constants.
Hence
\[
 \int_{D_r(z)}|f-f_{D_r(z),\mu}|^2\,\mathrm d\mu
 \le
 C\Psi(z,r)\mathscr E_{U,D_{2r}(z)}(f).
\]
This proves the boundary-centered estimate.

We next consider an interior center \(x\in\Om\).
Put \(d=d_\Om(x)\) and \(a_x=U(x)\).
Let \(\varepsilon\in(0,1/8)\) be a fixed constant
for which Proposition~\ref{prop:whitney-pair} applies
on every ball \(B_r(x)\) with
\(0<r\le\varepsilon d\).
Interior Harnack gives
\[
 U\asymp a_x\quad\text{on }B_{2r}(x).
\]
Apply \eqref{eq:whitney-pair-energy} with \(u=v=U\),
so that \(K_{u,v}=K_U\).
The kernel coefficient is comparable to
\((a_x/d)^{p-2}\), and the two weighted measure
factors contribute \(a_x^2\).
Therefore
\[
 \mathscr E_{U,B_{2r}(x)}(f)
 \ge
 c\,a_x^p d^{2-p}
 [f]_{\mathrm H^{\sigma_*/2}(B_r(x))}^2.
\]
The smaller-radius version of
Proposition~\ref{prop:whitney-pair} is used here,
with constants uniform for \(r\le\varepsilon d\). Using the minimizing property of the weighted mean,
followed by fractional Poincar\'e with the Lebesgue
average \(f_{B_r(x)}\), we obtain
\begin{align*}
 \int_{B_r(x)}|f-f_{B_r(x),\mu}|^2\,\mathrm d\mu
 &\le
 \int_{B_r(x)}|f-f_{B_r(x)}|^2\,\mathrm d\mu\\
 &\le
 Ca_xr^{\sigma_*}
 [f]_{\mathrm H^{\sigma_*/2}(B_r(x))}^2\le
 C d^{p-2}r^{\sigma_*}a_x^{1-p}
 \mathscr E_{U,B_{2r}(x)}(f).
\end{align*}
By \eqref{eq:interior-scale}, the coefficient is
comparable to \(\Psi(x,r)\).
This proves the estimate for the small interior balls. It remains to consider \(r>\varepsilon d\).
Choose a nearest boundary point \(z\), and put
\(t=r+d\).
Then
\[
 \mathbb B_r(x)\subset\mathbb B_t(z),
 \qquad
 \mathbb B_{2t}(z)\subset
 \mathbb B_{\kappa r}(x)
\]
provided that \(\kappa\ge2+3/\varepsilon\).
The choice of \(\kappa_*\) also accommodates the
fixed enlargements needed for the boundary argument
at \((z,t)\).
Thus all normalization and two-value-ball estimates
used in that argument remain available within the
original fractional $p$-harmonicity patch. Since \(t\asymp r\), the boundary and interior scale
comparisons in the proof of
Lemma~\ref{lem:variable-volume-scale} give
\[
 t^{sp}a_t(z)^{1-p}\asymp\Psi(x,r).
\]
Apply the boundary-centered estimate on \(D_t(z)\).
Minimizing over constants on the smaller ball and
using the two inclusions above yields
\begin{align*}
 \int_{\mathbb B_r(x)}
 |f-f_{\mathbb B_r(x),\mu}|^2\,\mathrm d\mu
 &\le
 \int_{D_t(z)}
 |f-f_{D_t(z),\mu}|^2\,\mathrm d\mu\\
 &\le
 Ct^{sp}a_t(z)^{1-p}
 \mathscr E_{U,D_{2t}(z)}(f)\le
 C\Psi(x,r)
 \mathscr E_{U,\mathbb B_{\kappa r}(x)}(f).
\end{align*}
This completes the proof of
\eqref{eq:variable-poincare}.

Finally, consider the two-background estimate.
Interchanging \(u\) and \(v\) if necessary, assume
\(u(A_r(z))\ge a/2\).
Apply Lemma~\ref{lem:two-value-balls} to \(u\), and
construct \(G_0,G_1,X_0,X_1\) using its two separated
value ranges.
On the same three families of pairs,
\[
 |u(\xi)-u(\eta)|\ge ca.
\]
Lemma~\ref{lem:scalar} therefore gives
\[
 K_{u,v}(\xi,\eta)
 \ge c\,a^{p-2}r^{-n-sp}.
\]
For \(p=2\), this follows from the explicit fractional
kernel. Repeat the preceding three-family Jensen argument
with Lebesgue measure. Since \(|G_i|\asymp r^n\) and
\(|D_r(z)|\le Cr^n\), it gives
\[
 \int_{D_r(z)}|f-f_{G_0}|^2\,\mathrm d x
 \le Cr^{sp}a^{2-p}
 \iint_{D_{2r}(z)^2}|f(x)-f(y)|^2K_{u,v}(x,y)\dxdy.
\]
Minimization over constants proves
\eqref{eq:pair-boundary-poincare}.
\end{proof}

\begin{rem} The boundary-centered argument in Proposition \ref{prop:variable-poincare} connects every point of
\(D_r(z)\) to one of the two interior sets \(G_0,G_1\)
through interactions with a quantitative kernel lower bound.
The interaction between these sets controls the difference
between their averages, providing a common reference value
throughout the boundary ball.
Thus the energy controls oscillation across distinct Whitney
balls without an iteration along chains of adjacent balls.
\end{rem}
\subsection{Jump moments, exterior interactions, and averaging}

We next estimate the short-range and distant interactions
of \(K_U\) relative to the measure \(\mu\).
These estimates also control the energy of cutoff functions.
For a bounded Lipschitz function \(\eta\) on \(S\), put
\[
 \Gamma_U(\eta)(x)
 =
 \int_S|\eta(x)-\eta(y)|^2K_U(x,y)\,\mathrm d\mu(y).
\]
The integration extends over the global set \(S=\{U>0\}\),
so this energy density includes interactions with all
positive exterior values of \(U\).

\begin{prop}\label{prop:variable-moment}
Assume the hypotheses of
Lemma~\ref{lem:variable-volume-scale}.
For \(\mu\)-almost every \(x\in D_{LR}(w)\) and every
\(r>0\) satisfying $|x-w|+\kappa_*r<LR$, we have
\begin{equation}\label{eq:variable-moment}
 \frac{1}{r^2}\int_{S\cap B_r(x)}
 |x-y|^2K_U(x,y)\,\mathrm d\mu(y)
 +
 \int_{S\setminus B_r(x)}
 K_U(x,y)\,\mathrm d\mu(y)
 \le {C}/{\Psi(x,r)}.
\end{equation}
In particular, if \(0\le\eta\le1\) on \(S\) and
\(\operatorname{Lip}(\eta)\le C_0/r\), then
\[
 \Gamma_U(\eta)(x)\le {C}/{\Psi(x,r)}
\]
for the same centers and radii. Moreover, for every allowed boundary ball
\(\mathbb B_r(z)\),
\begin{equation}\label{eq:variable-moment+}
\frac{1}{r^2}\int_{S\cap B_r(x)}
 |x-y|^2K_U(x,y)\,\mathrm d\mu(y)
 +
 \int_{S\setminus B_r(x)}
 K_U(x,y)\,\mathrm d\mu(y)
 \le Ca_r(z)^{p-1}r^{-sp}
\end{equation}
for \(\mu\)-almost every \(x\in D_r(z)\).
The constants depend only on \(n,s,p\) and the fixed
geometric constants, and additionally on \(C_0\) in
the cutoff estimate.
\end{prop}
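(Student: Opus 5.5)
The plan is to estimate the two integrals in \eqref{eq:variable-moment} by a dyadic decomposition in $|x-y|$. On each shell we will bound $K_U(x,y)U(y)$ using the local size and oscillation of $U$. The resulting shell contributions will then be summed with the growth exponents $0<\beta_0\le\beta_1<2$ from \eqref{eq:variable-weak-scale}. Write $\rho_k=2^{-k}r$ for $k\ge0$. Put $d=d_\Om(x)$ for interior $x$, and let $z_x\in\partial\Om$ be a nearest boundary point. The target is the shell estimate
\[
 \int_{S\cap(B_{2\rho}(x)\setminus B_\rho(x))}
 |U(x)-U(y)|^{p-2}\,U(y)\,\frac{\mathrm d y}{|x-y|^{n+sp}}
 \le \frac{C}{\Psi(x,\rho)}
\]
for every radius $\rho$ such that $\mathbb B_{2\rho}(x)$ is allowed. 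Here the constant $C$ depends only on $n,s,p$ and the geometric constants.

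The shell estimate splits into two regimes. If $\rho<\varepsilon d$ (the Whitney regime), Proposition~\ref{prop:C1}, normalized by interior Harnack and Lemma~\ref{lem:positive-tail}, gives $|U(x)-U(y)|\le C(U(x)/d)\rho$ and $U(y)\asymp U(x)$. The shell then contributes at most $C(U(x)/d)^{p-2}U(x)\rho^{p-2-sp}=C\rho^{-\sigma_*}d^{2-p}U(x)^{p-1}$, which is $C/\Psi(x,\rho)$ by \eqref{eq:interior-scale}. If $\rho\ge\varepsilon d$, the shell lies in $B_{C\rho}(z_x)$. There Proposition~\ref{prop:boundary-normalization} gives $0\le U\le Ca_{C\rho}(z_x)$, and Harnack chains give $U(x)\le Ca_{C\rho}(z_x)$. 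Hence the shell contributes at most $Ca_{C\rho}(z_x)^{p-1}\rho^{-sp}$, and this is comparable to $1/\Psi(x,\rho)$ by the proof of Lemma~\ref{lem:variable-volume-scale}.

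For the near integral, we multiply each inner shell by $\rho_k^2/r^2$ and obtain
\[
 \sum_{k\ge1}\frac{\rho_k^2}{r^2}\,\frac{C}{\Psi(x,\rho_k)}
 \le \frac{C}{\Psi(x,r)}\sum_k 2^{-k(2-\beta_1)}.
\]
This uses the upper growth bound $\Psi(x,r)/\Psi(x,\rho_k)\le C2^{k\beta_1}$, and the series converges because $\beta_1<2$; it is at this point that the earlier choice of $\beta$ making $sp-\beta(p-1)<2$ matters. For the far integral, we first sum the outer shells $2^kr$ while they remain inside the controlled patch. The lower growth exponent $\beta_0>0$ gives $\sum_k C/\Psi(x,2^kr)\le C/\Psi(x,r)$.

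The main obstacle is the remaining region outside the largest allowed ball $B_{T}(x)$, where $T\asymp (LR-|x-w|)/\kappa_*$. There $U$ is not controlled pointwise and may be unbounded. On this region we will use $|U(x)-U(y)|^{p-2}U(y)\le C\bigl(U(x)^{p-2}U(y)+U(y)^{p-1}\bigr)$. The $U(y)^{p-1}$ term is controlled by Lemma~\ref{lem:positive-tail}, applied at a corkscrew point $A_T(z_x)$ with radius comparable to $T$; since $|y-x|\asymp|y-A_T(z_x)|$ there, it is at most $Ca_T(z_x)^{p-1}T^{-sp}$. For the $U(y)$ term we apply H\"older with exponents $p-1$ and $(p-1)/(p-2)$ against the measure $|x-y|^{-n-sp}\,\mathrm d y$, which bounds it by $C a_T(z_x)T^{-sp}$. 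Combined with $U(x)\le Ca_T(z_x)$, this term is also at most $Ca_T(z_x)^{p-1}T^{-sp}\asymp 1/\Psi(x,T)\le C/\Psi(x,r)$. When $p=2$ the first term is absent and only the tail term occurs.

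The cutoff bound follows at once. When $|x-y|<r$ we have $|\eta(x)-\eta(y)|^2\le C_0^2|x-y|^2/r^2$, and when $|x-y|\ge r$ we have $|\eta(x)-\eta(y)|^2\le1$, so $\Gamma_U(\eta)(x)$ is dominated by the left side of \eqref{eq:variable-moment}. For \eqref{eq:variable-moment+}, let $x\in D_r(z)$. Then $|x-z|\le r$, so the nearby-center comparison gives $\Psi(x,r)\asymp\Psi(z,r)\asymp r^{sp}a_r(z)^{1-p}$, and we apply \eqref{eq:variable-moment}. Finally, the estimates hold for every $x$ in the open patch, since $U$ is continuous there; the boundary has $\mu$-measure zero, which accounts for the phrase ``$\mu$-almost every $x$''.
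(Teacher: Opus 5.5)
Your overall scheme (dyadic shells, each bounded by $C/\Psi(x,\rho)$, then summed using the growth exponents $\beta_1<2$ and $\beta_0>0$ of Lemma~\ref{lem:variable-volume-scale}) repackages the boundary decay that the paper uses explicitly, and it is correct where it applies. The paper instead proves \eqref{eq:variable-moment+} first, by a three-region split with $U(x)+U(y)\le Ca((d+|x-y|)/r)^\beta$. It then treats interior centers with $r<d/4$ directly, and reduces $r\ge d/4$ to the boundary case with $t=r+d$.

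The step that fails is your treatment of the region outside the largest allowed ball $B_T(x)$ when $T\ll d=d_\Om(x)$. This case is within the statement. Take $x$ near the inner normal at $w$ with $|x-w|$ close to $LR$: then $d\asymp LR$, while $T\asymp(LR-|x-w|)/\kappa_*\to0$. In this case $|x-A_T(z_x)|\approx d\gg T$, so $|y-x|\asymp|y-A_T(z_x)|$ is false for $|y-x|\sim T$. The bound $U(x)\le Ca_T(z_x)$ is also false, since boundary decay gives $a_T(z_x)\lesssim (T/d)^\beta U(x)$.

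More fundamentally, no choice of reference point rescues the split $|U(x)-U(y)|^{p-2}U(y)\le C(U(x)^{p-2}U(y)+U(y)^{p-1})$ on $T\le|x-y|<d/2$. It gives at best $CU(x)^{p-1}T^{-sp}$. By \eqref{eq:interior-scale}, however, $1/\Psi(x,T)\asymp U(x)^{p-1}d^{2-p}T^{-\sigma_*}=U(x)^{p-1}T^{-sp}(T/d)^{p-2}$. So for $p>2$ you lose the unbounded factor $(d/T)^{p-2}$. The half-space profile shows the loss is real: on $|y-x|\sim T$ the true integrand carries the gradient factor $(|x_n-y_n|/d)^{p-2}$, which the split replaces by $1$.

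The repair is to notice that allowedness plays no role in the interior estimates at $x$, since $B_d(x)\subset B_{2LR}(w)\subset B_{\Lambda R}(w)$. Your Whitney shell bound $CU(x)^{p-1}d^{2-p}\rho^{-\sigma_*}$ therefore holds for every $\rho<\varepsilon d$, provided you state it explicitly rather than through $\Psi$, which is defined only on allowed radii.
\begin{itemize}
\item Summing it over $T\le\rho<\varepsilon d$ gives $CU(x)^{p-1}d^{2-p}T^{-\sigma_*}$, because $\sigma_*>0$.
\item For $|x-y|\ge\varepsilon d$, apply Lemma~\ref{lem:positive-tail} at $x$ itself with radius comparable to $d$, together with your H\"older step. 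This gives $CU(x)^{p-1}d^{-sp}\le CU(x)^{p-1}d^{2-p}T^{-\sigma_*}$.
\end{itemize}
In other words, place the crude cutoff at $\max\{T,\varepsilon d\}$ instead of $T$. This is exactly the paper's treatment of interior centers with $r<d/4$.

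There is also a smaller bookkeeping point. Deducing \eqref{eq:variable-moment+} from \eqref{eq:variable-moment} via nearby-center comparison requires $\mathbb B_r(x)$ to be allowed. An allowed $\mathbb B_r(z)$ with $x\in D_r(z)$ only gives $|x-w|+(\kappa_*-1)r<LR$. The paper avoids this by proving the boundary-scale bound directly, with all enlargements centered at $z$. In your route you would have to absorb this change of center into the choice of $\kappa_*$.
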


\begin{proof}
Denote the left-hand side of
\eqref{eq:variable-moment} by \(I(x,r)\). We first record the local kernel bound used below.
Put \(d=d_\Om(x)\).
Interior Harnack, Lemma~\ref{lem:positive-tail}, and
Proposition~\ref{prop:C1} give
\[
 U\asymp U(x),
 \qquad
 |\nabla U|\le C\frac{U(x)}{d}
 \quad\text{on }B_{d/2}(x).
\]
These estimates follow by working on fixed interior
balls inside \(B_d(x)\).
The larger fractional $p$-harmonicity patch provides the required
room, since
\[
 B_d(x)\subset B_{2LR}(w)\subset B_{\Lambda R}(w).
\]
Thus the argument does not require
\(B_{d/2}(x)\) to lie in the smaller controlled patch. Integration along segments gives
\[
 |U(x)-U(y)|
 \le C\frac{U(x)}{d}|x-y|
 \qquad(|x-y|<d/2).
\]
Consequently,
\[
 K_U(x,y)U(y)
 \le
 C U(x)^{p-1}d^{2-p}|x-y|^{-n-\sigma_*},
 \qquad |x-y|<d/2,
\]
where \(\sigma_*=sp-p+2\in(0,2)\).

We first prove the boundary-scale estimate in \eqref{eq:variable-moment+}.
Let \(\mathbb B_r(z)\) be an allowed boundary ball,
take \(x\in D_r(z)\), and write \(a=a_r(z)\).
Then \(d\le r\).
We split the interactions into the three regions
\[
 B_{d/2}(x),\qquad
 B_{4r}(z)\setminus B_{d/2}(x),\qquad
 \R^n\setminus B_{4r}(z).
\]
The first two regions lie within the fractional $p$-harmonicity patch,
where local estimates for \(U\) are available.
The third is treated using the full positive tail. On the first region, the local kernel bound gives
\begin{align*}
 &\int_{S\cap B_{d/2}(x)}
 \frac{|x-y|^2}{r^2}K_U(x,y)\,\mathrm d\mu(y)\le
 C r^{-2}U(x)^{p-1}d^{2-p}
 \int_0^{d/2}t^{1-\sigma_*}\,\mathrm d t
 \le
 C r^{-2}U(x)^{p-1}d^{2-sp}.
\end{align*}
The last equality of exponents uses
\(4-p-\sigma_*=2-sp\).
Boundary decay gives
\[
 U(x)\le Ca\bigl({d}/{r}\bigr)^\beta.
\]
Hence the contribution of this region is bounded by
\[
 Ca^{p-1}r^{-sp}
 \bigl({d}/{r}\bigr)^\gamma,
 \qquad
 \gamma=2-sp+\beta(p-1)>0.
\]
Since \(d\le r\), this is at most
\(Ca^{p-1}r^{-sp}\). For the second region, apply boundary decay on a fixed
enlargement of \(B_r(z)\), and compare its corkscrew
value with \(a\) by a Harnack chain.
For \(y\in S\cap B_{4r}(z)\), this gives
\[
 U(x)+U(y)
 \le
 Ca\left(\frac{d+|x-y|}{r}\right)^\beta,
\]
because \(d_\Om(y)\le d+|x-y|\).
When \(|x-y|\ge d/2\), the numerator
\(d+|x-y|\) is comparable to \(|x-y|\).
It follows that
\[
 K_U(x,y)U(y)
 \le
 Ca^{p-1}r^{-\beta(p-1)}
 |x-y|^{-n-sp+\beta(p-1)}.
\]
Decompose this region into dyadic annuli
\(\ell\le|x-y|<2\ell\).
Here \(d/2\le\ell\le5r\), up to fixed endpoint factors.
Using
\[
 \min\biggl\{1,{|x-y|^2}/{r^2}\biggr\}
 \le{|x-y|^2}/{r^2},
\]
the contribution of such an annulus is at most
\[
 Ca^{p-1}r^{-\beta(p-1)-2}
 \ell^{2-sp+\beta(p-1)}
 =
 Ca^{p-1}r^{-sp}\bigl({\ell}/{r}\bigr)^\gamma.
\]
Since \(\gamma>0\), summing these contributions gives
\(Ca^{p-1}r^{-sp}\).
This is precisely where the choice
\(sp-\beta(p-1)<2\) is used. For the third region, \(|y-z|>4r\), we have
\(|x-y|\asymp|y-z|\).
We do not use a gradient estimate at the distant
endpoint.
Instead, use
\begin{equation}\label{eq:far-jump-split}
 |U(x)-U(y)|^{p-2}U(y)
 \le
 C\bigl(U(x)^{p-2}U(y)+U(y)^{p-1}\bigr).
\end{equation}
The normalized mass estimate
\eqref{eq:boundary-normalized-mass} gives
\[
 \int_{|y-z|>4r}
 \frac{U(y)^{p-1}}{|y-z|^{n+sp}}\,\mathrm d y
 \le Ca^{p-1}r^{-sp}.
\]
For \(p>2\), H\"older's inequality also gives
\begin{align*}
 \int_{|y-z|>4r}
 \frac{U(y)}{|y-z|^{n+sp}}\,\mathrm d y
 &\le
 \left(
 \int_{|y-z|>4r}
 \frac{U(y)^{p-1}}{|y-z|^{n+sp}}\,\mathrm d y
 \right)^{1/(p-1)}\times
 \left(
 \int_{|y-z|>4r}
 \frac{\mathrm d y}{|y-z|^{n+sp}}
 \right)^{(p-2)/(p-1)}\\
 &\le Ca r^{-sp}.
\end{align*}
For \(p=2\), this is already the natural tail integral.
Since \(U(x)\le Ca\), both terms in
\eqref{eq:far-jump-split} contribute at most
\(Ca^{p-1}r^{-sp}\).
Combining the estimates on the three regions proves
\[
 I(x,r)\le Ca_r(z)^{p-1}r^{-sp}
 \qquad(x\in D_r(z)).
\]
All fixed enlargements used in this calculation are
covered by the allowed boundary ball.
No additional allowed-ball condition centered at \(x\)
is needed. This completes the proof of \eqref{eq:variable-moment+}.

We next prove \eqref{eq:variable-moment} at an interior
center \(x\) when \(r<d/4\).
For interactions with \(|x-y|<d/2\), split the radial
integral at \(r\).
The local kernel bound yields
\begin{align*}
 &\int_{S\cap B_{d/2}(x)}
 \min\bigl\{1,{|x-y|^2}/{r^2}\bigr\}
 K_U(x,y)\,\mathrm d\mu(y)\\
 &\le
 C U(x)^{p-1}d^{2-p}
 \left(
 r^{-2}\int_0^r t^{1-\sigma_*}\,\mathrm d t
 +
 \int_r^{d/2}t^{-1-\sigma_*}\,\mathrm d t
 \right)\le
 C U(x)^{p-1}d^{2-p}r^{-\sigma_*}.
\end{align*}
Both integrals are controlled because
\(0<\sigma_*<2\). For \(|x-y|\ge d/2\), apply
Lemma~\ref{lem:positive-tail} at \(x\), with radius
\(d/4\).
It gives
\[
 \int_{|x-y|\ge d/2}
 \frac{U(y)^{p-1}}{|x-y|^{n+sp}}\,\mathrm d y
 \le C U(x)^{p-1}d^{-sp}.
\]
Using \eqref{eq:far-jump-split} and the same
H\"older argument as above bounds the remaining
contribution by \(C U(x)^{p-1}d^{-sp}\).
Moreover,
\[
 U(x)^{p-1}d^{-sp}
 =
 U(x)^{p-1}d^{2-p}r^{-\sigma_*}
 \bigl ({r}/{d}\bigr)^{\sigma_*},
\]
so this is bounded by the preceding local estimate.
Equation~\eqref{eq:interior-scale} now gives
\(I(x,r)\le C/\Psi(x,r)\). Finally, suppose that \(r\ge d/4\).
Choose a nearest boundary point \(z_x\), and put
\(t=r+d\le5r\).
Then \(x\in D_t(z_x)\), and
\[
 \min\bigl\{1,{|x-y|^2}/{r^2}\bigr\}
 \le
 \bigl({t}/{r}\bigr)^2
 \min\bigl\{1,{|x-y|^2}/{t^2}\bigr\}.
\]
The choice of \(\kappa_*\) ensures that the fixed
enlargements needed for the boundary calculation
at \((z_x,t)\) lie within the fractional $p$-harmonicity patch.
Applying that calculation gives
\[
 I(x,r)\le25I(x,t)
 \le Ca_t(z_x)^{p-1}t^{-sp}.
\]
Comparable-scale Harnack chains and the construction
of the scale in
Lemma~\ref{lem:variable-volume-scale} imply
\[
 a_t(z_x)^{p-1}t^{-sp}\asymp {1}/{\Psi(x,r)}.
\]
This proves \eqref{eq:variable-moment} for all allowed
radii.

To obtain the cutoff estimate, observe that
\(0\le\eta\le1\) and
\(\operatorname{Lip}(\eta)\le C_0/r\) imply
\[
 |\eta(x)-\eta(y)|^2
 \le
 C\min\bigl\{1,{|x-y|^2}/{r^2}\bigr\}.
\]
Integration against \(K_U(x,y)\,\mathrm d\mu(y)\)
therefore gives
\[
 \Gamma_U(\eta)(x)\le CI(x,r)
 \le{C}/{\Psi(x,r)}.
\]
This completes the proof.
\end{proof}

\begin{prop}\label{prop:variable-ujs}
Assume the hypotheses of
Lemma~\ref{lem:variable-volume-scale}.
For \(\mu\)-almost every \(x\in D_{LR}(w)\) and every
\(\rho>0\) satisfying $|x-w|+\kappa_*\rho<LR$, we have
\begin{equation}\label{eq:variable-ujs}
 K_U(x,y)
 \le
 \frac{C}{\mu(\mathbb B_\rho(x))}
 \int_{\mathbb B_\rho(x)}
 K_U(\xi,y)\,\mathrm d\mu(\xi)
\end{equation}
for \(\mu\)-almost every \(y\in S\) with
\(|x-y|\ge2\rho\).
The constant depends only on \(n,s,p\) and the fixed
geometric constants.
There is no location restriction on \(y\), apart
from its separation from \(x\).
\end{prop}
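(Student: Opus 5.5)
The plan is to prove \eqref{eq:variable-ujs} by a pointwise comparison of the numerator \(|U(x)-U(y)|^{p-2}\), mimicking the two-ball argument in the proof of \eqref{eq:model-ujs}. For \(\xi\in\mathbb B_\rho(x)\) and \(|x-y|\ge2\rho\) we have \(|\xi-y|\asymp|x-y|\), so the factor \(|x-y|^{-n-sp}\) may be replaced by \(|\xi-y|^{-n-sp}\) at uniform cost. Thus it suffices to show, with \(b=U(y)\),
\[
 |U(x)-b|^{p-2}\le\frac{C}{\mu(\mathbb B_\rho(x))}\int_{\mathbb B_\rho(x)}|U(\xi)-b|^{p-2}\,\mathrm d\mu(\xi)
\]
for every real \(b\ge0\). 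For \(p=2\) this is trivial, so we assume \(p>2\).

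The key step is to produce two disjoint balls \(G_0,G_1\subset\mathbb B_\rho(x)\) with the following properties. Each has \(\mu(G_i)\ge c\,\mu(\mathbb B_\rho(x))\). The value ranges of \(U\) on them are separated by a gap \(c\Delta\), and both ranges lie within \(C\Delta\) of \(U(x)\), for a suitable \(\Delta>0\). We take two cases.

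In the interior case \(\rho<\varepsilon d_\Om(x)\), we take \(\Delta=\rho\,U(x)/d_\Om(x)\). We use Corollary~\ref{cor:reifenberg-gradient} together with the \(C^{1,\sigma}\) control from Proposition~\ref{prop:C1}, as in the proof of Proposition~\ref{prop:whitney-pair}. This gives \(|\nabla U-\nabla U(x)|\le\frac14|\nabla U(x)|\) on \(B_\rho(x)\), with \(|\nabla U(x)|\asymp U(x)/d_\Om(x)\). We then take \(G_i\) to be small balls centered at \(x\pm\frac{\rho}{2}e\), with \(e=\nabla U(x)/|\nabla U(x)|\). Interior Harnack gives \(\mu(G_i)\asymp U(x)\rho^n\asymp\mu(B_\rho(x))\).

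In the near-boundary case \(\rho\ge\varepsilon d_\Om(x)\), we pick a nearest boundary point \(z\) and a radius \(t\asymp\rho\) with \(\mathbb B_t(z)\subset\mathbb B_\rho(x)\), possibly after first shrinking to a comparable ball around a corkscrew point. Lemma~\ref{lem:two-value-balls} then yields \(G_0,G_1\) with \(\Delta=a_t(z)\) and \(\mu(G_i)\asymp a_t(z)t^n\asymp\mu(\mathbb B_\rho(x))\) by \eqref{eq:variable-volume}. In this case \(U(x)\le Ca_t(z)\) by Proposition~\ref{prop:boundary-normalization}, and the values on \(G_i\) are \(\le Ca_t(z)\). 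Hence all relevant values lie within \(C\Delta\) of \(U(x)\).

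With the balls in hand, we finish with the dichotomy from Lemma~\ref{lem:model-weighted-estimates}. If \(|U(x)-b|\ge 2C\Delta\), then \(|U(\xi)-b|\ge\frac12|U(x)-b|\) on both balls. Otherwise, by the gap, one of the \(G_i\) has \(|U(\xi)-b|\ge c\Delta/2\ge c'|U(x)-b|\) on it. Integrating over that ball and using \(\mu(G_i)\ge c\,\mu(\mathbb B_\rho(x))\) gives the displayed inequality. No restriction on \(y\) is needed, since only the value \(b=U(y)\) and the separation \(|x-y|\ge2\rho\) enter. The a.e.\ qualifiers come from using the continuous representative in the patch.

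I expect the main obstacle to be the near-boundary case when \(x\) itself is a boundary point or very close to one. There one must check that the corkscrew balls of Lemma~\ref{lem:two-value-balls} at a comparable scale fit inside \(\mathbb B_\rho(x)\) while still carrying a fixed fraction of its \(\mu\)-mass. This requires the doubling from Lemma~\ref{lem:variable-volume-scale} and Harnack chains comparing \(a_t(z)\) with \(a_\rho\) at nearby centers. The allowed-ball condition with \(\kappa_*\) guarantees that all the needed enlargements lie in the harmonicity patch.
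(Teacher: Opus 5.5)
\textbf{Verdict: the approach is right, but the case split leaves the range $\rho\asymp d_\Om(x)$, $\rho\le d_\Om(x)$, uncovered.}

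Your overall strategy matches the paper's. It uses the distance comparison $|\xi-y|\asymp|x-y|$ and two value-separated subsets of $\mathbb B_\rho(x)$, each carrying a fixed fraction of its $\mu$-mass, with both value ranges within $C\Delta$ of $U(x)$. It finishes with the dichotomy in $|U(x)-U(y)|$. The dichotomy step is correct.

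\textbf{Where it fails.} Write $d=d_\Om(x)$.
\begin{itemize}
\item In your near-boundary case $\rho\ge\varepsilon d$, you ask for a boundary ball $\mathbb B_t(z)\subset\mathbb B_\rho(x)$ with $t\asymp\rho$. Since $|x-z|=d$, this needs $d+t\le\rho$. That is impossible when $\varepsilon d\le\rho\le d$, because then $\mathbb B_\rho(x)$ contains no boundary point at all.
\item Your interior construction places the balls at $x\pm\frac\rho2 e$ and integrates along segments in $B_\rho(x)$. This needs $|\nabla U-\nabla U(x)|\le\frac14|\nabla U(x)|$ on all of $B_\rho(x)$, which is available only for $\rho\lesssim\varepsilon d$.
\end{itemize}
So the transition regime is covered by neither case. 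The phrase ``possibly after first shrinking to a comparable ball around a corkscrew point'' is exactly where the missing argument belongs. The case you flagged as the main obstacle, $x$ very close to the boundary, is actually the easy one.

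\textbf{How the paper closes it.} It decouples the size of the two subsets from $\rho$.
\begin{itemize}
\item For $\rho<2d$, set $\ell=\frac14\min\{\rho,\varepsilon_0 d\}$ and take $G_0=B_{\varepsilon_1\ell}(x-\ell e)$, $G_1=B_{\varepsilon_1\ell}(x+\ell e)$. These lie both in $B_\rho(x)$ and in $B_{\varepsilon_0 d}(x)$, where the gradient control holds.
\item Since $\rho<2d$, one still has $\ell\asymp\rho$. Hence the gap and proximity bounds hold with $b=U(x)\rho/d$, and $\mu(G_i)\asymp U(x)\rho^n\asymp\mu(\mathbb B_\rho(x))$. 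For $d/4\le\rho<2d$, the last comparison uses the boundary-scale volume estimate and a Harnack chain.
\item The boundary case then starts at $\rho\ge 2d$. With $t=\rho/8$, the ball $\mathbb B_t(z)$ is allowed. Every $\xi\in D_{2t}(z)$ satisfies $|\xi-x|<2t+d\le 3\rho/4$, so $D_{2t}(z)\subset\mathbb B_\rho(x)$.
\item Lemma~\ref{lem:two-value-balls} at $(z,t)$ then applies directly, with $b=a_t(z)$, $U(x)\le Cb$, and $\mu(\mathbb B_\rho(x))\asymp b\rho^n$.
\end{itemize}
With this adjustment to the case split, the rest of your argument goes through unchanged.
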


\begin{proof}
For \(\xi\in\mathbb B_\rho(x)\) and
\(|x-y|\ge2\rho\), the triangle inequality gives
\[
 \frac12|x-y|
 \le|\xi-y|
 \le\frac32|x-y|.
\]
When \(p=2\), the kernel is
\(K_U(x,y)=|x-y|^{-n-2s}\), and
\eqref{eq:variable-ujs} follows immediately by
integrating this distance comparison.
We therefore assume \(p>2\).

To proceed we first construct two subsets
\(G_0,G_1\subset\mathbb B_\rho(x)\) and a number
\(b>0\) such that
\[
 \mu(G_i)\ge c\mu(\mathbb B_\rho(x)),
 \qquad i=0,1,
\]
and
\[
 \sup_{G_0}U+cb\le\inf_{G_1}U,
 \qquad
 |U(\xi)-U(x)|\le Cb
 \quad(\xi\in G_0\cup G_1).
\]
Thus each subset carries a fixed fraction of the
measure of the averaging ball, their value ranges
are separated, and neither range is too far from
\(U(x)\). Put \(d=d_\Om(x)\), and first suppose that
\(0<\rho<2d\).
As in the proof of Lemma~\ref{lem:two-value-balls},
Corollary~\ref{cor:reifenberg-gradient} and the
normalized interior estimate in
Proposition~\ref{prop:C1} give a fixed
\(\varepsilon_0\in(0,1/8)\) such that
\[
 |\nabla U(x)|\asymp\frac{U(x)}{d},
 \qquad
 \sup_{\zeta\in B_{\varepsilon_0d}(x)}
 |\nabla U(\zeta)-\nabla U(x)|
 \le\frac14|\nabla U(x)|.
\]
Set
\[
 e={\nabla U(x)}/{|\nabla U(x)|},
 \qquad
 \ell=\frac14\min\{\rho,\varepsilon_0d\}.
\]
For a fixed sufficiently small
\(\varepsilon_1>0\), choose
\[
 G_0=B_{\varepsilon_1\ell}(x-\ell e),
 \qquad
 G_1=B_{\varepsilon_1\ell}(x+\ell e).
\]
These balls lie inside both \(B_\rho(x)\) and
\(B_{\varepsilon_0d}(x)\).
Integration in the direction \(e\), followed by the
gradient upper bound on each ball, gives
\[
 \inf_{G_1}U-\sup_{G_0}U
 \ge c\,U(x)\frac{\ell}{d},
 \qquad
 |U(\xi)-U(x)|
 \le C\,U(x)\frac{\ell}{d}
 \quad(\xi\in G_0\cup G_1).
\]
Since \(\rho<2d\), we have \(\ell\asymp\rho\),
with constants depending only on \(\varepsilon_0\).
The required separation and proximity estimates
therefore hold with $b=U(x){\rho}/{d}$. Interior Harnack gives \(U\asymp U(x)\) on both
sets, so
\[
 \mu(G_i)\asymp U(x)\rho^n.
\]
Moreover,
\[
 \mu(\mathbb B_\rho(x))\asymp U(x)\rho^n
 \qquad(0<\rho<2d).
\]
For \(\rho<d/4\), this is the interior volume estimate.
For \(d/4\le\rho<2d\), it follows from the
boundary-scale volume comparison in
Lemma~\ref{lem:variable-volume-scale} and a bounded
Harnack chain comparing \(U(x)\) with the corresponding
corkscrew value.
This proves the required measure fractions in this
regime. Now suppose that \(\rho\ge2d\).
Choose a nearest boundary point \(z\), and put
\(t=\rho/8\).
The ball \(\mathbb B_t(z)\) is allowed, since
\(|x-z|=d\le\rho/2\) and the allowed-ball condition
holds at \((x,\rho)\).
Apply Lemma~\ref{lem:two-value-balls} at \((z,t)\).
The resulting sets satisfy
\[
 G_0,G_1\subset D_{2t}(z)
 \subset\mathbb B_\rho(x),
\]
because \(2t+d\le3\rho/4\).
Put \(b=a_t(z)\).
The lemma gives
\[
 \mu(G_i)\asymp b\rho^n,
 \qquad
 \sup_{G_0}U+cb\le\inf_{G_1}U,
 \qquad
 U\asymp b\quad\text{on }G_0\cup G_1.
\]
Boundary normalization on fixed enlargements and
comparable-scale Harnack chains also give
\[
 U(x)\le Cb,
 \qquad
 \mu(\mathbb B_\rho(x))\asymp b\rho^n.
\]
All required enlargements are covered by the choice
of \(\kappa_*\).
The desired measure fractions and the estimate
\(|U(\xi)-U(x)|\le Cb\) follow.

We now compare the kernel with its average.
Fix \(y\in S\) with \(|x-y|\ge2\rho\), and write
\[
 c_y=U(y),
 \qquad
 D_y=|U(x)-c_y|.
\]
Let \(C_1\) be the constant in
\(|U(\xi)-U(x)|\le C_1b\) on the two subsets.
If \(D_y\ge2C_1b\), then
\[
 |U(\xi)-c_y|
 \ge D_y-C_1b
 \ge\frac12D_y
 \quad(\xi\in G_0\cup G_1).
\]
If \(D_y<2C_1b\), use the separation between their
value ranges.
According to which side of
\[
 \frac12\biggl(\sup_{G_0}U+\inf_{G_1}U\biggr)
\]
contains \(c_y\), choose \(G_1\) or \(G_0\),
respectively.
On the chosen set,
\[
 |U(\xi)-c_y|\ge cb\ge c'D_y.
\]
Thus, in either case, one of the two subsets has
measure at least \(c\mu(\mathbb B_\rho(x))\) and
satisfies
\[
 |U(\xi)-U(y)|
 \ge c|U(x)-U(y)|.
\]
Combining this with the distance comparison at the
beginning of the proof gives
\[
 K_U(\xi,y)\ge cK_U(x,y)
\]
on that subset.
Integration proves \eqref{eq:variable-ujs}.
The argument places no bound on \(U(y)\) and uses
no geometric information at the distant endpoint.
\end{proof}

For later use, we record the integrated cutoff consequence
of Proposition~\ref{prop:variable-moment}. To state it precisely,
let \(x_0\in\overline\Om\), \(t,\rho>0\), and suppose
that $|x_0-w|+t+(\kappa_*+2)\rho<LR$. Consider the concentric relative balls
\[
 B_0=\mathbb B_t(x_0),
 \qquad
 B_1=\mathbb B_{t+\rho}(x_0),
 \qquad
 G=\mathbb B_{t+2\rho}(x_0).
\]
The displayed condition ensures that
\(\mathbb B_\rho(x)\) is allowed for every
\(x\in\overline G\).
Choose a function \(0\le\eta\le1\) on \(S\),
equal to \(1\) on \(S\cap B_t(x_0)\) and to \(0\)
outside \(B_{t+\rho}(x_0)\), with
\(\operatorname{Lip}(\eta)\le C/\rho\).
For every \(f\in\mathrm L^2(G,\mu)\), the pointwise
cutoff estimate gives
\begin{equation}\label{cutt}
 \int_G |f|^2\Gamma_U(\eta)\,\mathrm d\mu
 \le
 C\int_G\frac{|f(x)|^2}{\Psi(x,\rho)}\,\mathrm d\mu(x)
 \le
 C\left(\sup_{x\in G}\frac{1}{\Psi(x,\rho)}\right)
 \int_G|f|^2\,\mathrm d\mu.
\end{equation}
The supremum is finite.
Indeed, \(\overline G\) is compact and can be covered
by finitely many balls of radius \(\rho/2\).
Nearby-center comparability bounds
\((\Psi(x,\rho))^{-1}\) on each such ball by its value
at a fixed center. Thus \eqref{cutt} is the required cutoff
Sobolev bound, with no energy term involving \(f\) on
the right. It also controls localized cutoff densities,
since restricting the second endpoint only decreases
\(\Gamma_U(\eta)\). Neither this estimate nor kernel
averaging requires doubling of the distant exterior measure.

\subsection{Boundary cutoffs and the quotient equation}

As in the half-space case, the weak equation for the quotient
is initially tested only against functions supported away
from the boundary. We now show that, for bounded quotient
solutions, it remains valid for admissible test functions
whose supports meet the boundary within the controlled patch.
This is the counterpart of Lemma~\ref{lem:model-boundary-polar}
for the variable background \(U\) and the Reifenberg boundary.

Assigning the boundary zero \(\mu\)-measure does not by itself
justify this enlargement of the test function class. We construct
cutoff functions equal to one near each compact boundary
piece whose \(\mathrm L^2(\mu)\) norms and full
\(\mathscr E_U\)-energies tend to zero, including the energy
of interactions with distant points in \(S\).
These cutoffs establish zero capacity and allow us to
approximate test functions meeting the boundary by functions
supported away from it.

\begin{lem}\label{lem:variable-boundary-capacity}
Assume the hypotheses of
Lemma~\ref{lem:variable-volume-scale}.
Let \(\kappa_0\) and \(\beta\) be the exponents fixed
in Subsection~\ref{subprelim}, and put $\varepsilon_0=\kappa_0-p(s-\beta)>0$. Let \(z\in\partial\Om\) and \(r>0\) satisfy $|z-w|+\kappa_*r<LR$. For every \(0<\rho<r/8\), there is a Lipschitz
function \(\zeta_\rho\) on \(S\), extended continuously
to the locally adjoined boundary, such that
\(0\le\zeta_\rho\le1\),
\(\operatorname{Lip}(\zeta_\rho)\le C/\rho\),
and
\[
 \operatorname{supp}\zeta_\rho
 \subset \mathbb B_{2r}(z)\cap\{d_\Om<2\rho\},
 \qquad
 \zeta_\rho=1
 \quad\text{on }\mathbb B_r(z)\cap\{d_\Om\le\rho\}.
\]
The support and the equality are understood relative
to \(\overline\Om\), with
\(d_\Om=0\) on \(\partial\Om\). Moreover,
\begin{align}
 \|\zeta_\rho\|_{\mathrm L^2(S,\mu)}^2
 &\le
 Ca_r(z)r^n
 \bigl({\rho}/{r}\bigr)^{\kappa_0+\beta},
 \label{eq:strip-measure}\\
 \mathscr E_U(\zeta_\rho)
 &\le
 Ca_r(z)^p r^{n-sp}
 \bigl({\rho}/{r}\bigr)^{\varepsilon_0}
 \longrightarrow0
 \quad\text{as }\rho\downarrow0.
 \label{eq:strip-capacity}
\end{align}
The energy is the full form on \(S\times S\).
All constants depend only on \(n,s,p\) and the fixed
geometric constants. Consequently, every compact set
\(K\subset\partial\Om\cap B_{LR}(w)\) has zero capacity
in the following sense. There are Lipschitz functions \(\eta_j\), supported
compactly in the controlled patch $B_{LR}(w)$ and extended by
zero to the remaining part of \(S\), equal to one
on relative neighborhoods of \(K\), such that
\[
 \|\eta_j\|_{\mathrm L^2(S,\mu)}^2
 +\mathscr E_U(\eta_j)\longrightarrow0.
\]
Here relative neighborhoods are taken in the locally
adjoined space \(\overline\Om\).
\end{lem}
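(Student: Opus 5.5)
We build the cutoff from the boundary distance, split its energy by where the first endpoint lies, and then add a porosity estimate for the measure of the boundary layer.

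\emph{Construction.} Let $\theta$ be Lipschitz on $[0,\infty)$ with $0\le\theta\le1$, $\theta=1$ on $[0,1]$ and $\theta=0$ on $[3/2,\infty)$. Let $\chi$ be Lipschitz, equal to one on $B_r(z)$, supported in $B_{3r/2}(z)$, with $\operatorname{Lip}(\chi)\le C/r$. Set
\[
\zeta_\rho(x)=\chi(x)\,\theta\bigl(d_\Om(x)/\rho\bigr).
\]
Since $\rho<r/8$, we get $\operatorname{Lip}(\zeta_\rho)\le C/\rho$. The support and the equality to one hold by construction. The function extends continuously by one on $\partial\Om\cap B_r(z)$.

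\emph{Measure bound.} Let $N_\rho=\Om\cap B_{2r}(z)\cap\{d_\Om<2\rho\}$. Boundary decay \eqref{eq:boundary-normalized-decay}, applied on a fixed enlargement and compared to $a_r(z)$ by Harnack chains, gives $U\le Ca_r(z)(\rho/r)^\beta$ on $N_\rho$. The porosity fixed in Subsection~\ref{subprelim} gives the layer estimate $|N_\rho|\le Cr^n(\rho/r)^{\kappa_0}$. Multiplying yields \eqref{eq:strip-measure}.

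\emph{Energy bound.} Every nonzero difference $\zeta_\rho(x)-\zeta_\rho(y)$ has at least one endpoint in $N_\rho$. By symmetry,
\[
\mathscr E_U(\zeta_\rho)\le\int_{N_\rho}\Gamma_U(\zeta_\rho)(x)\,\mathrm d\mu(x),\qquad |\zeta_\rho(x)-\zeta_\rho(y)|^2\le C\min\{1,|x-y|^2/\rho^2\}.
\]
So the task is to bound the jump moment $I(x,\rho)$ at points $x\in N_\rho$, with the full global set $S$ included. We decompose $N_\rho$ into a Whitney-type covering by boundary balls $\mathbb B_{\rho'}(z')$ with $\rho'\asymp\rho$ and $z'\in\partial\Om\cap B_{2r}(z)$. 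The number of such balls is at most $C(r/\rho)^{n-\kappa_0}$, which is the counting form of the porosity estimate. On each ball, \eqref{eq:variable-moment+} gives $I(x,\rho)\le Ca_\rho(z')^{p-1}\rho^{-sp}$ for $x\in D_{\rho}(z')$. The earlier localization covers these balls; the allowed-ball condition holds because $|z'-w|+\kappa_*\rho<LR$ follows from the hypothesis on $(z,r)$. Each ball has weight $\mu(D_\rho(z'))\asymp a_\rho(z')\rho^n$ by \eqref{eq:variable-volume}, so it contributes at most $Ca_\rho(z')^p\rho^{n-sp}$. Boundary decay gives $a_\rho(z')\le Ca_r(z)(\rho/r)^\beta$. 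Summing over the balls gives
\[
\mathscr E_U(\zeta_\rho)\le C\Bigl(\frac r\rho\Bigr)^{n-\kappa_0}a_r(z)^p\Bigl(\frac\rho r\Bigr)^{p\beta}\rho^{n-sp}=Ca_r(z)^pr^{n-sp}\Bigl(\frac\rho r\Bigr)^{\kappa_0+p\beta-sp},
\]
which is exactly \eqref{eq:strip-capacity} with $\varepsilon_0=\kappa_0-p(s-\beta)$.

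\emph{Main obstacle.} The hard part is applying \eqref{eq:variable-moment+} uniformly at scale $\rho$ while still capturing every distant interaction. The cutoff density involves all of $S$, including exterior points where $U$ is uncontrolled. This is handled because \eqref{eq:variable-moment+} already includes the far region through the normalized positive tail \eqref{eq:boundary-normalized-mass}. A second delicate point is the counting and layer estimate from porosity, which must be stated with the same exponent $\kappa_0$ as fixed earlier. I would prove it by the standard argument: at each dyadic scale, a fixed fraction of every boundary ball is occupied by an exterior corkscrew ball, so the $2\rho$-neighborhood of the boundary loses a fixed proportion at each scale; iterating gives the power $\kappa_0$.

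\emph{Zero capacity.} Cover a compact $K\subset\partial\Om\cap B_{LR}(w)$ by finitely many allowed balls $\mathbb B_{r_i}(z_i)$. Take $\eta_j=\max_i\zeta^{(i)}_{\rho_j}$ with $\rho_j\downarrow0$. The maximum of finitely many such functions has $L^2$ norm and energy bounded by the respective sums: for the energy, use $|\max_ia_i-\max_ib_i|\le\max_i|a_i-b_i|$ and then bound the squared maximum by the sum of squares. Each term tends to zero by \eqref{eq:strip-measure} and \eqref{eq:strip-capacity}. Each $\eta_j$ equals one on the relative neighborhood $\bigcup_i\mathbb B_{r_i}(z_i)\cap\{d_\Om\le\rho_j\}$ of $K$ and is supported in the controlled patch.
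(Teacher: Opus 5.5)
Your proof is correct and follows the paper's argument: the same distance-based cutoff, the same measure bound (boundary decay times the porosity layer estimate), and the same reduction of the full energy to $\int\Gamma_U(\zeta_\rho)\,\mathrm d\mu$ over the strip. As in the paper, that integral is controlled by the boundary-scale moment estimate \eqref{eq:variable-moment+} at scale comparable to $\rho$ around nearby boundary points. The only difference is bookkeeping: the paper multiplies the uniform density bound $\Gamma_U(\zeta_\rho)\le Ca_r(z)^{p-1}r^{-\beta(p-1)}\rho^{\beta(p-1)-sp}$ directly by the $\mu$-measure of the strip already obtained for \eqref{eq:strip-measure}, so it never needs your counting form $C(r/\rho)^{n-\kappa_0}$ of the layer estimate (which does follow from the layer estimate together with interior corkscrews, so nothing is lost).
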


\begin{proof}
Uniform exterior corkscrews imply uniform porosity
of the boundary.
The corresponding covering iteration gives
\[
 |D_{2r}(z)\cap\{d_\Om<2\rho\}|
 \le
 Cr^n\bigl({\rho}/{r}\bigr)^{\kappa_0},
 \qquad 0<\rho<r/8.
\]
The exponent \(\kappa_0\) is the one fixed at the
initial geometric flatness threshold in
Subsection~\ref{subprelim}.
Subsequent reductions of the flatness constant
preserve this estimate.
In the Reifenberg setting, it can also be obtained
by iterating the boundary-layer estimate in
\cite[Lemma~2.3]{BKS} at that fixed threshold.

Choose a Lipschitz function \(\chi\) on \(\R^n\)
such that
\[
 0\le\chi\le1,\qquad
 \chi=1\text{ on }B_r(z),\qquad
 \operatorname{supp}\chi\subset B_{5r/4}(z),
 \qquad
 \operatorname{Lip}(\chi)\le C/r.
\]
The slightly smaller support will ensure that the
nearby corkscrew points used below lie in \(B_{2r}(z)\).
Choose also a Lipschitz function
\(\vartheta:[0,\infty)\to[0,1]\), equal to one on
\([0,1]\) and zero on \([3/2,\infty)\), and define
\[
 \zeta_\rho(x)
 =
 \chi(x)\vartheta\left(\frac{d_\Om(x)}{\rho}\right),
 \qquad x\in S.
\]
Since \(d_\Om\) is globally \(1\)-Lipschitz, $\operatorname{Lip}(\zeta_\rho)
 \le C/r+C/\rho\le C/\rho$. The same formula defines its values on the adjoined
boundary, where \(d_\Om=0\).
It also makes \(\zeta_\rho\) identically zero on
the part of \(S\) outside \(B_{5r/4}(z)\).
The choice of \(\vartheta\) ensures that its support
lies strictly inside \(\{d_\Om<2\rho\}\).

Write \(a=a_r(z)\), and put
\[
 T_\rho
 =
 S\cap B_{5r/4}(z)\cap\{d_\Om<2\rho\}.
\]
This set contains the support of \(\zeta_\rho\)
up to the adjoined boundary, which has zero
\(\mu\)-measure.
For \(x\in T_\rho\), choose a nearest boundary
point \(q_x\).
Both \(x\) and \(q_x\) lie in \(B_{2r}(z)\).
The boundary H\"older estimate in
Proposition~\ref{prop:boundary-normalization},
together with \(U(q_x)=0\), gives
\[
 U(x)
 \le
 Ca\left(\frac{d_\Om(x)}{r}\right)^\beta
 \le
 Ca\bigl({\rho}/{r}\bigr)^\beta.
\]
Combining this with the boundary-layer estimate yields
\[
 \mu(T_\rho)
 \le
 Ca r^n\bigl({\rho}/{r}\bigr)^{\kappa_0+\beta}.
\]
Since \(0\le\zeta_\rho\le1\) and
\(\zeta_\rho=0\) outside \(T_\rho\) up to null sets,
this proves \eqref{eq:strip-measure}.

We next estimate the full energy.
Fix \(x\in T_\rho\), and use the boundary point
\(q_x\) chosen above.
Since \(|x-q_x|<2\rho\), we have
\(x\in D_{4\rho}(q_x)\).
The ball \(\mathbb B_{4\rho}(q_x)\) is allowed.
Indeed,
\[
 |q_x-w|+4\kappa_*\rho
 \le
 |z-w|+\frac54r+2\rho+4\kappa_*\rho
 <
 |z-w|+\kappa_*r
 <LR,
\]
where we used \(\rho<r/8\) and \(\kappa_*\ge8\). Let \(A_{4\rho}(q_x)\) be its corkscrew point.
Then
\[
 |A_{4\rho}(q_x)-z|
 \le\frac54r+6\rho<2r.
\]
Thus both \(q_x\) and this corkscrew point lie in
the ball on which the normalized boundary H\"older
estimate is available.
Consequently,
\[
 a_{4\rho}(q_x)
 =
 U(A_{4\rho}(q_x))
 \le
 Ca\bigl({\rho}/{r}\bigr)^\beta.
\]

The bounds on \(\zeta_\rho\) and its Lipschitz
constant imply
\[
 |\zeta_\rho(x)-\zeta_\rho(y)|^2
 \le
 C\min\left\{1,\frac{|x-y|^2}{(4\rho)^2}\right\}.
\]
Apply the boundary-scale estimate in
Proposition~\ref{prop:variable-moment} at
\((q_x,4\rho)\).
It gives
\begin{align*}
 \Gamma_U(\zeta_\rho)(x)
 &\le
 C a_{4\rho}(q_x)^{p-1}\rho^{-sp}\le
 C a^{p-1}r^{-\beta(p-1)}
 \rho^{\beta(p-1)-sp}.
\end{align*}
This bound includes integration over all of \(S\),
including its distant exterior part. Since \(\zeta_\rho\) vanishes outside \(T_\rho\),
every pair contributing to its energy has at least
one endpoint in \(T_\rho\).
Symmetry of the kernel therefore gives
\[
 \mathscr E_U(\zeta_\rho)
 \le
 \int_{T_\rho}
 \Gamma_U(\zeta_\rho)(x)\,\mathrm d\mu(x).
\]
Using the preceding energy-density bound and the
estimate for \(\mu(T_\rho)\), we obtain
\begin{align*}
 \mathscr E_U(\zeta_\rho)
 &\le
 Ca^p r^{n-sp}
 \bigl({\rho}/{r}\bigr)^{
       \kappa_0+\beta+\beta(p-1)-sp}=
 Ca^p r^{n-sp}\bigl({\rho}/{r}\bigr)^{
       \kappa_0-p(s-\beta)}.
\end{align*}
This proves \eqref{eq:strip-capacity}.
The positivity of \(\varepsilon_0\) is precisely
what makes this energy tend to zero.

If \(K\subset\partial\Om\cap B_r(z)\) is compact,
then \(\zeta_\rho=1\) on a relative neighborhood
of \(K\).
Equations \eqref{eq:strip-measure} and
\eqref{eq:strip-capacity} therefore prove the
zero-capacity assertion for such a set.
A general compact subset of
\(\partial\Om\cap B_{LR}(w)\) is covered by finitely
many boundary balls satisfying the stated condition.
Take the sum of the corresponding cutoffs and
truncate it at one.
The resulting function equals one near the compact
set, and its squared norm and energy are bounded
by a fixed multiple of the sum of the corresponding
squared norms and energies.
They therefore tend to zero, proving the final
assertion.
\end{proof}

\begin{prop}\label{prop:bounded-quotient-removal}
Assume the hypotheses of
Lemma~\ref{lem:variable-volume-scale}.
Let \(q\) be measurable on \(S\), with
\(0\le q\le M_q<\infty\).
Suppose that
\[
 \mathscr E_{U,V}(q)<\infty
 \quad\text{for every open }V\Subset D_{LR}(w),
\]
and that
\begin{equation}\label{eq:quotient-cutoff-energytest}
 \mathscr E_U(q,\phi)=0
\end{equation}
for every \(\phi\in\mathrm L^2(S,\mu)\) with finite
full energy and compact support in \(D_{LR}(w)\).
Then, for every Lipschitz cutoff \(\eta\) whose
support is a compact subset of
\(\overline\Om\cap B_{LR}(w)\),
\begin{equation}\label{eq:quotient-cutoff-energy}
 \mathscr E_U(q\eta)
 \le M_q^2\mathscr E_U(\eta).
\end{equation}
Furthermore, each \(q\eta\) can be approximated in the full form norm
by Lipschitz functions with compact support in any fixed
relatively open neighborhood of \(\supp\eta\) within the
controlled patch $B_{LR}(w)$.
Thus \(q\) belongs locally to the form domain also at the
adjoined boundary.
Moreover, the weak identity in \eqref{eq:quotient-cutoff-energytest} remains valid for every
\(\phi\in\mathrm L^2(S,\mu)\) with finite full energy
whose support is a compact subset of
\(\overline\Om\cap B_{LR}(w)\), including test functions whose
supports meet \(\partial\Om\). Supports are understood relative to the locally
adjoined space.
All cutoffs and test functions are extended by zero
to the remaining part of \(S\), and their energies
include all interactions in \(S\times S\).
\end{prop}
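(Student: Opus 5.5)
The plan is to prove \eqref{eq:quotient-cutoff-energy} first for cutoffs supported away from \(\partial\Om\), using a nonlocal Caccioppoli identity, and then to remove the boundary with the cutoffs \(\zeta_\rho\) of Lemma~\ref{lem:variable-boundary-capacity}. The algebraic input is the pointwise identity
\[
 (q(x)\eta(x)-q(y)\eta(y))^2
 -(q(x)-q(y))\bigl(q(x)\eta(x)^2-q(y)\eta(y)^2\bigr)
 =q(x)q(y)(\eta(x)-\eta(y))^2 .
\]
Integrated against \(\frac12K_U\,\mathrm d\mu\,\mathrm d\mu\) over \(S\times S\), it gives
\[
 \mathscr E_U(q\eta)=\mathscr E_U(q,q\eta^2)
 +\frac12\iint_{S\times S}q(x)q(y)(\eta(x)-\eta(y))^2K_U\,\mathrm d\mu\,\mathrm d\mu .
\]
If \(q\eta^2\) is an admissible test in \eqref{eq:quotient-cutoff-energytest}, the first term vanishes. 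Since \(0\le q\le M_q\), the second term is then at most \(M_q^2\mathscr E_U(\eta)\).

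\textbf{Step 1 (interior cutoffs).} Let \(\eta\) be a Lipschitz cutoff with compact support in \(D_{LR}(w)\). Then \(q\eta^2\) is bounded and lies in \(\mathrm L^2(S,\mu)\). Its full energy is finite: pairs inside a neighbourhood \(V\Subset D_{LR}(w)\) of the support are controlled by \(\mathscr E_{U,V}(q)\) and \(\Gamma_U(\eta^2)\). Pairs with one endpoint in the support and the other outside \(V\) are controlled by \(M_q^2\) times the distant jump mass in Proposition~\ref{prop:variable-moment}. Hence the identity applies and gives \(\mathscr E_U(q\eta)\le M_q^2\mathscr E_U(\eta)\) in this case.

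\textbf{Step 2 (cutoffs meeting the boundary).} For a general \(\eta\) supported in \(\overline\Om\cap B_{LR}(w)\), cover \(\partial\Om\cap\supp\eta\) by finitely many allowed boundary balls. Let \(\zeta_\rho\) be the truncated sum of the corresponding cutoffs, and set \(\eta_\rho=\eta(1-\zeta_\rho)\), which falls under Step 1. The product estimate, as in \eqref{Mestauu}, gives
\[
 \mathscr E_U(\eta\zeta_\rho)\le 2\|\eta\|_\infty^2\mathscr E_U(\zeta_\rho)+\int\zeta_\rho^2\,\Gamma_U(\eta)\,\mathrm d\mu .
\]
Here \(\Gamma_U(\eta)\) is bounded on the support by Proposition~\ref{prop:variable-moment}, so both terms tend to zero by \eqref{eq:strip-measure} and \eqref{eq:strip-capacity}. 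Thus \(\eta_\rho\to\eta\) in the form norm, while \(q\eta_\rho\to q\eta\) in \(\mathrm L^2(\mu)\). Fatou's lemma applied to \(\mathscr E_U(q\eta_\rho)\le M_q^2\mathscr E_U(\eta_\rho)\) yields \eqref{eq:quotient-cutoff-energy}.

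\textbf{Step 3 (form-norm approximation).} Applying \eqref{eq:quotient-cutoff-energy} to the Lipschitz cutoff \(\eta\zeta_\rho\) gives
\[
 \mathscr E_U(q\eta-q\eta_\rho)=\mathscr E_U(q\eta\zeta_\rho)\le M_q^2\,\mathscr E_U(\eta\zeta_\rho)\to0 ,
\]
so \(q\eta_\rho\to q\eta\) in the full form norm. It then remains to approximate each fixed \(q\eta_\rho\) by Lipschitz functions with compact support in \(\Om\). I would use mollification at a scale much smaller than the distance from the support to \(\partial\Om\). Near-diagonal interactions are handled by the interior energy bounds together with the Whitney kernel comparisons of Proposition~\ref{prop:whitney-pair}. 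Far interactions are handled by dominated convergence, using the jump moment bound and \(|q\eta_\rho|\le M_q\).

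I expect this mollification step to be the main obstacle. The kernel \(K_U\) is neither translation invariant nor comparable to a fixed fractional kernel, so \(\mathrm L^2\) continuity of translations in energy is not automatic. If mollification proves awkward, I would instead prove density abstractly. The plan would be to show that the orthogonal complement of the Lipschitz functions within the local closed form domain is trivial, using the Markov property and truncation.

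\textbf{Step 4 (weak identity for boundary tests).} Take \(\phi\) as in the final assertion, and choose a cutoff \(\eta\) with \(\eta=1\) near \(\supp\phi\). Split \(\phi=\phi(1-\zeta_\rho)+\phi\zeta_\rho\). The first piece is admissible, so \(\mathscr E_U(q,\phi)=\mathscr E_U(q,\phi\zeta_\rho)\), and this equals
\[
 \mathscr E_U(q\eta,\phi\zeta_\rho)+\mathscr E_U(q(1-\eta),\phi\zeta_\rho).
\]
The first term tends to zero by Cauchy--Schwarz, using Step 2 and the product estimate for \(\phi\zeta_\rho\). In the second term \(q(1-\eta)\) vanishes near \(\supp\phi\), so only separated pairs contribute. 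That term is therefore bounded by \(CM_q\int|\phi|\zeta_\rho\,\mathrm d\mu\) times the distant jump mass from Proposition~\ref{prop:variable-moment}. It tends to zero by \eqref{eq:strip-measure}.
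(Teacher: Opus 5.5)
Your Steps 1 and 2 are the paper's argument: test with $q\eta_\rho^2$ via the ground-state identity, remove the boundary with the strip cutoffs of Lemma~\ref{lem:variable-boundary-capacity}, and conclude by Fatou. In Step 3 you get $q\eta_\rho\to q\eta$ in the form norm by applying \eqref{eq:quotient-cutoff-energy} to $\eta\zeta_\rho$. This is a clean and valid variant of the paper's product-estimate/dominated-convergence argument.

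\textbf{The gap is in Step 4.} The statement asserts the weak identity for every $\phi\in\mathrm L^2(S,\mu)$ with finite full energy, not only for bounded $\phi$. Your claim $\mathscr E_U(\phi\zeta_\rho)\to0$ rests on the product estimate (the analogue of \eqref{Mestauu}), which carries the factor $\|\phi\|_\infty$. For unbounded $\phi$ the splitting instead produces $\int|\phi|^2\Gamma_U(\zeta_\rho)\,\mathrm d\mu$. On the strip, Lemma~\ref{lem:variable-boundary-capacity} only bounds $\Gamma_U(\zeta_\rho)$ by a multiple of $\rho^{\beta(p-1)-sp}$, which blows up as $\rho\downarrow0$. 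Nothing in your argument makes this term small.

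The paper closes this in two stages:
\begin{itemize}
\item It proves the identity first for bounded tests.
\item It then truncates with $T_N\phi=\max\{-N,\min\{\phi,N\}\}$. Since $|\Delta_{xy}(\phi-T_N\phi)|\le|\Delta_{xy}\phi|$, dominated convergence gives $T_N\phi\to\phi$ in the form norm. The map $h\mapsto\mathscr E_U(q,h)$ is continuous in the form norm on functions supported in a fixed compact $K$. This follows from exactly your decomposition: Cauchy--Schwarz against $q\eta$, and the bound $CM_q\mu(K)^{1/2}\|h\|_{\mathrm L^2(\mu)}$ for the separated part $\mathscr E_U(q(1-\eta),h)$.
\end{itemize}
Also, even for bounded $\phi$, $\Gamma_U(\phi)$ is only integrable, not bounded, as $\Gamma_U(\eta)$ was in Step 2. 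So the term $\int\zeta_\rho^2\Gamma_U(\phi)\,\mathrm d\mu$ must go to zero by dominated convergence, using $\int_S\Gamma_U(\phi)\,\mathrm d\mu=2\mathscr E_U(\phi)<\infty$.

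\textbf{On the mollification step}, your worry is resolved by the tool you name, provided you use it in both directions; the abstract density fallback is unnecessary.
\begin{itemize}
\item The lower bound in \eqref{eq:whitney-pair-energy}, applied on a finite Whitney covering with a partition of unity and the two-sided bounds for $U$ on the compact interior set, puts the zero extension of $q\eta_\rho$ in $\mathrm H^{\sigma_*/2}(\R^n)$.
\item For functions supported in a fixed compact interior set, the upper bound $K_U\le C|x-y|^{-n-\sigma_*}$ on close pairs, together with Proposition~\ref{prop:variable-moment} on separated pairs, dominates the full form norm by the $\mathrm L^2+\mathrm H^{\sigma_*/2}$ norm.
\end{itemize}
Ordinary mollification then converges in the form norm. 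Comparison of quadratic forms is enough here; pointwise comparability of $K_U$ with a fixed fractional kernel is not needed.
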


\begin{proof}
We first note that interior localization is admissible
under the stated hypotheses.
If \(\psi\) is a Lipschitz cutoff compactly supported
in \(D_{LR}(w)\), choose an open set
\(V\Subset D_{LR}(w)\) containing its support with
a positive buffer.
The local energy of \(q\) controls interactions in
\(V\times V\).
Interactions between the support of \(\psi\) and
\(S\setminus V\) are separated by a positive distance
and are controlled by the boundedness of \(q\) and
Proposition~\ref{prop:variable-moment}.
Thus \(q\psi\) and \(q\psi^2\) have finite full energy,
and \(q\psi^2\) is an admissible interior test function.

Let \(\eta\) be a cutoff as in the statement.
Cover the compact set
\(\partial\Om\cap\operatorname{supp}\eta\)
by finitely many boundary balls to which
Lemma~\ref{lem:variable-boundary-capacity} applies.
Taking truncated sums of the corresponding strip
cutoffs gives Lipschitz functions \(\zeta_j\) such that
\[
 0\le\zeta_j\le1,\qquad
 \zeta_j\longrightarrow0
 \quad\mu\text{-almost everywhere on }S,
\]
and
\[
 \|\zeta_j\|_{\mathrm L^2(S,\mu)}^2
 +\mathscr E_U(\zeta_j)\longrightarrow0.
\]
Their supports lie in a fixed compact subset of the
controlled patch, and each \(\zeta_j\) equals one
on a relative neighborhood of
\(\partial\Om\cap\operatorname{supp}\eta\).
Consequently, $\eta_j=\eta(1-\zeta_j)$ has compact support strictly inside \(D_{LR}(w)\).

Testing the equation with \(q\eta_j^2\) and using
\[
 (a-b)(a\alpha^2-b\gamma^2)
 =
 (a\alpha-b\gamma)^2-ab(\alpha-\gamma)^2
\]
gives the exact identity
\[
 \mathscr E_U(q\eta_j)
 =
 \frac12\iint_{S\times S}
 q(x)q(y)|\eta_j(x)-\eta_j(y)|^2
 K_U(x,y)\,\mathrm d\mu(x)\,\mathrm d\mu(y).
\]
Since \(0\le q\le M_q\),
\[
 \mathscr E_U(q\eta_j)
 \le M_q^2\mathscr E_U(\eta_j).
\]

We next show that \(\eta_j\to\eta\) in the full
form norm.
The product estimate gives
\[
 \mathscr E_U(\eta\zeta_j)
 \le
 C\|\eta\|_\infty^2\mathscr E_U(\zeta_j)
 +
 C\int_S\zeta_j^2\Gamma_U(\eta)\,\mathrm d\mu.
\]
The moment estimate at a sufficiently small fixed
radius bounds \(\Gamma_U(\eta)\) uniformly on the
fixed compact set containing the supports of
\(\zeta_j\).
The second term therefore tends to zero with
\(\|\zeta_j\|_{\mathrm L^2(S,\mu)}^2\).
Also,
\[
 \|\eta\zeta_j\|_{\mathrm L^2(S,\mu)}
 \le
 \|\eta\|_\infty
 \|\zeta_j\|_{\mathrm L^2(S,\mu)}
 \longrightarrow0.
\]
Thus \(\eta_j\to\eta\) in energy and in
\(\mathrm L^2(S,\mu)\).
Since \(q\eta_j\to q\eta\) almost everywhere,
Fatou's lemma in the nonnegative pair integral yields
\[
 \mathscr E_U(q\eta)
 \le
 \liminf_{j\to\infty}\mathscr E_U(q\eta_j)
 \le
 M_q^2\mathscr E_U(\eta).
\]
This proves \eqref{eq:quotient-cutoff-energy}.
In particular, \(q\eta\) has finite full energy
even when the support of \(\eta\) meets the boundary.

Before enlarging the test function class, we record continuity
of the weak pairing on a fixed compact support.
Let \(K\subset\overline\Om\cap B_{LR}(w)\) be compact,
and choose a Lipschitz cutoff \(0\le\chi\le1\),
compactly supported in the patch and equal to one
on a neighborhood of \(K\).
For a finite-energy function \(h\) supported in \(K\),
write
\[
 \mathscr E_U(q,h)
 =
 \mathscr E_U(q\chi,h)
 +
 \mathscr E_U(q(1-\chi),h).
\]
The first term satisfies
\[
 |\mathscr E_U(q\chi,h)|
 \le
 \mathscr E_U(q\chi)^{1/2}
 \mathscr E_U(h)^{1/2}.
\]
The first factor is finite by the estimate just proved.  For the second term, choose \(a>0\) sufficiently small that
\(\chi=1\) on \(S\cap B_a(x)\) for every \(x\in K\),
and that the corresponding relative balls are allowed.
Proposition~\ref{prop:variable-moment} gives
\[
 \int_S q(y)(1-\chi(y))K_U(x,y)\,\mathrm d\mu(y)
 \le M_q\int_{S\setminus B_a(x)}
              K_U(x,y)\,\mathrm d\mu(y)
 \le \frac{CM_q}{\Psi(x,a)}
\]
for \(\mu\)-almost every \(x\in K\).
The nearby-center comparison in
Lemma~\ref{lem:variable-volume-scale}, together with
a finite covering of \(K\), bounds \(\Psi(x,a)^{-1}\)
uniformly on \(K\). Since \(\mu(K)<\infty\), Cauchy-Schwarz therefore yields
\[
 |\mathscr E_U(q(1-\chi),h)|
 \le C_{K,\chi}M_q\|h\|_{\mathrm L^2(S,\mu)}.
\]
Consequently, the pairing \(\mathscr E_U(q,h)\)
is well defined and continuous in the full form norm
for functions supported in \(K\).
No finite global energy of \(q\) is needed.

Now let \(f\) be a bounded finite-energy test function supported in \(K\), possibly meeting the boundary.
Choose strip cutoffs \(\zeta_j\) for
\(K\cap\partial\Om\) as above.
The product estimate gives
\[
 \mathscr E_U(f\zeta_j)
 \le
 C\|f\|_\infty^2\mathscr E_U(\zeta_j)
 +
 C\int_S\zeta_j^2\Gamma_U(f)\,\mathrm d\mu
 \longrightarrow0.
\]
Here \(\Gamma_U(f)\) is defined by the same integral
as for Lipschitz functions, and
\[
 \int_S\Gamma_U(f)\,\mathrm d\mu
 =2\mathscr E_U(f)<\infty.
\]
The second term tends to zero by dominated convergence,
as does \(\|f\zeta_j\|_{\mathrm L^2(S,\mu)}\).
Thus \(f_j=f(1-\zeta_j)\to f\) in the full form norm.
Each \(f_j\) has finite energy and compact support
strictly inside \(D_{LR}(w)\).
The assumed weak equation therefore gives
\(\mathscr E_U(q,f_j)=0\).
Continuity of the pairing on \(K\) yields
\(\mathscr E_U(q,f)=0\).

For an arbitrary compactly supported finite-energy
test \(f\), put
\[
 T_N(f)=\max\{-N,\min\{f,N\}\}.
\]
These truncations have the same support restriction
and converge to \(f\) in \(\mathrm L^2(S,\mu)\).
Moreover,
\[
 |\Delta_{xy}(f-T_N(f))|
 \le|\Delta_{xy}f|,
\]
and the left-hand side tends to zero almost everywhere.
Dominated convergence in the pair integral therefore
gives convergence in energy.
Applying the bounded-test conclusion to \(T_N(f)\)
and passing to the limit proves the asserted weak
identity for \(f\).

It remains to prove the asserted approximation of \(q\eta\) by
compactly supported Lipschitz functions and the resulting
local form-domain membership at the adjoined boundary. The truncation and strip-removal construction just
given approximates every compactly supported
finite-energy function by functions with compact
support strictly inside \(\Om\).
For such an interior support, a finite Whitney
covering and the lower comparison in
\eqref{eq:whitney-pair-energy}, together with a smooth
partition of unity, show that the zero extension
belongs to \(\mathrm H^{\sigma_*/2}(\R^n)\).
On these fixed interior sets, \(U\) is bounded above
and below by positive constants. Conversely, the upper kernel bound controls close
pairs by the fractional
\(\mathrm H^{\sigma_*/2}\) seminorm.
The remaining pairs are separated and are controlled
by the moment estimate.
Thus, for functions \(g\) supported in a fixed
compact interior neighborhood,
\[
 \|g\|_{\mathrm L^2(S,\mu)}^2+\mathscr E_U(g)
 \le
 C\left(
 \|g\|_{\mathrm L^2(\R^n)}^2
 +[g]_{\mathrm H^{\sigma_*/2}(\R^n)}^2
 \right).
\]
Smooth approximation inside that neighborhood
therefore converges in the full form norm.
Combining these approximations proves the required
closure by compactly supported Lipschitz functions.
Their supports may be kept in any fixed relatively
open neighborhood of the original compact support
within the controlled patch.
In particular, this applies to \(q\eta\), so \(q\)
belongs locally to the form domain also at the
adjoined boundary.
\end{proof}

\begin{lem}\label{lem:bounded-ground-state}
Assume the hypotheses of
Lemma~\ref{lem:variable-volume-scale}.
Let \(D\subset\R^n\) be a bounded open set with
\(D_{LR}(w)\subset D\), and suppose in addition that
\(U\in\Vs(D)\).
Let \(0\le M_q<\infty\), and let \(F\) be measurable
on \(\R^n\), with
\[
 0\le F\le M_qU\quad\text{almost everywhere in }\R^n,
 \qquad
 \iint_{Q_D}|F(x)-F(y)|^2K_U(x,y)
 \,\mathrm d y\,\mathrm d x<\infty,
\]
where \(Q_D=\R^{2n}\setminus(D^c\times D^c)\). Suppose that \(\mathcal L_UF=0\) weakly in \(D\),
with the weak identity valid for test functions in
\(\Wzero(D)\).
 Then \(q=F/U\) on \(S=\{U>0\}\) satisfies the
hypotheses and conclusions of
Proposition~\ref{prop:bounded-quotient-removal}.
In particular, its transformed weak equation admits
compactly supported finite-energy test functions whose
supports meet the adjoined boundary.
\end{lem}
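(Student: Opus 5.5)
The plan is to verify the three hypotheses of Proposition~\ref{prop:bounded-quotient-removal} for \(q=F/U\): boundedness, local finite energy, and the interior weak identity \eqref{eq:quotient-cutoff-energytest}. The conclusions then follow from that proposition. Boundedness is immediate from \(0\le F\le M_qU\), which gives \(0\le q\le M_q\) on \(S\). The same domination shows that \(F=0\) almost everywhere on \(Z=\{U=0\}\). Consequently the source term of Proposition~\ref{prop:quotient-source} vanishes, and Proposition~\ref{prop:ground-state} is the relevant identity.

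For the local energy, fix an open \(V\Subset D_{LR}(w)\). We write \(F(x)-F(y)=U(x)(q(x)-q(y))+q(y)(U(x)-U(y))\), which gives
\[
 U(x)U(y)|q(x)-q(y)|^2\le 2\frac{U(y)}{U(x)}|F(x)-F(y)|^2+2M_q^2\frac{U(y)}{U(x)}|U(x)-U(y)|^2 .
\]
On \(V\), Harnack chains bound \(U\) above and below by positive constants, so \(U(y)/U(x)\) is bounded on \(V\times V\). The first term is then controlled by the assumed finite \(K_U\)-energy of \(F\) on \(Q_D\supset V\times V\). The second term is at most a constant times \(\iint_{V\times V}|U(x)-U(y)|^p|x-y|^{-n-sp}\), which is finite because \(U\in\Vs(D)\). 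Hence \(\mathscr E_{U,V}(q)<\infty\).

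For the weak identity, let \(\phi\in\mathrm L^2(S,\mu)\) have finite full energy and compact support in \(D_{LR}(w)\). We first treat Lipschitz \(\phi\) and then pass to general \(\phi\) by the truncation and approximation steps already used in the proof of Proposition~\ref{prop:bounded-quotient-removal}. That passage is justified by continuity of the pairing \(\mathscr E_U(q,\cdot)\) on a fixed compact interior support, which follows from the moment bound (Proposition~\ref{prop:variable-moment}) together with the local energy just established. For Lipschitz \(\phi\) supported in a compact interior set \(K\), we check that \(U\phi\) and \(F\phi\) belong to \(\Wzero(D)\) and are admissible tests for \(\mathcal L_UF=0\) and \(\mathcal L_UU=0\). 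The second equation is Proposition~\ref{prop:homogeneity}, extended to \(\Wzero(D)\) by density since \(U\in\Vs(D)\). Admissibility of \(U\phi\) is standard, because \(U\in\Ws\) near \(K\) and \(\phi\) is Lipschitz.

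For \(F\phi\), the \(K_U\)-energy of \(F\) gives only a weighted \(L^2\) control, not \(\Ws\) membership. This is the main obstacle. Our plan is to note that the linear bilinear form \(\Bu(g,\psi)\) is well defined and continuous whenever both \(g\) and \(\psi\) have finite \(K_U\)-energy on \(Q_D\). Both \(F\phi\) and \(U\phi\) have this property. For \(F\phi\), we use \(|F\phi(x)-F\phi(y)|\le|\phi(x)||F(x)-F(y)|+|F(y)||\phi(x)-\phi(y)|\), bound the second term by \(M_qU(y)\) times the Lipschitz difference of \(\phi\), and control the far-field contribution by the jump moment estimate. We therefore read the equation \(\mathcal L_UU=0\) in this weighted-energy class: approximate \(F\phi\) by \(\Wzero(D)\) functions converging in \(K_U\)-energy, for instance through interior mollification, where \(K_U\) is comparable on Whitney balls to an \(\mathrm H^{\sigma_*/2}\) kernel by Proposition~\ref{prop:whitney-pair}.

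With both tests admissible, the pointwise identity of Proposition~\ref{prop:ground-state} yields
\[
 \frac12\iint_{S\times S}U(x)U(y)(q(x)-q(y))(\phi(x)-\phi(y))K_U(x,y)\,\mathrm dx\,\mathrm dy=0,
\]
which is \(\mathscr E_U(q,\phi)=0\). Pairs meeting \(Z\) contribute nothing, because both \(F\) and \(U\) vanish there. Absolute convergence of this integral follows from Cauchy--Schwarz using the local energy of \(q\) and the moment bound. Proposition~\ref{prop:bounded-quotient-removal} then supplies all the stated conclusions, including test functions whose supports meet the adjoined boundary.
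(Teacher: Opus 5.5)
Your proposal is correct and follows the paper's proof. It uses the same domination step, the same local energy bound from $U(x)(q(x)-q(y))=F(x)-F(y)-q(y)(U(x)-U(y))$ together with $U\in\Vs(D)$, and the same device of approximating $F\phi$ in $K_U$-energy via the Whitney-ball $\mathrm H^{\sigma_*/2}$ comparison of Proposition~\ref{prop:whitney-pair} so that it becomes an admissible test for $\mathcal L_UU=0$, after which Proposition~\ref{prop:ground-state} (with vanishing source) and Proposition~\ref{prop:bounded-quotient-removal} finish the argument.

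Two small slips should be corrected, neither of which affects the argument. First, $U$ is fractional $p$-harmonic only in $D_{\Lambda R}(w)$, so $\mathcal L_UU=0$ extends by density to $\Wzero(D_{LR}(w))$ but not to $\Wzero(D)$; this suffices, because all your tests are supported in $D_{LR}(w)$. Second, the separated pairs in the Lebesgue $K_U$-energy of $F\phi$ and of its approximation errors, including those with $y\in\{U=0\}$, need a bound on $\int K_U(x,y)\,\mathrm dy$ over $y$ away from $\supp\phi$, uniformly in $x\in\supp\phi$. That bound comes from the natural tail and H\"older's inequality, as in the paper, not from the $\mu$-weighted moment estimate of Proposition~\ref{prop:variable-moment}.
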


\begin{proof}
The domination assumption gives
\(0\le q\le M_q\) on \(S\) and
\(F=0\) almost everywhere on \(\{U=0\}\).
We first verify the local transformed energy of \(q\).
The additional assumption \(U\in\Vs(D)\) gives
\[
 \iint_{Q_D}|U(x)-U(y)|^2K_U(x,y)
 \,\mathrm d y\,\mathrm d x
 =
 (p-1)\iint_{Q_D}
 \frac{|U(x)-U(y)|^p}{|x-y|^{n+sp}}
 \,\mathrm d y\,\mathrm d x
 <\infty.
\]
Thus both \(U\) and \(F\) have finite energy on
\(Q_D\) for the original kernel \(K_U\).  Let \(V\Subset D_{LR}(w)\).
By Proposition~\ref{prop:interior}, \(U\) is continuous
and strictly positive in \(D_{LR}(w)\).
Compactness of \(\overline V\) therefore gives $0<m_V\le U(x)\le M_V<\infty$ for $x\in\overline V$. The identity
\[
 U(x)(q(x)-q(y))
 =
 F(x)-F(y)-q(y)(U(x)-U(y))
\]
therefore gives
\[
 U(x)U(y)|q(x)-q(y)|^2
 \le
 C_V\bigl(
 |F(x)-F(y)|^2
 +M_q^2|U(x)-U(y)|^2
 \bigr)
 \qquad(x,y\in V).
\]
Multiplication by \(K_U(x,y)\) and integration show
that \(\mathscr E_{U,V}(q)<\infty\).
Moreover, localization by an interior compactly
supported Lipschitz cutoff gives finite full
transformed energy.
Indeed, local energy controls nearby pairs, while
boundedness of \(q\) and
Proposition~\ref{prop:variable-moment} control the
separated exterior interactions.

We next derive the quotient equation for a smooth
test \(\phi\in C_0^\infty(D_{LR}(w))\).
Since \(U\) solves the nonlinear equation in this
patch, homogeneity gives
\[
 \mathcal L_UU=(p-1)\Fsp U=0
 \quad\text{in }D_{LR}(w).
\]
Only this local equation for \(U\) is used.
No fractional $p$-harmonicity of \(U\) throughout the auxiliary
domain \(D\) is required. Compactly supported multiplication gives
\(U\phi\in\Wzero(D)\), so \(U\phi\) is an admissible
test in the equation for \(F\).
To test the equation for \(U\) with \(F\phi\), we
approximate \(F\phi\) in the original \(K_U\)-energy.
Membership of \(F\phi\) in \(\Wzero(D)\) is not needed. Choose open sets \(O_0,O_1\) such that
\[
 \operatorname{supp}\phi\subset O_0\Subset O_1
 \Subset D_{LR}(w).
\]
The domination \(F\le M_qU\) gives local boundedness
and hence local \(\mathrm L^2\) integrability of \(F\).
Cover \(\operatorname{supp}\phi\) by finitely many
interior Whitney balls whose fixed enlargements
lie in \(O_1\).
Proposition~\ref{prop:whitney-pair}, applied with
both backgrounds equal to \(U\), converts the local
\(K_U\)-energy bound for \(F\) into
\(\mathrm H^{\sigma_*/2}\) regularity on the smaller
balls.
A smooth partition of unity then shows that the
zero extension of \(F\phi\) belongs to
\(\mathrm H^{\sigma_*/2}(\R^n)\).
It can therefore be approximated in that space by
smooth functions supported in one fixed compact
subset of \(O_1\).

We next verify that this approximation also converges
in the full \(K_U\)-energy.
For close pairs in \(O_1\), the local Lipschitz
bound for \(U\) gives
\[
 K_U(x,y)\le C|x-y|^{-n-\sigma_*}.
\]
For the remaining pairs, positive separation gives
\[
 \sup_{x\in K}
 \int_{\R^n\setminus O_1}K_U(x,y)\,\mathrm d y
 <\infty
\]
whenever \(K\Subset O_1\).
For \(p>2\), use
\[
 |U(x)-U(y)|^{p-2}
 \le C_p\bigl(U(x)^{p-2}+U(y)^{p-2}\bigr).
\]
The first term is controlled by local boundedness
of \(U\) and separation.
For the second, H\"older's inequality gives
\begin{align*}
 &\int_{\R^n\setminus O_1}
 \frac{U(y)^{p-2}}{|x-y|^{n+sp}}\,\mathrm d y\le
 \left(
 \int_{\R^n\setminus O_1}
 \frac{U(y)^{p-1}}{|x-y|^{n+sp}}\,\mathrm d y
 \right)^{(p-2)/(p-1)}
 \left(
 \int_{\R^n\setminus O_1}
 \frac{\mathrm d y}{|x-y|^{n+sp}}
 \right)^{1/(p-1)}.
\end{align*}
Both factors are uniformly finite for \(x\in K\),
by the natural tail assumption and the positive
distance between \(K\) and \(\R^n\setminus O_1\).
For \(p=2\), the assertion follows directly from
the separated fractional kernel. The close interactions of the approximation error
are therefore controlled by its
\(\mathrm H^{\sigma_*/2}\) seminorm, and the separated
interactions by its squared \(\mathrm L^2\) norm.
We obtain smooth functions
\(\psi_j\in C_0^\infty(D_{LR}(w))\), with supports
in a fixed compact interior set, such that
\[
 \iint_{Q_D}
 \bigl|\Delta_{xy}(\psi_j-F\phi)\bigr|^2
 K_U(x,y)\,\mathrm d y\,\mathrm d x
 \longrightarrow0.
\]
These integrals include interactions with
\(\{U=0\}\), which remain present in the original
linearized form. The weak equation for \(U\) holds with each
\(\psi_j\) as a test function.
Its finite \(K_U\)-energy on \(Q_D\), established
above, allows passage to the limit by
Cauchy-Schwarz.
Hence \(F\phi\) is an admissible energy test in
that equation.

We may now subtract the equation for \(U\), tested
with \(F\phi\), from the equation for \(F\), tested
with \(U\phi\).
Proposition~\ref{prop:ground-state} gives
\[
 \frac12\iint_{S\times S}
 (q(x)-q(y))(\phi(x)-\phi(y))
 K_U(x,y)\,\mathrm d\mu(x)\,\mathrm d\mu(y)
 =0.
\]
The domination assumption ensures that the source
in \eqref{eq:quotient-source} vanishes, because
\(F=0\) on the entire zero set of \(U\).
Thus
\[
 \mathscr E_U(q,\phi)=0
 \qquad
 \bigl(\phi\in C_0^\infty(D_{LR}(w))\bigr).
\]

It remains to verify the full interior test function class
required by
Proposition~\ref{prop:bounded-quotient-removal}.
On compact interior sets, \(U\) is bounded above
and below by positive constants.
The same Whitney-ball approximation argument
therefore applies to the transformed form, whose
Lebesgue kernel is \(U(x)U(y)K_U(x,y)\).
Its separated interactions are controlled by
Proposition~\ref{prop:variable-moment}.
Consequently, every compactly supported interior
finite-energy test can be approximated by smooth
interior test functions in the full transformed form norm. The pairing with \(q\) is continuous under this
approximation.
To see this, localize \(q\) by a cutoff equal to
one near the common support of the test functions.
The localized part has finite transformed energy
and is handled by Cauchy-Schwarz.
The remaining part contributes only separated
interactions, controlled by \(q\le M_q\) and the
jump tail estimate.
The quotient equation therefore holds for every
compactly supported interior energy test function. Consequently, all hypotheses of
Proposition~\ref{prop:bounded-quotient-removal}
are now verified.
That proposition gives
\eqref{eq:quotient-cutoff-energy}, local membership
in the form domain at the adjoined boundary, and
the weak equation for the enlarged test function class.
\end{proof}

\subsection{Local Harnack inequalities with the full exterior tail}

We now formulate the local quadratic Harnack inequality
needed for \(\mathscr E_U\). Define
\[
 X=S\cup(\partial\Om\cap B_{\Lambda R}(w)),\qquad
 P=X\cap B_{LR}(w),\qquad J=K_U.
\]
Equip \(X\) with the Euclidean metric and the measure
\(\mathrm d\mu=U\,\mathrm d x\) on \(S\), assigning zero
measure to the adjoined boundary.
The resulting quadratic form is \(\mathscr E_U\).
Functions on \(S\) are regarded as \(\mu\)-equivalence
classes on \(X\), so adjoining the boundary imposes
no condition on their boundary values. Our choice of
representative gives
\[
 X\cap B_{\Lambda R}(w)=\overline\Om\cap B_{\Lambda R}(w).
\]
In particular, \(P=X\cap B_{LR}(w)=\overline\Om\cap B_{LR}(w)\)
is open in the relative topology of \(X\).
The allowed relative balls are metric balls of \(X\),
including those centered at boundary points.
Although \(X\) need not be complete, the displayed
identity and \(L<\Lambda\) imply that every closed
metric ball contained in \(P\) is compact.
Moreover, \(\mu\) is Radon and finite on bounded sets,
with full support in \(P\).
These are the local topological and measure properties
required in Lemma~\ref{lem:local-form-harnack}. The space \(X\) retains the entire positivity set \(S\),
so the form includes all distant exterior interactions.
The volume and structural estimates, however, are
available only in \(P\).
In particular, \(\mu\) need not be globally doubling,
and the global theorem of \cite{CY} cannot be applied
directly.

Lemma \ref{lem:local-form-harnack} below is stated in an abstract form to
identify the local hypotheses and the estimates needed
to control the exterior terms. To formulate the lemma, let \((X,d,\mu)\) be a metric measure space, and let
\(P\subset X\) be open in the metric topology of \(X\).

Suppose that \(\mu\) is a Radon measure on \(X\) and that every nonempty relatively open subset of \(P\)
has positive \(\mu\)-measure. Assume that $\mu$ is finite on bounded subsets of \(P\), and that
every closed metric ball contained in \(P\) is compact. All supports, open sets and closures are understood relative to \(X\).
Thus \(G\Subset P\) means that \(\overline G^{\,X}\)
is compact and contained in \(P\).
In our application, such sets may contain points of
\(\partial\Om\). Write \(\mathbb B_\rho(x)=B_X(x,\rho)\). Let \(J\) be a measurable, symmetric, nonnegative kernel,
and define
\[
 \mathscr E(f,g)=\frac12\iint_{X\times X}
 (f(x)-f(y))(g(x)-g(y))J(x,y)
 \,\mathrm d\mu(x)\,\mathrm d\mu(y).
\]
We write \(\mathscr E_G\) when both integration variables
are restricted to \(G\).
Assume that every Lipschitz function on \(X\) with compact
support in \(P\) has finite energy.
For each open \(G\Subset P\), let \(\mathscr D_0(G)\)
be the closure of Lipschitz functions on \(X\) with
compact support in \(G\), in the norm
\[
 \bigl(\|f\|_{\mathrm L^2(X,\mu)}^2
       +\mathscr E(f,f)\bigr)^{1/2}.
\]
Thus the energy defining \(\mathscr D_0(G)\) includes
interactions between \(G\) and \(X\setminus G\). A bounded measurable function \(h\) belongs locally to
the form domain in \(P\) if $\eta h\in\mathscr D_0(G)$ for every open \(G\Subset P\) and every Lipschitz function
\(\eta\) on \(X\) with compact support in \(G\).
Such a function is harmonic in an open set \(O\subset P\)
if
\[
 \mathscr E(h,\phi)=0
 \quad\text{for every }\phi\in\mathscr D_0(G)
 \text{ and every open }G\Subset O.
\]

We are now ready to state the abstract lemma. Note that only the first endpoint
of an exterior interaction is required to lie in the
controlled patch. The second may lie anywhere in \(X\).

\begin{lem}\label{lem:local-form-harnack}
Let \((X,d,\mu)\), \(P\), \(J\), and \(\mathscr E\)
be as above, with the form domains and notion of
harmonicity just defined.
Suppose in addition that a positive measurable scale function
\(\Psi(x,r)\) is given on the local balls under
consideration. On balls whose required fixed enlargements lie in \(P\),
assume uniform volume doubling and reverse volume doubling,
the scale growth and nearby-center comparisons in
\eqref{eq:variable-weak-scale}, and the Poincar\'e
inequality \eqref{eq:variable-poincare}.
Assume also the full jump moment estimate
\eqref{eq:variable-moment} and the averaging estimate
\eqref{eq:variable-ujs}.
These estimates are understood with \(X\), \(J\), and
\(\mathscr E\) in place of \(S\), \(K_U\), and
\(\mathscr E_U\), respectively. Centers lie in \(P\),
distances and balls are those of \(X\), and the original
patch restrictions are replaced by containment of the
required fixed enlargements in \(P\).
In particular, the jump moment estimate includes
interactions with all of \(X\), and the second endpoint
in the averaging estimate may lie anywhere in \(X\),
subject to its distance condition.
All constants and fixed enlargement factors in these
assumptions are uniform. Under these assumptions there exist \(C_0\ge8\), \(H\ge1\), and
\(\delta_H\in(0,1)\), depending only on these structural
constants and enlargement factors, with the following
property.
Let \(z\in P\) and \(R>0\) satisfy $\overline{\mathbb B_{C_0R}(z)}\subset P$. Suppose that \(h\) is bounded on \(X\), belongs locally
to the form domain in \(P\), and is harmonic and
nonnegative in \(\mathbb B_R(z)\).
Then
\begin{equation}\label{eq:local-form-harnack}
 \esssup_{\mathbb B_\ell(z)}h
 \le H\left(
 \essinf_{\mathbb B_\ell(z)}h
 +\Psi(z,\ell)\mathcal T_{z,R}(h_-)
 \right),
 \qquad 0<\ell\le\delta_H R,
\end{equation}
where \(h_-=\max\{-h,0\}\) and
\begin{equation}\label{eq:variable-tail}
 \mathcal T_{z,R}(f)=
 \esssup_{\xi\in\mathbb B_{R/2}(z)}
 \int_{X\setminus\mathbb B_R(z)}
 |f(y)|J(\xi,y)\,\mathrm d\mu(y).
\end{equation}
All essential suprema and infima are taken with respect
to \(\mu\).
The constants are independent of the global bound on \(h\).
\end{lem}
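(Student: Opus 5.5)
The plan is to follow the quadratic Harnack strategy of Chen and Yu \cite{CY} in a purely local form, keeping the full exterior interactions in the error terms and never using any estimate beyond the fixed enlargement of $\mathbb B_R(z)$ inside $P$. We work at a single scale $R$ and take $C_0$ larger than every enlargement factor used below. There are three ingredients: a Caccioppoli inequality with a tail, an $\mathrm L^2\to\mathrm L^\infty$ mean value inequality for subsolutions, and a weak Harnack (lower bound) inequality for nonnegative supersolutions. Harnack then follows by combining the second and third at a common intermediate exponent.

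\textbf{Step 1: Caccioppoli.} For harmonic $h$ and a level $k$, we test with $\phi=\eta^2(h-k)_\pm$. This is admissible: $h$ is bounded and locally in the form domain, and the Markov property together with multiplication by Lipschitz cutoffs keeps $\phi$ in $\mathscr D_0(G)$. The multiplication step uses the bound on $\Gamma(\eta)$ from \eqref{eq:variable-moment}. The algebraic identity $(a-b)(a\alpha^2-b\gamma^2)=(a\alpha-b\gamma)^2-ab(\alpha-\gamma)^2$ gives
\[
 \mathscr E_{\mathbb B_{r}}\bigl(\eta(h-k)_\pm\bigr)
 \le C\int (h-k)_\pm^2\,\Gamma(\eta)\,\mathrm d\mu
 +C\int \eta^2(h-k)_\pm\,\mathrm d\mu\cdot
 \esssup_{\xi}\int_{X\setminus\mathbb B_r}(h-k)_\pm(y)J(\xi,y)\,\mathrm d\mu(y).
\]
We bound the $\Gamma(\eta)$ term by \eqref{cutt} and \eqref{eq:variable-moment}. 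For the tail of $(h-k)_+$, we split it into the part inside $\mathbb B_R$, which is controlled by the moment bound times the local supremum, and the part outside, which is controlled by $\mathcal T_{z,R}(h_+)$. Since $h$ is bounded, all quantities are finite, but the constants do not depend on $\|h\|_\infty$.

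\textbf{Step 2: $\mathrm L^\infty$ bound.} We run De Giorgi iteration on shrinking balls. The needed Sobolev-type improvement comes from the Poincaré inequality \eqref{eq:variable-poincare} combined with volume doubling and the lower scale growth ($\beta_0>0$) in \eqref{eq:variable-weak-scale}; these yield a Faber--Krahn inequality $\lambda_1(A)\ge c\,\Psi(x,r)^{-1}(\mu(\mathbb B_r)/\mu(A))^{\nu}$ for $A\subset\mathbb B_r$. The result is
\[
 \esssup_{\mathbb B_{r/2}}h\le \delta\,\Psi(z,r)\mathcal T_{z,r}(h_+)+C_\delta\bigl(\textstyle\fint_{\mathbb B_r}h^2\,\mathrm d\mu\bigr)^{1/2},
\]
with $\delta$ small. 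The positive tail is then removed by the kernel averaging estimate \eqref{eq:variable-ujs}: for $\xi$ near $z$ and $y$ far away, $J(\xi,y)\le C\fint_{\mathbb B_\rho}J(\cdot,y)\,\mathrm d\mu$. Hence $\mathcal T(h_+)$ is dominated by the average over a small ball of $\int h(y)J(\cdot,y)\,\mathrm d\mu(y)$. Testing the equation with a cutoff supported in that small ball converts this averaged quantity into $C\,\Psi^{-1}\fint h+C\,\mathcal T(h_-)$. This is the \cite{CY} "tail by infimum" device, and it is why only $\mathcal T(h_-)$ survives in \eqref{eq:local-form-harnack}.

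\textbf{Step 3: lower bound.} For a nonnegative supersolution we derive a log-Caccioppoli estimate by testing with $\eta^2/(h+\epsilon)$, where $\epsilon=\Psi\,\mathcal T(h_-)$. Combined with Poincaré, this places $\log(h+\epsilon)$ in BMO at scale $r$, and a local John--Nirenberg argument (valid under local doubling) gives a small exponent $\epsilon_0$ with
\[
 \Bigl(\fint h^{\epsilon_0}\Bigr)\Bigl(\fint h^{-\epsilon_0}\Bigr)\le C.
\]
A Moser or De Giorgi iteration on negative powers, with the tail of $h_-$ entering through $\epsilon$, then yields $\essinf h+\epsilon\ge c\bigl(\fint h^{\epsilon_0}\bigr)^{1/\epsilon_0}$. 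Finally, Step 2 with $L^2$ replaced by $L^{\epsilon_0}$ (by the standard interpolation/covering absorbing $\sup$) closes the chain, and $\delta_H$ is fixed by the nested radii.

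The main obstacle is that $\mu$ is doubling only inside $P$ while the tails range over all of $X$. Every step must therefore be arranged so that distant interactions enter only through the pointwise quantities $\int_{X\setminus\mathbb B}J(\xi,\cdot)\,\mathrm d\mu$ and $\mathcal T$, controlled by \eqref{eq:variable-moment} and \eqref{eq:variable-ujs}. I expect the delicate point to be Step 2's elimination of the positive tail via averaging without circular dependence on $\sup h$. I would handle it by choosing $\delta$ small and absorbing the resulting term via the standard iteration lemma over radii in $[R/4,R/2]$.
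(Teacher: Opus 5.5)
Your outline is essentially the paper's proof in the appendix. It is a Chen--Yu scheme run inside $P$, with the following steps:
\begin{enumerate}
\item a Caccioppoli estimate carrying only first-power exterior tails;
\item a Faber--Krahn inequality derived from the local Poincar\'e inequality, doubling and the lower scale exponent;
\item a logarithmic test followed by John--Nirenberg crossover and a sublevel De Giorgi iteration;
\item an $\mathrm L^2$ mean value bound upgraded to $\mathrm L^{q_0}$ by interpolation and iteration over radii, where boundedness of $h$ removes the residual term.
\end{enumerate}
In the logarithmic test, the paper regularizes by $\Psi(z,\ell)\mathcal T_{z,R}(h_-)+\varepsilon$, since the tail may vanish, and applies it to $h_+$, since $h$ may be negative off $\mathbb B_R(z)$.

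The one step that fails as written is your removal of the positive tail. Testing with a bare cutoff $\eta$ leaves the interior term $\frac12\iint(h(x)-h(y))(\eta(x)-\eta(y))J\,\mathrm d\mu\,\mathrm d\mu$. The hypotheses supply only the second-moment bound \eqref{eq:variable-moment}. They give no pointwise bound on $\int(\eta(x)-\eta(y))J(x,y)\,\mathrm d\mu(y)$, and the corresponding first moment can diverge, e.g.\ for $K_U$ when $\sigma_*\ge1$. So you do not obtain $C\Psi^{-1}\avgint h\,\mathrm d\mu$.

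The paper instead tests with $(h-2H_b)\eta^2$, where $H_b=\esssup_{\mathbb B_b(z)}h$. This gives an averaged tail bound by $H_b/\Psi(z,b)$ plus the negative tail. It then applies \eqref{eq:variable-ujs} to pass to the supremum over first endpoints. This produces exactly the $\sup h$ dependence you anticipated, and your absorption with small $\delta$ then closes the argument as in the paper.
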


The proof of Lemma~\ref{lem:local-form-harnack} is given in
Appendix~\ref{app:local-form-harnack}. The same hypotheses as in Lemma~\ref{lem:local-form-harnack} also give the following oscillation estimate,
normalized by a local supremum. We state it separately for
use with both the half-space and variable backgrounds.

\begin{cor}\label{cor:local-form-holder}
Assume the structural hypotheses and notation of
Lemma~\ref{lem:local-form-harnack}.
Then there are constants \(C_*\ge2C_0\), \(C\ge1\), and
\(\alpha\in(0,1)\), depending only on the structural data,
with the following property.
Let \(z\in P\) and \(r>0\) satisfy
\(\overline{\mathbb B_{C_*r}(z)}\subset P\).
Suppose that \(h\) is bounded and nonnegative on \(X\),
belongs locally to the form domain in \(P\), and is
harmonic in \(\mathbb B_{2r}(z)\).
Put \(Q=\esssup_{\mathbb B_{2r}(z)}h\).
Then \(h\) has a H\"older continuous representative on
\(\mathbb B_r(z)\), and
\begin{equation}\label{eq:local-form-holder}
 |h(x)-h(y)|\le CQ
 \left(\frac{d(x,y)}r\right)^\alpha
 \qquad(x,y\in\mathbb B_r(z)).
\end{equation}
The constants are independent of the global bound on \(h\).
\end{cor}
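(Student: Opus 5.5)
The plan is to derive the H\"older estimate from the Harnack inequality of Lemma~\ref{lem:local-form-harnack} through the standard De Giorgi--Moser oscillation-decay iteration, keeping track of the nonlocal tails. Fix \(z\), \(r\) and \(h\) as in the statement. Consider an arbitrary center \(x\in\mathbb B_r(z)\) and radii \(\rho_k=\theta^k r\), where \(\theta\in(0,1)\) is small and fixed, depending only on \(H\), \(\delta_H\), \(C_0\) and the scale growth exponents. Set \(\mathbb B^k=\mathbb B_{\rho_k}(x)\). We aim to construct nested intervals \([m_k,M_k]\) containing the essential range of \(h\) on \(\mathbb B^k\), with \(M_k-m_k\le CQ\theta^{k\alpha}\). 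At \(k=0\) we take \(m_0=0\), \(M_0=Q\). This is legitimate because \(\mathbb B_{r}(x)\subset\mathbb B_{2r}(z)\) and \(h\ge0\). The condition \(C_*\ge 2C_0\) is imposed precisely so that \(\overline{\mathbb B_{C_0\rho_k}(x)}\subset P\) at every step.

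\emph{Inductive step.} Apply Lemma~\ref{lem:local-form-harnack} on \(\mathbb B_{\rho_k}(x)\) to the two functions \(h-m_k\) and \(M_k-h\). Both are bounded, lie locally in the form domain (constants do, by the cutoff bound \eqref{cutt}), and are harmonic, since the energy sees only differences. Both are nonnegative on \(\mathbb B^k\). Use \(\ell=\rho_{k+1}\le\delta_H\rho_k\). Then add the two Harnack inequalities in the usual way, after first passing through the intermediate ball \(\mathbb B_{\delta_H\rho_k}\) and using the fact that one of \(h-m_k\), \(M_k-h\) exceeds \((M_k-m_k)/2\) on half the measure. (Equivalently, add \(\sup(h-m_k)+\sup(M_k-h)\).) This yields
\[
 \mathop{\mathrm{ess\,osc}}_{\mathbb B^{k+1}}h\le (1-H^{-1})(M_k-m_k)+C\Psi(x,\rho_{k+1})\bigl(\mathcal T_{x,\rho_k}((h-m_k)_-)+\mathcal T_{x,\rho_k}((M_k-h)_-)\bigr).
\]

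\emph{Tail estimate.} This is the main obstacle. Split \(X\setminus\mathbb B^k\) into the annuli \(\mathbb B^{i}\setminus\mathbb B^{i+1}\) for \(i<k\), together with the outer region \(X\setminus\mathbb B^0\). On \(\mathbb B^i\), nesting gives \((h-m_k)_-+(M_k-h)_-\le M_i-m_i-(M_k-m_k)\le CQ\theta^{i\alpha}\). On the outer region, \(0\le h\le\sup h\) is not available, since only \(Q\) controls \(h\) on \(\mathbb B_{2r}(z)\). There, however, \((h-m_k)_-\le m_k\le Q\) because \(h\ge 0\), while \((M_k-h)_-=(h-M_k)_+\) could be large. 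This is handled by noting that \(h-m_k\ge -m_k\) globally gives that tail directly, and by applying the Harnack inequality to \(M_k-h\) only with the positive part split off.

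If that splitting is awkward, the alternative is to apply the lemma instead to the truncated function \(\min\{h,Q\}\), checking that its failure to be harmonic is controlled by the tail of \((h-Q)_+\) outside \(\mathbb B_{2r}(z)\). That tail must then be absorbed using the jump moment estimate \eqref{eq:variable-moment}, which gives \(\int_{X\setminus\mathbb B_\rho}J(\xi,y)\,d\mu(y)\le C/\Psi(\xi,\rho)\).

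\emph{Closing the iteration.} In either variant, the annulus contributions are bounded using \eqref{eq:variable-moment} at radius \(\rho_{i+1}\) and the nearby-center comparison. Multiplying by \(\Psi(x,\rho_{k+1})\) and using the scale growth \eqref{eq:variable-weak-scale} gives
\[
 \Psi(x,\rho_{k+1})\,\mathcal T\le CQ\sum_{i\le k}\theta^{i\alpha}\theta^{(k-i)\beta_0}.
\]
For \(\alpha<\beta_0/2\) this is at most \(CQ\theta^{k\alpha}\varepsilon(\theta,\alpha)\), with \(\varepsilon\) small once \(\alpha\) is small. This is exactly the summation used in Lemma~\ref{lem:localized-oscillation}. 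Choosing \(\theta\) and then \(\alpha\) so that \((1-H^{-1})+\varepsilon\le\theta^{\alpha}\) closes the induction.

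The decay of essential oscillation at every center gives a Lebesgue-point representative. Full support of \(\mu\) identifies these representatives consistently, so \(h\) has a H\"older continuous representative on \(\mathbb B_r(z)\). Pairs with \(d(x,y)\ge\theta r\) are covered by the bound \(0\le h\le Q\).
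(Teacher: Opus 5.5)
You have the same skeleton as the paper's proof: nested intervals, Harnack applied to \(h-m_k\) and \(M_k-h\) on \(\mathbb B_{\rho_k}(x)\), and annular summation of the negative tails using the lower scale exponent \(\beta_0\). The step you flag as the main obstacle is not resolved, however, and neither of your two variants closes it.

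On \(X\setminus\mathbb B_r(x)\), the quantity \((M_k-h)_-=(h-M_k)_+\) is controlled only by \(h\) itself. So you need
\[
 \esssup_{\xi\in\mathbb B_{r/2}(x)}\int_{X\setminus\mathbb B_{r}(x)}h(y)J(\xi,y)\,\mathrm d\mu(y)\le\frac{CQ}{\Psi(x,r)},
\]
with \(C\) independent of the global bound on \(h\). Applying Harnack to \(M_k-h\) ``with the positive part split off'' does not avoid this, because the tail term in \eqref{eq:local-form-harnack} for \(M_k-h\) is exactly the tail of \((h-M_k)_+\). In the truncation variant, \(\min\{h,Q\}\) is not harmonic in \(\mathbb B_{2r}(z)\). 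For \(\phi\) supported there, \(\mathscr E(\min\{h,Q\},\phi)=\int\phi f\,\mathrm d\mu\) with \(f(\xi)=\int_X(h-Q)_+(y)J(\xi,y)\,\mathrm d\mu(y)\), and Lemma~\ref{lem:local-form-harnack} has no source term. Moreover, \eqref{eq:variable-moment} bounds only the kernel mass, so it controls \(f\) by \(\|h\|_\infty/\Psi\). That goes through precisely the global bound that the constants must not see.

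The missing ingredient is a positive-tail estimate that uses the equation, the analogue of Lemma~\ref{lem:positive-tail} for \(\mathscr E\). In the paper this is \eqref{eq:local-positive-tail}, proved inside the appendix proof of Lemma~\ref{lem:local-form-harnack}. The argument runs as follows.
\begin{enumerate}
\item Test the harmonicity of \(h\) with \((h-2Q)\eta^2\), where \(\eta\) is a cutoff equal to one on \(\mathbb B_{5r/8}(x)\) and supported in \(\mathbb B_{3r/4}(x)\). On \(\mathbb B_r(x)\subset\mathbb B_{2r}(z)\) we have \(0\le h\le Q\).
\item Interior pairs contribute at least \(-CQ^2\mathscr E(\eta)\). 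For cross pairs, global nonnegativity gives \((h(\xi)-h(y))(h(\xi)-2Q)\ge Qh(y)-3Q^2\).
\item Together with the moment estimate, this yields only an averaged bound,
\[
 \int_{\mathbb B_{5r/8}(x)}\int_{X\setminus\mathbb B_r(x)}hJ\,\mathrm d\mu\,\mathrm d\mu\le C\mu(\mathbb B_r(x))Q/\Psi(x,r).
\]
\item The kernel averaging estimate \eqref{eq:variable-ujs}, applied on \(\mathbb B_{r/16}(\xi)\) for \(\xi\in\mathbb B_{r/2}(x)\), together with doubling, upgrades this to the essential supremum over first endpoints that \(\mathcal T\) requires.
\end{enumerate}
The paper records the result as \eqref{eq:local-holder-positive-tail}, at radius \(r/4\).

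With this estimate, the outer region is handled by \((h-m_k)_-+(M_k-h)_-\le h+2Q\), and your summation goes through. Take the annuli with \(i\le k-1\); the \(i=k\) term in your sum is not small. Also note that the outer contribution has size at most \(C\theta^{\beta_0}\) relative to \(\omega_k\). It is therefore made small by choosing \(\theta\) small, not \(\alpha\).
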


The proof of Corollary \ref{cor:local-form-holder} is given at the end of
Appendix~\ref{app:local-form-harnack}.

\begin{rem}\label{rem:local-harnack-scope}
These results require volume doubling only in \(P\),
but retain all exterior interactions through the moment
and averaging hypotheses. They apply to bounded functions
with constants independent of their global bound.
The passage to possibly unbounded nonnegative
\(q\in\mathscr D_h+\R\) in the flat model uses
Lemma~\ref{lem:model-harmonic-approximation} separately.
\end{rem}

\begin{thm}\label{thm:variable-linear-bhi}
Assume \eqref{eq:range}, and fix the flatness threshold
and the parameters \(\kappa_*,L,\Lambda\) as in
Subsection~\ref{subprelim}.
Let \(\Om\) be \((\delta,r_0)\)-Reifenberg flat with
\(\delta\) below this threshold.
Let \(w\in\partial\Om\), \(R>0\), and \(\Lambda R<r_0\),
and let \(D\) be a bounded open set containing \(D_{LR}(w)\).
Suppose that $U\in\Vs(D)\cap\mathrm L^{p-1}_{sp}(\R^n)$ is nontrivial, globally nonnegative, and satisfies
\[
 \Fsp U=0\quad\text{in }D_{\Lambda R}(w),
 \qquad
 U=0\quad\text{on }B_{\Lambda R}(w)\setminus\Om
\]
in the sense of Definition~\ref{def:local-zero}.
Let \(F\) be measurable and satisfy
\[
 0\le F\le M_qU\quad\text{almost everywhere on }\R^n,
 \qquad
 \iint_{Q_D}|F(x)-F(y)|^2K_U(x,y)
 \,\mathrm d y\,\mathrm d x<\infty
\]
for some \(0\le M_q<\infty\).
Suppose also that \(\mathcal L_UF=0\) weakly in \(D\),
with test functions in \(\Wzero(D)\). Then \(q=F/U\) has a H\"older continuous representative
on \(\mathbb B_R(w)=\overline\Om\cap B_R(w)\).
With \(A=A_R(w)\), this representative satisfies
\begin{align}
 C^{-1}q(A)&\le q(x)\le Cq(A)
 &&\text{for }x\in\mathbb B_R(w),
 \label{eq:variable-linear-comparison}\\
 |q(x)-q(y)|
 &\le Cq(A)\left(\frac{|x-y|}{R}\right)^\alpha
 &&\text{for }x,y\in\mathbb B_R(w),
 \label{eq:variable-linear-holder}
\end{align}
where \(C\ge1\) and \(\alpha\in(0,1)\) depend only on
\(n,s,p\).
Pointwise values of \(F\) in \(D_R(w)\) are understood
through the representative \(F=Uq\), so that
\(q(A)=F(A)/U(A)\).
The constants are independent of \(M_q\).
\end{thm}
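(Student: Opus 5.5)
The plan follows the proof of Theorem~\ref{thm:linear-flat-model}, with the half-space structural estimates replaced by those of this section. First, Lemma~\ref{lem:bounded-ground-state} (using \(U\in\Vs(D)\), the domination \(0\le F\le M_qU\), and the finite \(K_U\)-energy of \(F\) on \(Q_D\)) shows that \(q=F/U\) satisfies the hypotheses and conclusions of Proposition~\ref{prop:bounded-quotient-removal}. Thus \(0\le q\le M_q\) is bounded. Moreover, \(q\) belongs locally to the form domain in \(P=\overline\Om\cap B_{LR}(w)\), including at the adjoined boundary, and \(\mathscr E_U(q,\phi)=0\) for all finite-energy \(\phi\) compactly supported in \(P\). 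The approximation statement in that proposition identifies these test functions with \(\mathscr D_0(G)\) for \(G\Subset P\). Hence \(q\) is harmonic in \(P\) in the sense of Lemma~\ref{lem:local-form-harnack}.

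Next I verify the structural hypotheses of Lemma~\ref{lem:local-form-harnack} and Corollary~\ref{cor:local-form-holder} on \(X=S\cup(\partial\Om\cap B_{\Lambda R}(w))\), \(J=K_U\), \(\mu=U\,\mathrm dx\). The required inputs are already available:
\begin{itemize}
\item volume doubling and reverse doubling, the scale growth \eqref{eq:variable-weak-scale}, and nearby-center comparability, from Lemma~\ref{lem:variable-volume-scale};
\item the Poincar\'e inequality \eqref{eq:variable-poincare}, from Proposition~\ref{prop:variable-poincare};
\item the full jump moment bound, including all distant interactions, from Proposition~\ref{prop:variable-moment};
\item the averaging estimate \eqref{eq:variable-ujs}, from Proposition~\ref{prop:variable-ujs};
\item the local topological and measure properties of \(X\), recorded before the lemma.
\end{itemize}
Every constant depends only on \(n,s,p\). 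Since \(q\ge0\) on all of \(X\), the negative tail \(\mathcal T_{z,R}(q_-)\) vanishes, and \eqref{eq:local-form-harnack} becomes a genuine Harnack inequality with constants independent of \(M_q\).

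Then I chain. The parameter \(L\) was chosen so that relative balls of radius \(cR\), with a fixed small \(c\), centered at points of \(\mathbb B_{2R}(w)\), have their \(C_0\)- and \(C_*\)-enlargements inside \(P\). Cover \(\mathbb B_{2R}(w)\) by boundedly many such balls, centered either at boundary points or at interior points. Consecutive balls overlap in sets of positive \(\mu\)-measure, using full support and the Harnack-chain geometry of Remark~\ref{rem:localized-chains}. One ball is centered near \(A=A_R(w)\). Chaining Harnack along these balls gives
\[
\esssup_{\mathbb B_{2R}(w)}q\le C\essinf_{\mathbb B_{2R}(w)}q .
\]
Corollary~\ref{cor:local-form-holder}, applied at radius \(cR\) about each point of \(\mathbb B_R(w)\) with \(Q\le\esssup_{\mathbb B_{2R}(w)}q\), yields a H\"older representative. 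These local representatives agree on overlaps by full support of \(\mu\). The resulting estimate is \(|q(x)-q(y)|\le C\,Q_0(|x-y|/R)^\alpha\) for nearby pairs. For separated pairs I use the trivial bound \(0\le q\le Q_0\).

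Finally, at \(A\) the function \(U\) is continuous and positive, and \(F\) agrees with \(Uq\) there, so \(q(A)=F(A)/U(A)\). Continuity of the representative together with the essential Harnack bound gives \(Q_0\le Cq(A)\le CQ_0\). This yields \eqref{eq:variable-linear-comparison} and \eqref{eq:variable-linear-holder}. If \(Q_0=0\), then \(q\equiv0\) and both statements are trivial.

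I expect the main obstacle to be the bookkeeping of admissibility. The enlarged balls required by the abstract lemma must lie in \(P\), which is where the ordering \(\kappa_*\), then \(L\), then \(\Lambda\) from Subsection~\ref{subprelim} enters. In addition, the boundary-centered Harnack balls must legitimately use test functions meeting \(\partial\Om\). I would handle the second point by appealing directly to the enlarged test class supplied by Proposition~\ref{prop:bounded-quotient-removal}. I would also confirm that the harmonicity definition quantifies over \(\mathscr D_0(G)\), which is the closure of Lipschitz functions and therefore lies within that test class.
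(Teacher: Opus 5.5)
Your proposal is correct and follows the paper's proof. Lemma~\ref{lem:bounded-ground-state} and Proposition~\ref{prop:bounded-quotient-removal} give bounded local form-domain membership and harmonicity up to the adjoined boundary, and the structural estimates verify Lemma~\ref{lem:local-form-harnack}, whose negative tail vanishes since \(q\ge0\). Corollary~\ref{cor:local-form-holder} and normalization at \(A\) then finish. The only difference is that the paper avoids your chaining step by applying Lemma~\ref{lem:local-form-harnack} once at \(w\), with outer radius \(R_H=3R/\delta_H\) and inner radius \(3R\), and Corollary~\ref{cor:local-form-holder} once at \(w\) with radius \(R\), having chosen \(L\) so that \(\kappa_*\max\{C_0R_H,C_*R\}<LR\); because these balls are centered at a boundary point, no chain geometry near \(\partial\Om\) is needed.
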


\begin{proof}
We use the space \(X\), patch \(P\), and kernel \(J=K_U\)
specified at the start of this subsection. The adjoined
boundary has zero measure, so the values initially
assigned to \(q\) there do not affect its energy or
weak equation.

The measure \(\mu=U\,\mathrm d x\) is locally finite,
since \(p\ge2\) and \(U\in\mathrm L^{p-1}_{\mathrm{loc}}(\R^n)\).
It has full support in \(P\), by interior positivity
and the corkscrew property.
Closed metric balls contained in \(P\) are compact.
Lemma~\ref{lem:variable-volume-scale} gives the local
volume and scale comparisons.
Propositions~\ref{prop:variable-poincare},
\ref{prop:variable-moment}, and \ref{prop:variable-ujs}
give Poincar\'e, the full jump moment bound, and kernel
averaging, respectively. The moment bound, symmetry,
and local scale comparison also give finite energy for
every Lipschitz function compactly supported in \(P\).
Finally, Lemma~\ref{lem:bounded-ground-state} and
Proposition~\ref{prop:bounded-quotient-removal} show that
\(q\) belongs locally to the form domain and satisfies
the quotient equation for the required energy test functions,
including those whose supports meet the adjoined boundary.
These results verify the hypotheses of
Lemma~\ref{lem:local-form-harnack}.

Let \(C_0\), \(H\), and \(\delta_H\) be the constants
in Lemma~\ref{lem:local-form-harnack}. Together with \(C_*\) in Corollary~\ref{cor:local-form-holder},
they depend only on the structural data. The initial
choice of \(L\) can therefore include
\[
 \kappa_*\max\{C_0R_H,C_*R\}<LR,
 \qquad R_H={3R}/{\delta_H}.
\]
Apply Lemma~\ref{lem:local-form-harnack} at \(w\), with outer
radius \(R_H\) and inner radius \(3R\).
The quotient is nonnegative on all of \(S\), so its
negative tail vanishes.
Consequently,
\[
 Q:=\esssup_{D_{3R}(w)}q
 \le H\essinf_{D_{3R}(w)}q.
\]
If \(Q=0\), the zero representative on
\(\mathbb B_R(w)\) gives both conclusions.
We henceforth assume \(Q>0\).

Apply Corollary~\ref{cor:local-form-holder} at \(w\)
with radius \(R\). Its local normalization is at most
\(Q\), since \(\mathbb B_{2R}(w)\subset\mathbb B_{3R}(w)\).
It gives a H\"older continuous representative on
\(\mathbb B_R(w)\) and
\[
 |q(x)-q(y)|\le CQ
 \left(\frac{|x-y|}{R}\right)^\alpha
 \qquad(x,y\in\mathbb B_R(w)).
\]
The initial essential Harnack inequality, continuity,
and full support imply
\[
 Q/H\le q(x)\le Q
 \qquad(x\in\mathbb B_R(w)).
\]
In particular, \(q(A)\le Q\le Hq(A)\).
Substitution proves \eqref{eq:variable-linear-comparison}
and \eqref{eq:variable-linear-holder}, with constants
depending only on \(n,s,p\), independently of \(M_q\).
\end{proof}

\begin{rem} The global bound \(0\le F\le M_qU\) ensures that the
quotient is bounded and that \(F\) vanishes on the
entire zero set of \(U\).
These properties justify the boundary energy estimates,
the cutoff limits, and the homogeneous quotient equation.
The comparison and H\"older constants are independent
of the size of \(M_q\).
\end{rem}

\begin{rem} Theorem~\ref{thm:variable-linear-bhi} does not cover an
arbitrary nonnegative linearized solution satisfying
only a local exterior zero condition.
Its quotient may be unbounded, and values of \(F\)
on the distant zero set of \(U\) may produce the source
in \eqref{eq:quotient-source}.
The ordered parameter derivatives considered in the
next section satisfy the global domination required here.
\end{rem}

\section{Ordered deformations and boundary comparison}
\label{sec:completion}

In this section we complete the proof of
Theorem~\ref{thm:main}. We first establish boundary
comparison for solutions in the variational energy
class, using the linear comparison estimates from
Section~\ref{sec:variable}. We then pass to the natural
local energy and tail class by approximation.

The first step concerns solutions with ordered exterior
data. We join these data by an affine path and consider
the corresponding family of variational solutions.
Without assuming that this family is continuously
differentiable in \(\Vs(D)\), we construct weak parameter
derivatives and show that they satisfy the linearized
equation. Order and homogeneity provide the domination
needed to apply the bounded-quotient estimates of the
preceding section. Since the quotient of a parameter
derivative by the corresponding solution is the
parameter derivative of its logarithm, integration
along the path yields boundary comparison and
H\"older estimates for the ratio of the endpoint
solutions.

To compare solutions with unrelated exterior values,
we construct a common harmonic majorant in a smaller
domain, prescribing the maximum of their exterior
data. A supersolution estimate controls the
normalization of this majorant by interior values
of the original solutions, while retaining their
distant exterior data. Applying the ordered comparison
to each solution and the majorant proves the
finite-energy result in
Theorem~\ref{thm:intro-finite}.

Finally, we approximate the distant exterior data and
perform fractional $p$-harmonic replacement. The approximating
solutions preserve the local weak zero condition and
converge both locally uniformly and in the natural
tail norm. Because the comparison constants are
independent of the energy norms, no uniform bound
on the full \(p\)-energies is needed to pass to the
limit and obtain Theorem~\ref{thm:main}.

\subsection{Differentiation of an ordered variational family}

The energy continuity in Proposition~\ref{prop:path-holder} is not
by itself a differentiability theorem. Ordering supplies an additional
comparison with scalar multiples, and this gives a direct weak
differentiation argument.

\begin{prop}\label{prop:ordered-derivative}
Let \(D\subset\mathbb R^n\) be bounded, \(p\ge2\), and \(g_0,g_1\in\Vs(D)\)
with \(0\le g_0\le g_1\) almost everywhere on \(\R^n\).
Let \(U_t\) be the variational solution in \(D\) with data
\(g_t=(1-t)g_0+tg_1\). Then
\begin{equation}\label{eq:ordered-parameter-bound}
 U_\tau\le U_t\le(t/\tau)U_\tau
 \quad(0<\tau<t\le1).
\end{equation}
Moreover, there is a jointly measurable function \(F_t(x)\), defined for almost
every \((t,x)\), which is the parameter derivative and satisfies
\begin{equation}\label{eq:ordered-derivative-bound}
 0\le F_t\le U_t/t\quad\hbox{a.e. on }\R^n,
 \qquad F_t=g_1-g_0\quad\hbox{on }D^c.
\end{equation}
For almost every \(t\in(0,1)\),
\begin{equation}\label{eq:ordered-derivative-equation}
 \iint_{Q_D}|\Delta F_t|^2K_{U_t}\dxdy<\infty,
 \qquad \mathcal B_{U_t}(F_t,\phi)=0
 \quad(\phi\in\Wzero(D)).
\end{equation}
The exceptional set in this last assertion is independent of \(\phi\).
\end{prop}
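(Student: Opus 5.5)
The plan is to derive the ordering \eqref{eq:ordered-parameter-bound} from comparison and homogeneity alone, then obtain $F_t$ as a monotone limit of difference quotients, and finally pass to the limit in the exact difference equation.

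\textbf{Ordering.} For $\tau<t$ we have $g_\tau\le g_t$ because $g_1-g_0\ge0$. Proposition~\ref{prop:comparison} then gives $U_\tau\le U_t$; the test $(U_\tau-U_t)_+\in\Wzero(D)$ is admissible by the order-ideal property. Homogeneity shows that $(t/\tau)U_\tau$ is the variational solution with data $(t/\tau)g_\tau$. Since $g_0\ge0$,
\[
 (t/\tau)g_\tau-g_t=(t/\tau-1)g_0\ge0,
\]
so comparison yields $U_t\le(t/\tau)U_\tau$. Consequently
\[
 0\le \frac{U_t-U_\tau}{t-\tau}\le \frac{U_\tau}{\tau}\le \frac{U_t}{\tau}.
\]
Hence for a.e. $x$ the map $t\mapsto U_t(x)$ is nondecreasing and locally Lipschitz on $(0,1]$, with Lipschitz constant at most $U_1(x)/\tau^2$ on $[\tau,1]$. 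Joint measurability of $(t,x)\mapsto U_t(x)$ follows from the $\mathrm L^p$ continuity in $t$ given by Proposition~\ref{prop:path-holder}. Fubini together with Lebesgue differentiation of monotone functions then supplies a jointly measurable $F_t(x)=\lim_{h\to0}(U_{t+h}(x)-U_t(x))/h$ for a.e. $(t,x)$. Letting $\tau\uparrow t$ in the displayed bound gives $0\le F_t\le U_t/t$. On $D^c$ we have $U_t=g_t$ by the variational exterior condition, so $F_t=g_1-g_0$ there. Absolute continuity of $t\mapsto U_t(x)$ holds on compact subintervals of $(0,1]$.

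\textbf{Linearized equation.} Set $f_h=(U_{t+h}-U_t)/h$. By Proposition~\ref{prop:difference},
\[
 \iint f_h\,\Delta\phi\,K_{U_t,U_{t+h}}\,\mathrm d\nu=0 \qquad\text{for }\phi\in\Wzero(D).
\]
The key step is a uniform weighted energy bound for $f_h$ on $Q_D$, which I would obtain by testing this equation with $f_h-(g_1-g_0)\in\Wzero(D)$. The left-hand side then becomes $\iint|\Delta f_h|^2K_{U_t,U_{t+h}}$. Cauchy--Schwarz in the weighted norm, the finiteness of $\iint|\Delta(g_1-g_0)|^2K_{U_t,U_{t+h}}$ (by H\"older with exponents $p/(p-2)$ and $p/2$, using the energy bounds in the proof of Proposition~\ref{prop:path-holder}), and absorption give $\sup_h\iint_{Q_D}|\Delta f_h|^2K_{U_t,U_{t+h}}<\infty$.

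I expect the main obstacle to be passing to the limit, because the kernels vary with $h$ and are not uniformly elliptic. Along $h\to0$, Proposition~\ref{prop:path-holder} gives $\Delta U_{t+h}\to\Delta U_t$ in $\mathrm L^p(\nu)$, hence a.e. along a subsequence, so $K_{U_t,U_{t+h}}\to K_{U_t}$ a.e. At a.e. $t$ we also have $f_h\to F_t$ a.e. in $x$, so $\Delta f_h\to\Delta F_t$ a.e. on pairs. Fatou's lemma then yields finite energy of $F_t$ with respect to $K_{U_t}$.

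For the equation itself, write $\Delta f_h\,K_{U_t,U_{t+h}}^{1/2}$. It is bounded in $\mathrm L^2(\nu)$ and converges a.e. to $\Delta F_t\,K_{U_t}^{1/2}$, so it converges weakly in $\mathrm L^2(\nu)$. Meanwhile $\Delta\phi\,K_{U_t,U_{t+h}}^{1/2}\to\Delta\phi\,K_{U_t}^{1/2}$ strongly in $\mathrm L^2(\nu)$, by Vitali's theorem with the domination $|\Delta\phi|^2(|\Delta U_t|+|\Delta U_{t+h}|)^{p-2}$, which is uniformly integrable from $\mathrm L^p$ convergence. The weak--strong product converges, so $\mathcal B_{U_t}(F_t,\phi)=0$.

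To make the exceptional set independent of $\phi$, fix a countable dense subset of $\Wzero(D)$. Select the $t$'s as those where the a.e. differentiation holds and the subsequence can be chosen, and extend to all $\phi$ by density, using continuity of $\phi\mapsto\mathcal B_{U_t}(F_t,\phi)$ in $\Wzero(D)$ (by H\"older).
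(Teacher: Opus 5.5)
Your proposal is correct and follows the paper's proof in all essentials. The ordering comes from comparison and homogeneity, and $F_t$ is obtained by pointwise differentiation in $t$ together with Fubini. The uniform energy bound comes from testing the exact difference equation with $f_h-(g_1-g_0)\in\Wzero(D)$. The limit is then taken using the $\mathrm L^p(\nu)$ convergence from Proposition~\ref{prop:path-holder}.

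The differences are minor:
\begin{itemize}
\item You close the limit as a weak--strong pairing. The sequence $\Delta f_h$ times the square root of the varying coefficient converges weakly, and $\Delta\phi$ times the same square root converges strongly. The paper instead splits off $a_h-a$ and uses $a\le C_pa_h$ together with $|a_h-a|^2/a_h\le C_p|a_h-a|$.
\item Your countable-density step is unnecessary. The good parameters are selected only through a.e.\ convergence in $x$, and the subsequence depends only on $t$, so the argument already works for every $\phi$ at once.
\item The monotonicity and Lipschitz bounds in $t$ should be stated for the interior continuous representatives, as the paper does, rather than for arbitrary a.e.\ representatives. Each comparison holds only off a null set that depends on the pair $(\tau,t)$.
\end{itemize}
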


\begin{proof}
On \(D^c\), ordering gives
\(g_\tau\le g_t\le(t/\tau)g_\tau\).
Homogeneity and the comparison principle prove
\eqref{eq:ordered-parameter-bound} in \(D\).
For interior continuous representatives, the inequalities hold
pointwise. Thus \(t\mapsto U_t(x)\) is locally Lipschitz on
\((0,1)\) for every such \(x\), and for almost every exterior
\(x\). Difference quotients are jointly measurable.
Rademacher's theorem and Fubini give \(F_t\) and
\eqref{eq:ordered-derivative-bound}, on a common full-measure set
of parameters for almost every \(x\).

Fix a parameter in this set and let \(h>0\) tend to zero. Put
\[
 F_h=(U_{t+h}-U_t)/h,\qquad H=g_1-g_0,
 \qquad K_{t,h}=K_{U_t,U_{t+h}}.
\]
The exact difference equation and the test
\(F_h-H\in\Wzero(D)\) give
\begin{equation}\label{eq:derivative-energy-bound}
 \iint_{Q_D}|\Delta F_h|^2K_{t,h}\dxdy
 \le\iint_{Q_D}|\Delta H|^2K_{t,h}\dxdy\le C_g.
\end{equation}
The last bound follows by H\"older's inequality and the uniform
\(p\)-energy bounds for \(U_t\). The constant may depend on the
data, but not on \(h\).
Lemma~\ref{lem:scalar} implies \(K_{U_t}\le C_pK_{t,h}\).
Also \(0\le F_h\le U_t/t\), and \(F_h\to F_t\) almost
everywhere. Since \(D\) is bounded and \(p\ge2\), dominated
convergence gives convergence in \(L^2(D)\). Fatou's lemma and
weak compactness identify \(\Delta F_t\) as the weak limit in
\(L^2(Q_D,K_{U_t}\dxdy)\), and give its finite energy.

It remains to pass the varying coefficient to the limit. Use
\(\d\nu=|x-y|^{-n-sp}\dxdy\) on \(Q_D\), and write
\[
 a_h=a_p(\Delta U_t,\Delta U_{t+h}),\qquad
 a=(p-1)|\Delta U_t|^{p-2}.
\]
Proposition~\ref{prop:path-holder} gives strong convergence of
\(\Delta U_{t+h}\) to \(\Delta U_t\) in \(L^p(\nu)\).
For \(p>2\), continuity of the power map therefore gives
\(a_h\to a\) in \(L^{p/(p-2)}(\nu)\).
Since \(a\le C_pa_h\),
\[
 {|a_h-a|^2}/{a_h}\le C_p|a_h-a|,
\]
with zero assigned to the ratio when both coefficients vanish.
For every \(\phi\in\Wzero(D)\), Cauchy-Schwarz and
\eqref{eq:derivative-energy-bound} imply
\begin{align*}
 \left|\int_{Q_D}(a_h-a)\Delta F_h\Delta\phi\d\nu\right|
 &\le C_g^{1/2}
 \left(\int_{Q_D}\frac{|a_h-a|^2}{a_h}|\Delta\phi|^2\d\nu
 \right)^{1/2}\\
 &\le C C_g^{1/2}
 \|a_h-a\|_{\mathrm L^{p/(p-2)}(\nu)}^{1/2}
 \|\Delta\phi\|_{\mathrm L^p(\nu)}\longrightarrow0.
\end{align*}
The fixed-coefficient term converges by weighted weak convergence.
Passing to the limit in the equation for \(F_h\) proves
\eqref{eq:ordered-derivative-equation}. For \(p=2\) the coefficients
are constant and this step is immediate.
The chosen parameter set came only from almost-everywhere convergence
in \(x\), and the convergence argument applies to every test at
such a parameter. No test-dependent exceptional set is needed.
\end{proof}

\subsection{Pointwise integration and ordered comparison}

\begin{thm}\label{ass:paths}
Assume \eqref{eq:range}, and fix the buffer \(\Lambda\) and the
flatness threshold from Section~\ref{sec:variable}.
Let \(\Om\) be \((\delta,r_0)\)-Reifenberg flat with \(\delta\)
below this threshold, let \(w\in\partial\Om\), and suppose that
\(\Lambda r<r_0\). Set
\[
 D=D_{\Lambda r}(w),\qquad E=D_r(w),\qquad A=A_r(w).
\]
Suppose that \(U_0,U_1\in\Vs(D)\) are nontrivial, globally
nonnegative weak solutions of \(\Fsp U_i=0\) in \(D\), \(i=0,1\),
that \(U_0\le U_1\) on \(\R^n\), and that both have zero exterior values on
\(B_{\Lambda r}(w)\setminus\Om\) in the sense of
Definition~\ref{def:local-zero}.
Let \(U_t\) be the variational solution in \(D\) with exterior
data \((1-t)U_0+tU_1\). For almost every \(t\in(0,1)\), the function
\(F_t\) of Proposition~\ref{prop:ordered-derivative} has a continuous
representative on \(E\), and \(F_t/U_t\) satisfies
\eqref{eq:variable-linear-comparison} and
\eqref{eq:variable-linear-holder}, with \(R=r\) and constants
depending only on \(n,s,p\), independently of \(t\) and the energy
norms of the endpoint solutions.
For every \(x\in E\), the map \(t\mapsto U_t(x)\) is absolutely
continuous on \([0,1]\), and
\begin{equation}\label{eq:path-pointwise-ftc}
 U_b(x)-U_a(x)=\int_a^b F_t(x)\d t
 \quad(0\le a<b\le1).
\end{equation}
The exceptional set for its derivative on \((0,1)\) can be chosen
independently of \(x\in E\).
\end{thm}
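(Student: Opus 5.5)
The plan has three parts: verify the hypotheses of Theorem~\ref{thm:variable-linear-bhi} for each pair \((U_t,F_t)\), upgrade the almost-everywhere derivative from Proposition~\ref{prop:ordered-derivative} to a pointwise statement on \(E\), and then integrate.

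\textbf{Step 1: applying the bounded-quotient theorem.} Proposition~\ref{prop:ordered-derivative} applies with \(g_i=U_i\), since \(U_0\le U_1\) globally and \(U_i\in\Vs(D)\). It gives a jointly measurable \(F_t\), a full-measure set \(\mathcal T\subset(0,1)\), and for \(t\in\mathcal T\):
\begin{itemize}
\item the weak linearized equation with finite \(K_{U_t}\)-energy on \(Q_D\);
\item the bound \(0\le F_t\le U_t/t\).
\end{itemize}
I then check the hypotheses of Theorem~\ref{thm:variable-linear-bhi} with background \(U=U_t\), radius \(R=r\), \(D=D_{\Lambda r}(w)\supset D_{Lr}(w)\), and \(M_q=1/t\). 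The needed facts are as follows.
\begin{itemize}
\item \(U_t\in\Vs(D)\) and \(U_t\in\mathrm L^{p-1}_{sp}\), because \(U_0\le U_t\le U_1\) and \(U_t\) agrees with affine data outside \(D\).
\item \(U_t\) is nontrivial and globally nonnegative.
\item \(U_t\) satisfies the local zero exterior condition on \(B_{\Lambda r}(w)\). This follows since \(U_t-g_t\in\Wzero(D)\) and \(g_t\) is a convex combination of functions with that property (cut off by \(\eta\)), using Definition~\ref{def:local-zero}.
\end{itemize}
The theorem then yields a Hölder representative \(q_t\) of \(F_t/U_t\) on \(\overline E\), with comparison and Hölder constants depending only on \(n,s,p\), independent of \(t\) and of \(M_q\). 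I define the continuous representative of \(F_t\) on \(E\) as \(U_tq_t\).

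\textbf{Step 2: pointwise differentiation on \(E\).} Ordering gives \(U_\tau\le U_t\le (t/\tau)U_\tau\) pointwise for the continuous representatives. Hence
\[
0\le U_t(x)-U_\tau(x)\le \frac{t-\tau}{\tau}\,U_1(x),
\]
so \(t\mapsto U_t(x)\) is Lipschitz on \([\epsilon,1]\) for every \(x\in E\), and its derivative is at most \(U_1(x)/t\). Near \(t=0\) the map is monotone and continuous: the Hölder continuity in \(t\) from Proposition~\ref{prop:path-holder}, together with local uniform interior estimates, gives convergence \(U_t\to U_0\) locally uniformly. Since the derivative is nonnegative, monotone convergence then gives absolute continuity on \([0,1]\), with finite integral \(U_1(x)-U_0(x)\).

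\textbf{Step 3: making the exceptional set independent of \(x\) (main obstacle).} Rademacher applied at each \(x\) separately produces an \(x\)-dependent exceptional set, so this step needs care.
\begin{itemize}
\item Fix a countable dense set \(\{x_k\}\subset E\). Choose a full-measure set \(\mathcal T'\subset\mathcal T\) on which, for every \(k\), \(\partial_tU_t(x_k)\) exists. Also require, by Fubini, that the a.e.-in-\(x\) derivative equals \(F_t\).
\item On a Whitney ball, the difference quotients \((U_{t+h}-U_t)/h\) are \(\Fsp\)-difference solutions bounded by \(CU_1/t\). Applying interior Hölder estimates to the exact linear difference equation is awkward. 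A cleaner route is to use the integral identity \(U_b-U_a=\int_a^bF_t\,\mathrm dt\), which holds for a.e.\ \(x\) by Fubini and the Lipschitz property.
\item The right-hand side of this identity is continuous in \(x\). This follows by dominated convergence: the uniform Hölder bounds on \(q_t\) on \(\overline E\) are scaled by \(q_t(A)\), and \(q_t(A)\) is integrable in \(t\) because \(\int_a^b q_t(A)U_t(A)\,\mathrm dt=U_b(A)-U_a(A)\) and \(U_t(A)\ge U_0(A)>0\). Away from \(t=0\) one could alternatively use \(q_t\le 1/t\).
\item The left-hand side is continuous in \(x\), so the identity holds for every \(x\in E\). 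This gives \eqref{eq:path-pointwise-ftc}.
\end{itemize}
Finally, by Lebesgue differentiation at \(t\), I take \(\mathcal T''\) to be the set of Lebesgue points of \(t\mapsto F_t\) in \(C(K)\) for each compact \(K\subset E\), using a countable exhaustion. This \(\mathcal T''\) has full measure and does not depend on \(x\), and at each \(t\in\mathcal T''\) the derivative equals \(F_t(x)\) for all \(x\in E\). This requires measurability of \(t\mapsto F_t|_K\) into \(C(K)\), which I would obtain from joint measurability together with equicontinuity.
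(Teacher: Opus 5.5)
Your proposal is correct and follows essentially the paper's route. The steps match: Theorem~\ref{thm:variable-linear-bhi} with $M_q=1/t$, the identity for a.e.\ $x$ via Fubini extended to all of $E$ by continuity, differentiation of the $C(K)$-valued Bochner integral over a countable exhaustion, and the endpoint $t=0$ via local uniform convergence and monotone convergence.

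Two small refinements. First, using the identity at the specific point $A$ to show $q_t(A)$ is integrable is circular at that stage, since the identity is only known for a.e.\ $x$; rely instead on the domination $F_t\le U_1/a$ on $[a,b]$ with $a>0$, as you also note. Second, measurability of $t\mapsto F_t|_K$ does not come from equicontinuity. As in the paper, it comes from choosing the continuous representatives as limits of local averages of $F_t/U_t$, multiplied by $U_t$.
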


\begin{proof}
Put \(g_t=(1-t)U_0+tU_1\).
Uniqueness in Theorem~\ref{thm:dirichlet} identifies the
variational solutions at \(t=0,1\) with the prescribed endpoints.
Since \(U_t-g_t\in\Wzero(D)\), we have
\(U_t\in\Ws(B_{\Lambda r}(w))\), and
\[
 \eta U_t=(1-t)\eta U_0+t\eta U_1+\eta(U_t-g_t)
 \in\Wzero(D)
 \qquad(\eta\in C_0^\infty(B_{\Lambda r}(w))).
\]
Also \(U_t=g_t=0\) almost everywhere on
\(B_{\Lambda r}(w)\setminus\Om\). Thus every \(U_t\)
has the local weak exterior zero condition.
Proposition~\ref{prop:ordered-derivative} supplies a common
full-measure set of parameters on which \(F_t\) satisfies
the weak linearized equation and \(0\le F_t\le U_t/t\).
Theorem~\ref{thm:variable-linear-bhi} applies there with
\(M_q=1/t\) and uniform constants. It gives continuous
representatives of \(F_t/U_t\), and hence of \(F_t\), in
\(E\). No regularity of \(F_t\) at possible critical points
outside this comparison patch is needed.

Fix \(t_0\in(0,1)\), an open set \(O\Subset E\), and
\(K=\overline O\). The ordered comparison gives
\[
 \|U_t-U_\tau\|_{C(K)}
 \le\frac{|t-\tau|}{t_0}\|U_1\|_{C(K)}
 \qquad(t,\tau\in[t_0,1]).
\]
In particular, \(U_t(x)\) is jointly continuous on
\([t_0,1]\times K\). To choose the continuous representatives
of \(F_t\) measurably, take limits of interior local averages
of the jointly measurable quotient \(F_t/U_t\) with respect
to Lebesgue measure in \(E\), multiply by
\(U_t\), and assign zero on the exceptional parameter set.
The resulting function is jointly measurable and satisfies
\(0\le F_t\le U_1/t_0\) on \(K\).
Since \(C(K)\) is separable and its Borel structure is
determined by evaluations on a countable dense subset,
\(t\mapsto F_t|_K\) is strongly measurable. Its norm is
bounded by \(\|U_1\|_{C(K)}/t_0\), so it is Bochner
integrable on \([t_0,1]\).
For almost every \(x\in O\), the scalar fundamental theorem
of calculus and Fubini give \eqref{eq:path-pointwise-ftc}
for \(t_0\le a<b\le1\). Both sides are continuous on
\(K\) and the right side is the evaluation of the Bochner
integral. The identity therefore holds throughout \(K\),
because a full-measure subset of \(O\) is dense in
\(K=\overline O\). It is consequently an identity in
\(C(K)\). Differentiating this Bochner integral gives
\(\partial_tU_t=F_t\) in \(C(K)\) for almost every
\(t\). A countable exhaustion \(O_j\Subset E\), together
with \(t_0\downarrow0\) through a sequence, produces one
exceptional set valid for every \(x\in E\).

It remains to include \(t=0\). Proposition~\ref{prop:path-holder}
gives convergence to \(U_0\) in energy and in \(L^p(D)\).
The bound \(U_t\le U_1\) controls both the local suprema
and the tails, so the interior estimates give equicontinuity
on compact subsets of \(E\). Hence \(U_t\to U_0\)
locally uniformly as \(t\downarrow0\).
Since \(F_t\ge0\), monotone convergence permits
\(a\downarrow0\) in \eqref{eq:path-pointwise-ftc} and gives
\[
 \int_0^b F_t(x)\d t=U_b(x)-U_0(x)<\infty
 \qquad(x\in E,\ 0<b\le1).
\]
This integral representation proves absolute continuity on
\([0,1]\) and the asserted identity at both endpoints.
\end{proof}

The preceding theorem provides the pointwise parameter
regularity needed to integrate logarithmic derivatives
in the comparison patch. This is sufficient for the
ordered comparison argument, without requiring
\(C^1\) dependence in \(\Vs(D)\).

\begin{prop}\label{prop:ordered-completion}
Under the hypotheses of Theorem~\ref{ass:paths}, there
are uniform constants \(C_L\ge1\) and
\(\alpha_L\in(0,1)\) such that, for all \(x,y\in E\),
\begin{align}
 \left|\log\frac{U_1(x)}{U_0(x)}
       -\log\frac{U_1(y)}{U_0(y)}\right|
 &\le
 C_L\log\frac{U_1(A)}{U_0(A)}
 \left(\frac{|x-y|}{r}\right)^{\alpha_L},
 \label{eq:ordered-holder}\\
 C_L^{-1}\log\frac{U_1(A)}{U_0(A)}
 \le \log\frac{U_1(x)}{U_0(x)}
 &\le
 C_L\log\frac{U_1(A)}{U_0(A)}.
 \label{eq:ordered-size}
\end{align}
The constants have the same structural dependence as
those in Theorem~\ref{thm:variable-linear-bhi}.
\end{prop}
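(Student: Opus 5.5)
The plan is to integrate the linear estimates of Theorem~\ref{ass:paths} along the parameter. Write \(q_t=F_t/U_t\) on \(E\). For every \(x\in E\), \(t\mapsto U_t(x)\) is absolutely continuous and positive on \([0,1]\); positivity holds because \(U_t\ge U_0>0\) in \(D\) by \eqref{eq:ordered-parameter-bound} and interior positivity. Hence \(\log U_t(x)\) is absolutely continuous, and by \eqref{eq:path-pointwise-ftc} its derivative is \(q_t(x)\) for almost every \(t\). The exceptional set is independent of \(x\). This gives the identity \eqref{eq:funs++14}:
\[
 \log\frac{U_1(x)}{U_0(x)}=\int_0^1 q_t(x)\,\mathrm d t\qquad(x\in E).
\]
Two points need checking. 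First, the chain rule for the logarithm needs \(U_t(x)\) bounded below on \([0,1]\) for fixed \(x\), which holds because \(U_t(x)\ge U_0(x)>0\). Second, \(q_t\ge0\), so all integrals make sense in \([0,\infty]\). At \(x=A\) the left side is finite, so \(t\mapsto q_t(A)\) is integrable and \(\int_0^1q_t(A)\,\mathrm d t=\log(U_1(A)/U_0(A))\).

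Next, apply the uniform estimates from Theorem~\ref{ass:paths}. For almost every \(t\), with \(R=r\), these are \eqref{eq:variable-linear-comparison} and \eqref{eq:variable-linear-holder}: \(C^{-1}q_t(A)\le q_t(x)\le Cq_t(A)\), and \(|q_t(x)-q_t(y)|\le Cq_t(A)(|x-y|/r)^\alpha\) for \(x,y\in E\). The constants do not depend on \(t\) or on \(M_q=1/t\). Integrating the comparison over \(t\in(0,1)\) gives \eqref{eq:ordered-size} with \(C_L=C\). Integrating the H\"older bound gives
\[
 \Bigl|\log\frac{U_1(x)}{U_0(x)}-\log\frac{U_1(y)}{U_0(y)}\Bigr|\le\int_0^1|q_t(x)-q_t(y)|\,\mathrm d t\le C\Bigl(\frac{|x-y|}{r}\Bigr)^{\alpha}\int_0^1q_t(A)\,\mathrm d t,
\]
which is \eqref{eq:ordered-holder} with \(\alpha_L=\alpha\).

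The main obstacle is measurability and the common exceptional set. The linear estimates must hold at every point of \(E\) for almost every \(t\), with the continuous representative of \(F_t\) being the one that appears in \eqref{eq:path-pointwise-ftc}. Theorem~\ref{ass:paths} already constructs a jointly measurable continuous representative and an \(x\)-independent exceptional set, so I would invoke it directly. On the remaining null set of parameters the integrand may be redefined without changing any integral. A further point is that \(A\) must lie in \(E\), or at least be a point where the identity holds. Since \(A=A_r(w)\in D_r(w)=E\), this is satisfied.
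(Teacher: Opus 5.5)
Your proposal is correct and follows the paper's proof essentially step for step. You apply the chain rule to \(\log U_t(x)\), using \(U_t(x)\ge U_0(x)>0\), to obtain the integral identity for \(\log(U_1/U_0)\). Evaluating it at \(A\) gives the integrable normalization \(\int_0^1 q_t(A)\,\mathrm d t\), even though the domination constant is \(1/t\). Integrating the \(t\)-uniform comparison and H\"older bounds from Theorem~\ref{ass:paths} then yields \eqref{eq:ordered-size} and \eqref{eq:ordered-holder}, and the representative and exceptional-set issues you flag are handled by that theorem, as you say.
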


\begin{proof}
By Theorem~\ref{ass:paths}, for almost every \(t\in(0,1)\),
the quotient \(q_t=F_t/U_t\) satisfies
\[
 C_L^{-1}a_t\le q_t(x)\le C_La_t,
 \qquad
 |q_t(x)-q_t(y)|\le C_La_t
       \left(\frac{|x-y|}{r}\right)^{\alpha_L},
 \qquad a_t=\frac{F_t(A)}{U_t(A)},
\]
for \(x,y\in E\), with constants independent of \(t\).
The same theorem gives absolute continuity of \(U_t(x)\)
on \([0,1]\). Since \(U_t(x)\ge U_0(x)>0\), the chain
rule yields
\begin{equation}\label{eq:log-integration}
 \log\frac{U_1(x)}{U_0(x)}
 =\int_0^1\frac{F_t(x)}{U_t(x)}\,\mathrm d t.
\end{equation}
In particular,
\[
 \int_0^1a_t\,\mathrm d t
 =\log\frac{U_1(A)}{U_0(A)}<\infty.
\]
This supplies an integrable normalization even near \(t=0\),
where the available domination constant is \(1/t\).
Integrating the two-sided estimate gives \eqref{eq:ordered-size}.
Integrating the oscillation estimate and using
\eqref{eq:log-integration} gives \eqref{eq:ordered-holder}.
\end{proof}

\subsection{Removal of ordering}

For two solutions with independent distant data, take a maximum lift
in a domain strictly inside their common fractional $p$-harmonicity patch. The
following comparison estimate controls the lift by the local size and
the entire distant tail of its data.

\begin{lem}\label{lem:buffered-supersolution}
Let \(p\ge2\), \(0<s<1\), and let \(D\) be a nonempty open
subset of \(B_R(z)\). Suppose that \(G\in\Vs(D)\), \(G\ge0\)
on \(\R^n\), and
\[
 L=\esssup_{B_{2R}(z)}G<\infty,\qquad
 T=\Tail(G,z,2R)<\infty.
\]
Let \(W\in G+\Wzero(D)\) be the variational solution in \(D\).
Then
\begin{equation}\label{eq:buffered-dirichlet-bound}
 0\le W\le C(n,s,p)(L+T)\quad\hbox{almost everywhere in }D.
\end{equation}
No geometric regularity of \(D\) is required.
\end{lem}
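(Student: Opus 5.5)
The plan is to build an explicit supersolution $\overline W$ that is constant, equal to $M=K(L+T)$, on a neighbourhood of $\overline{B_R(z)}$ and agrees with $G$ far away, and then compare it with $W$ using Proposition~\ref{prop:comparison}. The lower bound $W\ge0$ is immediate from that proposition, comparing $W$ with the zero solution, since $G\ge0$. For the upper bound, fix a cutoff $0\le\chi\le1$ with $\chi=1$ on $B_{3R/2}(z)$, $\chi\in C_0^\infty(B_{2R}(z))$, and set
\[
 \overline W=\chi M+(1-\chi)G .
\]
On $B_{2R}(z)$ we have $G\le L\le M$ as soon as $K\ge1$, and therefore $\overline W\ge G$ on $D^c$. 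Moreover $\overline W-G=\chi(M-G)$ has finite energy and is compactly supported in $B_{2R}(z)$. It follows that $(W-\overline W)_+$ vanishes outside $D$ and is dominated by the $\Wzero(D)$ function $W-G$. By the order-ideal property recalled after Proposition~\ref{prop:comparison}, $(W-\overline W)_+\in\Wzero(D)$, and $\overline W\in\Vs(D)$. Comparison then gives $W\le\overline W=M$ in $D$, provided $\overline W$ is a weak supersolution in $D$.

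The main step is to check that $\Fsp\overline W\ge0$ in $D$. For $x\in D\subset B_R(z)$, $\overline W(x)=M$, and $\overline W(y)=M$ for $y\in B_{3R/2}(z)$, so these pairs contribute nothing. Only $y\notin B_{3R/2}(z)$ remains, and for such $y$ we have $|x-y|\asymp|y-z|$. The contribution is
\[
 \int_{B_{3R/2}^c}\frac{J_p(M-\overline W(y))}{|x-y|^{n+sp}}\,\mathrm dy .
\]
On $B_{2R}\setminus B_{3R/2}$ the value $\overline W(y)$ is a convex combination of $M$ and $G(y)\le L$, so the integrand is $\ge0$. The negative part comes only from $y\notin B_{2R}(z)$ with $G(y)>M$, and there $|J_p(M-G(y))|\le G(y)^{p-1}$. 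Since $x$ is separated from $B_{2R}^c$ by at least $R$, this negative part is bounded by $CR^{-sp}T^{p-1}$.

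To produce a positive part that beats this, I would use the region $B_{2R}\setminus B_{7R/4}$ where, after choosing $\chi\le1/2$ there, $M-\overline W(y)\ge(M-L)/2\ge M/4$ when $K\ge2$. This region has measure $\asymp R^n$ and distance $\asymp R$ from $x$, so it contributes at least $cM^{p-1}R^{-sp}$. Choosing $K$ large, depending only on $n,s,p$, then gives
\[
 cM^{p-1}R^{-sp}\ \ge\ CR^{-sp}T^{p-1},
\]
and hence $\Fsp\overline W\ge0$ pointwise in $D$. Since $\overline W$ is constant near $\overline D$, the principal value is absolutely convergent, and the weak inequality against nonnegative $\varphi\in C_0^\infty(D)$ follows by Fubini: the local pairs cancel, and the exterior integral converges by the tail bound.

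The obstacle I expect is the admissibility bookkeeping. One issue is the order-ideal step placing $(W-\overline W)_+$ in $\Wzero(D)$ for an arbitrary open $D$ without regularity. The other is extending the weak supersolution inequality to nonnegative tests in $\Wzero(D)$ by density, which uses $\overline W\in\Vs(D)$. I would handle the first exactly as in the comparison arguments of Lemma~\ref{lem:flat-two-sided}, via truncation of an approximating sequence for $W-G$.
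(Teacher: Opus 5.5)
Your proposal is correct and is essentially the paper's argument. Both use a barrier equal to a constant $\asymp L+T$ near $\overline D$ and to $G$ far away, show it is a supersolution, and compare it with $W$ through Proposition~\ref{prop:comparison} and the order ideal property. The paper uses the discontinuous barrier $H_*\mathbf 1_{B_{2R}(z)}+G\mathbf 1_{\R^n\setminus B_{2R}(z)}$ and takes the positive term from all of $\R^n\setminus B_{2R}(z)$ via $J_p(a-b)\ge2^{2-p}a^{p-1}-b^{p-1}$. You instead take it from the transition annulus where $\chi\le1/2$ and $G\le L$, which works equally well.

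One intermediate claim is false as stated: $\chi(M-G)$ need not have finite energy. The hypothesis $G\in\Vs(D)$ gives no control of pairs in $(B_{2R}(z)\setminus D)^2$. For instance, take $sp\ge1$, $D=B_R(z)$, and let $G$ be the indicator of a small ball in $B_{3R/2}(z)\setminus\overline{B_{5R/4}(z)}$. Your smooth interpolation makes $\overline W$ no more regular there than the paper's jump.

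You do not actually need that claim. Since $\overline W=M\ge G$ on $D$ and $W=G\le\overline W$ almost everywhere off $D$, you have $(W-\overline W)_+=(W-M)_+\mathbf 1_D\le|W-G|\mathbf 1_D$. Its increments are therefore bounded by those of $W$ on $D\times D$, and by those of $W-G\in\Wzero(D)$ on $D\times D^c$. This is exactly the finite global energy the order ideal step requires, and it is how the paper justifies the same step. Likewise, $\overline W\in\Vs(D)$ follows from $G\le L$ on $B_{2R}(z)$ together with the weighted $\mathrm L^p$ integrability of $G$ at infinity implied by $G\in\Vs(D)$.
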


\begin{proof}
Choose \(H_*\ge L\), to be fixed below, and define
\begin{equation}\label{eq:constant-tail-barrier}
 \Phi_{H_*}(x)=
 \begin{cases}
 H_*,&x\in B_{2R}(z),\\
 G(x),&x\in\R^n\setminus B_{2R}(z).
 \end{cases}
\end{equation}
Then \(\Phi_{H_*}\ge G\) on \(\R^n\).
The function \(\Phi_{H_*}\) belongs to \(\Vs(D)\). Indeed, it is
constant on a neighborhood of \(D\), and the remaining cross
integral is finite by \(G\in\Vs(D)\), boundedness of \(G\)
on \(B_{2R}(z)\), and the separation of \(D\) from
\(\R^n\setminus B_{2R}(z)\).
Its possible jump on \(\partial B_{2R}(z)\) occurs a positive
distance from \(D\).

For \(a,b\ge0\),
\begin{equation}\label{eq:scalar-barrier-lower}
 J_p(a-b)\ge 2^{2-p}a^{p-1}-b^{p-1}.
\end{equation}
If \(b\le a\), this follows from
\(a^{p-1}\le2^{p-2}((a-b)^{p-1}+b^{p-1})\).
If \(b\ge a\), use
\(b^{p-1}\ge(b-a)^{p-1}+a^{p-1}\).
Thus, for \(x\in D\),
\begin{align}
 \Fsp\Phi_{H_*}(x)
 &=\int_{\R^n\setminus B_{2R}(z)}
       \frac{J_p(H_*-G(y))}{|x-y|^{n+sp}}\d y\notag\\
 &\ge 2^{2-p}H_*^{p-1}
       \int_{\R^n\setminus B_{2R}(z)}
                   \frac{\d y}{|x-y|^{n+sp}}
       -\int_{\R^n\setminus B_{2R}(z)}
                   \frac{G(y)^{p-1}}{|x-y|^{n+sp}}\d y\notag\\
 &\ge R^{-sp}\bigl(cH_*^{p-1}-CT^{p-1}\bigr).
 \label{eq:barrier-operator-lower}
\end{align}
For the positive term, integrate over \(\R^n\setminus B_{3R}(x)\),
which is contained in \(\R^n\setminus B_{2R}(z)\).
For the negative term use
\(|x-y|\ge|y-z|/2\).
All integrals are absolutely convergent, and hence the calculation
also gives the weak supersolution inequality.
Take \(H_*=C_*(L+T)\), with \(C_*\) depending only on \(n,s,p\),
large enough that \(H_*\ge L\) and the last line is nonnegative. For completeness, we note that the comparison test is admissible in the precise
variational class. Indeed, since \(W=G\le\Phi_{H_*}\) outside \(D\),
\(h=(W-\Phi_{H_*})_+\) has finite global \(W^{s,p}\) energy and
vanishes outside \(D\). Moreover,
\[
 0\le h\le(W-G)_+\in\Wzero(D).
\]
The order ideal property of \(\Wzero(D)\) gives
\(h\in\Wzero(D)\). Proposition~\ref{prop:comparison} therefore
yields \(W\le\Phi_{H_*}=H_*\) in \(D\).
Comparison with zero gives \(W\ge0\).
\end{proof}

\begin{prop}\label{prop:max-lift}
Let \(p\ge2\), \(0<s<1\), \(\Lambda\ge1\), and
\(R=\Lambda r\).
There exists \(\delta_{\max}=\delta_{\max}(n,s,p)>0\),
independent of \(\Lambda\), such that the following holds.
Suppose that \(\Om\) is \((\delta,r_0)\)-Reifenberg flat with
\(\delta\le\delta_{\max}\),
\(w\in\partial\Om\), and \(8R<r_0\).
Let \(u,v\) be nontrivial, globally nonnegative functions
satisfying
\[
 \Fsp u=\Fsp v=0\quad\text{in }D_{8R}(w),
 \qquad
 u=v=0\quad\text{on }B_{8R}(w)\setminus\Om
\]
in the local weak sense of
Definition~\ref{def:local-zero}.
Assume also that \(u,v\in\Vs(D_R(w))\).
Put
\[
 A=A_r(w),\qquad D=D_R(w),\qquad G=\max\{u,v\}.
\]
Let \(W\in G+\Wzero(D)\) be the variational solution of
\[
 \Fsp W=0\quad\text{in }D
\]
with exterior data \(G\).
Then $W$ satisfies
\begin{equation}\label{eq:proved-max-lift}
 W\ge u,v\quad\hbox{on }\R^n,\qquad
 \sup_D W\le C\Lambda^{sp/(p-1)}\bigl(u(A)+v(A)\bigr).
\end{equation}
The global comparison holds almost everywhere, and pointwise in
\(D\) for the continuous representatives.
Here \(C\) depends only on \(n,s,p\) and the uniform geometric
constants, independently of \(\Lambda\).
In particular, if \(u(A)=v(A)=1\) and \(\Lambda\)
is fixed, then \(1\le W(A)\le K_0\) with a uniform \(K_0\).
The function \(W\) has weak zero values on
\(B_R(w)\setminus\Om\).
\end{prop}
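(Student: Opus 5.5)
The plan is to deduce everything from Lemma~\ref{lem:buffered-supersolution}, applied with $D=D_R(w)\subset B_R(w)$ and data $G=\max\{u,v\}$. The one nontrivial input is a bound on the local size $L=\esssup_{B_{2R}(w)}G$ and the tail $T=\Tail(G,w,2R)$ in terms of $u(A)+v(A)$, where $A=A_r(w)$ sits at the smaller scale $r=R/\Lambda$.

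The first step is to check the admissibility conditions. Since $u,v\in\Vs(D)$ and $\max$ is a contraction on increments, $G\in\Vs(D)$ and $G\ge0$. Then $W\in G+\Wzero(D)$ is well defined by Theorem~\ref{thm:dirichlet}. For the ordering, $u$ is a weak solution in $D$, hence a subsolution, and $(u-W)_+\le(G-W)_+$ lies in $\Wzero(D)$ by the order ideal property. Proposition~\ref{prop:comparison} then gives $u\le W$ almost everywhere on $\R^n$, and the same argument gives $v\le W$; pointwise statements follow from continuity in $D$. For the weak zero condition, take $\eta\in C_0^\infty(B_R(w))$ and write $\eta W=\eta G+\eta(W-G)$. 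The second term lies in $\Wzero(D)$ by smooth multiplication. For the first, use $\max\{u,v\}=u+(v-u)_+$ with $\eta u,\eta v\in\Wzero(D)$, together with lattice stability of $\Wzero$.

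The main step is the normalization. Apply Proposition~\ref{prop:boundary-normalization} to $u$ at $w$ with radius $R$; its hypotheses hold on $D_{8R}(w)$ with any fixed $\beta<s$. This gives
\[
\|u\|_{\mathrm L^\infty(B_{2R}(w))}\le Cu(A_R(w)),
\]
and \eqref{eq:boundary-normalized-mass} bounds the integral of $u^{p-1}$ outside $B_{4R}(w)$ by $Ca_R^{p-1}R^{-sp}$. Between $2R$ and $4R$ the $L^\infty$ bound suffices, so $\Tail(u,w,2R)\le Cu(A_R(w))$.

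It remains to compare $u(A_R(w))$ with $u(A)=u(A_r(w))$, and this scale transfer is the obstacle, since it must be polynomial in $\Lambda$ with the exact exponent $sp/(p-1)$. A Carleson/Harnack chain would only give $\Lambda^{\gamma}$ for an uncontrolled $\gamma$. The right tool is Lemma~\ref{lem:positive-tail} applied at $A$ with radius $\rho\asymp r$. Its weighted integral gives
\[
r^{sp}\int_{E}\frac{u^{p-1}}{(r+|y-A|)^{n+sp}}\,dy\le Cu(A)^{p-1},
\]
where $E=B_{cR}(A_R(w))$ is the corkscrew ball at scale $R$. On $E$ we have $u\asymp u(A_R(w))$ by Harnack and $|y-A|\le 2R$. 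Hence
\[
u(A_R(w))^{p-1}R^n r^{sp}R^{-n-sp}\le Cu(A)^{p-1},
\]
that is, $u(A_R(w))\le C\Lambda^{sp/(p-1)}u(A)$, uniformly in $\Lambda$. This is exactly the mechanism already used in the scale-growth estimate \eqref{eq:scale-sum}.

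The same bounds hold for $v$. Since $G^{p-1}\le u^{p-1}+v^{p-1}$, we get $L+T\le C\Lambda^{sp/(p-1)}(u(A)+v(A))$. Lemma~\ref{lem:buffered-supersolution} then yields \eqref{eq:proved-max-lift}. The final claim follows from $W(A)\ge u(A)=1$ and the supremum bound, noting that $A\in D_r(w)\subset D$.
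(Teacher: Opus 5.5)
Your proof is correct and is essentially the paper's argument. The paper also obtains the factor $\Lambda^{sp}$ from Lemma~\ref{lem:positive-tail} at $A$ with radius comparable to $r$, then uses boundary local boundedness at scale $R$ and Lemma~\ref{lem:buffered-supersolution}. The only reorganization is that the paper bounds $R^{-n}\int_{B_{4R}(w)}f^{p-1}\,dy+R^{sp}\int_{\R^n\setminus B_{4R}(w)}f^{p-1}|y-w|^{-n-sp}\,dy$ directly by $C\Lambda^{sp}f(A)^{p-1}$, instead of passing through $u(A_R(w))$ and Proposition~\ref{prop:boundary-normalization}.

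One small correction: on the annulus $2R\le|y-w|<4R$ your $L^\infty$ bound does not apply, since it holds only on $B_{2R}(w)$. The $L^{p-1}$-mass term over $B_{4R}(w)$ in \eqref{eq:boundary-normalized-mass} gives the required tail bound there instead.
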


\begin{proof}
Choose \(\delta_{\max}\) no larger than the threshold in
Proposition~\ref{prop:boundary-normalization} with \(\beta=s/2\).
We first retain the normalization at \(A_r(w)\) while estimating
on the larger scale \(R\). For \(f=u\) or \(f=v\), choose
\(\rho=cr\) with \(B_{2\rho}(A)\subset D_{2r}(w)\).
Lemma~\ref{lem:positive-tail} and interior Harnack give
\[
 \int_{\R^n\setminus B_\rho(A)}
       \frac{f(y)^{p-1}}{|y-A|^{n+sp}}\d y
 \le Cr^{-sp}f(A)^{p-1},\qquad
 \sup_{B_\rho(A)}f\le Cf(A).
\]
On \(B_{4R}(w)\), use \(|y-A|\le5R\), apart from the small
ball handled by Harnack. Outside \(B_{4R}(w)\), compare
\(|y-A|\) with \(|y-w|\). This proves
\begin{align}
 N_R(f)&=R^{-n}\int_{B_{4R}(w)}f^{p-1}\d y
   +R^{sp}\int_{\R^n\setminus B_{4R}(w)}
             \frac{f(y)^{p-1}}{|y-w|^{n+sp}}\d y\le C\Lambda^{sp}f(A)^{p-1}.
 \label{eq:buffer-normalized-mass}
\end{align}
The boundary local boundedness estimate used in
Proposition~\ref{prop:boundary-normalization}, namely
\cite[Lemma~2.6]{BKS}, applied on \(B_{4R}(w)\), gives
\[
 \|f\|_{\mathrm L^\infty(B_{2R}(w))}
 \le C N_R(f)^{1/(p-1)}.
\]
Splitting the tail at \(2R\) into the annulus between \(2R\)
and \(4R\) and its complement also gives
\(\Tail(f,w,2R)^{p-1}\le C N_R(f)\).
Consequently,
\begin{equation}\label{eq:max-data-normalization}
 \|G\|_{\mathrm L^\infty(B_{2R}(w))}+\Tail(G,w,2R)
 \le C\Lambda^{sp/(p-1)}\bigl(u(A)+v(A)\bigr),
\end{equation}
since \(G^{p-1}\le u^{p-1}+v^{p-1}\). The maximum operation preserves \(\Vs(D)\), so the variational
solution exists by Theorem~\ref{thm:dirichlet}.
Lemma~\ref{lem:buffered-supersolution} and
\eqref{eq:max-data-normalization} give its upper bound.
Comparison with \(u\) and \(v\) gives the lower bounds.
For example,
\((u-W)_+\le(G-W)_+\in\Wzero(D)\), so the order ideal
property again justifies the comparison test.
Interior continuity makes the upper bound valid at \(A\). Finally, for \(\eta\in C_0^\infty(B_R(w))\),
\[
 \eta W=\eta G+\eta(W-G)\in\Wzero(D_R(w)).
\]
The first term belongs to this space by localization of the weak
zero conditions for \(u,v\) and stability under maxima.
The second belongs to it since \(W-G\in\Wzero(D)\).
The almost everywhere exterior zero condition is immediate from
\(W=G\) outside \(D\). This proves the final assertion.
\end{proof}

The larger fractional $p$-harmonicity patch in Proposition~\ref{prop:max-lift}
places a full annulus between the artificial boundary of the lift
domain and the uncontrolled distant data. It permits comparison in the lift domain while preserving the
original distant data.
Its proof uses positive-tail control and boundary local boundedness,
and does not use any boundary comparison theorem for quotients.

\begin{thm}\label{thm:completion}
Assume \eqref{eq:range}, sufficiently small Reifenberg flatness,
and the hypotheses of Proposition~\ref{prop:max-lift}, with
\(\Lambda\) fixed as in Section~\ref{sec:variable}.
Then, for \(E=D_r(w)\),
\begin{equation}\label{eq:completed-holder}
 \left|\log\frac{u(x)}{v(x)}-\log\frac{u(y)}{v(y)}\right|
 \le 2C_L\log(2K_0)
       \left(\frac{|x-y|}{r}\right)^{\alpha_L}
 \quad(x,y\in E).
\end{equation}
Moreover,
\[
 C^{-1}\frac{u(A)}{v(A)}\le\frac{u(x)}{v(x)}
 \le C\frac{u(A)}{v(A)}\quad(x\in E),
\]
where \(C\) depends only on \(n,s,p,K_0\).
\end{thm}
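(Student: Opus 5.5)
By homogeneity of $\Fsp$ and scale invariance of both conclusions, I first normalize $u(A)=v(A)=1$. Proposition~\ref{prop:max-lift} then yields the lift $W$ in $D=D_R(w)$, $R=\Lambda r$. It satisfies $W\ge u,v$ on $\R^n$, has weak zero values on $B_R(w)\setminus\Om$, and obeys $1\le W(A)\le K_0$. I then set $U_1=2W$ and take in turn $U_0=u$ and $U_0=v$. These pairs are globally ordered, with $U_0\le W<2W$ wherever $W>0$, and in particular strictly so at $A$.

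The main step is to check that each ordered pair satisfies the hypotheses of Theorem~\ref{ass:paths} and Proposition~\ref{prop:ordered-completion}. The harmonicity domain there should be a patch of radius comparable to $\Lambda$ times a smaller radius. The cleanest route is to apply those results at a reduced scale $r'=r/\Lambda'$, with $\Lambda'$ a fixed constant chosen so that $D_{\Lambda r'}(w)\subset D_R(w)$. I expect this bookkeeping of radii to be the main obstacle. Concretely, I would shrink the comparison patch and then use the chain containment of Remark~\ref{rem:localized-chains}, together with interior Harnack, to transfer the estimates back from $D_{r'}(w)$ and $A_{r'}(w)$ to $E=D_r(w)$ and $A$. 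Only the fixed constants change in this transfer.

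Energy membership also has to be checked. We have $u,v\in\Vs(D_R(w))$ by hypothesis, and $W\in G+\Wzero(D)$ with $G=\max\{u,v\}\in\Vs(D)$, so $W\in\Vs(D)$. The solutions are nontrivial and globally nonnegative, and $2W$ solves the equation by homogeneity.

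Proposition~\ref{prop:ordered-completion} then gives, for each pair, a H\"older estimate for $\log(2W/u)$ whose constant is $C_L\log(2W(A)/u(A))\le C_L\log(2K_0)$. The same bound holds for $\log(2W/v)$. I then subtract using the identity \eqref{eq:funs++15}, $\log(u/v)=\log(2W/v)-\log(2W/u)$. Adding the two oscillation bounds gives \eqref{eq:completed-holder} with constant $2C_L\log(2K_0)$.

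For the two-sided bound, \eqref{eq:ordered-size} gives $0<\log(2W/u)\le C_L\log(2K_0)$ on $E$, and likewise for $v$. Subtracting, $|\log(u/v)|\le 2C_L\log(2K_0)$ on $E$. Since $u(A)/v(A)=1$ after normalization, this is the comparison with $C=(2K_0)^{2C_L}$, which depends only on $n,s,p,K_0$. Undoing the normalization restores the factor $u(A)/v(A)$.
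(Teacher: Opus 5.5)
Your argument is the paper's: normalize $u(A)=v(A)=1$, take the lift $W$ of Proposition~\ref{prop:max-lift}, apply Proposition~\ref{prop:ordered-completion} to $(u,2W)$ and $(v,2W)$, and subtract via $\log(u/v)=\log(2W/v)-\log(2W/u)$. Reading the two-sided bound off \eqref{eq:ordered-size}, rather than setting $y=A$ in \eqref{eq:completed-holder}, is an equally valid finish.

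The radius bookkeeping you flag as the main obstacle does not arise. Since $R=\Lambda r$, the lift domain $D_R(w)$ is exactly the domain $D=D_{\Lambda r}(w)$ of Theorem~\ref{ass:paths} at scale $r$, so $\Lambda'=1$. You should not shrink to $r'<r$: interior Harnack chains cannot carry the estimates from $D_{r'}(w)$ to points of $D_r(w)$ near $\partial\Om\cap(B_r(w)\setminus B_{r'}(w))$. Shrinking would also lose the exact constant $2C_L\log(2K_0)$.
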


\begin{proof}
Normalize \(u(A)=v(A)=1\) by multiplying the solutions by
positive constants, and let \(W\) be the maximum lift supplied by
Proposition~\ref{prop:max-lift}. Its normalization constant \(K_0\)
depends only on \(n,s,p\), since \(\Lambda\) was fixed in terms
of these parameters. The functions \(u,v,W\) are all fractional $p$-harmonic
in the lift domain \(D_{\Lambda r}(w)\), and have the required
weak zero values there.
Apply Proposition~\ref{prop:ordered-completion} to
\((u,2W)\) and \((v,2W)\). Their logarithmic ratios at \(A\)
are at most \(\log(2K_0)\).
Subtract the two estimates, using
\[
 \log\frac{u}{v}=\log\frac{2W}{v}-\log\frac{2W}{u}.
\]
This proves \eqref{eq:completed-holder}. Set \(y=A\), use
\(|x-A|\le2r\), and exponentiate to get comparison.
Undoing the normalization gives the assertion.
\end{proof}

\begin{proof}[Proof of Theorem~\ref{thm:intro-finite}]
The preceding theorem has a fixed buffer \(\Lambda\), whereas the
introduction uses the radii \(r\) and \(4r\).
Take \(\rho=r/(16\Lambda)\).
For every boundary point within distance \(2\rho\) of
\(D_r(w)\), both the lift domain \(D_{\Lambda\rho}(z)\) and
its larger harmonicity patch \(D_{8\Lambda\rho}(z)\) lie in
\(D_{2r}(w)\). Indeed, \(8\Lambda\rho=r/2\) and
\(|z-w|<r+2\rho\).
Restriction therefore preserves the required finite energy.
The original equation and exterior zero condition hold throughout
the larger patch. Thus Proposition~\ref{prop:max-lift} and
Theorem~\ref{thm:completion} apply on \(D_\rho(z)\).
Set \(\theta=\rho/(8M_{\mathrm{geom}}r)\), and decrease the
flatness threshold so that \(\delta\le\theta/16\).
The points \(A_\rho(z)\) and \(A_r(w)\) belong to
\(D_{3r/2}(w)\) and have boundary distance at least \(\theta r\).
Remark~\ref{rem:localized-chains} therefore joins them by a
uniformly bounded chain whose doubled balls lie in \(D_{2r}(w)\).
Interior Harnack along this chain compares their values and gives
uniform ratio comparison near the boundary. The same argument
handles points of \(D_r(w)\) at boundary distance at least
\(\rho/8\). All constants depend only on the fixed parameters,
since \(\rho/r=1/(16\Lambda)\).

For two points separated by at most a sufficiently small multiple
of \(\rho\), either both lie in one such boundary comparison ball,
or their connecting segment lies in an interior ball of radius
comparable to \(\rho\). In the latter case interior regularity,
Harnack, and the normalized gradient upper estimate give
\(|\nabla\log u|+|\nabla\log v|\le C/\rho\).
Thus both cases give the required local H\"older bound.
For more widely separated points, the ratio comparison already
proved bounds the logarithmic oscillation, and the factor
\((|x-y|/r)^{\alpha_L}\) is bounded below by a fixed positive
constant. This gives \eqref{eq:target-holder} on all of \(D_r(w)\),
and completes the proof.
\end{proof}

\subsection{Passage to the natural tail class}

\begin{proof}[Proof of Theorem~\ref{thm:main}]
Let \(u,v\) satisfy its hypotheses, without assuming global
\(p\)-energy on pairs meeting the comparison domain.
Boundary local boundedness from \cite[Lemma~2.6]{BKS},
together with interior local boundedness and a finite covering
inside $B_{4r}(w)$, gives
\[
 \|u\|_{\mathrm L^\infty(B_{3r}(w))}
 +\|v\|_{\mathrm L^\infty(B_{3r}(w))}<\infty.
\]
This step uses only the local \(W^{s,p}\) regularity and natural
tails in the hypothesis. Apply
Lemma~\ref{lem:natural-tail-approximation} with \(R=3r\),
\(z=w\), and \(D=D_{3r}(w)\), separately to \(u\) and \(v\).
The required larger Sobolev ball is exactly \(B_{4r}(w)\).
We obtain globally nonnegative \(u_m,v_m\in\Vs(D_{3r}(w))\),
harmonic in \(D_{3r}(w)\), with weak zero values on
\(B_{3r}(w)\setminus\Om\), converging locally uniformly to
\(u,v\) there and converging in the natural tail norm.
In particular, for \(A=A_r(w)\),
\[
 u_m(A)\longrightarrow u(A)>0,\qquad
 v_m(A)\longrightarrow v(A)>0.
\]
Discard finitely many indices if necessary.

The approximants need not solve the equation outside
\(D_{3r}(w)\). To apply the finite-energy result, put
\(\rho=r/16\). For every boundary point \(z\) within
\(2\rho\) of \(D_r(w)\),
\[
 B_{4\rho}(z)\subset B_{3r}(w),\qquad
 u_m,v_m\in\Vs(D_{2\rho}(z)).
\]
Thus Theorem~\ref{thm:intro-finite} applies on \(D_\rho(z)\)
with constants independent of \(m\). We now use the covering
and chain argument in its proof, with this fixed ratio
\(\rho/r=1/16\). More precisely, take
\(\theta=\rho/(8M_{\mathrm{geom}}r)\) and
\(\delta\le\theta/16\). Remark~\ref{rem:localized-chains}
compares the values at \(A_\rho(z)\) and \(A=A_r(w)\)
through chains whose doubled balls lie in
\(D_{2r}(w)\subset D_{3r}(w)\). The same argument handles
points at boundary distance at least \(\rho/8\), and gives
\begin{equation}\label{eq:approximate-global-comparison}
 C^{-1}\frac{u_m(A)}{v_m(A)}
 \le\frac{u_m(x)}{v_m(x)}
 \le C\frac{u_m(A)}{v_m(A)}
 \quad(x\in D_r(w)).
\end{equation}
The two-point argument in that proof also gives
\[
 \left|\log\frac{u_m(x)}{v_m(x)}
              -\log\frac{u_m(y)}{v_m(y)}\right|
 \le C\left(\frac{|x-y|}{r}\right)^\alpha
 \quad(x,y\in D_r(w)),
\]
with \(C,\alpha\) independent of \(m\).
Indeed, when \(|x-y|<\rho/16\), either both points lie
in one boundary comparison ball, or their joining segment
has boundary distance at least \(\rho/16\), where the
normalized interior gradient estimate applies.
For larger separations, \eqref{eq:approximate-global-comparison}
bounds the logarithmic oscillation.

For each fixed pair of interior points, local uniform convergence
and positivity allow passage to the limit in this estimate and
in \eqref{eq:approximate-global-comparison}.
They give \eqref{eq:target-bhi} and \eqref{eq:target-holder}.
The latter also defines the continuous boundary extension of the
logarithmic ratio. No uniform \(p\)-energy bound for the
approximants enters this passage. This proves the theorem.
\end{proof}

\section{Concluding remarks and further research}\label{sec:concluding}

The proof separates geometric approximation from the structural
estimates for the linearized forms. This separation identifies several
extensions that follow from the same computations, and others that
require additional regularity or coercivity estimates. Throughout this
discussion, any use of Proposition~\ref{prop:C1} retains the range
\eqref{eq:range}.

\subsection{Weaker geometric assumptions}

The variational Dirichlet problem and
Lemma~\ref{lem:natural-tail-approximation} require no flatness
of the bounded comparison domain. Exterior capacity density
supplies continuity and positive boundary decay, while corkscrew
balls and Harnack chains supply the geometry for normalization.
Small Reifenberg flatness is used more substantially in
Theorem~\ref{thm:approximation} and
Corollary~\ref{cor:reifenberg-gradient} which provide the lower
bound in \(|\nabla U|\asymp U/d_\Om\) and cone increment
estimates needed for coercivity and averaging on Whitney balls.
Harnack and interior regularity alone give only the upper
gradient bound after tail normalization.

The geometry is needed only in a fixed larger boundary patch.
Bounded \(C^1\) domains are included at sufficiently small scales,
without a Dini assumption, as are Lipschitz graphs with small
Lipschitz constant. Extending the result to arbitrary Lipschitz or NTA domains
requires further arguments. Lipschitz regularity alone does
not imply the required small Reifenberg flatness, while
Reifenberg flat boundaries need not be locally Lipschitz graphs.

The boundary moment estimate also uses
\begin{equation}\label{eq:future-decay-moment}
 U(x)\le Ca_r\left(\frac{d_\Om(x)}r\right)^\beta,
 \qquad 2-sp+(p-1)\beta>0,
 \qquad a_r=U(A_r(z)).
\end{equation}
The near-\(s\) decay in \cite[Theorem~1.8]{BKS} permits this
choice in sufficiently flat domains. A positive exponent from
capacity density alone need not suffice. When \(sp\le2\),
the restriction is automatic for every \(\beta>0\), making
this subrange a natural starting point for geometric extensions.

Two parts of Section~\ref{sec:variable} require less geometry.
Under normalized boundary decay and the NTA conditions, one can
choose value-separated corkscrew balls at scales \(r\) and
\(\theta r\), with \(\theta>0\) fixed and small.
Decay gives the upper separation, and Harnack chains give positive
lower bounds and weighted measures comparable to \(a_r r^n\).
This proves the boundary two-set Poincar\'e argument without a
lower gradient bound, but does not give coercivity on arbitrarily
small Whitney balls. Finite energy also gives qualitative boundary
cutoffs directly, as follows.

\begin{lem}\label{lem:energy-boundary-capacity}
Assume \eqref{eq:range}, let \(\Om\) be open,
\(w\in\partial\Om\), \(R>0\), and put \(D=D_R(w)\).
Let \(U\in\Vs(D)\) be globally nonnegative, positive in \(D\),
and a local weak solution of \(\Fsp U=0\) in \(D\), with zero exterior values on
\(B_R(w)\setminus\Om\) in the sense of
Definition~\ref{def:local-zero}.
Put \(S=\{U>0\}\), \(\d\mu=U\d x\), and define
\(\mathscr E_U\) by \eqref{eq:variable-form}.
For every \(\eta\in C_0^\infty(B_R(w))\), \(0\le\eta\le1\),
there are compactly supported Lipschitz functions \(\zeta_j\),
\(0\le\zeta_j\le\eta\), equal to \(\eta\) in a neighborhood
of \(\partial\Om\cap\supp\eta\), such that
\begin{equation}\label{eq:energy-boundary-capacity}
 \|\zeta_j\|_{\mathrm L^2(\mu)}^2+\mathscr E_U(\zeta_j)
 \longrightarrow0.
\end{equation}
No geometric regularity of \(\partial\Om\) is assumed.
\end{lem}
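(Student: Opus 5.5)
The natural route is to build the cutoffs from the finite energy of \(U\) itself, not from geometry. I would apply the test-function approximation in Definition~\ref{def:local-zero} to \(\eta U\), and then convert the approximants into cutoffs that equal \(\eta\) near the boundary. Concretely, since \(\eta U\in\Wzero(D)\), I choose \(\phi_j\in C_0^\infty(D)\) with \(\phi_j\to\eta U\) in \(\Ws(\R^n)\). The candidate cutoff is
\[
 \zeta_j=\eta\,\bigl(1-\min\{1,\phi_j/(\varepsilon_j U)\}_+\bigr)
\]
with suitable \(\varepsilon_j\downarrow0\), or the simpler truncation
\[
 \zeta_j=\eta\,\Theta\bigl(U/\varepsilon_j\bigr),
\]
where \(\Theta\) is Lipschitz, equals \(1\) on \([0,1]\) and \(0\) on \([2,\infty)\). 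I would first try the second. It equals \(\eta\) wherever \(U<\varepsilon_j\). By boundary continuity of the variational solution under the local zero condition (as in Theorem~\ref{thm:continuous} and Proposition~\ref{prop:boundary-decay}), this holds on a neighborhood of \(\partial\Om\cap\supp\eta\). It is Lipschitz inside \(\Om\), being Lipschitz composed with locally \(C^1\) (Proposition~\ref{prop:C1}), and vanishes outside \(\{U<2\varepsilon_j\}\cap\supp\eta\). If boundary continuity is unavailable without geometry, I would instead regularize \(U\) by the smooth approximants \(\phi_j\) and define \(\zeta_j\) through \(\phi_j\), keeping the energy bound below via \(\phi_j\to\eta U\).

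The \(\mathrm L^2(\mu)\) norm is easy:
\[
 \|\zeta_j\|^2_{\mathrm L^2(\mu)}\le\int_{\supp\eta\cap\{0<U<2\varepsilon_j\}}U\,\mathrm d x\le 2\varepsilon_j|B_R|\to0.
\]
For the energy, I split \(\Delta_{xy}\zeta_j\) into \(\Delta\eta\cdot\Theta\) and \(\eta\,\Delta\Theta(U/\varepsilon_j)\). For the second piece, \(|\Delta\Theta(U/\varepsilon_j)|\le C\min\{1,|\Delta U|/\varepsilon_j\}\), and the pair has an endpoint \(x\) with \(U(x)<2\varepsilon_j\). Writing
\[
 K_U\,U(x)U(y)=(p-1)\,U(x)U(y)|\Delta U|^{p-2}|x-y|^{-n-sp},
\]
I bound \(U(x)U(y)\le U(x)\bigl(U(x)+|\Delta U|\bigr)\le 2\varepsilon_j\bigl(2\varepsilon_j+|\Delta U|\bigr)\). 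Combined with \(\min\{1,|\Delta U|/\varepsilon_j\}^2\), this gives an integrand at most \(C|\Delta U|^p|x-y|^{-n-sp}\), restricted to pairs touching \(\{U<2\varepsilon_j\}\cap\supp\eta\). This restricted energy tends to zero by dominated convergence, since \(U\in\Vs(D)\) and the sets shrink to \(\{U=0\}\cap\supp\eta\), which has Lebesgue-measure zero inside \(D\). Pairs with \(y\notin D\) and \(U(y)=0\) drop out of the \(\mu\times\mu\) integral entirely. The first piece is handled by the cutoff bound \(|\Delta\eta|\le C\min\{1,|x-y|\}\) and
\[
 U(x)U(y)|\Delta U|^{p-2}\le C\varepsilon_j\bigl(U(y)^{p-1}+\varepsilon_j^{p-1}\bigr).
\]
The tail assumption \(U\in\mathrm L^{p-1}_{sp}\) then gives a bound \(C\varepsilon_j\) times a finite quantity.

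The main obstacle is the requirement that \(\zeta_j=\eta\) on a neighborhood of the boundary with no geometric regularity, that is, making \(\{U<\varepsilon_j\}\) contain a neighborhood of \(\partial\Om\cap\supp\eta\). Without exterior capacity density, \(U\) need not vanish continuously at irregular boundary points. If this fails, I would replace \(U\) in the definition of \(\zeta_j\) by a smooth \(\phi_j\) close to \(\eta U\) in \(\Ws\). Such a \(\phi_j\) vanishes identically near \(\partial\Om\). I would then set \(\zeta_j=\eta\,\Theta(\phi_j/\varepsilon_j)\) and choose \(\varepsilon_j\) slowly relative to \(\|\phi_j-\eta U\|_{\Ws}\), so that the extra energy is controlled by \(\varepsilon_j^{-2}\) times \(\|\phi_j-\eta U\|^p\)-type errors weighted by the bounded factors above. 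The delicate point is balancing \(\varepsilon_j\) against the approximation rate so that both the error term and the main term go to zero.
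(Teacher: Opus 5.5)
There is a genuine gap: you never give an energy estimate that works for an arbitrary open set, and the missing ingredient is the equation for \(U\) used through the ground-state identity.

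\emph{The main candidate.} Your cutoff \(\zeta_j=\eta\,\Theta(U/\varepsilon_j)\) needs \(U\to0\) at \(\partial\Om\cap\supp\eta\). The results you cite for this, Theorem~\ref{thm:continuous} and Proposition~\ref{prop:boundary-decay}, rest on exterior capacity density, which is exactly what the lemma drops. For example, let \(\partial\Om\) contain a puncture of zero \((s,p)\)-capacity, which is possible within \eqref{eq:range} when \(sp\le n\). Then \(U\) extends continuously and positively across the puncture. So \(\zeta_j\) vanishes on a punctured neighborhood of it instead of equalling \(\eta\) there.

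\emph{The fallback.} The cutoff \(\zeta_j=\eta\,\Theta(\phi_j/\varepsilon_j)\) fixes the support and regularity requirements. But the energy bound, which is the whole content of the lemma, is left as an unperformed ``balancing'', and your mechanism no longer applies.
\begin{itemize}
\item Where \(\phi_j<2\varepsilon_j\) you only know \(\eta U<2\varepsilon_j+|\eta U-\phi_j|\). So the weight \(U\) at that endpoint is not small wherever \(\eta\) is small or the approximation error is large.
\item The increments are now \(\min\{1,|\Delta_{xy}\phi_j|/\varepsilon_j\}\) rather than \(\min\{1,|\Delta_{xy}U|/\varepsilon_j\}\). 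Domination by \(|\Delta_{xy}U|^p|x-y|^{-n-sp}\) is lost, and nothing in your sketch controls the coupled error terms.
\end{itemize}

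\emph{The \(\Delta\eta\)-piece.} Your bound there is false when \(sp\ge2\), which occurs in \eqref{eq:range}, for instance \(s=3/4\), \(p=4\). After replacing \(|\Delta_{xy}U|^{p-2}\) by powers of \(U\) and \(\varepsilon_j\), you are left with \(\int_{|x-y|<1}|x-y|^{2-n-sp}\,\mathrm dy=\infty\). The factor \(|\Delta_{xy}U|^{p-2}\) must be kept and paired with the \(p\)-energy of \(U\) by H\"older.

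\emph{The missing idea.} Your energy estimates never use that \(U\) solves the equation, and the ground-state identity uses exactly that. It removes any need for smallness of \(U\) near \(\partial\Om\).
\begin{itemize}
\item Take nonnegative \(\phi_j\in C_0^\infty(D)\) with \(\phi_j\to\eta U\) in \(\Ws(\R^n)\) and almost everywhere. Set
\[
 \zeta_j=(\eta-\phi_j/U)_+\ \text{ in }D,\qquad \zeta_j=\eta\ \text{ outside }D,
\]
which is close to the first candidate you discarded.
\item This \(\zeta_j\) is Lipschitz, since \(U\) is \(C^1\) and bounded below on \(\supp\phi_j\Subset D\). It equals \(\eta\) off \(\supp\phi_j\), which contains a neighborhood of the boundary, and \(0\le\zeta_j\le\eta\).
\item Crucially, \(U\zeta_j=e_j:=(\eta U-\phi_j)_+\) exactly, on all of \(\R^n\).
\item The pointwise identity \(U(x)U(y)(\zeta(x)-\zeta(y))^2=(\Delta_{xy}(U\zeta))^2-\Delta_{xy}U\,\Delta_{xy}(U\zeta^2)\) gives
\[
 \mathscr E_U(\zeta_j)=\mathcal B_U(e_j,e_j)-\mathcal B_U(U,U\zeta_j^2)=\mathcal B_U(e_j,e_j).
\]
The second term vanishes because \(U\zeta_j^2=\eta e_j^2/(\eta U)\in\Wzero(D)\) is an admissible test in the equation for \(U\) (Proposition~\ref{prop:homogeneity}).
\item H\"older then gives \(\mathcal B_U(e_j,e_j)\le C\,E_{p,D}(U)^{(p-2)/p}[e_j]_{s,p}^2\). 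Here \(E_{p,D}(U)\) is the \(p\)-energy of \(U\) on \(Q_D\). This tends to zero because \([e_j]_{s,p}\le[\eta U-\phi_j]_{s,p}\).
\item The \(\mathrm L^2(\mu)\) part follows by dominated convergence, since \(\zeta_j\to0\) \(\mu\)-almost everywhere.
\end{itemize}
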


\begin{proof}
Put \(W=\eta U\in\Wzero(D)\). Choose nonnegative
\(\phi_j\in C_0^\infty(D)\) converging to \(W\) in
\(\Ws(\R^n)\), and, after passing to a subsequence, almost
everywhere. Define
\[
 e_j=(W-\phi_j)_+,
 \qquad
 \zeta_j=(\eta-\phi_j/U)_+\quad\hbox{in }D,
 \qquad \zeta_j=\eta\quad\hbox{outside }D.
\]
The positive-part contraction gives \(e_j\to0\) in
\(\Ws(\R^n)\). On the compact support of \(\phi_j\),
interior regularity and positivity give \(U\in C^1\) and
\(\inf U>0\). Hence \(\phi_j/U\), extended by zero outside
\(D\), is Lipschitz, and so is \(\zeta_j\).
The asserted boundary equality follows from
\(\supp\phi_j\Subset D\). Also \(U\zeta_j=e_j\) on
\(\R^n\), using the local exterior zero condition.

The function \(U\zeta_j^2\), set to zero where \(U=0\),
belongs to \(\Wzero(D)\). Indeed, it equals
\(\eta e_j^2/W\), with zero assigned when \(W=0\), and
the map \((a,b)\mapsto a^2/b\) is Lipschitz on
\(0\le a\le b\), with its continuous value at \((0,0)\).
Composition and multiplication therefore preserve the required
zero-boundary space. Proposition~\ref{prop:homogeneity} allows
this test in the equation for \(U\). The pointwise ground-state
identity gives
\begin{align}
 \mathscr E_U(\zeta_j)
 &=\mathcal B_U(e_j,e_j)
       -\mathcal B_U(U,U\zeta_j^2)=\mathcal B_U(e_j,e_j)
 \le C E_{p,D}(U)^{(p-2)/p}[e_j]_{s,p}^2
 \longrightarrow0,\label{eq:capacity-ground-state-bound}
\end{align}
where
\[
 E_{p,D}(U)=\iint_{Q_D}
       \frac{|\Delta_{xy}U|^p}{|x-y|^{n+sp}}\dxdy<\infty.
\]
The last estimate is H\"older's inequality, and its energy factor
is interpreted as one when \(p=2\). All bilinear terms are
absolutely integrable by the same inequality.
Finally \(\zeta_j\to0\) almost everywhere with respect to
\(\mu\), and \(0\le\zeta_j\le\eta\).
Since \(\int_{\supp\eta}U\d x<\infty\), dominated convergence
gives the \(\mathrm L^2(\mu)\) conclusion.
\end{proof}

These cutoffs can replace those of
Lemma~\ref{lem:variable-boundary-capacity} in the proof of
Proposition~\ref{prop:bounded-quotient-removal}, under its
remaining hypotheses, and give an alternative capacity
argument for the ordered variational backgrounds.
We retain Lemma~\ref{lem:variable-boundary-capacity}
in the main proof because it gives a uniform rate and applies
without assuming \(U\in\Vs(D)\). Neither argument replaces
the jump moment or kernel averaging estimates.

An NTA extension would still require the structural and local
energy hypotheses of Lemma~\ref{lem:local-form-harnack}, together
with normalization and maximum-lift bounds, uniformly over the
ordered backgrounds. Under these inputs the quotient and deformation
arguments need no approximating hyperplane. General NTA geometry
alone has not been shown here to provide them. For comparison,
when \(p=2\), \cite{RSBHI} proves local boundary comparison in
arbitrary open sets and quotient H\"older continuity in Lipschitz
domains, for globally nonnegative continuous solutions with common
local exterior zero values.

\subsection{More general operators of \texorpdfstring{\(p\)}{p}-Laplace type} We here discuss extensions of the preceding analysis to more general
nonlocal operators of \(p\)-Laplace type. We indicate the additional
structural assumptions needed for the boundary comparison argument. Consider first the homogeneous class
\begin{equation}\label{eq:general-homogeneous-operator}
 \mathcal F_{a,s,p}u(x)
 =\operatorname{PV}\int_{\R^n}
 a(x,y)\frac{J_p(u(x)-u(y))}{|x-y|^{n+sp}}\d y,
\end{equation}
where \(a\) is measurable, independent of the solution, and
\begin{equation}\label{eq:general-kernel-ellipticity}
 a(x,y)=a(y,x),\qquad
 0<\lambda_0\le a(x,y)\le\Lambda_0<\infty
 \quad\hbox{almost everywhere}.
\end{equation}
For rough coefficients the symmetric weak formulation is primary.
Ellipticity and monotonicity preserve the variational, comparison,
Caccioppoli, positive-tail, distant-data approximation, and
buffered-supersolution arguments. Interior Harnack inequalities
for this class are available in \cite{DKP}.
The difference and derivative kernels are
\begin{equation}\label{eq:general-linearized-kernels}
 K_{a,u,v}=aK_{u,v},\qquad K_{a,U}=aK_U.
\end{equation}
For a fixed background, energy comparability preserves coercivity
and Poincar\'e estimates, and the upper ellipticity bound preserves
moment estimates. Kernel averaging is also stable as
\[
 a(x,y)K_U(x,y)
 \le\frac{C\Lambda_0}{\lambda_0\mu(\mathbb B_\rho(x))}
 \int_{\mathbb B_\rho(x)}a(\xi,y)K_U(\xi,y)\d\mu(\xi).
\]
These assertions require the increment and value bounds for the
actual background. Form comparability alone does not supply them.

Homogeneity retains \(\mathcal L_{a,U}U=0\) and the ordered
parameter bounds, so the ground-state transformation and logarithmic
integration keep their algebraic form.
Lemma~\ref{lem:energy-boundary-capacity} also extends with the
coefficient inserted, provided its local regularity input holds.
The principal remaining task is therefore to establish the
uniform solution estimates used in the geometric approximation.

An exact anisotropic extension follows by a linear change of
variables. For an invertible matrix \(T\), replace the denominator
in \(\Fsp\) by \(|T(x-y)|^{n+sp}\) and call the resulting
operator \(\mathcal F^T_{s,p}\). If \(v(X)=u(T^{-1}X)\), then $\mathcal F^T_{s,p}u(T^{-1}X)
 =|\det T|^{-1}\Fsp v(X)$. Thus Theorem~\ref{thm:main} applies on the transformed domain,
with thresholds and constants adjusted by the distortion of \(T\).

For the larger translation-invariant class
\[
 a(x,y)=b\left(\frac{x-y}{|x-y|}\right),\qquad
 b(\theta)=b(-\theta),\qquad \lambda_0\le b\le\Lambda_0,
\]
the half-space profile remains explicit. For a unit vector \(\nu\),
put \(h_\nu(x)=(x\cdot\nu)_+^s\) and \(t=x\cdot\nu>0\).
Pairing opposite directions in polar coordinates gives
\begin{align}\label{eq:anisotropic-profile}
 \mathcal F_{a,s,p}h_\nu(x)
 &=c_b(\nu)\operatorname{PV}\int_\R
 \frac{J_p(t^s-(t+\rho)_+^s)}{|\rho|^{1+sp}}\d\rho=0,\quad c_b(\nu):=\frac12\int_{\mathbb S^{n-1}}
 b(\theta)|\theta\cdot\nu|^{sp}\d\cH^{n-1}(\theta).
\end{align}
The cancellation is that of Proposition~\ref{prop:profile}.
A uniform half-space comparison and classification theorem and
interior gradient estimates remain necessary. The theorem of
\cite{GJS} used here concerns the isotropic operator.
Spatially varying kernels would additionally require stability
under freezing and control of coefficient errors, including
interactions with the distant exterior. Flattening a general
Lipschitz graph leads to such spatial dependence, so it does not
by itself extend the argument to arbitrary Lipschitz constants.

Nonhomogeneous dependence on the increment presents a further
obstruction. Suppose that
\[
 \mathcal F_\Phi u(x)=\operatorname{PV}\int_{\R^n}
 \frac{\Phi(x,y,u(x)-u(y))}{|x-y|^{n+sp}}\d y,
\]
where \(\Phi\) is measurable and symmetric in \((x,y)\),
odd and continuously differentiable in its last variable, and
\[
 \lambda_0|t|^{p-2}\le\partial_t\Phi(x,y,t)
 \le\Lambda_0|t|^{p-2}.
\]
Difference kernels retain the expected comparability. The derivative
kernel is
\[
 K_U^\Phi(x,y)=
 \frac{\partial_t\Phi(x,y,\Delta_{xy}U)}{|x-y|^{n+sp}},
\]
but its operator need not have \(U\) as a positive solution.
Indeed, with
\(r_\Phi(x,y,t)=t\partial_t\Phi(x,y,t)-(p-1)\Phi(x,y,t)\),
\begin{align}\label{eq:nonhomogeneous-residual}
 \mathcal L_U^\Phi U&=(p-1)\mathcal F_\Phi U+\mathcal R_U,\quad \langle\mathcal R_U,\varphi\rangle
 =\frac12\iint_{\R^{2n}}
 \frac{r_\Phi(x,y,\Delta_{xy}U)\Delta_{xy}\varphi}
 {|x-y|^{n+sp}}\dxdy.
\end{align}
The residual vanishes for \(\Phi(x,y,t)=a(x,y)J_p(t)\),
but need not vanish under general \(p\)-growth assumptions.
Scalar multiples also need not remain solutions. Both the
homogeneous quotient equation and the ordered parameter bounds
would therefore require replacement.

Two further questions concern the present operator.
First, removing the global domination \(0\le F\le M_qU\)
from linear boundary comparison requires control of unbounded
quotients and the exterior source in \eqref{eq:quotient-source}.
Second, a non-local counterpart of \cite[Lemma~3.15]{LN} would give
\begin{equation}\label{eq:growth-target}
 C_\varepsilon^{-1}
 \left(\frac{\rho}{R}\right)^{s+\varepsilon}
 \le \frac{u(A_\rho(w))}{u(A_R(w))}
 \le C_\varepsilon
 \left(\frac{\rho}{R}\right)^{s-\varepsilon},
 \qquad 0<\rho<R.
\end{equation}
An iteration of flat comparisons might yield this sharper growth,
with suitable control of the exterior data. A power bound of order
\(s+\varepsilon\) is integrable at infinity in the natural
\((p-1)\)-tail when \(\varepsilon<s/(p-1)\).
Neither question is settled or used in Theorem~\ref{thm:main}. Instead,
the approximation proof identifies escaped exterior mass
and excludes the resulting constant right side.

\subsection{Extensions to sign-changing solutions}

Negative exterior values affect normalization and boundary
non-degeneracy, as discussed in
Remarks~\ref{rem:global-nonnegativity} and
\ref{rem:sign-changing-exterior-data}. They also obstruct the
weighted quotient argument. Indeed, if \(U\) changes sign,
\(U\,\d x\) is no longer a positive measure, and
\[
 \widetilde K_U(x,y)=U(x)U(y)K_U(x,y)
\]
is negative on pairs where \(U\) has opposite signs.
Although \(K_U\) remains nonnegative, the transformed form
falls outside the Dirichlet-form framework used here.
Restricting to \(\{U>0\}\) leaves interactions with
\(\{U<0\}\) in the quotient equation. An extension would
therefore require control of negative tails and a modification
of the ground-state argument.

\appendix
\section{Proof of the local quadratic Harnack inequality}
\label{app:local-form-harnack}

We here prove Lemma~\ref{lem:local-form-harnack} and Corollary~\ref{cor:local-form-holder}.
Throughout this appendix, we use the notation and
hypotheses of Lemma~\ref{lem:local-form-harnack}.

We give a direct proof for the bounded quadratic class in the
statement of the lemma, following the organization of the Harnack argument
in \cite[Sections~4 and~5]{CY}.
We establish the local functional inequalities, control the
exterior interactions, and carry out the required iterations.
All volume comparisons are used inside \(P\).
No doubling assumption is imposed on the distant exterior.

We write \(B_t=\mathbb B_t(z)\) and
\(\mathscr E(f)=\mathscr E(f,f)\).
All balls used below have their required fixed enlargements
inside \(P\). The constant \(C_0\) will be chosen to accommodate
these enlargements.

The increment operator \(Df(x,y)=f(x)-f(y)\), defined
on the finite-energy subspace of \(\mathrm L^2(X,\mu)\),
has a closed graph into the pair \(\mathrm L^2\) space
with measure $J(x,y)\,\mathrm d\mu(x)\,\mathrm d\mu(y)$. This follows by passing to almost everywhere convergent
subsequences in both spaces. Consequently, the finite-energy
domain is complete in the form norm.
In particular, \(\mathscr D_0(G)\) is a Hilbert space
for every open \(G\Subset P\). A normal contraction is a map \(T:\R\to\R\) satisfying
\(T(0)=0\) and \(|T(a)-T(b)|\le|a-b|\).
Composition with such a map preserves the space \(\mathscr D_0(G)\)
and does not increase the form norm. Multiplication by a Lipschitz cutoff is continuous for
functions supported in a fixed compact subset of \(P\).
Indeed, the product inequality gives
\[
 \mathscr E(\eta f)
 \le C\|\eta\|_\infty^2\mathscr E(f)
      +C\int_X|f|^2\Gamma(\eta)\,\mathrm d\mu,\qquad
 \Gamma(\eta)(x)
 =
 \int_X|\eta(x)-\eta(y)|^2J(x,y)\,\mathrm d\mu(y).
\]
The moment estimate \eqref{eq:variable-moment} and scale comparison \eqref{eq:variable-weak-scale} bound
\(\Gamma(\eta)\) uniformly on the compact support of \(f\).
These facts justify the localized truncations and Lipschitz
compositions used below.

Furthermore, the weak pairings are well defined by the localization
argument in the proof of
Proposition~\ref{prop:bounded-quotient-removal}.
Indeed, let \(h\) be bounded on \(X\) and belong locally
to the form domain, and let \(\phi\in\mathscr D_0(G)\),
where \(G\Subset P\) is open.
Choose a Lipschitz cutoff \(\chi\), compactly supported
in \(P\) and equal to one near \(\overline G^{\,X}\).
The decomposition $\mathscr E(h,\phi)
 =\mathscr E(\chi h,\phi)
  +\mathscr E((1-\chi)h,\phi)$ has two absolutely convergent terms.
The first is controlled by Cauchy-Schwarz, since
\(\chi h\) has finite energy.
The second involves only separated pairs and is
controlled by boundedness of \(h\) and the full jump
moment estimate \eqref{eq:variable-moment}.
Pairs with both endpoints outside \(\overline G^{\,X}\)
contribute nothing, so no global energy assumption
on \(h\) is needed.

Let \(d_+>\beta_0\) be a local volume comparison exponent (as in Lemma~\ref{lem:variable-volume-scale}),
enlarged if necessary.
For \(f\in\mathscr D_0(B_b)\), extended by zero to \(X\),
we first prove
\begin{equation}\label{eq:local-averaging-energy}
 \|f\|_2^2
 \le C\Psi(z,b)(t/b)^{\beta_0}\mathscr E(f)
 +\frac{C(b/t)^{d_+}}{\mu(B_b)}\|f\|_1^2,
 \qquad 0<t<b.
\end{equation}
All norms in this proof use \(\mu\).
Local doubling and full support give a covering of \(B_b\)
by balls \(B_t(x_i)\) whose fixed enlargements have uniformly
bounded overlap.
Scale and volume comparison give
\[
 \Psi(x_i,t)\le C(t/b)^{\beta_0}\Psi(z,b),
 \qquad
 \mu(B_t(x_i))\ge c(t/b)^{d_+}\mu(B_b).
\]
Subtract the weighted mean on each covering ball and apply
Poincar\'e \eqref{eq:variable-poincare} to the resulting fluctuation.
Bounded overlap controls the sum of the local energies by
\(C\mathscr E(f)\).
For the mean terms, use
\[
 \mu(B_t(x_i))|f_{B_t(x_i),\mu}|^2
 \le
 \frac{1}{\mu(B_t(x_i))}
 \left(\int_{B_t(x_i)}|f|\,\mathrm d\mu\right)^2.
\]
The volume lower bound and bounded overlap then give
\eqref{eq:local-averaging-energy}.
All covering balls and their enlargements lie in a fixed
enlargement of \(B_b\).

We next derive a local Faber-Krahn inequality, \eqref{eq:local-faber-krahn} below. Suppose that \(f=0\) almost everywhere outside a measurable
set \(V\subset B_b\). Then \(\|f\|_1^2\le\mu(V)\|f\|_2^2\).
If \(\mu(V)/\mu(B_b)\) is sufficiently small, choose \(t/b\)
to be a sufficiently large fixed multiple of
\((\mu(V)/\mu(B_b))^{1/d_+}\).
The second term in \eqref{eq:local-averaging-energy} can then
be absorbed into the left-hand side.
For the remaining volume ratios, reverse doubling gives
a fixed \(K_*>1\) such that
\[
 \mu(B_{K_*b})\ge2\mu(B_b).
\]
Since \(f=0\) outside \(B_b\),
\[
 \int_{B_{K_*b}}
 |f-f_{B_{K_*b},\mu}|^2\,\mathrm d\mu
 \ge \frac12\|f\|_2^2.
\]
Poincar\'e \eqref{eq:variable-poincare} on this enlarged ball and scale comparison \eqref{eq:variable-weak-scale}
therefore give the required estimate also in this case.
With \(\nu=\beta_0/d_+\in(0,1)\), we obtain
\begin{equation}\label{eq:local-faber-krahn}
 \|f\|_2^2
 \le C\Psi(z,b)
 \left(\frac{\mu(V)}{\mu(B_b)}\right)^\nu
 \mathscr E(f).
\end{equation}
The fixed enlargement used in the argument is included
in the choice of \(C_0\). The iterations below use \eqref{eq:local-faber-krahn} directly.
Its energy is that of the zero extension to \(X\), so it
retains all exterior interactions without requiring exterior
volume comparisons.

We next control the positive exterior tail by local values
and the negative exterior tail.
For \(0<a<b\), write
\[
 T_{a,b}(f)
 =
 \esssup_{\xi\in B_a}
 \int_{X\setminus B_b}|f(y)|J(\xi,y)\,\mathrm d\mu(y).
\]
Suppose that \(h\) is harmonic and nonnegative in \(B_b\),
and put
\[
 H_b=\esssup_{B_b}h,\qquad \delta=b-a.
\]
Choose \(0\le\eta\le1\), equal to one on \(B_{a+\delta/4}\),
compactly supported in \(B_{a+\delta/2}\), and with
Lipschitz constant at most \(C/\delta\).
Assume first that \(H_b>0\), and test with
\((h-2H_b)\eta^2\). The integral over \(B_b\times B_b\) is bounded below by
\(-CH_b^2\mathscr E(\eta)\).
For a cross pair, with \(x\) in the support of \(\eta\)
and \(y\notin B_b\), use
\[
 (h(x)-h(y))(h(x)-2H_b)
 \ge H_bh_+(y)-2H_bh_-(y)-2H_b^2.
\]
The moment estimate \eqref{eq:variable-moment}, scale comparison \eqref{eq:variable-weak-scale}, and the separation
of the cutoff support from \(X\setminus B_b\) give
\[
 \mathscr E(\eta)
 \le
 \frac{C\mu(B_b)}{\Psi(z,b)}
 \left(\frac b\delta\right)^{\beta_1}.
\]
After division by \(H_b\), the weak equation consequently
yields
\begin{align*}
 &\int_{B_{a+\delta/4}}
   \int_{X\setminus B_b}
   h_+(y)J(x,y)\,\mathrm d\mu(y)\,\mathrm d\mu(x)\le
 C\mu(B_b)\left[
 \left(\frac b\delta\right)^{\beta_1}
 \frac{H_b}{\Psi(z,b)}
 +T_{a+\delta/2,b}(h_-)\right].
\end{align*}
To pass from this averaged estimate to a supremum over
first endpoints, fix \(\xi\in B_a\) and apply the averaging
estimate \eqref{eq:variable-ujs} on \(\mathbb B_{\delta/8}(\xi)\).
This ball lies in \(B_{a+\delta/4}\), and every
\(y\notin B_b\) satisfies \(d(\xi,y)\ge\delta\).
Moreover,
\[
 \mu(\mathbb B_{\delta/8}(\xi))
 \ge c(\delta/b)^{d_+}\mu(B_b).
\]
Integrating the averaging inequality in \(y\) therefore gives
\begin{equation}\label{eq:local-positive-tail}
 T_{a,b}(h_+)
 \le
 C\left(\frac b{b-a}\right)^{d_++\beta_1}
 \left(
 \frac{H_b}{\Psi(z,b)}
 +T_{(a+b)/2,b}(h_-)
 \right).
\end{equation}
If \(H_b=0\), use
\((h-2(H_b+\varepsilon))\eta^2\), divide by
\(H_b+\varepsilon\), and let \(\varepsilon\downarrow0\).
The averaging assumption is essential in this step.
It controls the supremum over first endpoints while
allowing the second endpoint to range over all of
\(X\setminus B_b\).

We will also use the following quadratic cutoff estimate.
Let \(w=(h-k)_+\), where \(k\ge0\), and let \(\eta\) be
compactly supported in \(B_b\).
Testing with \(w\eta^2\), and splitting the form into
interior pairs and cross pairs, gives
\[
 \mathscr E(w\eta)
 \le
 C\int_{B_b}w^2\Gamma(\eta)\,\mathrm d\mu
 +C T_{\supp\eta,b}(w)
       \int_{B_b}w\eta^2\,\mathrm d\mu.
\]
Here \(T_{\supp\eta,b}\) denotes the same tail supremum
with first endpoint restricted to \(\supp\eta\).
This estimate involves only the first power of the
exterior values of \(w\).
In particular, no exterior \(\mathrm L^2\) bound enters
the iteration.

We now prove the weak Harnack estimate.
Choose \(\ell/R\) sufficiently small that all fixed
enlargements used below lie in \(B_{R/2}\), and put
\[
 T_-=\mathcal T_{z,R}(h_-),\qquad
 k=C_*\Psi(z,\ell)T_-+\varepsilon,\qquad
 v=h+k,\qquad \varepsilon>0.
\]
The constant \(C_*\) is structural.
On \(B_R\), \(v\ge k\), while \(v_-\le h_-\) on \(X\).
The quantity \(T_-\) is finite by boundedness of \(h\)
and the separated jump moment estimate \eqref{eq:variable-moment}.

We first obtain a uniform logarithmic oscillation bound.
Let \(B=\mathbb B_t(x)\), where \(x\in B_{4\ell}\) and
\(0<t\le4\ell\).
Choose \(\ell/R\) small enough that the fixed Poincar\'e
and cutoff enlargements of every such ball lie in
\(B_{R/2}\).
If \(\kappa\) is the Poincar\'e enlargement factor,
choose \(0\le\eta\le1\), equal to one on \(\kappa B\),
compactly supported in \(2\kappa B\), and with Lipschitz
constant at most \(C/t\).
Use the test function
\[
{\eta^2}/{(h_++k)}.
\]
On the support of \(\eta\), this equals \(\eta^2/v\).
The strict regularization \(k\ge\varepsilon>0\) makes
the reciprocal a Lipschitz function of \(h_+\).
After subtracting its value at zero, the local form-domain
property and cutoff multiplication justify this test.
It is well defined even where \(h\) is negative outside
\(B_R\).

For \(a,b>0\) and \(\alpha,\gamma\in\R\), the identity
\[
 (a-b)\biggl({\alpha^2}/{a}-{\gamma^2}/{b}\biggr)
 = (\alpha-\gamma)^2
 -\biggl(\alpha\sqrt{b/a}
        -\gamma\sqrt{a/b}\biggr)^2
\]
controls the pairs in \(B_R\times B_R\), where \(v>0\).
On \(\kappa B\times\kappa B\), both cutoff values are one,
and the last square satisfies
\[
\biggl(\sqrt{b/a}
        -\sqrt{a/b}\biggr)^2
 ={(a-b)^2}/{ab}
 \ge |\log a-\log b|^2.
\]
For the other interior pairs, discard that nonnegative
square. On cross pairs, with \(x\in\supp\eta\) and
\(y\notin B_R\), use
\[
 (h(x)-h(y))\frac{\eta(x)^2}{h(x)+k}
 \le \eta(x)^2\left(1+\frac{h_-(y)}{k}\right).
\]
The weak equation consequently gives
\[
 \mathscr E_{\kappa B}(\log v)
 \le \mathscr E(\eta)
      +\frac{T_-}{k}\int_X\eta^2\,\mathrm d\mu
 \le \frac{C\mu(B)}{\Psi(x,t)}
      +C\frac{T_-}{k}\mu(B).
\]
Here the first endpoints of the cross pairs lie in
\(B_{R/2}\), and \(h_-=0\) in \(B_R\), so their negative
exterior contribution is bounded by \(T_-\).
The last inequality uses the full moment estimate \eqref{eq:variable-moment} and
local volume and scale comparison.
The localized logarithm is equally justified by the
Lipschitz composition \(\log(h_++k)-\log k\).

Applying Poincar\'e \eqref{eq:variable-poincare} and using scale comparison \eqref{eq:variable-weak-scale}, we obtain
\[
 \avgint_B
 |\log v-(\log v)_{B,\mu}|^2\,\mathrm d\mu
 \le
 C\left(1+\frac{\Psi(x,t)T_-}{k}\right)
 \le C.
\]
The proof of the ball form of the John-Nirenberg inequality
in \cite[Theorem~2.2]{Buckley} uses only the preceding
mean-oscillation bounds and doubling comparisons in a
fixed enlargement of \(B_{2\ell}\).
We choose \(\ell/R\) still smaller, if necessary, so that
\(B_{16\ell}\) and its required structural enlargements
lie in \(B_{R/2}\).
That proof therefore gives a structural \(q_0\in(0,1)\)
such that
\[
 \avgint_{B_{2\ell}}
 \exp\left(q_0|\log v-(\log v)_{B_{2\ell},\mu}|\right)
 \,\mathrm d\mu\le C.
\]
Applying this estimate to the positive and negative signs
of the centered logarithm and multiplying gives
\begin{equation}\label{eq:local-log-crossover}
 \left(\avgint_{B_{2\ell}}v^{q_0}\,\mathrm d\mu
 \right)^{1/q_0}
 \left(\avgint_{B_{2\ell}}v^{-q_0}\,\mathrm d\mu
 \right)^{1/q_0}
 \le C.
\end{equation}
This is the crossover step. Compare \cite[Lemma~4.9]{CY}.
Its constants are independent of \(k\), \(\varepsilon\),
and the bound on \(h\).

For the sublevel iteration, fix \(a\ge k\), and set
\[
 r_j=\ell(1+2^{-j}),\qquad
 a_j=\frac a2(1+2^{-j}),\qquad
 A_j=B_{r_j}\cap\{v<a_j\},\qquad
 Y_j=\frac{\mu(A_j)}{\mu(B_{2\ell})}.
\]
Choose \(\eta_j=1\) on \(B_{r_{j+1}}\), compactly supported
in \(B_{(r_j+r_{j+1})/2}\), with Lipschitz constant at most
\(C2^j/\ell\), and put \(w_j=(a_j-v)_+\).
Testing with \(w_j\eta_j^2\) gives
\begin{align*}
 0=\mathscr E(v,w_j\eta_j^2)
 &\le-\mathscr E(w_j\eta_j)+\frac12\iint_{X\times X}
 w_j(x)w_j(y)|\eta_j(x)-\eta_j(y)|^2
 J(x,y)\,\mathrm d\mu(x)\,\mathrm d\mu(y).
\end{align*}
By symmetry, every nonzero contribution in the last
integral can be estimated with its first endpoint in
\(A_j\cap\supp\eta_j\).
On \(B_R\), \(w_j\le a\).
Outside \(B_R\), we have \(w_j\le a+h_-\).
The moment estimate \eqref{eq:variable-moment} and the definition of \(T_-\) in \eqref{eq:variable-tail}
therefore yield
\[
 \mathscr E(w_j\eta_j)
 \le
 \frac{Ca^2 2^{j\beta_1}}{\Psi(z,\ell)}\mu(A_j)
 +CaT_-\mu(A_j)
 \le
 \frac{Ca^2 2^{j\beta_1}}{\Psi(z,\ell)}\mu(A_j).
\]
The last inequality uses
\(a\ge k\ge C_*\Psi(z,\ell)T_-\).
This calculation includes the annuli outside \(B_{r_j}\)
as well as the distant exterior. The function \(w_j\eta_j\) vanishes outside \(A_j\), and
on \(A_{j+1}\) it is at least
\[
 a_j-a_{j+1}=a2^{-j-2}.
\]
Applying \eqref{eq:local-faber-krahn} on \(B_{2\ell}\)
and using scale comparison \eqref{eq:variable-weak-scale} gives
\[
 Y_{j+1}
 \le C2^{j(2+\beta_1)}Y_j^{1+\nu}.
\]
This sequence tends to zero when \(Y_0\) is below a
structural threshold.
Consequently, there is \(\varepsilon_*>0\) such that
\[
 \mu(B_{2\ell}\cap\{v<a\})
 \le\varepsilon_*\mu(B_{2\ell})
 \quad\Longrightarrow\quad
 \essinf_{B_\ell}v\ge a/2.
\]
This proves the sublevel implication directly. Compare
\cite[Lemma~4.4]{CY}.

Choose
\[
 a=c_*
 \left(\avgint_{B_{2\ell}}v^{-q_0}\,\mathrm d\mu
 \right)^{-1/q_0},
 \qquad c_*^{q_0}\le\varepsilon_*.
\]
Chebyshev's inequality gives the required sublevel
measure bound.
If \(a<k\), the conclusion follows directly from \(v\ge k\).
Otherwise, the preceding iteration applies.
Combining the resulting lower bound with
\eqref{eq:local-log-crossover}, and letting
\(\varepsilon\downarrow0\), proves
\begin{equation}\label{eq:local-weak-harnack}
 \left(\avgint_{B_{2\ell}}h^{q_0}\,\mathrm d\mu
 \right)^{1/q_0}
 \le
 C\left(
 \essinf_{B_\ell}h
 +\Psi(z,\ell)\mathcal T_{z,R}(h_-)
 \right).
\end{equation}

It remains to estimate the supremum by the integral
on the left-hand side.
Fix \(0<b\le R/2\), and put
\[
 A_b=H_b+\Psi(z,b)T_-,\qquad
 b_j=\frac b2(1+2^{-j}),\qquad
 k_j=N(1-2^{-j}),\qquad N>0.
\]
Let
\[
 w_j=(h-k_j)_+,\qquad
 I_j=\int_{B_{b_j}}w_j^2\,\mathrm d\mu,
\]
and choose \(\eta_j=1\) on \(B_{b_{j+1}}\), compactly
supported in \(B_{(b_j+b_{j+1})/2}\), with Lipschitz
constant at most \(C2^j/b\).
Testing with \(w_{j+1}\eta_j^2\) and using
\[
 w_{j+1}
 \le\frac{w_j^2}{k_{j+1}-k_j}
 \quad\text{where }w_{j+1}>0,
\]
together with \eqref{eq:variable-moment} and \eqref{eq:variable-weak-scale},
gives
\begin{align*}
 \mathscr E(w_{j+1}\eta_j)
 &\le
 C\left[
 \frac{2^{j\beta_1}}{\Psi(z,b)}
 +\frac{
 T_{(b_j+b_{j+1})/2,b_j}(h_+)}
 {k_{j+1}-k_j}
 \right]I_j\le
 \frac{C2^{j(d_++\beta_1+1)}}{\Psi(z,b)}
 \left(1+\frac{A_b}{N}\right)I_j.
\end{align*}
For the second inequality, apply
\eqref{eq:local-positive-tail} at radius \(b_j\).
The local supremum is at most \(H_b\).
Moreover, every first endpoint lies in \(B_{R/2}\),
and \(h_-=0\) in \(B_R\), so each negative tail is
bounded by \(T_-\).

The support of \(w_{j+1}\eta_j\) has measure at most
\[
 \mu(B_{b_j}\cap\{h>k_{j+1}\})
 \le
 \frac{I_j}{(k_{j+1}-k_j)^2}
 =\frac{2^{2j+2}I_j}{N^2}.
\]
Apply \eqref{eq:local-faber-krahn} to this function.
Volume and scale comparison between radii \(b/2\) and \(b\)
then give
\[
 Y_{j+1}
 \le Cb_*^j\left(1+\frac{A_b}{N}\right)Y_j^{1+\nu},
 \qquad
 Y_j=\frac{I_j}{N^2\mu(B_b)},
\]
where \(b_*>1\) is structural.
The iteration converges to zero provided that
\[
 Y_0\le c\left(1+\frac{A_b}{N}\right)^{-1/\nu}.
\]
For \(0<\varepsilon\le1\), this condition is ensured by
\[
 N=\varepsilon A_b+
 C\varepsilon^{-1/(2\nu)}
 \left(\avgint_{B_b}h^2\,\mathrm d\mu\right)^{1/2},
\]
with \(C\) sufficiently large.
Indeed, \(A_b/N\le\varepsilon^{-1}\), while the second
term makes \(Y_0\le C^{-2}\varepsilon^{1/\nu}\).
We conclude that
\[
 \esssup_{B_{b/2}}h
 \le
 \varepsilon\bigl(H_b+\Psi(z,b)T_-\bigr)
 +C\varepsilon^{-1/(2\nu)}
 \left(\avgint_{B_b}h^2\,\mathrm d\mu\right)^{1/2}.
\]
This proves the quadratic mean-value estimate, with all
exterior tails retained. Compare \cite[Lemma~5.6]{CY}.

For arbitrary nested radii, the same calculation introduces
only fixed powers of the inverse relative gap.
To replace the \(\mathrm L^2\) average by the
\(\mathrm L^{q_0}\) average, use
\[
 \left(\avgint_Bh^2\,\mathrm d\mu\right)^{1/2}
 \le
 (\esssup_Bh)^{1-q_0/2}
 \left(\avgint_Bh^{q_0}\,\mathrm d\mu\right)^{1/2}.
\]
Young's inequality, together with volume and scale
comparison, therefore yields
\begin{align*}
 H(a)
 &\le\varepsilon H(b)+
 C\varepsilon^{-c_0}
 \left(\frac{\ell}{b-a}\right)^{c_1}
 \left[
 \left(\avgint_{B_{2\ell}}h^{q_0}\,\mathrm d\mu
 \right)^{1/q_0}
 +\Psi(z,\ell)T_-
 \right],
 \qquad \ell\le a<b\le2\ell,
\end{align*}
where \(H(t)=\esssup_{B_t}h\), and \(c_0,c_1>0\)
depend only on the structural constants.
Set \(a_j=\ell(2-2^{-j})\) and fix
\(\varepsilon<2^{-c_1-1}\).
Iteration gives a convergent geometric series.
The residual term \(\varepsilon^jH(a_j)\) tends to zero
because \(h\) is bounded.
Hence
\[
 \esssup_{B_\ell}h
 \le C\left[
 \left(\avgint_{B_{2\ell}}h^{q_0}\,\mathrm d\mu
 \right)^{1/q_0}
 +\Psi(z,\ell)T_-
 \right].
\]
Combining this estimate with
\eqref{eq:local-weak-harnack} proves
\eqref{eq:local-form-harnack}.

Finally, choose \(C_0\) to contain all fixed structural
enlargements, including the enlargement used in the
Faber-Krahn argument.
Choose \(\delta_H\) sufficiently small that the logarithmic
and Harnack iterations take place inside \(B_{R/2}\).
Both choices depend only on the structural data.
Boundedness of \(h\) ensures test admissibility and removes
the residual term in the last iteration.
Its numerical bound does not enter any of the constants. This completes the proof of Lemma~\ref{lem:local-form-harnack}.\qed

\begin{proof}[Proof of Corollary~\ref{cor:local-form-holder}]
If \(Q=0\), the conclusion follows from the zero
representative. Assume \(Q>0\).
Fix \(x\in\mathbb B_r(z)\) and put \(r_0=r/4\).
All required fixed enlargements lie in
\(\mathbb B_{C_*r}(z)\subset P\) after choosing
\(C_*\) sufficiently large. We first normalize the positive tail by \(Q\).
Since \(h\le Q\) on \(\mathbb B_{2r_0}(x)\),
\eqref{eq:local-positive-tail}, with \(a=r_0/2\),
\(b=2r_0\), and \(h_-=0\), gives
\[
 \esssup_{\xi\in\mathbb B_{r_0/2}(x)}
 \int_{X\setminus\mathbb B_{2r_0}(x)}
 h(y)J(\xi,y)\,\mathrm d\mu(y)
 \le \frac{CQ}{\Psi(x,r_0)}.
\]
On \(\mathbb B_{2r_0}(x)\setminus\mathbb B_{r_0}(x)\),
use \(h\le Q\), separation from
\(\mathbb B_{r_0/2}(x)\), and the jump moment estimate \eqref{eq:variable-moment}.
The nearby-center scale comparison \eqref{eq:variable-weak-scale} therefore yields
\begin{equation}\label{eq:local-holder-positive-tail}
 \mathcal T_{x,r_0}(h)
 \le \frac{CQ}{\Psi(x,r_0)}.
\end{equation}
This estimate includes all exterior interactions.

Choose \(0<\lambda<\min\{\delta_H,1/4\}\), and set
\(r_j=\lambda^jr_0\).
We construct nested intervals \([m_j,M_j]\) containing
the essential range of \(h\) on \(\mathbb B_{r_j}(x)\),
with
\[
 [m_0,M_0]=[0,2Q],\qquad
 \omega_j=M_j-m_j=2Q\lambda^{j\alpha}.
\]
The choices of \(\lambda\) and \(\alpha\) are fixed below.
Suppose the intervals have been constructed through step \(j\).
Both \(h-m_j\) and \(M_j-h\) are harmonic and nonnegative
in \(\mathbb B_{r_j}(x)\), although they may be negative
outside that ball. Apply \eqref{eq:local-form-harnack}
to each, with inner radius \(r_{j+1}\), and add the
resulting inequalities. Writing
\(\vartheta=(H-1)/(H+1)<1\), we obtain
\begin{align*}
 &\esssup_{\mathbb B_{r_{j+1}}(x)}h
 -\essinf_{\mathbb B_{r_{j+1}}(x)}h\le\vartheta\omega_j
 +C\Psi(x,r_{j+1})
 \bigl(\mathcal T_{x,r_j}((h-m_j)_-)
       +\mathcal T_{x,r_j}((M_j-h)_-)\bigr).
\end{align*}

On the annulus
\(\mathbb B_{r_k}(x)\setminus\mathbb B_{r_{k+1}}(x)\),
where \(k<j\), the sum of these two negative parts
is at most \(2\omega_k\).
Its distance from any first endpoint in
\(\mathbb B_{r_j/2}(x)\) is at least \(r_{k+1}/2\).
The moment estimate \eqref{eq:variable-moment} and nearby-center comparison \eqref{eq:variable-weak-scale}
thus bound its kernel mass by
\(C/\Psi(x,r_{k+1})\).
Outside \(\mathbb B_{r_0}(x)\), the sum of the negative
parts is at most \(h+2Q\), so
\eqref{eq:local-holder-positive-tail} and the moment
estimate \eqref{eq:variable-moment} control this remaining contribution.
Consequently, the error in the last display is at most
\[
 C\Psi(x,r_{j+1})
 \left(\sum_{k=0}^{j-1}
 \frac{\omega_k}{\Psi(x,r_{k+1})}
 +\frac{Q}{\Psi(x,r_0)}\right)
 \le C\omega_j
 \frac{\lambda^{\beta_0-\alpha}}
      {1-\lambda^{\beta_0-\alpha}}.
\]
The sum is empty when \(j=0\).
Here we used the positive lower scale exponent
\(\beta_0\) in \eqref{eq:variable-weak-scale}.
First choose \(\lambda\) sufficiently small that the
last coefficient is at most \((1-\vartheta)/2\)
whenever \(0<\alpha\le\beta_0/2\).
Then choose \(0<\alpha<\min\{1,\beta_0/2\}\)
small enough that
\(\lambda^\alpha\ge(1+\vartheta)/2\).
The essential oscillation on the next ball is then
at most \(\omega_{j+1}\).
Its essential range fits in an interval of that length
contained in \([m_j,M_j]\), completing the induction.

These bounds are uniform in \(x\in\mathbb B_r(z)\).
Local doubling gives differentiation of locally
integrable functions, and the vanishing oscillations
show that local averages define a representative of
\(h\) at every such point.
Full support of \(\mu\) makes the representatives
consistent on overlapping balls, and the resulting
representative satisfies \(0\le h\le Q\).
For \(d(x,y)<r_0/2\), choose a radius \(r_j\)
comparable to \(d(x,y)\) whose ball centered at \(x\)
contains a neighborhood of \(y\).
The oscillation bound gives
\[
 |h(x)-h(y)|\le CQ
 \left(\frac{d(x,y)}r\right)^\alpha.
\]
For larger distances, the same inequality follows,
after increasing \(C\), from \(0\le h\le Q\) on
\(\mathbb B_r(z)\).
This proves \eqref{eq:local-form-holder}.
\end{proof}

\medskip
\noindent
{\bf Conflict of interest.}
The author states that there is no conflict of interest.

\medskip
\noindent
{\bf Data availability.}
Data sharing is not applicable to this article as no datasets
were generated or analysed during the current study.

\medskip
\noindent
{\bf Declaration of use of AI.}
The author used GPT-6 Astra, developed by OpenAI, during the preparation and revision of this manuscript.  The tool was
used for language editing, organization, LaTeX formatting, and checks of
clarity and internal consistency.  The mathematical arguments, results,
and conclusions were developed and verified by the author, who also
checked the references and takes full responsibility for the content of
the manuscript.

\end{document}